\documentclass[11pt,a4paper]{amsart}

\usepackage{amsmath,amscd}
\usepackage{amssymb}
\usepackage{amsthm}

\usepackage{graphicx}
\usepackage{todonotes}

\usepackage{mathrsfs}
\usepackage[ocgcolorlinks, linkcolor=blue]{hyperref}

\usepackage[alphabetic,abbrev,nobysame]{amsrefs} \AtBeginDocument{\def\MR#1{}} 

\usepackage{calc}
             {\begin{list}{\arabic{enumi}.}{\usecounter{enumi}%
              \setlength{\labelsep}{0.5em}%
              \settowidth{\labelwidth}{(\arabic{enumi})}%
              \setlength{\leftmargin}{\labelwidth+\labelsep}}}%
             {\end{list}}

\usepackage{tikz-cd}

\usepackage{comment}

\DeclareRobustCommand{\SkipTocEntry}[5]{}

\setlength{\oddsidemargin}{0.0in}
\setlength{\evensidemargin}{0.0in}
\setlength{\textwidth}{6.5in}
\setlength{\baselineskip}{19pt}
\setlength{\parskip}{5pt}
\pagestyle{plain}
\setlength{\topmargin}{0in}
\setlength{\textheight}{8.7in}

\usepackage{xcolor}
\definecolor{LOcolor}{RGB}{150,100,0}

\DeclareMathOperator{\WF}{WF}


\newtheorem{Theorem}{Theorem}[section]

\newtheorem{Lemma}[Theorem]{Lemma}
\newtheorem{Corollary}[Theorem]{Corollary}

\newtheorem{Proposition}[Theorem]{Proposition}

\theoremstyle{definition}
\newtheorem{Definition}{Definition}[section]

\newtheorem{Remark}[Theorem]{Remark}

\numberwithin{equation}{section}

\newcommand{\mR}{\mathbb{R}}                    
\newcommand{\mC}{\mathbb{C}}                    
\newcommand{\abs}[1]{\lvert #1 \rvert}          

\newcommand{\ol}[1]{\overline{#1}}

\newcommand{\re}{\mathrm{Re}}
\newcommand{\im}{\mathrm{Im}}

\newcommand{\mG}{\mathcal{G}}

\newcommand{\supp}{\mathrm{supp}}

\newcommand{\eps}{\varepsilon}

\newcommand{\mi}{M^{\mathrm{int}}}
\newcommand{\mx}{X}

\newcommand{\p}{\partial}

\definecolor{darkred}{rgb}{1,0,0} 
\definecolor{darkgreen}{rgb}{0,0.6,0}
\definecolor{darkblue}{rgb}{0,0,0.8}

\newcounter{sidenote}

\begin{document}

\title{A general support theorem for analytic double fibration transforms}

\author[M. Mazzucchelli]{Marco Mazzucchelli}
\address{Marco Mazzucchelli\newline\indent CNRS and {\'E}cole Normale Sup{\'e}rieure de Lyon, France}
\email{marco.mazzucchelli@ens-lyon.fr}

\author[M. Salo]{Mikko Salo}
\address{Mikko Salo\newline\indent Department of Mathematics and Statistics, University of Jyv\"askyl\"a, Finland}
\email{mikko.j.salo@jyu.fi}

\author[L. Tzou]{Leo Tzou}
\address{Leo Tzou\newline\indent Korteweg-de Vries Institute, University of Amsterdam, Netherlands}
\email{leo.tzou@gmail.com}




\begin{abstract}
We develop a systematic approach for resolving the analytic wave front set for a class of integral geometry transforms appearing in various tomography problems. Combined with microlocal analytic continuation, this leads to uniqueness and support theorems for analytic integral transforms which are in the microlocal double fibration framework introduced by Guillemin.

For the case of ray transforms, we show that the double fibration setup has a concrete interpretation in terms of curve families obtained by projecting integral curves of a fixed vector field on some fiber bundle down to the base. This setup includes geodesic X-ray type transforms, null bicharacteristic ray transforms and transforms related to real principal type systems. We also study transforms integrating over submanifolds of any codimension, and give geometric characterizations for the Bolker condition required for recovering singularities.

Our approach is based on a general result related to recovering the analytic wave front set of a function from its transform given by a suitable analytic elliptic Fourier integral operator. This approach extends and unifies a number of previous works. We use wave packet transforms to extrapolate the geometric features of wave front set propagation for such operators when their canonical relation satisfies the Bolker condition.
\end{abstract}

\maketitle

\section{Introduction} \label{sec_intro}

This article was originally motivated by the recent work \cite{OSSU} that studied inverse problems for real principal type differential operators. It was shown there that certain null bicharacteristic ray transforms of the coefficient functions can be determined from boundary measurements. A special case is the light ray transform studied e.g.\ in \cites{Plamen_lightray, LOSU, FIO}, but in the general case the invertibility of these transforms was left as an open question in \cite{OSSU}.

In this work we observe that the transforms mentioned above fall under the double fibration approach to integral geometry as in \cites{Helgason1966, GGS, GuilleminSternberg, Guillemin1985}. We will thus study the much more general class of transforms arising from a double fibration satisfying a Bolker condition. In particular, this includes ray transforms arising from the flow of a vector field on some fiber bundle over the base manifold, and generalized Radon transforms where one has access to integrals over a suitable family of $k$-dimensional submanifolds (the case of ray transforms corresponds to $k=1$). 

The recovery of $C^{\infty}$ singularities for such transforms $R$ follows from the standard clean intersection calculus of Fourier integral operators (FIOs) applied to the normal operator $R^* R$ as discussed in \cites{GuilleminSternberg, Guillemin1985}. As the main contribution of this work we show that if the underlying structures are analytic, then certain analytic singularities of a function can be recovered from the analytic singularities of its transform. The strength of this approach is that unlike in the $C^{\infty}$ case, recovery of analytic singularities combined with microlocal analytic continuation (also known as the microlocal Holmgren or Sato-Kawai-Kashiwara theorem, \cite[Theorem 8.5.6']{Hormander}) leads to local uniqueness results, global uniqueness results under foliation conditions, and Helgason type support theorems for our transforms. Such results were known for the weighted Radon transform \cite{BomanQuinto1987}, geodesic X-ray transform \cites{StefanovUhlmann, FrigyikStefanovUhlmann, Krishnan2009}, light ray transform \cite{Plamen_lightray}, and certain generalized Radon transforms \cite{HomanZhou2016}. Our approach hinges on a general result that allows one to recover the analytic wave front set of a function $f$ from the knowledge of $Rf$, where $R$ is a special type of analytic elliptic FIO. We hope that this result could be useful in other contexts as well.

\subsection{Ray transforms} \label{subsec_ray}

We first discuss our results for ray transforms. Let $M$ be a manifold with smooth boundary, let $\Xi$ be a fiber bundle over $M$ whose fibers are manifolds without boundary, and let $\pi: \Xi \to M$ be the projection (then $\p \Xi = \pi^{-1}(\p M)$). Let $Y$ be a vector field in $\Xi$ with flow $\Phi_t$, and consider its ``horizontal projection'' 
\[
Y^h = d\pi \circ Y: \Xi \to TM.
\]
Given any $z \in \Xi$ we consider the maximally extended integral curve $\gamma_z: [-\tau_-(z), \tau_+(z)] \to \Xi$, $\gamma_{z}(t) = \Phi_t(z)$, so that $\dot{\gamma}_{z}(t) = Y(\gamma_{z}(t))$ and $\gamma_{z}(0) = z$. In general one may have $\tau_{\pm}(z) = +\infty$, but we will only consider curves that are not trapped and satisfy $\tau_{\pm}(z) < \infty$. There are corresponding base space curves $x_{z}(t) = \pi(\gamma_{z}(t))$ with tangent vector 
\[
\dot{x}_{z}(t) = Y^h(\Phi_t(z))).
\]

We consider the integral geometry problem where we know the integrals of a function $f$ over all curves $x_z(t)$ for $z \in \mG$, where $\mG$ is a submanifold of $\Xi$ describing the set of admissible curves. We can also include a weight $\kappa \in C^{\infty}(\mG \times M)$ that is nowhere vanishing. The related integral transform is 
\[
R: C^{\infty}_c(M^{\mathrm{int}}) \to C^{\infty}(\mG), \ \ R f(z) = \int_{-\tau_-(z)}^{\tau_+(z)} \kappa(z, x_z(t)) f(x_{z}(t)) \,dt.
\]
The inverse problem is to determine the function $f$, or some properties of $f$, from the knowledge of $R f$.

We require that $\mG$ is chosen so that $Y$ is never tangent to $\mG$ (to remove redundancy in parametrizing the curves) and that $\dim(\mG) \geq \dim(M)$ (to ensure that the inverse problem is not formally underdetermined). These conditions hold e.g.\ if $\mG$ is an open subset of the inward pointing boundary $\p_+ \Xi$ of $\Xi$, defined by 
\[
\p_+ \Xi = \{ z \in \Xi \mid \pi(z) \in \p M, \ Y^h(z) \text{ is transverse to $\partial M$, and $Y^h(z)$ points inside $M$} \}.
\]
In general $\mG$ could also be lower dimensional submanifold of $\Xi$, which corresponds to a ray transform with restricted data.

Examples of operators that arise as $R$ include the following:

\begin{itemize}
\item 
The geodesic X-ray transform, with $\Xi = SM$ and $Y = X_g$ where $SM$ is the unit sphere bundle and $X_g$ is the geodesic vector field on $SM$ (see e.g.\ \cite{PSU_book}). One can take $\mG = \p_+ SM$ (full data case) or $\mG$ could be a strict open subset of $\p_+ SM$ (partial data case). One could also take $\mG$ to be a lower dimensional submanifold of $\p_+ SM$, one example being the X-ray transform with sources on a curve \cite{FinchLanUhlmann}.
\item 
The transform in \cite{FrigyikStefanovUhlmann} involving a family of curves $\gamma$ satisfying an equation $\ddot{\gamma} = G(\gamma, \dot{\gamma})$, with the choice $\Xi = TM$ and $Y(x,y) = y^j \p_{x_j} + G^j(x,y) \p_{y_j}$. These curve families include e.g.\ Riemannian geodesics, magnetic geodesics \cite{DPSU}, and thermostats \cite{AZ17}.
\item 
The null bicharacteristic ray transform for a differential operator $P$ with real principal symbol $p$, where $\Xi = p^{-1}(0) \subset T^* M$ and $Y$ is the Hamilton vector field $H_p$ \cite{OSSU}. If $P$ is the Lorentzian wave operator, this is just the light ray transform. This case also includes ray transforms for real principal type systems such as fully anisotropic elasticity, where $p$ is a simple eigenvalue of the Christoffel matrix \cite{dehoop2021determination}.
\end{itemize}

The first question is to determine whether $R$ above is covered by the double fibration framework. This will be true under certain minimal conditions, one of which is phrased in terms of \emph{variation fields} that reduce to Jacobi fields in the case of geodesic curves.

\begin{Definition} \label{def_variations}
Let $z_s$ be a smooth curve through $z_0$ in $\mG$ with $\partial_s z_s|_{s=0} = w$. The family $(x_{z_s})$ is called a \emph{variation} of the curve $x_{z_0}$. The associated \emph{variation field} $J_w: (-\tau_-(z_0), \tau_+(z_0)) \to T M$ along $x_{z_0}$ is given by 
\[
J_w(t) = \p_s x_{z_s}(t)|_{s=0} = d\pi(d \Phi_t(w)).
\]
The space of all possible directions of variation at $x_{z_0}(t)$ is 
\[
V_{z_0}(t) = \{ J_w(t) \mid w \in T_{z_0} \mG \}.
\]
\end{Definition}

\begin{Definition} \label{def_dfrt}
We say that $R$ is a \emph{double fibration ray transform} if the curves $x_z(t)$ satisfy the following:
\begin{enumerate}
\item[(i)]
(No tangential intersections) $x_z(t) \in \mi$ for $z \in \mG$ and $t \in (-\tau_-(z),\tau_+(z))$;
\item[(ii)] 
(No self-intersections) $t \mapsto x_z(t)$ is injective for all $z \in \mG$;
\item[(iii)] 
(No singular points) $\dot{x}_z(t) \neq 0$ for all $z \in \mG$ and all $t \in (-\tau_-(z),\tau_+(z))$;
\item[(iv)]
(Nontrapping) $\tau_{\pm}(z) < \infty$ for all $z \in \mG$.
\end{enumerate}
If $\dim(\mG) \leq \dim(\Xi)-2$, we also require that 
\begin{enumerate}
\item[(v)]
(Enough variations) $V_z(t) + \mR Y^h = T_{x_z(t)} M$ for all $z \in \mG$ and all $t \in (-\tau_-(z),\tau_+(z))$.
\end{enumerate}
\end{Definition}

Conditions (i)--(iv) are easy to visualize. Condition (v) ensures that the curve family is so large that one can vary $x_z(t)$ within this family in such a way that the infinitesimal directions of variation cover all possible directions at any point. By Lemma \ref{lem: double fibration condition}, (v) is automatically satisfied if $\mG$ is an open subset of $\p_+ \Xi$, or if $\dim(\mG) = \dim(\Xi)-1$, so this condition is only relevant if $\mG$ is a lower dimensional manifold. It is proved in Section \ref{sec_ray} that assuming (i)--(v), $R$ is indeed a transform arising from a double fibration in the sense of Definition \ref{def_doublefibration_general}. Conversely, under certain orientability assumptions, any transform arising from a double fibration with one dimensional left fibers is related to a vector field $Y$ as above though the integral curves might be periodic. We remark that conditions (i) and (ii) are not always necessary and could be removed in certain cases by extension procedures (see e.g.\ \cites{Dairbekov, StefanovUhlmann}), but we assume them for simplicity.

As observed in \cite{GuilleminSternberg}, and also stated in Theorem \ref{thm_r_fio} below, a double fibration ray transform $R$ is an FIO and its canonical relation is given by 
\[
C = \{ (z, -A(z,t) \eta, x_z(t), \eta) \mid z \in \mG, \ t \in (-\tau_-(z),\tau_+(z)), \ \eta\in T_{x_z(t)}^*M \setminus 0, \ \eta \perp \dot{x}_z(t) \},
\]
where $A(z,t)\eta = (\pi \circ \Phi_t|_{\mG})^* \eta$. Here the suggestive notation $\eta \perp \dot{x}_z(t)$ means that $\eta(\dot{x}_z(t)) = 0$.  It follows from basic properties of FIOs \cite[vol.\ IV]{Hormander} that $R$ is well defined on $\mathcal{E}'(\mi)$, has certain Sobolev mapping properties, and that one has the wave front set relation 
\[
\WF(Rf) \subset C \circ \WF(f).
\]
Thus any singularity of $Rf$ at $(z,\zeta)$ must come from some singularity of $f$ at $(x, \eta)$, where $x = x_z(t)$ and $\zeta = -A(z,t)\eta$ for some $t$.

In order to recover singularities of $f$ from those of $Rf$ we need a converse statement, and this is where the Bolker condition \cites{GuilleminSternberg, Guillemin1985} comes in. The Bolker condition states that the left projection $\pi_L: C \to T^* \mG \setminus 0$ is an injective immersion. For $R$ as above we can give a geometric characterization. This requires two notations. Below, we say that $\eta$ annihilates $V$ if $\eta(v) = 0$ for all $v \in V$.

\begin{Definition} \label{def_hx_vts_ray}
Given $x \in M$ let $H_{x}$ be the submanifold of $\mG$ defined by 
\[
H_{x} = \{ z \in \mG \mid x_z(t) = x \text{ for some $t$} \}.
\]
Given $z \in \mG$ and $t, s \in (-\tau_-(z), \tau_+(z))$ let $V_{z}(t, s)$ be the space of all possible directions of variation at $x_z(t)$ among variations that keep $x_z(s)$ fixed in the normal direction, i.e.\ 
\[
V_{z}(t, s) = \{ J_w(t) \mid w \in T_{z} \mG, \ J_w(s) \parallel \dot{x}_z(s) \}.
\]
\end{Definition}

\begin{Definition} \label{def_bolker_ray}
We say that the Bolker condition holds at $(z,\zeta,x,\eta) \in C$ with $x = x_{z}(t)$ if 
\begin{enumerate}
\item[(vi)] 
$V_z(t,s)$ is not annihilated by $\eta$ for any $s \neq t$; and 
\item[(vii)] 
$d(Y^h(x,\xi(\,\cdot\,)))(T_{z} H_{x})$ is not annihilated by $\eta$, where $(x,\xi(\tilde{z}))$ is the unique point over $x$ on the integral curve $\gamma_{\tilde{z}}$ through $\tilde{z} \in H_{x}$. 
\end{enumerate}
\end{Definition}

Condition (vi) means that for any $s \neq t$, there is a variation of $x_z$ that keeps $x_z(s)$ fixed in the normal direction, and the infinitesimal variation of $x_z(t)$ has a nonzero component in the $\eta$ direction. A sufficient condition for (vi) to hold in the case of the geodesic X-ray or null bicharacteristic ray transform is that the curve $x_z$ has no conjugate points (for geodesics this holds for arbitrary $\eta$, but for null bicharacteristic rays one needs a structural condition $\eta \nparallel \xi$). Condition (vii) means that the infinitesimal variations of $\dot{x}_{z}(t)$, among all curves that go through $x_{z}(t)$, should include a vector with nonzero component in the $\eta$ direction. When $\mG$ is an open subset of $\p_+ \Xi$, condition (vii) always holds for the geodesic X-ray transform, and it also holds for the null bicharacteristic ray transform provided that the Hessian $\nabla_{\xi}^2 p$ is nondegenerate and $\eta \nparallel \xi$. See Section \ref{sec_ray} for detailed statements.

We say that a double fibration transform $R$ is \emph{analytic} if all related objects ($M$, $\Xi$, $\mG$, $Y$ and $\kappa$) are analytic (i.e.\ real-analytic). Let $\WF_a$ denote the analytic wave front set \cite{sjostrand}. We can now state our first main theorem.

\begin{Theorem}
\label{global elliptic regularity intro ray}
Let $R$ be an analytic double fibration ray transform, and assume that the Bolker condition holds at $(z,\zeta,x,\eta) \in C$. Then for any $f\in {\mathcal E}'(M^{\mathrm{int}})$, we have 
\[
(  z,  \zeta) \notin \WF_a(Rf) \implies (  x,  \eta) \notin \WF_a(f).
\]
\end{Theorem}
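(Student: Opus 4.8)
The plan is to reduce the statement to a microlocal inversion result for elliptic analytic Fourier integral operators whose canonical relations satisfy the Bolker condition. The paper advertises such a ``general result'' in the abstract and introduction, so the proof of Theorem~\ref{global elliptic regularity intro ray} should consist of verifying that a double fibration ray transform $R$ falls into the scope of that general theorem near the relevant point, and then invoking it. Concretely, I would first recall (from Theorem~\ref{thm_r_fio}, cited in the excerpt) that $R$ is an elliptic analytic FIO with canonical relation $C$, and that the Bolker condition at $(z,\zeta,x,\eta)\in C$ is equivalent to the left projection $\pi_L\colon C\to T^*\mG\setminus 0$ being an injective immersion near that point. Ellipticity here comes from the nonvanishing weight $\kappa$, and analyticity from the standing assumption that $M,\Xi,\mG,Y,\kappa$ are all real-analytic.

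The heart of the argument is the local microlocal inversion: if $A$ is an analytic elliptic FIO associated to a canonical relation $C$ satisfying the Bolker condition at a point $(y_0,\eta_0;x_0,\xi_0)$ (with $\pi_L$ an injective analytic immersion locally), then $(x_0,\xi_0)\notin\WF_a(f)$ whenever $(y_0,\eta_0)\notin\WF_a(Af)$, at least for $f\in\mathcal E'$ supported away from the boundary. I would carry this out by the wave packet / FBI transform method (as the abstract indicates): microlocalize near $(x_0,\xi_0)$, represent $f$ via a wave packet (FBI) transform, apply $A$, and analyze the resulting oscillatory integral by analytic stationary phase. Because $\pi_L$ is an injective immersion, the phase appearing after this analysis is nondegenerate in the appropriate variables, so that (analytic) stationary phase yields a clean asymptotic expansion, and one can construct a microlocal analytic parametrix $B$ for $A$ near $(y_0,\eta_0;x_0,\xi_0)$ with $BA\equiv\mathrm{Id}$ microlocally near $(x_0,\xi_0)$ modulo an analytic-smoothing operator. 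Then $(y_0,\eta_0)\notin\WF_a(Af)$ forces $(x_0,\xi_0)\notin\WF_a(BAf)=\WF_a(f)$ microlocally. This is the analytic analogue of the classical $C^\infty$ Bolker-condition argument of Guillemin--Sternberg, but run with analytic symbol classes and analytic stationary phase, controlling the constants so that the remainders are genuinely exponentially small rather than merely $O(N^{-\infty})$.

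There are two bookkeeping points I would attend to. First, $R$ maps into functions on $\mG$, which may be a manifold with boundary (e.g.\ an open subset of $\p_+\Xi$) or of lower dimension; one must make sense of $\WF_a(Rf)$ and of the FIO/parametrix constructions on such $\mG$, and one must localize $t$ away from the endpoints $\pm\tau_\pm(z)$ so that the canonical relation is a genuine analytic manifold near the point of interest — this uses the nontrapping and interior-intersection conditions (i),(iv) of Definition~\ref{def_dfrt}. Second, the hypothesis $f\in\mathcal E'(M^{\mathrm{int}})$ keeps all relevant curve segments $x_z(t)$ in the interior, so conditions (i)--(iii) guarantee that $C$ is, microlocally near $(z,\zeta,x,\eta)$, an honest immersed Lagrangian, and (v)--(vii) are exactly what make $C$ of the double fibration type with $\pi_L$ an injective immersion there.

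The main obstacle I expect is the analytic stationary phase / analytic-symbol estimate underlying the parametrix construction: in the $C^\infty$ category one freely uses the clean intersection calculus and the fact that $\pi_L$ injective immersion makes $R^*R$ a pseudodifferential operator, but in the analytic category one cannot invoke a ready-made analytic clean intersection calculus, so one must instead construct the microlocal parametrix $B$ by hand via FBI transforms and prove exponential decay of the error term uniformly in the relevant parameters. Equivalently, the difficulty is to show that the Bolker (injective immersion) condition, which is an open and stable condition, is enough to make the analytic-wave-front calculation go through with the right quantitative control — that is precisely the ``general result'' the paper isolates, and once it is in place Theorem~\ref{global elliptic regularity intro ray} follows by the verification described above.
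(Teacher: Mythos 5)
Your overall reduction is the right one: the paper proves Theorem \ref{global elliptic regularity intro ray} exactly by showing (Section \ref{sec_ray}) that a double fibration ray transform is a double fibration transform whose ray-type Bolker conditions (vi)--(vii) coincide with the abstract one, and then invoking the general result, Theorem \ref{global elliptic regularity intro}. The gap is in how you propose to prove that general result. Your central step is a microlocal analytic parametrix $B$ with $BA\equiv\mathrm{Id}$ near $(x_0,\xi_0)$ modulo an analytic-smoothing error. Since $C$ here is not a canonical graph ($\dim(\mG)\geq\dim(\mx)$, $\pi_L$ is only an injective immersion and $\pi_R$ a submersion), such a $B$ would be an FIO associated to the transposed relation, and the identity $BA\equiv\mathrm{Id}$ modulo analytic smoothing amounts to an analytic clean-intersection/composition calculus for these relations --- precisely the ingredient the paper states is unavailable (neither Sj\"ostrand's parametrix for canonical transformations nor the Rouby--Sj\"ostrand--V\~u Ng\d{o}c transversal composition applies) and which its proof is engineered to avoid: $R^*$ is never used and no composition of analytic FIOs is ever performed. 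Instead the paper works directly with $R$: it writes the Gaussian FBI transform of $Tf$ as $\int K_\lambda(x,v)f(x)\,dx$, integrates out $\eta$ to obtain a delta function on $Z$, applies analytic stationary phase in the remaining $\zeta'$-variables, and then proves (Proposition \ref{good phase}, the technical heart: unique nondegenerate critical point and, above all, the quadratic lower bound $\im\psi\gtrsim|x-\pi(\chi(v))|^2$) that the resulting expression is itself a generalized FBI transform of $f$ in the sense of Sj\"ostrand's Definition 6.1, so exponential decay near $(\hat z,\hat\zeta)$ transfers directly to $(\hat x,\hat\eta)$. Saying you will ``construct $B$ by hand via FBI transforms'' names the obstacle but defers all of this content; as written, the key step is unproven and the chosen route runs head-on into the missing composition theory.

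Two smaller points. The global half of the Bolker condition, $\pi_L^{-1}(\hat z,\hat\zeta)=\{(\hat z,\hat\zeta,\hat x,\hat\eta)\}$, is not a side remark: it is what lets the paper dispose of the part of $f$ supported away from $\hat x$ (write $f=\chi f+(1-\chi)f$ and apply H\"ormander's Theorem 8.5.5 to $R((1-\chi)f)$); with only local injectivity of $\pi_L$, singularities of $f$ at other points of $G_{\hat z}$ could contribute to, or cancel in, $Rf$ at $(\hat z,\hat\zeta)$, and your identity $BAf=f$ would acquire off-diagonal artifacts. Also, ``microlocalize near $(x_0,\xi_0)$'' is itself delicate in the analytic category, where compactly supported analytic cutoffs do not exist; the paper deliberately uses only a spatial cutoff and shows (Lemma \ref{lem_bolker_open}) that spatial localization together with microlocalization on the $(z,\zeta)$ side already forces microlocalization in $(x,\eta)$, so no frequency cutoffs are needed.
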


\subsection{General double fibration transforms}

We will now consider the case of transforms that integrate over $k$-dimensional submanifolds where $1 \leq k \leq \dim(X)-1$. Here and below we write $\mx$ instead of $\mi$, to emphasize that $\mx$ can be any manifold without boundary. The results will be stated in the abstract double fibration setting.

\begin{Definition} \label{def_doublefibration_general}
Let $\mx$ and $\mG$ be smooth oriented manifolds with no boundary, and let $Z$ be an oriented embedded submanifold of $ \mG \times \mx$ with projections $\pi_{\mG}: Z \to \mG$, $\pi_{\mx}: Z \to \mx$ and with 
\begin{equation*} 
\dim(\mG)+\dim(\mx) > {\rm dim}(Z) > {\rm dim}(\mG) \geq {\rm dim}(\mx).
\end{equation*}

\begin{enumerate}
\item[(a)]
We say that $Z$ is a \emph{double fibration} if $\pi_{\mG}$ and $\pi_{\mx}$ are submersions. Then the sets 
\[
G_z := \pi_{\mx}(\pi_{\mG}^{-1}(z)), \qquad H_x := \pi_{\mG}(\pi_{\mx}^{-1}(x))
\]
are embedded submanifolds, with orientation forms $\omega_{G_z}$ and $\omega_{H_x}$ induced by some fixed orientation forms on $\mG, \mx, Z$ and by the submersions $\pi_{\mG}$ and $\pi_{\mx}$ (see Lemma \ref{lemma_orientation}). One has $\dim(G_z) = \dim(Z)-\dim(\mG)$.
\item[(b)] 
Let $\kappa \in C^{\infty}(\mG \times \mx)$ be nowhere vanishing. The linear map $R: C^{\infty}_c(\mx) \to C^{\infty}(\mG)$ given by 
\[
Rf(z) = \int_{G_z} \kappa(z,x) f(x) \,d\omega_{G_z}(x), \qquad z \in \mG,
\]
is called a \emph{double fibration transform}.
\item[(c)]
We say that $R$ is an \emph{analytic double fibration transform} if $R$ is as in (b) and all the related objects (i.e.\ $\mG$, $\mx$, $Z$, $\omega_{G_z}$, $\kappa$) are analytic.
\item[(d)]
If $R$ is a double fibration transform, we consider its canonical relation 
\[
C := (N^* Z \setminus 0)' = \{ (z,\zeta,x,\eta) \mid (z,\zeta,x,-\eta) \in N^* Z \setminus 0 \}
\]
and define the left and right projections $\pi_L: C \to T^* \mG \setminus 0 $ and $\pi_R: C \to T^* \mx \setminus 0$.
\item[(e)] 
We say that the \emph{Bolker condition} holds at $(z,\zeta,x,\eta) \in C$ if $\pi_L^{-1}(z,\zeta) = \{ (z,\zeta,x,\eta) \}$ and if $d\pi_L|_{(z,\zeta,x,\eta)}$ is injective.
\end{enumerate}
\end{Definition}

One has the related diagrams 
\begin{equation} \label{df_diagrams}
\begin{tabular}{cc}
\begin{tikzcd}
& Z \arrow[dl,swap,"\pi_{\mG}"] \arrow[dr,"\pi_{\mx}"] \\
\mG && \mx
\end{tikzcd} \qquad & \qquad
\begin{tikzcd}
& C \arrow[dl,swap,"\pi_L"] \arrow[dr,"\pi_R"] \\
T^* \mG \setminus 0 && T^* \mx \setminus 0
\end{tikzcd}
\end{tabular}
\end{equation}

As discussed in Section \ref{subsec_ray}, any double fibration ray transform according to Definition \ref{def_dfrt} is a double fibration transform, with $Z$ given by 
\[
Z = \{ (z, x_z(t)) \mid z \in \mG,\ t \in (-\tau_-(z), \tau_+(z)) \}.
\]
The $k$-plane transform that encodes the integrals of $f \in C^{\infty}_c(\mR^n)$ over all $k$-planes in $\mR^n$, where $1 \leq k \leq n-1$, is another example. Generalized Radon transforms studied in \cites{Beylkin, HomanZhou2016} where one integrates over hypersurfaces $\{ b(x,\theta) = s \}$, for a defining function $b$ satisfying suitable conditions, are also in this class. We extend this setup to any codimension in Section \ref{subsubsec_k}.

For double fibration ray transforms, the Bolker condition holds in the sense of Definition \ref{def_doublefibration_general} iff conditions (vi) and (vii) in Definition \ref{def_bolker_ray} hold. Similar geometric conditions that characterize the validity of the Bolker condition in the general case are proved in Section \ref{injectivity of piL}.

We then have the following analytic regularity result that generalizes Theorem \ref{global elliptic regularity intro ray}.

\begin{Theorem}
\label{global elliptic regularity intro}
Let $R$ be an analytic double fibration transform, and assume that the Bolker condition holds at $(z, \zeta,  x, \eta)\in C$. Then for any $f\in {\mathcal E}'(\mx)$, we have 
\[
(  z,  \zeta) \notin \WF_a(Rf) \implies (  x,   \eta) \notin \WF_a(f).
\]
\end{Theorem}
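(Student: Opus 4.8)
The plan is to reduce the statement to a microlocal analytic regularity result for analytic elliptic FIOs whose canonical relation satisfies the Bolker condition, and then to verify that an analytic double fibration transform is precisely such an operator near the point of interest. First I would observe that $R$ is an analytic elliptic FIO of the appropriate order whose canonical relation is $C = (N^*Z\setminus 0)'$: ellipticity comes from the nowhere-vanishing weight $\kappa$ together with the nondegeneracy of the conormal phase, and analyticity from the hypothesis in Definition \ref{def_doublefibration_general}(c). Then I would use the Bolker condition at $(z,\zeta,x,\eta)$: since $\pi_L$ is an injective immersion there (locally), the composition $R^*R$, or more precisely a suitable microlocal parametrix construction near $(z,\zeta)$, behaves like an analytic elliptic pseudodifferential operator microlocally at $(x,\eta)$. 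The key point is that the Bolker condition guarantees the clean (in fact transversal) intersection needed for the FIO composition, and in the analytic category one needs the analytic-microlocal version of this calculus.

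The heart of the argument, which I expect the paper establishes as its general stand-alone result (alluded to at the end of the introduction), is a wave-packet / FBI transform analysis. Concretely: suppose $(z,\zeta)\notin\WF_a(Rf)$. Take an analytic elliptic FIO $B$ microlocally elliptic at $(z,\zeta)$ and quantizing (a piece of) the inverse of $d\pi_L$, so that $BR$ is microlocally an analytic elliptic FIO associated to the graph of $\pi_R\circ\pi_L^{-1}$ near $(x,\eta)$; since this graph is (locally) the graph of a canonical transformation, $BR$ is equivalent to an analytic elliptic pseudodifferential operator after conjugation, up to an analytic-regularizing error. Applying the FBI transform (Sjöstrand's $T$), one shows that the exponential decay of $T(Rf)$ near the projection of $(z,\zeta)$ forces exponential decay of $T f$ near the relevant point over $(x,\eta)$, because the FBI transform intertwines the FIO with an explicit, analytically controlled integral kernel that is exponentially small away from the graph. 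This is where the Bolker injectivity is used a second time: injectivity of $\pi_L$ ensures there is a single branch in the stationary phase / almost-analytic extension argument, so no cancellation or spreading of the analytic wave front set occurs.

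The main obstacle is the analytic-microlocal bookkeeping: unlike in the $C^\infty$ case, one cannot simply invoke Hörmander's clean-intersection composition theorem, and the FBI/wave-packet estimates must be carried out with exponential weights, uniform almost-analytic extensions of the (analytic) phase and amplitude, and careful control of the critical manifold of the phase in the composition. In particular one must show that the phase governing $T\circ R$ has a nondegenerate critical point whose critical value has the right sign of imaginary part, and that the deformation of contour used to extract exponential decay stays within the region where the almost-analytic extensions are valid. I would organize this by: (1) localizing in $C$ near $(z,\zeta,x,\eta)$ so that $Z$ is a graph over coordinates adapted to $\pi_L$ being an embedding; (2) writing $R$ in an oscillatory-integral form with an analytic phase linear in the fiber variable; (3) composing with the FBI transform and applying analytic stationary phase (à la Sjöstrand) to get a clean Fourier-integral kernel; (4) reading off the wave-front-set inclusion from exponential decay estimates. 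Steps (1)–(2) are essentially the content of Theorem \ref{thm_r_fio} and the Bolker geometry from Section \ref{injectivity of piL}; step (3) is the general analytic-FIO lemma; step (4) is then a direct consequence of the definition of $\WF_a$ via the FBI transform.
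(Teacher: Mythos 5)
Your final four-step outline does capture the paper's strategy, but the way you motivate it in the preceding paragraph is misleading and would not go through. You propose constructing an analytic elliptic FIO $B$ near $(z,\zeta)$ so that $BR$ is ``an analytic elliptic FIO associated to the graph of a canonical transformation'' and then conjugating to an analytic $\Psi$DO. Two problems: (a) when $\dim(\mathcal G) > \dim(X)$ (the typical case), $\pi_L(C)$ is a proper submanifold of $T^*\mathcal G$ and $\pi_R\circ\pi_L^{-1}$ is a submersion from it onto $T^*X$ --- this is not a canonical transformation, so the parametrix construction for analytic FIOs associated to canonical graphs does not apply; (b) even setting that aside, no analytic FIO composition calculus of the required generality is available, and the paper says so explicitly. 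Indeed the whole point of the paper's method is to \emph{avoid} any such composition: it never forms $R^*R$ nor a parametrix $B$, but instead composes $R$ directly with the explicit Gaussian FBI transform $L_\lambda$, writes the Schwartz kernel of $L_\lambda\circ R$ as a concrete oscillatory integral, integrates out the fiber variable to produce $\delta_Z$, applies Sj\"ostrand's analytic stationary phase, and then verifies that the resulting phase $\psi(x,v)$ is a generalized FBI phase in the sense of Sj\"ostrand's Definition 6.1 (in particular, that $\mathrm{Im}\,\psi(x,v)\geq c|x-\pi(\chi(v))|^2$, which is the technical heart of the argument and is established via a careful Taylor analysis of the complex critical point, not just ``the right sign of the imaginary part''). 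You are also missing the paper's first reduction: one writes $f = \psi f + (1-\psi)f$ for a cutoff $\psi$ equal to $1$ near $\hat{x}$, and uses the \emph{global} Bolker injectivity $\pi_L^{-1}(\hat z,\hat\zeta)=\{(\hat z,\hat\zeta,\hat x,\hat\eta)\}$ together with H\"ormander's Theorem 8.5.5 for general operator kernels to show $(\hat z,\hat\zeta)\notin\WF_a(R((1-\psi)f))$; this localization in $x$, not merely localization in $C$, is what allows reduction to the model operator with $f$ compactly supported near $\hat{x}$, after which only the \emph{local} Bolker condition (injectivity of $d\pi_L$) is needed.
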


\subsection{Applications}

As the first application of the above results, we state a local uniqueness result for analytic double fibration transforms. This is an immediate consequence of Theorem \ref{global elliptic regularity intro} and microlocal analytic continuation.

\begin{Theorem} \label{thm_local_uniqueness}
Suppose that $\Sigma$ is a $C^2$ hypersurface in $\mx$ such that $(x_0,\eta_0) \in N^* \Sigma$ and that $f \in \mathcal{E}'(\mx)$ vanishes on one side of $\Sigma$ near $x_0$. Let $R$ be an analytic double fibration transform, and assume that the Bolker condition holds at $(z_0, \zeta_0,  x_0, \eta_0)\in C$. If $Rf(z) = 0$ for $z$ near $z_0$, then $f = 0$ near $x_0$.
\end{Theorem}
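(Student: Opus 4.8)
The plan is to derive Theorem~\ref{thm_local_uniqueness} directly from Theorem~\ref{global elliptic regularity intro} together with the microlocal analytic continuation theorem (the Sato--Kawai--Kashiwara / microlocal Holmgren theorem, \cite[Theorem~8.5.6']{Hormander}). First I would apply Theorem~\ref{global elliptic regularity intro} at the point $(z_0,\zeta_0,x_0,\eta_0)\in C$ where the Bolker condition is assumed. Since $Rf$ vanishes identically near $z_0$, it is in particular real-analytic near $z_0$, so $(z_0,\zeta_0)\notin\WF_a(Rf)$. The theorem then yields $(x_0,\eta_0)\notin\WF_a(f)$.

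Next I would invoke the hypothesis that $f$ vanishes on one side of the $C^2$ hypersurface $\Sigma$ near $x_0$, with $(x_0,\eta_0)\in N^*\Sigma$. This is precisely the geometric setup of microlocal analytic continuation: if a distribution vanishes on one side of a $C^2$ hypersurface near a point $x_0$, and is microlocally analytic at $(x_0,\eta_0)$ with $\eta_0$ conormal to $\Sigma$ at $x_0$, then the distribution must in fact vanish on a full neighborhood of $x_0$. More precisely, one should check the direction convention: if $f=0$ on, say, $\{\phi<0\}$ near $x_0$ where $\Sigma=\{\phi=0\}$ and $\eta_0=d\phi(x_0)$ (up to sign), the conclusion $(x_0,\eta_0)\notin\WF_a(f)$ propagates the vanishing across $\Sigma$ to the other side. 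Here the sign of $\eta_0$ is irrelevant because $\WF_a(f)$ is conic and, by the reality of $f$, symmetric, so whichever conormal codirection arises from $C$ the Holmgren-type argument applies. This gives $f=0$ near $x_0$, which is the desired conclusion.

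The one point requiring a little care — and the closest thing to an obstacle — is matching the hypotheses of the microlocal analytic continuation theorem exactly, in particular the regularity required of $\Sigma$ and of $f$. The statement in \cite[Theorem~8.5.6']{Hormander} is formulated for distributions and $C^2$ (or even Lipschitz, depending on the version) hypersurfaces, so the $C^2$ assumption on $\Sigma$ in the theorem is tailored to this. Since $f\in\mathcal{E}'(\mx)$ is compactly supported and the statement is local, there is no issue with global behavior; one simply works in a coordinate chart near $x_0$ in which $\Sigma$ is a graph and applies the cited theorem. There is also the minor bookkeeping point that $f$ should be supported in $\mx$ (which is without boundary in the general double fibration setting), so the interior condition present in Theorem~\ref{global elliptic regularity intro ray} does not intervene here; the version for the general transform (Theorem~\ref{global elliptic regularity intro}) is exactly the one to cite. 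Assembling these pieces, the proof is short: analytic regularity of $Rf$ at $z_0$ transfers, via the Bolker condition and Theorem~\ref{global elliptic regularity intro}, to analytic microlocal regularity of $f$ at $(x_0,\eta_0)$, and then one-sided vanishing plus microlocal Holmgren forces $f$ to vanish on a neighborhood of $x_0$.
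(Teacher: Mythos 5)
Your proposal is correct and takes the same route as the paper: since $Rf$ vanishes (hence is analytic) near $z_0$, Theorem \ref{global elliptic regularity intro} with the Bolker condition gives $(x_0,\eta_0)\notin\WF_a(f)$, and then the microlocal analytic continuation theorem (restated in the paper as Theorem \ref{thm_mac}) combined with the one-sided vanishing across $\Sigma$ yields $f=0$ near $x_0$. One minor remark: your appeal to the ``reality of $f$'' to symmetrize $\WF_a(f)$ is both unnecessary and unjustified (a distribution $f\in\mathcal{E}'(\mx)$ need not be real), but this is harmless since the cited H\"ormander theorem only requires that one of the two codirections $(x_0,\pm\eta_0)$ be absent from $\WF_a(f)$, which is exactly what you have.
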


Let us give a few applications to specific transforms.

\subsubsection{Geodesic X-ray transform}

We begin with the geodesic X-ray transform, defined on a compact oriented Riemannian manifold $(M,g)$ with smooth boundary. We say that $(M,g)$ is strictly convex if the second fundamental form of $\p M$ is positive definite, and that $(M,g)$ is nontrapping if any geodesic reaches the boundary in finite time. The geodesic X-ray transform encodes the integrals of a function $f \in C(M)$ over all maximal geodesics. For an account of various properties of this transform see \cite{PSU_book}.

It was conjectured in \cite{PSU_tensor} that the geodesic X-ray transform of a compact strictly convex nontrapping manifold is injective (and that the corresponding transform on tensor fields is solenoidal injective). This is known if additionally there are no conjugate points, i.e.\ $(M,g)$ is simple \cite{Mukhometov}, or if $\dim(M) \geq 3$ and $(M,g)$ can be foliated by strictly convex hypersurfaces \cite{UhlmannVasy}. There are also results in the case of hyperbolic trapping \cite{Guillarmou_trapping} and microlocal results in the presence of conjugate points \cites{MSU, HolmanUhlmann}. When $\dim(M)=2$ we can verify the conjecture provided that $M$ and $g$ are real-analytic.

\begin{Theorem} \label{thm_xray_twodim}
Let $(M,g)$ be a compact, strictly convex, nontrapping two-dimensional Riemannian manifold. If $M$ and $g$ are analytic, then the geodesic X-ray transform, possibly with an analytic nowhere vanishing weight, acting on functions on $(M,g)$ is injective.
\end{Theorem}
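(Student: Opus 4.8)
The plan is to combine the analytic microlocal regularity of Theorem \ref{global elliptic regularity intro ray} with microlocal analytic continuation (in the form of Theorem \ref{thm_local_uniqueness}), run inside a layer-stripping scheme, and feed it families of \emph{short} geodesics along which the geodesic X-ray transform is an analytic double fibration ray transform whose canonical relation satisfies the Bolker condition.

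First I would set up the reductions. It suffices to show that $f\in C(M)$ with $Rf\equiv 0$ forces $f\equiv 0$. Extend $(M,g)$ to a slightly larger real-analytic Riemannian surface $(\widetilde M,\widetilde g)$ which is still strictly convex and nontrapping — both properties are stable under small analytic enlargements of the domain, and a real-analytic metric on a compact analytic manifold with boundary extends analytically — and extend $f$ by zero, so $f\in\mathcal E'(\widetilde M^{\,\mathrm{int}})$ with support a compact subset of $M\subset\widetilde M^{\,\mathrm{int}}$. The weight enters only as a nowhere-vanishing analytic factor in $\kappa$ and plays no further role. For suitable open families $\mG$ of short geodesics one checks conditions (i)--(v) of Definition \ref{def_dfrt}: short geodesics near a strictly convex boundary are $C^2$-close to Euclidean segments, so they have no self-intersections and no singular points, stay in the interior, and are nontrapped, while (v) holds because there are enough variations. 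Thus the relevant transforms are analytic double fibration ray transforms, and the integral of $f$ over a geodesic of $\widetilde M$ agrees with $Rf$ on the corresponding maximal geodesic of $M$, hence vanishes.

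Next I would establish vanishing near the boundary. By strict convexity of $\p M$, the level sets of the boundary distance function are strictly convex for small $c$; the geodesics above, being short, have no conjugate points, so condition (vi) holds (it holds for geodesics without conjugate points) and condition (vii) holds automatically when $\mG$ is open in $\p_+\Xi$, giving the Bolker condition along the part of $C$ over a collar of $\p M$. Since $f$ vanishes on the far side of each such level set (it is zero outside $M$), Theorem \ref{global elliptic regularity intro ray} makes $\WF_a(f)$ empty over the conormals to these level sets, and microlocal analytic continuation propagates the vanishing across them; iterating over $c$ in the range where these level sets stay strictly convex yields $f\equiv 0$ on a neighborhood of $\p M$. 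This step already uses the new machinery essentially: the smooth layer-stripping result of Uhlmann--Vasy needs $\dim M\ge 3$, whereas the analytic Bolker estimate here is available in dimension two.

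Finally I would strip inward. One first notes, using nontrapping and strict convexity (which forbid closed geodesics in nontrivial free homotopy classes), that $M$ is diffeomorphic to a disk. Let $\Omega$ be the largest open subset of $M$ on which $f$ vanishes; it contains the collar just produced. If $\Omega\ne M$, one wants a point $x_0$ on the front $\p\Omega\cap M^{\,\mathrm{int}}$ admitting a strictly convex hypersurface $\Sigma$ through it with $\Omega$ locally on the far side: then the maximal geodesics of $\widetilde M$ nearly tangent to $\Sigma$ at $x_0$ lie in $\Omega$ except for a short arc through $x_0$ along which there are no conjugate points, so their (vanishing) integrals of $f$ equal integrals of $f$ over short conjugate-point-free arcs through $x_0$; the Bolker condition holds there, Theorem \ref{global elliptic regularity intro ray} and microlocal analytic continuation push the vanishing strictly across the front, contradicting maximality of $\Omega$, and hence $f\equiv 0$. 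The main obstacle is exactly this geometric input: producing, from $\p M$ inward, an exhaustion of $M$ by strictly convex curves (a strictly-convex-exhaustion / convex-foliation statement for analytic strictly convex nontrapping surfaces), or, if a literal foliation is unavailable, adapting the induction around the singular and non-strictly-convex parts of the successive fronts using analyticity. I expect this to be the crux of the argument and the place where the two-dimensional and real-analytic hypotheses are genuinely used.
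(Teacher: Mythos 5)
Your proposal follows the same overall strategy as the paper's proof: extend $(M,g)$ analytically, extend $f$ by zero, apply the analytic Bolker regularity result plus microlocal analytic continuation to short geodesics tangent to strictly convex curves, and layer-strip inward. The first two steps (extension, and vanishing on a collar near $\p M$) are carried out correctly and match what the paper does via Proposition \ref{thm_xray_uniqueness}.

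The genuine gap is precisely the one you flag at the end: you need a strictly convex foliation of $M$ from $\p M$ inward, and your ``largest open subset $\Omega$'' argument does not by itself produce a strictly convex curve through a point of the front $\p\Omega$ with $\Omega$ on the far side---in general that front could be wildly non-convex, and then the local uniqueness step cannot be initiated. The paper closes this gap by quoting a theorem of Betel\'u--Gulliver--Littman \cite{BeteluGulliverLittman}: for a compact strictly convex nontrapping two-dimensional $(M,g)$, the punctured manifold $M\setminus\{p\}$ (for a suitable interior point $p$) admits a foliation by strictly convex closed curves $\Gamma_s$, $s\in(0,1]$, with $\Gamma_1=\p M$ and $\Gamma_s\to\{p\}$. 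With this foliation in hand, the paper replaces your ``maximal $\Omega$'' argument by a cleaner connectedness argument on the parameter interval: set $I=\{t\in(0,1]: f=0 \text{ on } \cup_{s\in[t,1]}\Gamma_s\}$, show $1\in I$ from the boundary step, show $I$ is open (apply the support theorem at geodesics tangent to $\Gamma_t$) and closed, hence $I=(0,1]$, and finish by continuity at $p$. So your plan is right and your diagnosis of the missing ingredient is exactly correct; supplying the \cite{BeteluGulliverLittman} foliation and switching to the open/closed argument on $(0,1]$ turns your sketch into a complete proof.
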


This theorem follows by combining the strictly convex foliation given by \cite{BeteluGulliverLittman} with a layer stripping argument based on the local uniqueness result in Theorem \ref{thm_local_uniqueness}. In the present case of geodesics the local uniqueness result is already contained in \cite{StefanovUhlmann}.

\subsubsection{Null bicharacteristic ray transform}

Let us next consider the null bicharacteristic ray transform. Let $M$ be a manifold with smooth boundary, let $p \in C^{\infty}(T^* M \setminus 0)$ be homogeneous in $\xi$, and let $\Xi = p^{-1}(0) \subset T^* M \setminus 0$. We assume that 
\[
\text{$\nabla_{\xi} p \neq 0$ and $\nabla_{\xi}^2 p$ is nondegenerate everywhere on $\Xi$.}
\]
The basic example is the light ray transform where $p(x,\xi) = g_x(\xi,\xi)$ with $g$ a Lorentzian metric on $M$. More generally, if $g$ is a pseudo-Riemannian metric, one obtains the pseudo-Riemannian geodesic X-ray transform studied in \cite{Ilmavirta} for product manifolds. The geodesic X-ray transform of a Riemannian metric $g$, for geodesics on the cosphere bundle, also arises in this way by taking $p(x,\xi) = \abs{\xi}_g^2-1$ (this $p$ is not homogeneous but it can still be included, see Section \ref{subsect: null bich transform}).

Let $Y = H_p$ be the Hamilton vector field of $p$ on $\Xi$, and for any $z \in \Xi$ let $x_z(t) = \pi(\Phi_t(z))$ be the projection to $M$ of the null bicharacteristic through $z$. We assume that the admissible curves are parametrized by an open subset $\mG$ of $\p_+ \Xi$, and that the following conditions hold for all $z \in \mG$:
\begin{itemize}
\item 
(No tangential intersections) $x_z(t) \in \mi$ for $t \in (0, \tau_+(z))$;
\item 
(No self-intersections) $t \mapsto x_z(t)$ is injective; and
\item 
(Nontrapping) $\tau_{+}(z) < \infty$.
\end{itemize}
If $\kappa \in C^{\infty}(\mG \times M)$ is nowhere vanishing, the weighted null bicharacteristic ray transform is given by 
\[
Rf(z) = \int_{0}^{\tau_+(z)} \kappa(z, x_z(t)) f(x_z(t)) \,dt.
\]
Under the above conditions, $R$ is a double fibration ray transform. Hence, if all the related quantities are analytic, one has a local uniqueness theorem as in Theorem \ref{thm_local_uniqueness}.

\begin{Theorem} \label{thm_intro_support_nbrt}
Assume that $M$, $p$, and $\kappa$ are analytic. Suppose $x_0 = x_{z_0}(t_0)$ where the curve $x_{z_0}$ has no conjugate points, and $\eta_0\in T^*_{x_0} M$ satisfies $\eta_0 \perp \dot{x}_{z_0}(t_0)$ and $\eta_0 \nparallel \Phi_{t_0}(z_0)$. If $f\in \mathcal{E}'(M^{\mathrm{int}})$ satisfies $Rf(z) = 0$ for $z \in \mG$ near $z_0$, then 
\[
(x_0,\eta_0) \notin \WF_a(f).
\]
Moreover, if $\Sigma$ is a hypersurface in $\mi$ such that $x_{z_0}$ is tangent to $\Sigma$ at $x_0$, and if $f = 0$ on one side of $\Sigma$ near $x_0$, then $f$ vanishes near $x_0$.
\end{Theorem}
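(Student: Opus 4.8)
The theorem is obtained by specialising Theorem~\ref{global elliptic regularity intro ray} and Theorem~\ref{thm_local_uniqueness} to this $R$, so the genuine task is to certify the Bolker condition at the relevant point of $C$ via the geometric criteria (vi)--(vii) of Definition~\ref{def_bolker_ray}. First I would record that, as noted just before the statement, the three bulleted hypotheses make $R$ a double fibration ray transform in the sense of Definition~\ref{def_dfrt}: conditions (i), (ii), (iv) are assumed outright; (iii) holds since $\dot x_z = Y^h = \nabla_\xi p \neq 0$ on $\Xi$; and (v) is automatic because $\mG$ is an open subset of $\p_+\Xi$ (Lemma~\ref{lem: double fibration condition}). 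Analyticity of $M$, $p$, $\kappa$ makes $R$ analytic. Setting $\zeta_0 = -A(z_0,t_0)\eta_0$, the hypothesis $\eta_0 \perp \dot x_{z_0}(t_0)$ says precisely that $(z_0,\zeta_0,x_0,\eta_0) \in C$; and since $Rf$ vanishes near $z_0$ it is real-analytic there, so $(z_0,\zeta_0) \notin \WF_a(Rf)$.

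Next I would verify the Bolker condition at $(z_0,\zeta_0,x_0,\eta_0)$. By the equivalence recorded after Definition~\ref{def_doublefibration_general}, this reduces to conditions (vi) and (vii) of Definition~\ref{def_bolker_ray} with $z=z_0$, $t=t_0$, $\eta=\eta_0$. For (vi) I invoke the conjugate-point analysis of Section~\ref{sec_ray}: for a null bicharacteristic ray transform, $V_{z_0}(t_0,s)$ is not annihilated by $\eta_0$ for any $s\neq t_0$ as soon as $x_{z_0}$ has no conjugate points and $\eta_0 \nparallel \Phi_{t_0}(z_0)$, the transversality to the covector being exactly what rules out the ``homogeneity Jacobi direction'' intrinsic to $Y=H_p$ with $p$ homogeneous. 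For (vii) I use that $\mG$ is an open subset of $\p_+\Xi$ and that $\nabla_\xi^2 p$ is nondegenerate on $\Xi$: as shown in Section~\ref{sec_ray}, these assumptions (again together with $\eta_0 \nparallel \Phi_{t_0}(z_0)$) force the infinitesimal variations of $\dot x_{z_0}(t_0)$ over the admissible curves through $x_0$ to include a vector with nonzero $\eta_0$-component, i.e.\ $\eta_0$ does not annihilate $d(Y^h(x_0,\xi(\cdot)))(T_{z_0}H_{x_0})$. Hence the Bolker condition holds.

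The two paragraphs above put us in the hypotheses of Theorem~\ref{global elliptic regularity intro ray}, which yields $(x_0,\eta_0) \notin \WF_a(f)$ --- the first assertion. For the second, let $\Sigma$ be as stated; tangency of $x_{z_0}$ to $\Sigma$ at $x_0$ means $\dot x_{z_0}(t_0) \in T_{x_0}\Sigma$, so a conormal $\eta_0 \in N^*_{x_0}\Sigma$ is consistent with the standing assumption $\eta_0 \perp \dot x_{z_0}(t_0)$, and the Bolker condition at the corresponding point of $C$ holds exactly as above. Since $f$ vanishes on one side of $\Sigma$ near $x_0$ and $Rf=0$ near $z_0$, Theorem~\ref{thm_local_uniqueness} gives $f=0$ near $x_0$; equivalently, the first assertion already provides $(x_0,\eta_0)\notin\WF_a(f)$, and the microlocal Holmgren theorem upgrades the one-sided vanishing to vanishing near $x_0$.

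The only non-formal step is the verification of (vi) and (vii): translating ``no conjugate points'' into non-annihilation of the variation spaces $V_{z_0}(t_0,s)$, and locating precisely where the nondegeneracy of $\nabla_\xi^2 p$ and the transversality $\eta_0 \nparallel \Phi_{t_0}(z_0)$ enter. This is the geometric content of Section~\ref{sec_ray}, and the homogeneity of $p$ is exactly what makes the bare hypotheses of the Riemannian geodesic case (where no transversality is needed) insufficient here.
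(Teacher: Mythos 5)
Your proposal is correct and follows essentially the same route as the paper: the paper derives this theorem as an immediate consequence of Proposition~\ref{prop_null} (which packages the verification that $R$ is a double fibration ray transform together with the Bolker condition under no conjugate points, $p$ homogeneous, $\nabla_\xi^2 p$ nondegenerate and $\eta\nparallel\xi$), Theorem~\ref{global elliptic regularity intro}, and the microlocal Holmgren theorem (Theorem~\ref{thm_mac}). Your unpacking of the Bolker verification through conditions (vi)--(vii) of Definition~\ref{def_bolker_ray}, the role of Lemma~\ref{lemma_null_xit} and Lemma~\ref{lemma_bolker_ray}(d), and your substitution of the specialized intermediate statements (Theorem~\ref{global elliptic regularity intro ray}, Theorem~\ref{thm_local_uniqueness}) for their general forms are all faithful restatements of the same argument.
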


For the light ray transform, it was proved in \cite{LOSU} that one can always recover $C^{\infty}$ spacelike singularities in the absence of conjugate points and that one can never recover any timelike singularities. In this case requiring that $\eta_0 \perp \dot{x}_{z_0}(t_0)$ and $\eta_0$ is not null (which implies $\eta_0 \nparallel \Phi_{t_0}(z_0)$) is equivalent with saying that $\eta_0$ is spacelike, so Theorem \ref{thm_intro_support_nbrt} implies the recovery of spacelike analytic singularities for the light ray transform. It does not give information about lightlike singularities since for lightlike $\eta_0$ the conditions $\eta_0 \perp \dot{x}_{z_0}(t_0)$ and $\eta_0 \nparallel \Phi_{t_0}(z_0)$ are incompatible. Results on lightlike $C^{\infty}$ singularities may be found in \cites{Wang_lightray1, Wang_lightray2}, and the light ray transform of wave equation solutions is studied in \cite{VasyWang}. Support theorems for analytic light ray transforms are given in \cite{Plamen_lightray}.

We will next formulate a more global uniqueness theorem, based on a layer stripping argument that iterates Theorem \ref{thm_intro_support_nbrt}. In words, the result is as follows: if smooth hypersurfaces $\Gamma_s$ for $s \in (0,1]$ foliate a subset $U$ of $\mi$, if $\Gamma_1 \cap \supp(f) = \emptyset$, and if for any $y \in \Gamma_s$ with conormal $\nu_y$ of $\Gamma_s$ at $y$ there is $z = z_y$ and a nontrapped curve $x_{z}$ with $x_z(t_y) = y$ and $\dot{x}_z(t_y) \perp \nu_y$ such that $x_z$ has  no conjugate points, self-intersections or tangential intersections and $\Phi_{t_y}(z) \nparallel \nu_y$, then $f = 0$ in $U$ provided that $Rf = 0$ in some open set containing all $z_y$ arising in this way.

A simple way to rule out conjugate points, self-intersections and tangential intersections is to assume that the hypersurfaces $\Gamma_s$ are strictly convex in a suitable sense. Then it is possible to only use short curves $x_z$ in the layer stripping argument. Short curves do not have conjugate points under the nondegeneracy assumption for $\nabla_{\xi}^2 p$ (see Lemma \ref{lem_ncp_short}). We will formulate our uniqueness theorem in a way similar to \cite{Plamen_lightray} for the light ray transform. We first need two definitions.

\begin{Definition}
For any $x \in \mi$, the set of \emph{potentially visible singularities} is 
\[
{\rm PVS}(x) = \bigcup_{\xi \in \Xi_x} \left[ Y^h(x,\xi)^{\perp} \setminus (\mR \xi) \right] \subset T_x^* M \setminus 0.
\]
If $U \subset \mi$, a smooth function $F: U \to \mR$ is said to be \emph{strictly pseudoconvex} with respect to $p$ in $U$ if 
\[
\{p, \{p, F\} \} > 0 \text{ whenever } p = \{p, F \} = 0 \text{ in $T^* U \setminus 0$}.
\]
\end{Definition}

The condition $\eta \in {\rm PVS}(x)$ means that there is some curve $x_z$ with $x_z(t_0) = x$, $\dot{x}_z(t_0) \perp \eta$ and $\Phi_{t_0}(z) \nparallel \eta$. Thus for $\eta \in {\rm PVS}(x)$ it is possible to recover a singularity of $f$ at $(x,\eta)$ from the knowledge of $Rf$ if additionally a no conjugate points condition holds, whereas for $\eta \notin  \cup_{\xi \in \Xi_x} Y^h(x,\xi)^{\perp}$ the singularity at $(x,\eta)$ can never be recovered. The notion of strict pseudoconvexity appears in connection with pseudoconvexity conditions in unique continuation problems \cite[Chapter 28]{Hormander}. Geometrically it means that curves $x_z$ tangent to a level set $F^{-1}(s)$ stay strictly on one side of the level set near the point of tangency.

The uniqueness theorem can now be stated as follows.

\begin{Theorem} \label{thm_bichar_foliation}
Assume that $M$, $p$, and $\kappa$ are analytic and let $f \in \mathcal{E}'(\mi)$. Let $F$ be a smooth real-valued function near $\supp(f)$, write $\Gamma_s = F^{-1}(s)$ and $U = \cup_{s \in (0,1]} \Gamma_s$. Suppose that $F$ satisfies the following conditions:
\begin{enumerate}
\item[(a)]
$\supp(f) \cap \Gamma_1 = \emptyset$.
\item[(b)]
$dF(x) \in {\rm PVS}(x)$ for all $x \in U$.
\item[(c)]
$F$ is strictly pseudoconvex with respect to $p$ in $U$.
\end{enumerate}
If $Rf(z) = 0$ in some open set containing all $z$ corresponding to curves that are tangent to some $\Gamma_s$ for $s \in (0,1]$, then $f=0$ in $U$.
\end{Theorem}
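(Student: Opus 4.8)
The plan is to run a layer-stripping (continuity) argument along the foliation $\{\Gamma_s\}$ which iterates the local uniqueness statement of Theorem~\ref{thm_intro_support_nbrt}, following the scheme used for the light ray transform in \cite{Plamen_lightray}. First I would set
\[
s_* := \inf\bigl\{\, s\in[0,1] : f\equiv 0 \text{ on } F^{-1}((s,1]) \,\bigr\},
\]
with $F$ regarded on its given neighbourhood of $\supp(f)$. By (a) and the compactness of $\supp(f)$ the set $\{x\in\supp(f): F(x)\le 1\}$ is compact and disjoint from $\Gamma_1$, so $f$ vanishes on $F^{-1}((s,1])$ for all $s$ sufficiently close to $1$ and $s_*<1$ is well defined; as a union of open sets on which $f$ vanishes, $f\equiv 0$ on $F^{-1}((s_*,1])$. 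The goal is to show $s_*=0$, for then $f\equiv 0$ on $\bigcup_{\sigma>0}F^{-1}((\sigma,1])=F^{-1}((0,1])=U$. So I would argue by contradiction and assume $s_*>0$; then $K:=\supp(f)\cap\Gamma_{s_*}$ is nonempty --- otherwise $f$ would vanish on a neighbourhood of $\Gamma_{s_*}$, hence on $F^{-1}((s_*-\delta,1])$ for some $\delta>0$, against minimality --- and $K$ is compact.

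I would then fix $y\in K$ and use (b): since $dF(y)\in{\rm PVS}(y)=\bigcup_{\xi\in\Xi_y}\bigl[Y^h(y,\xi)^{\perp}\setminus\mR\xi\bigr]$, there is $\xi_y\in\Xi_y=p^{-1}(0)\cap T^*_yM$ with $Y^h(y,\xi_y)\perp dF(y)$ and $\xi_y\nparallel dF(y)$. Let $x_{z_y}$ be the (nontrapped) null bicharacteristic curve through $(y,\xi_y)$, so that $(y,\xi_y)=\Phi_{t_y}(z_y)$ for some $z_y\in\mG$ and $t_y\in(0,\tau_+(z_y))$, with $x_{z_y}(t_y)=y$ and $\dot x_{z_y}(t_y)=Y^h(y,\xi_y)$ tangent to $\Gamma_{s_*}$ at $y$. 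Writing $h(t):=F(x_{z_y}(t))$, Hamilton's equations give $h'(t)=\{p,F\}(\Phi_t(z_y))$ and $h''(t)=\{p,\{p,F\}\}(\Phi_t(z_y))$; at $t=t_y$ the tangency gives $h'(t_y)=0$, while $p(\Phi_{t_y}(z_y))=0$ since $\Phi_{t_y}(z_y)\in\Xi$, so (c) forces $h''(t_y)>0$. Thus $h$ has a strict local minimum at $t_y$: the curve $x_{z_y}$ enters $\{F>s_*\}$ for $t$ near $t_y$, $t\neq t_y$, and near $y$ it meets $\supp(f)$ only at $y$, lying otherwise in the region $\{F>s_*\}$ where $f$ already vanishes.

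Next I would check the hypotheses of Theorem~\ref{thm_intro_support_nbrt} at $x_0=y=x_{z_y}(t_y)$ with $\eta_0=dF(y)$: $\eta_0\perp\dot x_{z_y}(t_y)$ and $\eta_0\nparallel\Phi_{t_y}(z_y)=(y,\xi_y)$ hold by construction; $Rf$ vanishes near $z_y$ because $x_{z_y}$ is tangent to the leaf $\Gamma_{s_*}$, so $z_y$ lies in the open set on which $Rf$ is assumed to vanish; and $x_{z_y}$ has no conjugate points (nor self- or tangential intersections), which I will arrange by taking $x_{z_y}$ short and invoking Lemma~\ref{lem_ncp_short}. The theorem then gives $(y,dF(y))\notin\WF_a(f)$, and its ``moreover'' part --- applied with $\Sigma=\Gamma_{s_*}$, to which $x_{z_y}$ is tangent at $y$ and on whose side $\{F>s_*\}$ the function $f$ vanishes near $y$ --- gives that $f$ vanishes near $y$. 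As $y\in K$ was arbitrary and $K$ is compact, $f$ vanishes on an open neighbourhood $N$ of $K$; then $\supp(f)\setminus N$ is compact and disjoint from $\Gamma_{s_*}$, so $\supp(f)\setminus N\subset\{\,|F-s_*|\ge\delta\,\}$ for some $\delta>0$, and combining $f\equiv 0$ on $N$, on $F^{-1}((s_*,1])$, and off $\supp(f)$ gives $f\equiv 0$ on $F^{-1}((s_*-\delta,1])$ --- contradicting the minimality of $s_*$. Hence $s_*=0$ and $f\equiv 0$ on $U$.

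The step I expect to be the main obstacle is exactly the verification that the tangent curves $x_{z_y}$ used in the layer stripping meet the hypotheses of Theorem~\ref{thm_intro_support_nbrt}, above all that they have no conjugate points. This is where condition (c) does work beyond the sign computation: strict pseudoconvexity forces curves tangent to a leaf to stay on one side of it, so that, after an appropriate choice of the foliation, the layer stripping can be carried out using only \emph{short} tangent curves confined to thin slabs between nearby leaves, and short curves have no conjugate points by Lemma~\ref{lem_ncp_short} (using the nondegeneracy of $\nabla^2_\xi p$). Pinning down this ``short curve'' reduction --- i.e.\ checking that the data hypothesis (vanishing of $Rf$ on an open set of curves tangent to leaves) supplies, at every point of every leaf meeting $\supp(f)$, a curve that is simultaneously short, in a ${\rm PVS}$ direction, and free of conjugate points and self- and tangential intersections --- is the technical heart of the proof; the rest is the soft bookkeeping above, together with the microlocal recovery and Holmgren-type continuation already packaged in Theorem~\ref{thm_intro_support_nbrt}.
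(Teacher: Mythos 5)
Your proposal is correct and is essentially the paper's proof: a layer-stripping argument along the foliation (the paper runs it as an open-and-closed argument for the set $I=\{t\in(0,1] : f=0 \text{ in an open set containing } \cup_{s\in[t,1]}\Gamma_s\}$ rather than via an infimum and contradiction, a purely cosmetic difference), iterating Theorem \ref{thm_intro_support_nbrt} at tangency points supplied by condition (b). The short-curve step you single out is handled in the paper exactly as you sketch: for small $\delta>0$ one works in $\tilde M=\cup_{s\in(0,t+\delta]}\Gamma_s$, where condition (c) makes the segment of the tangent curve lying in $\tilde M$ short, hence free of conjugate points by Lemma \ref{lem_ncp_short}, transversal to $\partial\tilde M$ at its endpoints and without self-intersections, and Theorem \ref{thm_intro_support_nbrt} is then invoked in $\tilde M$.
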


For the light ray transform this result was proved in \cite{Plamen_lightray}. In that case, the condition $dF(x) \in {\rm PVS}(x)$ translates to the level sets $\Gamma_s$ being timelike (i.e.\ their conormals are spacelike), and some examples of pseudoconvex foliations are given in \cite{Plamen_lightray}. In other cases, such as $p(x,\xi) = \abs{\xi}_g^2-1$ (Riemannian geodesic X-ray transform) or $p(x,\xi) = g_x(\xi,\xi)$ for a pseudo-Riemannian metric of signature $(n_1, n_2)$ with $n_1, n_2 \geq 2$, any nonzero covector is in ${\rm PVS}(x)$ and condition (b) always holds. We also stress that the pseudoconvexity condition in (c) could be weakened to a condition that some (possibly long) curves tangent to $\Gamma_s$ have no conjugate points.

When combined with \cite[Theorem 1.2]{OSSU}, the above theorem gives a uniqueness result for Calder\'on type problems for real principal type operators.

\begin{Theorem}
\label{calderon problem intro}
Let $M$ be a compact analytic manifold with smooth boundary, let $P$ be a real principal type differential operator of order $m \geq 2$ on $M$ with analytic coefficients, and let $V_1, V_2 \in C^\infty(M)$ with $V_1 = V_2$ to infinite order on $\partial M$. If the Cauchy data sets for $P+V_1$ and $P+ V_2$ agree, and if the geometric assumptions of Theorem \ref{thm_bichar_foliation} hold with $f = V_1-V_2$, then $V_1 = V_2$ in $U$.
\end{Theorem}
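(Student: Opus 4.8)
The plan is to obtain the theorem by composing two black boxes: the reduction of the Calder\'on problem to a ray transform provided by \cite[Theorem 1.2]{OSSU}, and the uniqueness statement of Theorem \ref{thm_bichar_foliation}. First I would recall precisely what \cite[Theorem 1.2]{OSSU} delivers: if the Cauchy data sets of $P+V_1$ and $P+V_2$ coincide and $V_1 = V_2$ to infinite order on $\p M$, then a weighted null bicharacteristic ray transform of the difference $f := V_1 - V_2$ vanishes identically, where the weight $\kappa$ is built from the amplitudes of the Gaussian beam / WKB quasimodes concentrating on the null bicharacteristics of the principal symbol $p$. This $\kappa$ is nowhere vanishing, and — since the quasimode amplitude is obtained by solving a linear transport ODE along each bicharacteristic whose coefficients are assembled from the (analytic) coefficients of $P$ and the (analytic) flow of $H_p$, together with the phase Hessian that solves a Riccati equation with analytic coefficients — it is real-analytic whenever $P$ has analytic coefficients. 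Hence the first step records that, under the present hypotheses, $R$ is an analytic double fibration ray transform in the sense of Section \ref{subsect: null bich transform} (after restricting to the admissible curves, which is legitimate because the absence of self- and tangential intersections and nontrapping are part of the geometric assumptions of Theorem \ref{thm_bichar_foliation}), and that $Rf$ vanishes on $\mG$.

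Next I would note that $f = V_1 - V_2 \in C^\infty(M)$ vanishes to infinite order on $\p M$, so its extension by zero is a compactly supported distribution to which the transform $R$ and Theorem \ref{thm_bichar_foliation} apply; this is all that is needed, since the conclusion only asserts that $f$ vanishes on the interior region $U = \bigcup_{s\in(0,1]}\Gamma_s$. Then I would apply Theorem \ref{thm_bichar_foliation} directly, with this $f$ and with the function $F$ supplied by the hypothesis "the geometric assumptions of Theorem \ref{thm_bichar_foliation} hold with $f = V_1 - V_2$": conditions (a)--(c) of that theorem hold by assumption, and by the first step $Rf \equiv 0$ on $\mG$, so in particular $Rf$ vanishes on any open set containing the points $z$ that correspond to curves tangent to some $\Gamma_s$ with $s \in (0,1]$. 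Theorem \ref{thm_bichar_foliation} then yields $f = 0$ in $U$, i.e.\ $V_1 = V_2$ in $U$, which is the claim.

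The hard part — and the reason the statement is phrased as "combining" rather than "an immediate consequence" — is the interface between the two inputs: one must check that the weighted transform produced by \cite[Theorem 1.2]{OSSU} is genuinely of the type Theorem \ref{thm_bichar_foliation} consumes. Concretely there are three compatibility points: (i) that $\kappa$ is really analytic, which reduces to the transport and Riccati equations for the quasimode having analytic coefficients when $P$ does; (ii) that the family of null bicharacteristics appearing in \cite{OSSU} can be arranged to satisfy conditions (i)--(iv) of Definition \ref{def_dfrt}, so that $R$ is a double fibration ray transform and the machinery behind Theorems \ref{global elliptic regularity intro ray} and \ref{thm_bichar_foliation} is available; and (iii) that the vanishing of the transform furnished by \cite{OSSU} covers precisely the curves — those tangent to the foliation hypersurfaces $\Gamma_s$ — that the layer stripping in Theorem \ref{thm_bichar_foliation} requires, in particular that such tangent curves extend to admissible (nontrapped, boundary-to-boundary) null bicharacteristics carrying quasimodes. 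Points (ii) and (iii) are exactly what the geometric assumptions of Theorem \ref{thm_bichar_foliation} are designed to guarantee: condition (b), $dF(x) \in {\rm PVS}(x)$, produces the tangent curves with the right covector orientation, and the strict pseudoconvexity in (c), via Lemma \ref{lem_ncp_short}, lets one work with short curves that automatically avoid conjugate points, self-intersections and tangential intersections and reach the boundary quickly. Once these checks are dispatched, the theorem follows by citation.
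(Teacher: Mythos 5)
Your proof is correct and takes the same route the paper intends: the paper states Theorem \ref{calderon problem intro} as a direct consequence of Theorem \ref{thm_bichar_foliation} combined with \cite[Theorem 1.2]{OSSU} (the sentence preceding the theorem statement is the paper's entire argument), and your write-up simply supplies the details of that combination. The three compatibility checks you single out — analyticity of the quasimode weight $\kappa$ when $P$ has analytic coefficients, that the null bicharacteristics form an admissible double fibration ray transform via the short-curve/strict-pseudoconvexity mechanism, and that the vanishing of $Rf$ from \cite{OSSU} covers the curves tangent to the foliation — are exactly the points implicitly being asserted when the paper says "when combined with," so your elaboration is a faithful expansion rather than a different argument.
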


\subsubsection{Transforms over codimension $k$ submanifolds} \label{subsubsec_k}

The previous examples were in the setting of ray transforms. Next we state a result for generalized Radon transforms that integrate over codimension $k$ submanifolds. This extends earlier results for the Euclidean Radon transform with analytic weights \cite{BomanQuinto1987} and for Radon type transforms over hypersurfaces $\{ b(x,\theta) = s \}$ where $b$ is a suitable defining function \cite{HomanZhou2016}.

Let $\mx$ be a manifold without boundary. We will study a transform that integrates a function $f \in \mathcal{E}'(\mx)$ over codimension $k$ submanifolds $G_{\theta,s}$ in $\mx$, given explicitly as 
 \[
 G_{\theta,s} = \{ x \in \mx \mid b(x,\theta) = s \},
 \]
where $b: \mx \times V' \to \mR^k$ is a smooth function, $\theta \in V'$ and $s \in V''$, where $V'$ and $V''$ are open subsets of $\mR^m$ and $\mR^k$, respectively. Here $m \geq 1$ and $1 \leq k \leq n-1$ where $n = \dim(\mx)$. We assume that we are considering integrals over $G_{\theta,s}$ with $(\theta,s)$ close to a fixed $(\theta_0,s_0)$, so we can work in local coordinates and take $\mG = V' \times V'' \subset \mR^{m+k}$. Moreover, if we let $U \subset \mx$ be a neighborhood of $G_{\theta_0,s_0}$ such that $G_{\theta,s} \subset U$ for $(\theta,s) \in \mG$, it will be enough to have everything defined for $x \in U$.
 
We assume that the defining function $b$ is such that the Jacobi matrix $b_x$ satisfies 
\begin{equation} \label{dxb_condition}
\text{$b_x(x,\theta)$ is surjective for $x \in U$ and $\theta \in V'$.}
\end{equation}
Then $Z = \{ (\theta,s,x) \mid b(x,\theta)=s \}$ will be a double fibration. If we fix orientation forms on $Z$ and $\mG$ (we can use the Lebesgue measure on $\mG \subset \mR^{m+k}$), we obtain orientation forms on each $G_{\theta,s}$ and consider the weighted double fibration transform 
\[
Rf(\theta,s) = \int_{G_{\theta,s}} \kappa(\theta,s,x) f(x) \,d\omega_{G_{\theta,s}}(x), \qquad (\theta,s) \in \mG,
\]
where $\kappa$ is smooth and nowhere vanishing.

The Bolker condition can be described as follows. The map $d\pi_L$ is injective if for any $x \in U$, $\theta \in V'$ and $\zeta \in \mR^{k} \setminus 0$, the linear map  
\begin{equation} \label{cdk_bolker1}
\text{$\left( b_x(x,\theta)^T, \ \  \p_{\theta} (b_x(x,\theta)^T \zeta) \right)$ is surjective.}
\end{equation}
The map $\pi_L$ is injective if for any $\theta \in V'$ and $\zeta \in \mR^{k} \setminus 0$, the map 
\begin{equation} \label{cdk_bolker2}
x \mapsto (b(x,\theta), b_{\theta}(x,\theta)^T \zeta) \text{ is injective on $U$.}
\end{equation}
We remark that \eqref{cdk_bolker1} can only be valid when $m \geq n-k$, which is precisely the condition that the inverse problem for $R$ is not formally underdetermined. We also remark that the weighted Euclidean Radon transform is obtained by taking $b: \mR^n \times \mR^n \to \mR, \ b(x,\theta) = x \cdot \theta$, which satisfies \eqref{dxb_condition}--\eqref{cdk_bolker2}.

Theorem \ref{global elliptic regularity intro} and microlocal unique continuation yield the following uniqueness theorem.

\begin{Theorem} \label{thm_b_k}
Let $X, b, \kappa$ and $\omega_{G_{\theta,s}}$ above be analytic. Suppose that $\Sigma$ is a hypersurface in $\mx$ such that $(x_0,\eta_0) \in N^* \Sigma \cap N^* G_{\theta_0,s_0}$ and that $f \in \mathcal{E}'(\mx)$ vanishes on one side of $\Sigma$ near $x_0$. Assume that \eqref{dxb_condition}--\eqref{cdk_bolker2} hold. If $Rf(\theta,s) = 0$ for $(\theta,s)$ near $(\theta_0,s_0)$, then $f = 0$ near $x_0$.
\end{Theorem}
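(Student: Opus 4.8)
The plan is to deduce Theorem \ref{thm_b_k} directly from the local uniqueness result Theorem \ref{thm_local_uniqueness}, once we have checked (i) that $R$ is an analytic double fibration transform in the sense of Definition \ref{def_doublefibration_general}, (ii) that the hypothesis $(x_0,\eta_0)\in N^* G_{\theta_0,s_0}$ produces a point of the canonical relation $C$ lying over $(x_0,\eta_0)$, and (iii) that the Bolker condition holds there. Throughout we work locally near $x_0$, where all data ($b$, $\kappa$, the orientation forms) are defined and analytic by hypothesis, and we take $\mG=V'\times V''\subset\mR^{m+k}$ with the Lebesgue orientation form; since the conclusion $f=0$ near $x_0$ is local and $G_{\theta,s}\subset U$ for $(\theta,s)\in\mG$, it is harmless to regard $U$ as the base manifold $\mx$.

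First I would verify the double fibration hypotheses for $Z=\{(\theta,s,x):b(x,\theta)=s\}$. Since the differential of $(\theta,s,x)\mapsto b(x,\theta)-s$ has the block $-\mathrm{Id}_k$ in the $s$-variables, $Z$ is an embedded analytic submanifold of codimension $k$, so $\dim Z=m+n$; combined with $1\le k\le n-1$ and the standing assumption $m\ge n-k$ (forced by \eqref{cdk_bolker1}, as noted just after that display) this gives the chain $\dim\mG+\dim\mx>\dim Z>\dim\mG\ge\dim\mx$ required in Definition \ref{def_doublefibration_general}. The projection $\pi_{\mx}$ is always a submersion, since any $\dot x$ lifts to $(0,b_x(x,\theta)\dot x,\dot x)\in T_{(\theta,s,x)}Z$, and $\pi_{\mG}$ is a submersion precisely because $b_x(x,\theta)$ is surjective by \eqref{dxb_condition} (so that $b_x(x,\theta)\dot x=\dot s-b_\theta(x,\theta)\dot\theta$ is solvable for every $(\dot\theta,\dot s)$). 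With the fixed orientation forms, $R$ is then exactly the transform of Definition \ref{def_doublefibration_general}(b)--(c), and it is analytic.

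Next I would locate the relevant point of $C$. Differentiating $b(x,\theta)=s$ gives $T_{(\theta,s,x)}Z=\{(\dot\theta,\dot s,\dot x):\dot s=b_x\dot x+b_\theta\dot\theta\}$, whence $(\theta,s,x;\zeta_\theta,\zeta_s,\eta)\in N^* Z$ iff $\eta=-b_x(x,\theta)^T\zeta_s$ and $\zeta_\theta=-b_\theta(x,\theta)^T\zeta_s$; after the sign twist in $C=(N^* Z\setminus 0)'$ this says $(\theta_0,s_0,\zeta_0,x_0,\eta_0)\in C$ iff there is $\zeta\in\mR^k\setminus 0$ with $\eta_0=b_x(x_0,\theta_0)^T\zeta$ and $\zeta_0=(-b_\theta(x_0,\theta_0)^T\zeta,\zeta)$. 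Now $(x_0,\eta_0)\in N^* G_{\theta_0,s_0}$ means precisely that $\eta_0\in\mathrm{range}\,b_x(x_0,\theta_0)^T$; since $\eta_0\neq 0$ and $b_x(x_0,\theta_0)$ is surjective (so $b_x(x_0,\theta_0)^T$ is injective), there is a unique such $\zeta$, and it is nonzero. This produces the point $(z_0,\zeta_0,x_0,\eta_0)\in C$ with $z_0=(\theta_0,s_0)$ and $\zeta_0$ as above.

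Finally, the Bolker condition at this point is exactly what \eqref{cdk_bolker1}--\eqref{cdk_bolker2} encode via the geometric characterization established in Section \ref{injectivity of piL}: injectivity of $\pi_L$ follows from \eqref{cdk_bolker2}, so $\pi_L^{-1}(z_0,\zeta_0)$ is the single point found above, and injectivity of $d\pi_L$ at $(z_0,\zeta_0,x_0,\eta_0)$ follows from \eqref{cdk_bolker1}. One then applies Theorem \ref{thm_local_uniqueness} with this $(z_0,\zeta_0,x_0,\eta_0)$, the given $C^2$ hypersurface $\Sigma$ (for which $(x_0,\eta_0)\in N^*\Sigma$ and $f$ vanishes on one side near $x_0$), and the vanishing $Rf(\theta,s)=0$ for $(\theta,s)$ near $(\theta_0,s_0)$, to conclude $f=0$ near $x_0$. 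No genuinely hard step arises: the argument is a mechanical reduction, the only points needing care being the computation of $N^*Z$ and the verification that the characterization of Section \ref{injectivity of piL} really matches Definition \ref{def_doublefibration_general}(e); alternatively one can argue directly from Theorem \ref{global elliptic regularity intro} together with microlocal analytic continuation (\cite[Theorem 8.5.6']{Hormander}), which is how Theorem \ref{thm_local_uniqueness} is itself obtained.
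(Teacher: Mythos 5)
Your proposal is correct and follows essentially the same route as the paper: verify that $Z=\{b(x,\theta)=s\}$ is an analytic double fibration (the paper cites Lemma \ref{lem_df_b} where you recompute this and $N^*Z$ by hand), identify the point of $C$ over $(x_0,\eta_0)$ using $\eta_0\in N^*G_{\theta_0,s_0}=\mathrm{range}\,b_x(x_0,\theta_0)^T$, match \eqref{cdk_bolker1}--\eqref{cdk_bolker2} with the Bolker condition via Lemma \ref{lemma_bolker_local_char} and the explicit form of $C$, and conclude from Theorem \ref{global elliptic regularity intro} plus microlocal analytic continuation (equivalently Theorem \ref{thm_local_uniqueness}), exactly as the paper does.
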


Let us compare the above setup to that in \cite{HomanZhou2016}. In \cite{HomanZhou2016}, one considers a function $b: \mx \times (\mR^n \setminus 0) \to \mR$ such that $b(x,\theta)$ is positively homogeneous of degree $1$ in $\theta$, $b_x$ is never zero, and the matrix $(\p_{x_j \theta_k} b)_{j,k=1}^n$ is positive definite. One also assumes the following form of a global Bolker condition: for any fixed $x$ and $\theta$, 
\[
\text{the map $b_{\theta}(\,\cdot\,,\theta)$ is injective, and the map $b_x(x, \,\cdot\,)$ is surjective.}
\]
We see that our setup generalizes this in several ways:
\begin{itemize}
\item 
the submanifolds $G_z$ can have any codimension $k$ with $1 \leq k \leq n-1$;
\item 
one can have any dimension $m \geq n-k$ for space of parameters $\theta$;
\item 
the homogeneity of $b$ is not needed;
\item 
the positivity of  $(\p_{x_j \theta_k} b)_{j,k=1}^n$ is replaced by the weaker condition \eqref{cdk_bolker1}; and 
\item 
the injectivity of $x \mapsto b_{\theta}(x,\theta)$ is replaced by the weaker condition \eqref{cdk_bolker2}.
\end{itemize}

\subsection{Methods} \label{sec_methods}

In our proof of the main theorem, instead of going through the normal operator $R^* R$ as in the $C^{\infty}$ case \cite{Guillemin1985}, we will work directly with the FIO $R$. In fact the adjoint $R^*$ is never used in this article. This avoids the issue of having to consider compositions of analytic FIOs. Instead we will employ the FBI transform characterization of the analytic wave front set in \cite{sjostrand}, both in the setting of Gaussian wave packets and generalized FBI transforms. Our argument also avoids the use of special cutoffs that are typical in analytic microlocal analysis and were used e.g.\ in the related works \cites{StefanovUhlmann, Plamen_lightray}.

Here is a more detailed outline of the proof of Theorem \ref{global elliptic regularity intro}. For clarity we will write $(  \hat z,   \hat \zeta,   \hat x,  \hat \eta)$ for the point of interest below.

\begin{enumerate}
\item[1.]
We first localize matters near $\hat x$ and choose $\chi \in C^{\infty}_c(X)$ with $0 \leq \chi \leq 1$ and $\chi = 1$ near $\hat x$ (in fact $\chi$ could even be a hard cutoff function). Writing $f = \chi f + (1-\chi) f$, we have 
\[
Rf = R(\chi f) + R((1-\chi)f).
\]
The Schwartz kernel of an analytic double fibration transform satisfies $\WF_a(R)' = C$ where $C$ is as in Definition \ref{def_doublefibration_general}. Now $\WF_a(R((1-\chi)f)) \subset C(\WF_a((1-\chi)f))$ by properties of general linear operators \cite[Theorem 8.5.5]{Hormander}. Since $\pi_L^{-1}(\hat{z},\hat{\zeta}) = \{(\hat z,\hat \zeta,\hat x,\hat \eta) \}$ by the Bolker condition, we always have $(\hat z,\hat \zeta) \notin \WF_a(R((1-\chi)f))$. It is thus enough to show that $(\hat z,\hat \zeta) \notin \WF_a(R(\chi f))$ implies $(\hat x,\hat \eta) \notin \WF_a(f)$.
\item[2.]
After localizing near $\hat x$, we reduce the problem to a model FIO 
\begin{equation} \label{b_op_expression}
Tf(z) = \int_{\mR^n} \int_{\mR^{n''}} e^{i(\phi(z,x') - x'') \cdot \eta} a(z,x) f(x) \,d\eta \,dx
\end{equation}
where the double fibration $Z$ is locally given by $\{ x'' = \phi(z,x') \}$ for suitable coordinates $x = (x', x'') \in \mR^{n'} \times \mR^{n''}$, and $f$ is supported near $\hat x$. From now on we only require the local Bolker condition that $d\pi_L|_{(\hat z,\hat \zeta,\hat x,\hat \eta)}$ is injective, or equivalently that the matrix 
\[
(\phi_z(\hat z, \hat x')^T,\ \partial_{x'}( \phi_z(\hat z,  x')^T\hat \eta)\mid_{x'= \hat x'})\ \ \text{ is injective}.
\]
This condition implies that $\pi_L$ is injective in a neighborhood of $(\hat z,\hat \zeta,\hat x,\hat \eta)$ in $C$. This, together with structural properties of $C$, implies that microlocalization on the $(z,\zeta)$ side combined with localization on the $x$ side effectively implies microlocalization on the $(x,\eta)$ side, without the need to introduce additional cutoffs in the $\eta$ variable (see Lemma \ref{lem_bolker_open}).
\item[3.]
We need to prove that $(  \hat z,  \hat \zeta) \notin \WF_a(Tf) \implies (  \hat x,   \hat \eta) \notin \WF_a(f)$.
We apply the FBI transform characterization of the analytic wave front set \cite[Definition 6.1]{sjostrand}. If $L_{\lambda}$ denotes the FBI transform with Gaussian wave packets, we have 
\[
(  \hat z,  \hat \zeta) \notin \WF_a(Tf) \ \, \Longleftrightarrow \, \ L_{\lambda} Tf(z,\zeta) = O(e^{-c\lambda}) \text{ uniformly for $(z,\zeta)$ near $(\hat z, \hat \zeta)$}.
\]
We write $L_{\lambda} T$ in terms of its Schwartz kernel $K_{\lambda}(z,\zeta,x)$, which can be expressed as an oscillatory integral. Then we have 
\begin{equation} \label{klambda_exp}
\int K_{\lambda}(z,\zeta,x) f(x) \,dx = O(e^{-c\lambda}) \text{ for $(z,\zeta)$ near $(\hat z, \hat \zeta)$}.
\end{equation}
\item[4.]
Next we simplify the oscillatory integral expression $K_{\lambda}$ as 
\[
K_{\lambda}(z,\zeta,x) = c \lambda^{\frac{3N}{4}} \int_{\mathcal U} e^{i\lambda \Psi(\zeta',x; z,\zeta)} \tilde{a}(\zeta',x) \,d\zeta'
\]
for some phase function $\Psi$ and amplitude $\tilde{a}$. Here we use that $a(z,x)$ is independent of $\eta$, so we can integrate out the $\eta$ variable to obtain a delta function of $Z$. We show that the phase has a unique nondegenerate critical point, apply analytic stationary phase \cite[Theorem 2.8]{sjostrand} to the formula for $K_{\lambda}$, and substitute the result in \eqref{klambda_exp} to obtain 
\begin{equation} \label{klambda_exp_two}
\int e^{i\lambda \psi(x,z,\zeta)} \tilde{a}_1(x,z,\zeta;\lambda) f(x) \,dx = O(e^{-c\lambda}).
\end{equation}
for certain $\psi$ and $\tilde{a}_1$.
\item[5.]
The final step is to verify that the left hand side of \eqref{klambda_exp_two} is a generalized FBI transform applied to $f$. This involves proving that the phase $\psi$ satisfies the estimates in \cite[Definition 6.1]{sjostrand}, including quadratic growth for $\mathrm{Im}(\psi)$. Establishing these estimates is the most technical part of the argument. Once this has been done, applying \cite[Definition 6.1]{sjostrand} in \eqref{klambda_exp_two} shows that $(\hat{x}, \hat{\eta}) \notin \WF_a(f)$ as required.
\end{enumerate}

\begin{Remark}
From the FIO point of view, it would be natural to consider operators of the form \eqref{b_op_expression} where the analytic function $a(z,x)$ is replaced by a classical analytic symbol $a(z,x,\eta)$ (see e.g.\ \cite{Treves22}). Such operators can also be handled by the method above. However, the analytic stationary phase argument will include an additional integral over $\eta \in \mR^n$ which is not directly covered by \cite[Theorem 2.8]{sjostrand}. Since operators of the form \eqref{b_op_expression} are already sufficient for our applications to integral geometry transforms, we will only give a sketch of this extension in Remark \ref{general amplitude}.
\end{Remark}

We discuss some previous work related to recovering analytic singularities. The possibility that the $C^{\infty}$ argument in \cite{Guillemin1985} could be extended to the analytic case was raised in \cites{BomanQuinto1993, Quinto1993}. However, since a detailed proof of the required general FIO composition result in the analytic category was not available, results for particular transforms were proved by alternative means in \cites{StefanovUhlmann, FrigyikStefanovUhlmann, Krishnan2009, KrishnanStefanov, Plamen_lightray, HomanZhou2016} building on the FBI transform method in  \cite{KenigSjostrandUhlmann}. Results based on realizing $R^* R$ explicitly as an analytic pseudodifferential operator are given in \cites{BomanQuinto1987, StefanovUhlmann_generic}. On the FIO theory side, Theorem 4.5 of \cite{sjostrand} gives a construction of a microlocal parametrix for an elliptic analytic FIO associated to a canonical transformation. Recent work of \cite{RoubySjostrandNgoc} gives a composition calculus under a transversal intersection condition. Neither of these works can be directly applied in our setting. Other related references on analytic microlocal analysis are \cites{KashiwaraKawai, Treves2, BonthonneauJezequel}.

There have also been several studies related to $C^{\infty}$ singularities in cases where the Bolker condition is violated. Some of these are related to seismic imaging applications, where the Bolker condition corresponds to a travel time injectivity condition \cites{NolanSymes, KSV}. In these results the composition $R^* R$ may not be covered by the clean intersection calculus, but one typically interprets $R^* R$ as a generalized Fourier integral operator whose wave front relation can be used to describe artifacts appearing in the imaging process. See e.g.\ \cites{GreenleafUhlmann, GreenleafUhlmann1990, Nolan2000, FinchLanUhlmann, StefanovUhlmann_SAR, FGGN} and references therein for various results of this type.

We would also like to mention that there is a large literature on double fibration transforms in various symmetric and homogeneous settings \cites{GGG_book, Helgason2011} and in the complex setting in connection with Penrose transforms \cite{BastonEastwood}.

\subsection*{Organization}

This paper is organized as follows. Section \ref{sec_basic_doublefibration} collects basic properties of double fibrations needed later, and Section \ref{injectivity of piL} gives several equivalent characterizations of the Bolker condition. In Section \ref{sec_ray} we give an alternative approach to double fibration ray transforms based on vector fields on fiber bundles, and discuss geodesic and null bicharacteristic ray transforms in more detail. Section \ref{sec_wf} gives the proof of Theorem \ref{global elliptic regularity intro} based on analytic microlocal analysis. Finally, in Section \ref{sec_appl} we prove Theorems \ref{thm_local_uniqueness}--\ref{thm_b_k} as consequences of Theorem \ref{global elliptic regularity intro}.

\subsection*{Acknowledgements}

The authors would like to thank Michael Hitrik, Todd Quinto, Plamen Stefanov and Gunther Uhlmann for helpful discussions and references. M.S.\ would also like to thank the Isaac Newton Institute for support and hospitality during the programme Rich and nonlinear tomography (EPSRC grant EP/R014604/1) when part of this work was undertaken. M.M.\ is partially supported by the ANR grants CoSyDy (ANRCE40-0014) and
COSY (ANR-21-CE40-0002). M.S.\ is partly supported by the Academy of Finland (Centre of Excellence in Inverse Modelling and Imaging, grants 284715 and 353091) and by the European Research Council under Horizon 2020 (ERC CoG 770924). L.T.\ is partly supported by Australian Research Council DP220101808.

\section{Basic properties of double fibration transforms} \label{sec_basic_doublefibration}

We begin by collecting certain properties of double fibration transforms that will be used later. We will state these properties in the smooth category, but the same properties continue to hold if one replaces smooth by analytic throughout.

Let $\mG$ and $\mx$ be oriented smooth manifolds without boundary so that $\dim(\mG) = N$ and $\dim(\mx) = n$. Assume that $Z$ is a smooth oriented embedded submanifold of $\mG \times \mx$ and consider the projections $\pi_{\mG}: Z \to \mG$ and $\pi_{\mx}: Z \to \mx$, as illustrated in the diagram \eqref{df_diagrams}.

We will assume that 
\[
\pi_{\mG}: Z \to \mG\ \text{is a submersion}.
\]
This forces $\dim(Z) \geq N$. If $\dim(Z) = N$ (resp.\ $\dim(Z) = N+n$) then the sets $G_z = \pi_{\mx}(\pi_{\mG}^{-1}(z))$ will be $0$-dimensional (resp.\ $n$-dimensional) and one does not obtain a standard integral geometry transform. For these reasons we will assume in this section that 
\[
N+n > \dim(Z) > N.
\]
We will also write 
\begin{align*}
\dim(Z) &= N + n', \\
n &= n' + n'',
\end{align*}
where $1 \leq n', n'' \leq n-1$.

Since $\pi_{\mG}$ is a submersion, for any $z \in \mG$ the fiber $\pi_{\mG}^{-1}(z)$ is an embedded $n'$-dimensional submanifold of $Z$, and correspondingly the set 
\begin{eqnarray}
\label{def: Gz}
G_z := \pi_{\mx}(\pi_{\mG}^{-1}(z))
\end{eqnarray}
is an embedded $n'$-dimensional submanifold of $\mx$. We equip $G_z$ with an orientation form as follows (see \cite{Quinto_measures} for more on the choice of measures).

\begin{Lemma} \label{lemma_orientation}
Given orientation forms $\omega_Z$ on $Z$ and $\omega_{\mG}$ on $\mG$, there is a natural orientation form $\omega_{G_z}$ on each $G_z$ induced by the submersion $\pi_{\mG}$.
\end{Lemma}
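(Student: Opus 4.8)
The plan is to construct $\omega_{G_z}$ fiberwise by a standard "quotient of orientation forms through a submersion" construction, applied twice. First I would work on $Z$ rather than directly on $G_z$. Since $\pi_{\mG} \colon Z \to \mG$ is a submersion, at each point $\zeta \in \pi_{\mG}^{-1}(z)$ we have the short exact sequence
\[
0 \to T_\zeta(\pi_{\mG}^{-1}(z)) \to T_\zeta Z \xrightarrow{d\pi_{\mG}} T_z \mG \to 0 .
\]
Given the orientation form $\omega_Z$ of degree $\dim Z = N+n'$ and $\omega_{\mG}$ of degree $N$, there is a well-defined $n'$-form $\sigma$ on the fiber $F_z := \pi_{\mG}^{-1}(z)$ characterized by the property that, for any local vector fields $V_1,\dots,V_N$ on $Z$ with $d\pi_{\mG}(V_j)$ a positively oriented frame of $T\mG$, and any $W_1,\dots,W_{n'}$ tangent to $F_z$,
\[
\sigma(W_1,\dots,W_{n'}) \;=\; \frac{\omega_Z(V_1,\dots,V_N,W_1,\dots,W_{n'})}{\omega_{\mG}(d\pi_{\mG}(V_1),\dots,d\pi_{\mG}(V_N))}.
\]
I would check this is independent of the choice of the $V_j$'s: changing the lift $V_j \mapsto V_j + (\text{vertical})$ does not change $\omega_Z(V_1,\dots,V_N,W_1,\dots,W_{n'})$ because the inserted vertical vectors coincide with some $W_i$ up to the span of the $W$'s (here one uses that $\dim F_z = n'$ so the $W_i$ already span all vertical directions), and changing the $V_j$'s by a $\mathrm{GL}_N$ matrix multiplies numerator and denominator by the same determinant. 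Thus $\sigma = \sigma_z$ is a nowhere-vanishing $n'$-form on $F_z$, i.e.\ an orientation form.

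Next I would push $\sigma_z$ forward along $\pi_{\mx}\colon Z \to \mx$. The key point is that $\pi_{\mx}$ restricts to an embedding (in particular a diffeomorphism onto its image) of $F_z = \pi_{\mG}^{-1}(z)$ onto $G_z = \pi_{\mx}(F_z)$: indeed $Z \subset \mG \times \mx$, so $\pi_{\mx}|_{F_z}$ is injective because $F_z$ lies in the single slice $\{z\}\times\mx$, and its differential is injective since $\pi_{\mx}$ is a submersion of $Z$ (dimension count: $\dim Z - \dim \mx = n' - n'' \le 0$ is not quite what we want, so instead note $d\pi_{\mx}$ on $F_z$ has kernel $T F_z \cap \ker d\pi_{\mx} = T F_z \cap T(\{*\}\times\mx)^{\perp}\cdots$) — more cleanly, $T_\zeta F_z \subset \{0\}\times T\mx$ since $d\pi_{\mG}$ kills it, and on that subspace $d\pi_{\mx}$ is the identity; hence $d\pi_{\mx}|_{T_\zeta F_z}$ is injective and $\pi_{\mx}|_{F_z}$ is an immersion, and being injective with $F_z$ embedded in $Z$ it is an embedding onto the embedded submanifold $G_z$. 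Therefore I define
\[
\omega_{G_z} \;:=\; \bigl((\pi_{\mx}|_{F_z})^{-1}\bigr)^* \sigma_z ,
\]
a nowhere-vanishing $n'$-form on $G_z$, which is exactly the orientation form used in the transform $Rf(z) = \int_{G_z}\kappa(z,x) f(x)\, d\omega_{G_z}(x)$. The same construction with $(\pi_{\mx},\omega_{\mx})$ in place of $(\pi_{\mG},\omega_{\mG})$ yields $\omega_{H_x}$ on $H_x = \pi_{\mG}(\pi_{\mx}^{-1}(x))$, using now that $\pi_{\mx}$ is a submersion and $\pi_{\mG}|_{\pi_{\mx}^{-1}(x)}$ is an embedding onto $H_x$.

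The only genuinely delicate point — and the one I would write out carefully — is the well-definedness of the quotient form $\sigma_z$, namely the verification that inserting a vertical correction into the lifted frame $V_1,\dots,V_N$ leaves $\omega_Z(V_1,\dots,V_N,W_1,\dots,W_{n'})$ unchanged; this is where one uses crucially that the $W_i$ already exhaust the vertical tangent space $T_\zeta F_z$, so any vertical vector is a linear combination of the $W_i$ and disappears by antisymmetry of $\omega_Z$. Everything else (smoothness of $\zeta \mapsto \sigma_z$, hence of $x \mapsto \omega_{G_z}$, and the embedding property of $\pi_{\mx}|_{F_z}$) is routine, and the construction is manifestly local and transfers verbatim to the real-analytic category.
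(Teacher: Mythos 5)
Your proposal is correct and follows essentially the same route as the paper: define the fiber form on $\pi_{\mG}^{-1}(z)$ as the quotient $\omega_Z/\pi_{\mG}^*\omega_{\mG}$ evaluated on fiber vectors together with a lifted frame, check independence of the choice of complementary vectors by the change-of-basis/antisymmetry argument, and transport the result to $G_z$ via the diffeomorphism $\pi_{\mx}|_{\pi_{\mG}^{-1}(z)}$. The only differences are cosmetic (ordering of fiber versus lifted vectors, which at most fixes the orientation convention) and that you spell out why $\pi_{\mx}|_{\pi_{\mG}^{-1}(z)}$ is an embedding onto $G_z$, which the paper simply notes.
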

\begin{proof}
The submersion $\pi_{\mG}: Z \to \mG$ induces on each fiber $\pi_{\mG}^{-1}(z)$ the orientation form 
\[
\omega_{\pi_{\mG}^{-1}(z)}(v_1,\ldots,v_{n'}) = \frac{\omega_Z(v_1, \ldots, v_{n'}, w_1, \ldots, w_{N})}{\omega_{\mG}(d\pi_{\mG}(w_1), \ldots, d\pi_{\mG}(w_{N}))}
\]
where $\{ v_1, \ldots, v_{n'} \}$ spans $T_{(z,x)} \pi_{\mG}^{-1}(z)$ and $\{ v_1, \ldots, v_{n'}, w_1, \ldots, w_N \}$ spans $T_{(z,x)}Z$. 
Here we used that $\pi_{\mathcal G}$ is a submersion and $d \pi_{\mG}(v_j) = 0$ to ensure that $\{d\pi_{\mG}(w_1), \ldots, d\pi_{\mG}(w_{N}) \}$ is a basis of $T_z\mathcal G$. Furthermore, a standard change of basis formula for orientation forms ensures that the above definition does not depend on the choice of $\{w_1,\dots, w_N\}$ as long as $\{ v_1, \ldots, v_{n'}, w_1, \ldots, w_N \}$ spans $T_{(z,x)}Z$. It is then enough to note that $\pi_{\mx}: \pi_{\mG}^{-1}(z) \to G_z$ is a diffeomorphism and to define $\omega_{G_z}(d\pi_{\mx}(v_1), \ldots, d\pi_{\mx}(v_{n'})) = \omega_{\pi_{\mG}^{-1}(z)}(v_1,\ldots,v_{n'})$.
\end{proof}

Define the weighted double fibration transform $R: C^{\infty}_c(\mx) \to C^{\infty}(\mG)$ by 
\begin{eqnarray}
\label{def: R}
R f(z) = \int_{G_z} \kappa(z,x)f(x) \,d\omega_{G_z}(x), \qquad z \in \mG,
\end{eqnarray}
for some smooth and nowhere vanishing weight $\kappa(z,x)$.
The following result, which implies that $R$ indeed maps into $C^{\infty}(\mG)$, characterizes $R$ as an FIO associated with the conormal bundle $N^*Z$, acting on functions with values in the half density bundle $\Omega^{1/2}$.

\begin{Theorem} \label{thm_r_fio}
Assume that $\pi_{\mG}: Z \to \mG$ is a submersion and $\dim(Z) = N + n'$. Then $R$ is a Fourier integral operator of order $\frac{n}{4} - \frac{n'}{2} - \frac{N}{4}$  whose canonical relation is 
\begin{equation*}
C = (N^* Z \setminus 0)' =  \{ (z,A(z,x)\eta,x,-\eta) \mid (z,x) \in Z, \ \eta \in N_x^* G_z, \ \eta \neq 0 \}
\end{equation*}
where $A(z,x)$ is is a linear map $N_x^* G_z \to T_z^* \mG$ depending smoothly on $(z,x) \in Z$. The operator $R$ has a nowhere vanishing homogeneous principal symbol 
\[a_Z\in S^{\frac{n-n'}{2}}(N^*Z, \Omega^{1/2}).\]
\end{Theorem}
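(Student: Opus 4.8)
The plan is to realize $R$ as integration against the delta distribution of $Z$ composed with the given weight, and then to read off the FIO structure by writing a phase function for $\delta_Z$ in local coordinates. First I would fix a point $(z_0,x_0) \in Z$ and choose adapted coordinates: since $\pi_{\mG}: Z \to \mG$ is a submersion with $\dim Z = N + n'$, near $(z_0,x_0)$ one may split $x = (x',x'') \in \mR^{n'}\times\mR^{n''}$ so that $Z$ is the graph $\{x'' = \phi(z,x')\}$ for an analytic (resp.\ smooth) map $\phi$, with $(z,x')$ serving as coordinates on $Z$; this is exactly the local model \eqref{b_op_expression}. In these coordinates $G_z = \{(x',\phi(z,x'))\}$, and the induced orientation form $\omega_{G_z}$ of Lemma \ref{lemma_orientation} is a nowhere vanishing analytic $n'$-form; pulling back via $x' \mapsto (x',\phi(z,x'))$ it becomes $g(z,x')\,dx'$ for an analytic nonvanishing density $g$. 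Hence
\[
Rf(z) = \int_{\mR^{n'}} \kappa(z,x',\phi(z,x'))\, g(z,x')\, f(x',\phi(z,x'))\,dx'
= \int_{\mR^n}\int_{\mR^{n''}} e^{i(x'' - \phi(z,x'))\cdot\eta}\, a(z,x)\, f(x)\,d\eta\,dx,
\]
after inserting $\delta(x'' - \phi(z,x')) = (2\pi)^{-n''}\int e^{i(x''-\phi(z,x'))\cdot\eta}\,d\eta$ and absorbing constants into the analytic nonvanishing amplitude $a(z,x) = (2\pi)^{-n''}\kappa(z,x)g(z,x')$ (extended arbitrarily in $x''$, which is harmless on $Z$).

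Next I would identify this as a nondegenerate FIO. The phase is $\Phi(z,x,\eta) = (x''-\phi(z,x'))\cdot\eta$ with $\eta\in\mR^{n''}$ the fiber variable, so $N = \dim\mG$, $n = \dim\mx$ and the number of phase variables is $n''$. One checks $d_\eta\Phi = 0 \iff x'' = \phi(z,x')$, i.e.\ $(z,x)\in Z$, and that $\Phi$ is a nondegenerate phase (the differentials $d(\p_{\eta_j}\Phi)$, which are $dx''_j - d(\phi_j)$, are linearly independent since $\phi$ does not depend on $x''$). The associated Lagrangian in $T^*(\mG\times\mx)$ is the conormal bundle $N^*Z$: on the critical set one has $d_z\Phi = -\phi_z(z,x')^T\eta$ and $d_x\Phi = (-\phi_{x'}(z,x')^T\eta,\ \eta)$, and $(-\phi_{x'}^T\eta,\eta)$ is precisely a nonzero conormal to $G_z$ at $x$, with the $\mG$-component $-\phi_z^T\eta$ being the value of the linear map $A(z,x)$ applied to that conormal. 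This is the displayed form of $C$, and the twist in the $\mx$-slot ($\eta \mapsto -\eta$, equivalently passing to $N^*Z$ versus $(N^*Z)'$) is the standard convention. The order of the FIO follows from the general formula: with $n'' = n - n'$ phase variables and base/target dimensions $n$ and $N$, the order is $\frac{n''}{2} + \frac{N+n}{4} - \frac{N+n}{2} = \frac{n-n'}{2} - \frac{N+n}{4}$; rewriting, this equals $\frac{n}{4} - \frac{n'}{2} - \frac{N}{4}$, as claimed. The principal symbol is computed from the amplitude $a$ together with the Maslov/half-density factor on the critical manifold $d_\eta\Phi = 0$; since $a$ is nowhere vanishing and the density $\omega_{G_z}$ and weight $\kappa$ are nowhere vanishing, the resulting symbol $a_Z$ is a nowhere vanishing section of $\Omega^{1/2}$ over $N^*Z$, homogeneous of degree $\frac{n-n'}{2}$ in $\eta$ (the $n''$-fold $\eta$-integration raises the naive degree accordingly).

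I expect the main obstacle to be bookkeeping rather than conceptual: (i) verifying that the invariantly defined density $\omega_{G_z}$ from Lemma \ref{lemma_orientation} really matches the coordinate density $g(z,x')\,dx'$ arising from the oscillatory-integral representation of $\delta_Z$, so that the principal symbol is globally well defined and nonvanishing — this requires chasing the change-of-basis factor in Lemma \ref{lemma_orientation} against the Jacobian of $x'\mapsto(x',\phi(z,x'))$; and (ii) getting the order and the half-density normalization exactly right with consistent conventions (the $(2\pi)$ powers, the Maslov factor, and the passage between $N^*Z$ and its twist). The nondegeneracy of the phase and the computation of $C$ are routine once the graph coordinates are in place. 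Since everything — $\phi$, $\kappa$, $\omega_{G_z}$ — is analytic in the analytic case, the same computation shows $\WF_a(R)' = C$, which is what is needed for the proof of Theorem \ref{global elliptic regularity intro}.
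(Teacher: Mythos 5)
Your proposal is correct and follows essentially the same route as the paper: both express the Schwartz kernel of $R$ as the conormal distribution $\kappa\,\delta_Z$, use the graph coordinates $Z=\{x''=\phi(z,x')\}$ from Lemma \ref{local description of Z} to write it as the oscillatory integral $\int e^{\pm i(x''-\phi(z,x'))\cdot\eta}a(z,x)\,d\eta$ with $n''$ fiber variables and a nonvanishing amplitude, and then read off the Lagrangian $N^*Z$, the order, and the principal symbol from standard conormal/FIO theory. The paper simply packages the argument slightly differently — it first exhibits $\delta_Z$ as a globally defined distribution density and invokes H\"ormander's Definition 18.2.10 and Theorem 18.2.11 for the order, whereas you verify phase nondegeneracy and apply the raw order formula locally; the bookkeeping concerns you flag (matching $\omega_{G_z}$ to the coordinate density, half-density normalizations) are exactly what the paper's global $\delta_Z$ setup streamlines.
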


This result follows from \cite[Sections VI.3 and VI.6]{GuilleminSternberg} or \cite{Guillemin1985}. We will also give a proof below since some of the computations will be used in the proof of Theorem \ref{global elliptic regularity intro}. As a consequence of Theorem \ref{thm_r_fio}, $R$ is well defined as a map $\mathcal{E}'(\mx) \to \mathcal{D}'(\mG)$. We will later work under the assumption that $N \geq n$ and the left and right projections ($\pi_L$ and $\pi_R$) acting on $C$ have full rank. In this case, the rank of $d\pi_L$ will be $n+N$ while the rank of $d\pi_R$ will be $2n$. Then by Theorem 4.3.2 of \cite{Hormanderacta}, $R: H^{s}_c(X) \to H^{s+n'/2}_{\mathrm{loc}}(\mathcal G)$.

For many of our arguments, it will be convenient to write $Z$ locally as the graph $\{ x'' = \phi(z,x') \}$ for some coordinates $x = (x',x'')$ in $\mx$ with $x' \in \mR^{n'}$ and $x'' \in \mR^{n''}$.

\begin{Lemma}
\label{local description of Z}
Let $Z\subset \mR^N\times \mR^{n}$, ${\rm dim}(Z)>N$, be an embedded smooth submanifold such that 
\[
\text{$\pi_N : Z\to \mR^N$ is a submersion.}
\]
If $(\hat z, \hat x) \in Z$, we can find open subsets $ U\subset \subset \mR^{n'+n''}$ containing $\hat x$ and $V \subset\subset \mR^N$ containing $\hat z$ with $ U = U' \times U''$ and $U'\subset\subset\mR^{n'}$, $ U'' \subset\subset \mR^{n''}$ open, and a smooth function $\phi : V \times U' \to U''$ such that
\[
Z\cap (V\times U )= \{ (z,x) \mid x'' = \phi(z, x'), z\in V, x'\in U'\}.
\]
Conversely, if $Z = \{ x'' = \phi(z,x') \}$ for some smooth $\phi$, then $\pi_N$ is a submersion.
\end{Lemma}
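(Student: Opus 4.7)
The plan is to obtain the local graph description as an application of the inverse function theorem to the auxiliary projection $\Pi: Z \to \mR^N \times \mR^{n'}$, $(z, x', x'') \mapsto (z, x')$, after first selecting a suitable splitting of coordinates $\mR^n = \mR^{n'} \times \mR^{n''}$. Once I verify that $d\Pi|_{(\hat z,\hat x)}$ is a linear isomorphism between spaces of equal dimension $N+n'$, the inverse function theorem yields product neighborhoods $V \subset \mR^N$, $U' \subset \mR^{n'}$, $U'' \subset \mR^{n''}$ and a smooth map $\phi:V\times U' \to U''$ with $\Pi^{-1}(z,x') = (z, x', \phi(z, x'))$, which is exactly the desired graph description of $Z$.

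To choose the splitting, I would exploit the submersion hypothesis: since $\pi_N$ is a submersion and $\dim Z = N + n'$, the kernel $K := \ker(d\pi_N|_{T_{(\hat z,\hat x)}Z})$ is an $n'$-dimensional subspace of $T_{(\hat z,\hat x)}Z$. As $d\pi_N$ is the projection onto the first factor, any vector in $K$ has the form $(0,v)$ with $v \in T_{\hat x}\mR^n$, so $K$ is naturally identified with an $n'$-dimensional subspace $K' \subset T_{\hat x}\mR^n \cong \mR^n$. By permuting the $x$-coordinates I can arrange that $K'$ meets $\{0\} \times \mR^{n''}$ only in the origin, i.e.\ that $K'$ projects isomorphically onto the first $n'$ factors. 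With this splitting in place, any $(w,v',v'') \in T_{(\hat z,\hat x)}Z$ lying in $\ker d\Pi|_{(\hat z,\hat x)}$ satisfies $w = 0$ and $v' = 0$, so $(0,0,v'') \in K$, which forces $v'' = 0$ by the transversality just arranged. Thus $d\Pi|_{(\hat z,\hat x)}$ is injective, hence a linear isomorphism.

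Applying the inverse function theorem and shrinking neighborhoods as needed produces the asserted $V$, $U'$, $U''$ and $\phi$, and the identity $Z \cap (V\times U' \times U'') = \{x'' = \phi(z,x')\}$ follows by reading off the graph of $\Pi^{-1}$. For the converse, if locally $Z = \{x'' = \phi(z,x')\}$, then $Z$ is parametrized by the smooth embedding $(z,x') \mapsto (z, x', \phi(z,x'))$, and the image of $d\pi_N$ at any point contains the $N$-dimensional subspace $\{(w,0,\phi_z(z,x')w) : w \in \mR^N\}$, which projects onto $T_z\mR^N = \mR^N$; hence $d\pi_N$ is surjective and $\pi_N$ is a submersion.

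No substantial obstacle is anticipated: the entire content of the forward direction boils down to the elementary linear-algebra observation that the $n'$-dimensional kernel $K$ of the submersion sits inside $T_{\hat x}\mR^n$, and can therefore be placed in graph position over some choice of $n'$ of the $x$-coordinates, after which the inverse function theorem takes over. The only mild subtlety is bookkeeping with the coordinate permutation, which is implicit in the statement's freedom to select which $n'$ coordinates play the role of $x'$.
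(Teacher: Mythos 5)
Your proof is correct and follows essentially the same route as the paper: choose $n'$ of the $x$-coordinates so that the projection $\Pi:(z,x)\mapsto(z,x')$ restricted to $Z$ has bijective differential at $(\hat z,\hat x)$, apply the inverse function theorem, and set $\phi = \pi_{n''}\circ\Pi^{-1}$, with the converse being the same one-line observation about the graph parametrization. The only cosmetic difference is that you justify the coordinate choice by putting the $n'$-dimensional kernel of $d\pi_N|_{T_{(\hat z,\hat x)}Z}$ (which sits inside $\{0\}\times\mR^n$) in graph position over $\mR^{n'}$, while the paper selects $n'$ rows of $d\pi_n$ completing the $N$ rows of $d\pi_N$ to full rank --- equivalent linear algebra; just be aware that the step ``shrinking neighborhoods as needed'' so that $Z\cap(V\times U)$ is exactly the graph (and no other sheet of $Z$ re-enters $V\times U$) is precisely where the embeddedness of $Z$ is used, a point the paper flags explicitly.
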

\begin{proof}
Let $j$ be the embedding $Z \to \mR^N \times \mR^n$. Then $dj(w) = (d\pi_N(w), d\pi_n(w))$ for $w \in TZ$, so we may write $dj$ in local coordinates as the $(N+n) \times \dim(Z)$ matrix 
\[
dj = \left( \begin{array}{c} d\pi_N \\ d\pi_n \end{array} \right).
\]
Now $dj$ has $\dim(Z)$ linearly independent columns since $j$ is an immersion. This means that $dj$ has $\dim(Z)$ linearly independent rows. Furthermore, $d\pi_N$ has $N$ linearly independent rows since $\pi_N$ is a submersion. It follows that we can choose $n' = \dim(Z)-N$ components $x'$ of $x$, with $x = (x', x'') \in \mR^{n'}\times \mR^{n''}$, such that the differential of the projection $\pi_{N + n'}: Z\ni (z,x) \mapsto (z, x') \in \mR^N \times \mR^{n'}$ is a bijection at $( \hat z,  \hat x)$. Let $\pi_{n''}: Z\to \mR^{n''}$ be the projection to the other components.

By the inverse function theorem there is a small neighborhood $V\times U'\subset \mR^N\times \mR^{n'}$  of $(\hat z, \hat x')$ such that $\pi_{N+n'}^{-1}:  V \times  U'\to Z$ is a smooth diffeomorphism from $V \times  U'$ to a neighborhood of $(\hat z, \hat x)$ in $Z$ (here we use that $Z$ is an embedded submanifold). So the map $\phi := \pi_{n''}\circ \pi_{N+n'}^{-1} :  V\times  U' \to \mR^{n''}$ is the desired map. The converse follows since $\pi_N(z,x',\phi(z,x')) = z$.
\end{proof}

We now compute $N^* Z$, considered as a subset of $T^* \mG \times T^* \mx$, and describe the linear map $A(z,x)$ appearing in Theorem \ref{thm_r_fio} more precisely.

\begin{Lemma} \label{lemma_nstarz}
Assume that $\pi_{\mG}: Z \to \mG$ is a submersion. One has 
\[
N^* Z = \{ (z, \zeta, x, \eta) \mid (z,x) \in Z, \ \eta \in N_x^* G_z, \ \zeta = A(z,x) \eta \}
\]
where $A(z,x)$ is a linear map $N_x^* G_z \to T_z^* \mG$ depending smoothly on $(z,x) \in Z$. If $Z$ is locally given by $\{ x'' = \phi(z,x') \}$, then in these coordinates  
\begin{align*}
T_x G_z &= \{ (\eta', \phi_{x'}(z,x') \eta') \mid \eta \in \mR^{n'} \}, \\
N_x^* G_z &= \{ (-\phi_{x'}(z,x')^T \eta'', \eta'') \mid \eta'' \in \mR^{n''} \}, \\
N^*_{(z,x)} Z &= \{ (z, -\phi_z(z,x')^T \eta'', (x', \phi(z,x')), (-\phi_{x'}(z,x')^T \eta'', \eta'')) \mid \eta'' \in \mR^{n''} \},
\end{align*}
and 
\[
A(z,x)(-\phi_{x'}^T \eta'', \eta'') = -\phi_z^T \eta''.
\]
\end{Lemma}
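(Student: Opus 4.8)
The plan is to verify the formula by a direct local computation using the graph description $Z = \{x'' = \phi(z,x')\}$ from Lemma \ref{local description of Z}, and then observe that the map $A(z,x)$ so obtained is intrinsically defined and smooth in $(z,x)$. First I would parametrize $Z$ locally by $(z,x') \mapsto (z, x', \phi(z,x'))$, so that a basis of $T_{(z,x)}Z$ is given by the pushforwards of $\partial_{z_i}$ and $\partial_{x'_j}$, namely the vectors $(e_i, \phi_{z_i})$ in $\mathbb{R}^N \times \mathbb{R}^{n'} \times \mathbb{R}^{n''}$ (where the first $N$-block is $e_i$, the middle $n'$-block is $0$, and the last block is $\partial_{z_i}\phi$) and $(0, e_j, \phi_{x'_j})$. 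A covector $(z,\zeta,x,\eta)$ with $\zeta \in \mathbb{R}^N$, $\eta = (\eta',\eta'') \in \mathbb{R}^{n'}\times\mathbb{R}^{n''}$ lies in $N^*_{(z,x)}Z$ precisely when it annihilates both families: pairing with $(0,e_j,\phi_{x'_j})$ gives $\eta'_j + \phi_{x'_j}(z,x')^T \eta'' = 0$ for all $j$, i.e. $\eta' = -\phi_{x'}(z,x')^T\eta''$; pairing with $(e_i,0,\phi_{z_i})$ gives $\zeta_i + \phi_{z_i}(z,x')^T\eta'' = 0$, i.e. $\zeta = -\phi_z(z,x')^T\eta''$. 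This yields exactly the stated formula for $N^*_{(z,x)}Z$, and hence $A(z,x)(-\phi_{x'}^T\eta'',\eta'') = -\phi_z^T\eta''$.

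Next I would record the formulas for $T_xG_z$ and $N^*_xG_z$. Since $G_z = \pi_{\mx}(\pi_{\mathcal G}^{-1}(z))$ and $\pi_{\mathcal G}^{-1}(z)$ is cut out by fixing $z$ in the graph, locally $G_z = \{(x', \phi(z,x')) : x' \in U'\}$, whose tangent space is spanned by the $(e_j, \phi_{x'_j})$, i.e. $T_xG_z = \{(\eta', \phi_{x'}(z,x')\eta') : \eta' \in \mathbb{R}^{n'}\}$. A covector $(\alpha',\alpha'')$ annihilates this space iff $\alpha' + \phi_{x'}(z,x')^T\alpha'' = 0$, giving $N^*_xG_z = \{(-\phi_{x'}(z,x')^T\eta'', \eta'') : \eta'' \in \mathbb{R}^{n''}\}$ as claimed. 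Comparing, the restriction $\eta \in N^*_xG_z$ appearing in the global statement is exactly the condition $\eta' = -\phi_{x'}^T\eta''$ extracted above, so the two descriptions of $N^*Z$ agree.

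Finally I would address the global/intrinsic claims: that $A(z,x): N^*_xG_z \to T^*_z\mathcal G$ is a well-defined linear map independent of the coordinate choices and depends smoothly on $(z,x) \in Z$. The cleanest argument is that $N^*Z$ is an intrinsically defined smooth vector bundle over $Z$ (as the conormal bundle of an embedded submanifold), and that the projection $N^*Z \to T^*\mathcal G|_Z$, being the restriction of a bundle map, is smooth; likewise the projection $N^*Z \to N^*G_z \subset T^*\mx|_Z$ is smooth, and the computation above shows the latter is a fiberwise linear isomorphism onto $N^*_xG_z$ for each $(z,x)$ — indeed $\eta''$ serves as a global linear coordinate on the fiber. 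Hence $A(z,x)$ is the composition of the inverse of this isomorphism with the first projection, so it is linear in $\eta$ and smooth in $(z,x)$; the explicit local formula $A(z,x)(-\phi_{x'}^T\eta'',\eta'') = -\phi_z^T\eta''$ then exhibits the smooth dependence concretely. The only mild subtlety — and the place I would be most careful — is the bookkeeping of the sign conventions and which $\eta$-block is the "free" one, i.e. checking consistency between $N^*_xG_z$ as a subset of $T^*_x\mx$ and the $\eta$ appearing in $N^*_{(z,x)}Z$; everything else is routine linear algebra.
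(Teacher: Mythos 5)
Your proof is correct and follows essentially the same route as the paper: write $Z$ locally as the graph $\{x''=\phi(z,x')\}$, compute $T_{(z,x)}Z$, $T_xG_z$, $N_x^*G_z$ and $N_{(z,x)}^*Z$ by pairing covectors against the obvious tangent vectors, read off $A(z,x)(-\phi_{x'}^T\eta'',\eta'')=-\phi_z^T\eta''$, and then note that $A$ is intrinsically a smooth fiberwise-linear map (the paper phrases this as $A$ being a smooth section of $\mathrm{Hom}(E,F)$ with $E_{(z,x)}=N_x^*G_z$, $F_{(z,x)}=T_z^*\mG$, trivialized by the $\eta''$ coordinate, which is the same observation as your projection/isomorphism argument). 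The signs and block bookkeeping you flag all check out against the paper's formulas.
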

\begin{proof}
By Lemma \ref{local description of Z}, we can write $Z$ locally as the set $\{ x'' = \phi(z,x') \}$. Any curve $x_s$ in $G_z$ satisfies $x_s'' = \phi(z,x_s')$ and therefore 
\[
\dot{x}_0'' = \phi_{x'}(z,x_0') \dot{x}_0'.
\]
This gives the formula for $T_x G_z$. Then any vector of the form $(-\phi_{x'}(z,x')^T \eta'', \eta'')$ must be in $N_x^* G_z$, and a dimension count shows that all vectors in $N_x^* G_z$ are of this form.

For any curve $(z_s,x_s)$ in $Z$ we have 
\[
\dot{x}_0'' = \phi_z(z_0,x'_0) \dot{z}_0 + \phi_{x'}(z_0,x'_0) \dot{x}_0'.
\]
It follows that $T_{(z_0,x_0)} Z = \{ (v, u', \phi_z(z_0,x_0') v + \phi_{x'}(z_0,x_0') u') \mid v \in \mR^N, \ u' \in \mR^{n'} \}$. Then $(\zeta, \eta) \in N^*_{(z_0,x_0)} Z$ if and only if 
\[
\zeta(v) + \eta'(u') + \eta''(\phi_z(z_0,x_0') v + \phi_{x'}(z_0,x_0') u') = 0
\]
for all $v$ and $u'$. Choosing $v=0$ we have $\eta'(u') + \eta''(\phi_{x'}(z_0,x_0') u') = 0$ for all $u'$, which is equivalent to $\eta \in N_{x_0}^* G_{z_0}$. Then varying $v$ gives $\zeta = -\phi_z(z_0,x_0')^T \eta''$. This proves the formula for $N_{(z,x)}^* Z$.

Finally, the argument above shows that any $(z, \zeta, x, \eta) \in N^* Z$ must satisfy $x \in G_z$, $\eta \in N_x^* G_z$, and $\zeta = A(z,x) \eta$ where $A(z,x)$ is defined on $N_x^* G_z$ by the required formula $A(z,x)(-\phi_{x'}^T \eta'', \eta'') = -\phi_z^T \eta''$.  In order to state precisely the smooth dependence of $A$ on $(z,x)$, consider the smooth vector bundles $E$ and $F$ over $Z$ with fibers at $(z,x)$ given by $E_{(z,x)} = N_x^* G_z$ and $F_{(z,x)} = T_z^* \mG$. Note that $F$ is a pullback bundle, and $E$ is a smooth vector bundle with local trivializations given by $((z,x',\phi(z,x')), (-\phi_z^T \eta'', \eta'')) \mapsto ((z,x'), \eta'')$. Then $A$ is indeed a smooth section of the bundle $\mathrm{Hom}(E,F)$.
\end{proof}

We proceed to the proof of Theorem \ref{thm_r_fio}. It follows from \cite{Guillemin1985} that if $Z$ is a double fibration, then $R$ is an FIO whose Schwartz kernel is essentially the delta function $\kappa \delta_Z$. To state this properly we need to consider $R$ acting on half densities. If we fix nonvanishing half densities on $\mG$ and $\mx$, we obtain identifications between functions and half densities and we can identify $R$ with an operator, still denoted by $R$, that acts on half density valued functions.

\begin{proof}[Proof of Theorem \ref{thm_r_fio}]
To simplify notation we assume without loss of generality that $\kappa(z,x) =1$.
We define $\delta_Z$ precisely via the formula 
\[
\langle \delta_Z, \Psi  \rangle = \int_{\mG} \int_{G_z} \Psi(z, x) \,d\omega_{G_z}(x) \,d\omega_{\mG}(z), \qquad \Psi \in C^{\infty}_c(\mG \times \mx).
\]
Then $\delta_Z$ is a continuous linear functional on $C^{\infty}_c(\mG \times \mx)$, i.e.\ a distribution density on $\mG \times \mx$. It follows from the definitions that 
\[
\langle \delta_Z, \psi(z) f(x)  \rangle = \int_{\mG} (R f(z)) \psi(z) \,d\omega_{\mG}(z), \qquad f \in C^{\infty}_c(\mx), \ \psi \in C^{\infty}_c(\mG).
\]
This shows that $\delta_Z$ is the (density valued) Schwartz kernel of $R$.

To see that $\delta_Z$ behaves indeed like a delta function of $Z$, let $(z_0, x_0) \in Z$ and use Lemma \ref{local description of Z} to find smooth positively oriented local coordinates $(z,x',x'')$ in some neighborhood $V \times U$ of $(z_0,x_0)$ so that $V \subset \mG$ and $U \subset \mx$ are open, $Z \cap (V \times U) = \{ (z,x',x'') \in V \times U \mid x'' = \phi(z',x') \}$, and $G_z \cap U = \{ (x',x'') \in U \mid x'' = \phi(z,x') \}$ for $z \in V$. In these coordinates $d\omega_{G_z}(x) = a_0(z,x') \,dx'$ and $d\omega_{\mG}(z) = b(z) \,dz$ where $a_0$ and $b$ are smooth positive functions. Then if $\Psi$ has small support near $(z_0,x_0)$ we have 
\[
\langle \delta_Z, \Psi  \rangle = \int_{\mR^N} \int_{\mR^n} \delta_{\{x''=\phi(z,x')\}}(z,x',x'') \Psi(z,x',x'') a_0(z,x') b(z) \,dx' \,dz.
\]
Thus in these local coordinates $\delta_Z$ becomes the conormal distribution $a_0(z,x') b(z) \delta_{\{x''=\phi(z,x')\}}$ in $\mR^{N+n}$ whose symbol is positive, smooth and homogeneous of degree zero. We may also write $\delta_Z$, or equivalently the Schwartz kernel $R(z,x)$ of $R$, as the oscillatory integral 
\begin{equation} \label{rzx_formula}
R(z,x) = \int_{\mR^{n''}} e^{i(\phi(z,x')-x'') \cdot \eta} a(z,x) \,d\eta
\end{equation}
for some positive nonvanishing smooth function $a$.

After multiplying by nonvanishing smooth half densities we can interpret $\delta_Z$ as a half density valued distribution conormal to $Z$. Since $N^* Z$ has fiber dimension $n - n'$, we see that $\delta_Z$ has principal symbol in $S^{\frac{n-n'}{2}}(N^* Z, \Omega^{1/2})$ according to the convention in \cite[Definition 18.2.10]{Hormander}. Then by \cite[Theorem 18.2.11]{Hormander} $\delta_Z$ is a conormal distribution of order $m$ in $\mG \times \mx$, where 
\[
m + \frac{N+n}{4} = \frac{n-n'}{2}.
\]
It follows that $R$ is a Fourier integral operator whose Schwartz kernel is a conormal to the Lagrangian manifold $N^* Z \setminus 0$, and thus $R$ has canonical relation $C = (N^* Z \setminus 0)'$ (see \cite[Section 25.2]{Hormander}). The formula for $C$ follows from Lemma \ref{lemma_nstarz}.
\end{proof}

Up to now we have considered submanifolds $Z \subset \mG \times \mx$ such that $\pi_{\mG}$ is a submersion.  From now on we will also assume that $\pi_{\mx}$ is a submersion, i.e.\ that $Z$ is a double fibration. This is locally characterized as follows.

\begin{Lemma} \label{lemma_pim_submersion}
Let $Z \subset \mG \times \mx$ be locally given by $\{ x'' = \phi(z,x') \}$. Then $\pi_{\mx}: Z \to \mi$ is a submersion iff the matrix $\phi_z(z,x')$ is surjective iff the map $A(z,x)$ in Lemma \ref{lemma_nstarz} is injective.
\end{Lemma}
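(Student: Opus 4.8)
The plan is to verify the two claimed equivalences by direct linear-algebra computations in the local coordinates $\{x'' = \phi(z,x')\}$ provided by Lemma \ref{local description of Z}, using the explicit formulas from Lemma \ref{lemma_nstarz}. Both statements are pointwise conditions at a given $(z,x) \in Z$, so it suffices to work at one such point.

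First I would handle the equivalence ``$\pi_{\mx}$ is a submersion iff $\phi_z(z,x')$ is surjective.'' Recall from the proof of Lemma \ref{lemma_nstarz} that in the graph coordinates $T_{(z,x)} Z = \{ (v, u', \phi_z(z,x') v + \phi_{x'}(z,x') u') \mid v \in \mR^N, \ u' \in \mR^{n'} \}$, and the projection $\pi_{\mx}$ reads $(z,x',x'') \mapsto (x',x'')$. Hence $d\pi_{\mx}|_{(z,x)}$ sends $(v,u',\phi_z v + \phi_{x'} u')$ to $(u', \phi_z v + \phi_{x'} u') \in \mR^{n'} \times \mR^{n''}$. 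The first component $u'$ is free, so surjectivity of $d\pi_{\mx}$ onto $\mR^{n'} \times \mR^{n''}$ is equivalent to: for every $w'' \in \mR^{n''}$ there exist $v, u'$ with $\phi_z v + \phi_{x'} u' = w''$; fixing $u' = 0$ this is exactly surjectivity of $v \mapsto \phi_z(z,x') v$, i.e.\ surjectivity of the matrix $\phi_z(z,x')$. (The ``only if'' direction is immediate from the same computation.) This is the routine half.

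Next I would prove ``$\phi_z(z,x')$ surjective iff $A(z,x)$ injective.'' By Lemma \ref{lemma_nstarz}, after the linear identification $\eta'' \mapsto (-\phi_{x'}^T \eta'', \eta'')$ of $N_x^* G_z$ with $\mR^{n''}$, the map $A(z,x)$ becomes $\eta'' \mapsto -\phi_z(z,x')^T \eta''$, i.e.\ it is (up to sign and the isomorphism) just the transpose matrix $\phi_z(z,x')^T : \mR^{n''} \to \mR^N$. Now invoke the standard fact from linear algebra that a matrix is surjective if and only if its transpose is injective (equivalently $\ker(B^T) = (\mathrm{range}\, B)^\perp$). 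Applying this with $B = \phi_z(z,x')$ gives the claim. I would also note the intermediate observation, already available, that surjectivity of $\phi_z$ is independent of the choice of splitting $x = (x',x'')$ and of the local chart, so the condition descends to an intrinsic statement about $Z$ — this is what makes the lemma well-posed as stated for abstract double fibrations.

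There is no real obstacle here: the lemma is essentially a bookkeeping consequence of Lemma \ref{lemma_nstarz} together with the transpose/annihilator duality. The only point requiring a small amount of care is to make sure that the chosen coordinates $(z,x',x'')$ around $(\hat z,\hat x)$ are the ones furnished by Lemma \ref{local description of Z}, so that $Z$ is genuinely a graph over $(z,x')$ and the formulas for $T_{(z,x)}Z$ and for $A(z,x)$ apply verbatim; and to observe that submersivity of $\pi_{\mx}$, being an open condition phrased coordinate-free, is correctly tested in any such chart. I would therefore write the proof as three short paragraphs mirroring the three equivalences, each a two-line matrix computation, with a closing remark on chart-independence.
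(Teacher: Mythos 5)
Your proposal is correct and follows essentially the same route as the paper: compute $d\pi_{\mx}$ in the graph coordinates $(z,x')$ to see that its surjectivity is equivalent to surjectivity of $\phi_z(z,x')$, and then identify $A(z,x)$ with $\eta'' \mapsto -\phi_z(z,x')^T\eta''$ via Lemma \ref{lemma_nstarz} so that injectivity of $A$ is the transpose statement. The paper's proof is just a terser version of the same computation, so no substantive differences to report.
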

\begin{proof}
We identify $(z,x',\phi(z,x')) \in Z$ with $(z,x')$. Since $\pi_{\mx}(z,x') = (x', \phi(z,x'))$, we have $d\pi_{\mx}(\dot{z}, \dot{x}') = (\dot{x}', \phi_z \dot{z} + \phi_{x'} \dot{x}')$. Thus $d\pi_{\mx}$ is surjective iff $\phi_z$ is surjective. This is equivalent with $A(z,x)$ being injective by Lemma \ref{lemma_nstarz}.
\end{proof}

Since $\pi_{\mx}$ is a submersion, the fibers $\pi_{\mx}^{-1}(x)$ are smooth manifolds in $Z$. Correspondingly the sets 
\[
H_x := \pi_{\mG}(\pi_{\mx}^{-1}(x))
\]
are embedded $(N-n'')$-dimensional submanifolds in $\mG$.

For a double fibration, both projections $\pi_{\mG}$ and $\pi_{\mx}$ are submersions and thus the roles of the $z$ and $x$ variables are somewhat symmetric. This leads to the following analogue of the results given above.

\begin{Lemma} \label{lem_df_b}
Let $Z$ be a double fibration with $N+n > \dim(Z) > n$. Then 
\[
N^* Z = \{ (z,\zeta,x,\eta) \mid (z,x) \in Z, \ \zeta \in N_z^* H_x, \ \eta = B(z,x)\zeta \}
\]
where $B(z,x)$ is a linear map $N_z^* H_x \to T_x^* X$ depending smoothly on $(z,x) \in Z$. If $(z_0, x_0) \in Z$, there are local coordinates $z = (z',z'')$ near $z_0$ and a smooth $\mR^{n''}$-valued function $b(x,z')$ near $(x_0, z_0')$ such that near $(z_0,x_0)$, $b_x$ is surjective and one has 
\begin{align*}
Z &= \{ (z,x) \mid z'' = b(x,z') \}, \\
N_{(z,x)}^* Z &= \{ ((z', b(x,z')), (-b_{z'}(x,z')^T \zeta'', \zeta''), x, -b_x(x,z')^T \zeta'') \mid \zeta'' \in \mR^{n''} \}.
\end{align*}
In these coordinates one has $B(z,x) \zeta = -b_x(x,z')^T \zeta''$. Conversely, if $b(x,z')$ is a smooth function such that $b_x$ is surjective, then $Z =  \{ z'' = b(x,z') \}$ is a double fibration.
\end{Lemma}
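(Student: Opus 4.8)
The plan is to derive Lemma \ref{lem_df_b} as the mirror image of Lemmas \ref{local description of Z}, \ref{lemma_nstarz} and \ref{lemma_pim_submersion}, whose proofs rely only on the submersion property of a \emph{single} projection and are otherwise symmetric in $\mG$ and $\mx$. The one observation that makes the interchange legitimate is that for a double fibration $\pi_{\mx}\colon Z\to\mx$ is \emph{also} a submersion, so we may re-run those three lemmas with the roles of $\mG$ and $\mx$ swapped, i.e.\ viewing $Z$ as an embedded submanifold of $\mx\times\mG$. Under this swap the running hypothesis $N+n>\dim(Z)>N$ turns into $N+n>\dim(Z)>n$, which is precisely the hypothesis of Lemma \ref{lem_df_b}, and the fibre codimension is unchanged: if $\dim(Z)=N+n'$ with $n=n'+n''$, the number of dependent $z$-coordinates produced by the interchanged Lemma \ref{local description of Z} is $N-(\dim(Z)-n)=N+n-\dim(Z)=n''$.

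First I would apply the interchanged Lemma \ref{local description of Z}: since $\pi_{\mx}$ is a submersion and $\dim(Z)>n$, near $(z_0,x_0)$ there is a splitting $z=(z',z'')\in\mR^{N-n''}\times\mR^{n''}$ of coordinates on $\mG$ and a smooth $\mR^{n''}$-valued map $b(x,z')$ with $Z=\{z''=b(x,z')\}$ locally, the converse half of that lemma guaranteeing that any such graph makes $\pi_{\mx}$ a submersion. Next, the interchanged Lemma \ref{lemma_nstarz} computes the conormal bundle: $T_{(z,x)}Z$ consists of the vectors $((\dot z',\,b_x\dot x+b_{z'}\dot z'),\,\dot x)$, and a covector $((\zeta',\zeta''),\eta)$ annihilates this space exactly when $\zeta'=-b_{z'}(x,z')^T\zeta''$ and $\eta=-b_x(x,z')^T\zeta''$; this gives the stated local formula for $N^*_{(z,x)}Z$, from which one reads off $N_z^*H_x=\{(-b_{z'}(x,z')^T\zeta'',\zeta'')\mid\zeta''\in\mR^{n''}\}$ and $B(z,x)\zeta=-b_x(x,z')^T\zeta''$. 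Since $N^*Z$, the submanifolds $H_x$, and these linear maps are all globally defined, the local formula holds near every point of $Z$ and hence yields the global description of $N^*Z$ in the lemma. The smooth dependence of $B$ on $(z,x)\in Z$ follows exactly as in the final paragraph of the proof of Lemma \ref{lemma_nstarz}, by realizing $B$ as a smooth section of $\mathrm{Hom}(E',F')$ for the vector bundles over $Z$ with fibres $E'_{(z,x)}=N_z^*H_x$ and $F'_{(z,x)}=T_x^*\mx$.

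Finally, the surjectivity of $b_x$ and the converse statement come from the interchanged Lemma \ref{lemma_pim_submersion}: with $Z$ locally the graph $\{z''=b(x,z')\}$, the map $\pi_{\mG}$, which in the base coordinates $(x,z')$ reads $(x,z')\mapsto(z',b(x,z'))$, is a submersion iff $b_x(x,z')$ is surjective iff $B(z,x)$ is injective. As $Z$ is a double fibration, $\pi_{\mG}$ is a submersion, so $b_x$ is surjective; conversely, starting from a smooth $b(x,z')$ with $b_x$ surjective and setting $Z=\{z''=b(x,z')\}$, the converse half of the interchanged Lemma \ref{local description of Z} makes $\pi_{\mx}$ a submersion and the interchanged Lemma \ref{lemma_pim_submersion} makes $\pi_{\mG}$ a submersion, so $Z$ is a double fibration (with the dimension bounds $N+n>\dim(Z)>n$ following from the coordinate splitting together with $1\le n''\le n-1$). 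I do not expect a genuine obstacle here: the only thing requiring care is the bookkeeping of the interchange — which coordinates are split, the placement of the transposes, and the directions of the linear maps $A$ and $B$ — together with the trivial but essential remark that the interchange is valid precisely because the double fibration hypothesis makes $\pi_{\mx}$, not just $\pi_{\mG}$, a submersion.
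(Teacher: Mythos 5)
Your proof is correct and takes essentially the same approach as the paper's, which likewise obtains the lemma by interchanging the roles of $z$ and $x$ in Lemmas \ref{local description of Z}, \ref{lemma_nstarz}, and \ref{lemma_pim_submersion}. You simply spell out the dimension bookkeeping (that the split gives $z''\in\mR^{n''}$) and the conormal computation in more detail than the paper, which just cites the symmetry.
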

\begin{proof}
This follows by interchanging the roles of $z$ and $x$ above. Since $\pi_{\mx}$ is a submersion, we can use Lemma \ref{local description of Z} to find coordinates $z = (z',z'')$ with $z'' \in \mR^{n''}$ such that locally $Z = \{ z'' = b(x,z') \}$ for some smooth function $b$. Since $Z$ is a double fibration, $b_x$ is surjective by Lemma \ref{lemma_pim_submersion}. Conversely, if $Z = \{ z'' = b(x,z') \}$ where $b_x$ is surjective, then $Z$ is a double fibration by Lemmas \ref{local description of Z} and \ref{lemma_pim_submersion}.
\end{proof}

If $Z$ is a double fibration, it follows from the above results that for any $(z,x) \in Z$ the map $A(z,x)$ is a bijection $N_x^* G_z \to N_z^* H_x$ with inverse map given by $B(z,x)$. We can thus write $N^*Z$ in a more symmetric form as 
\[
N^* Z = \{ (z,\zeta,x,\eta) \mid (z,x) \in Z, \ \zeta = A(z,x) \eta, \ \eta = B(z,x)\zeta \},
\]
and we can parametrize points in $N^*Z$ over $(z,x)$ either by $\eta \in N_x^* G_z$ or by $\zeta \in N_z^* H_x$.

\section{The Bolker condition}
\label{injectivity of piL}

In this section we give several equivalent characterizations for the Bolker condition \cite{Guillemin1985} in the setting of double fibration transforms. This condition states that the left projection $\pi_L: C \to T^* \mG \setminus 0$ in the microlocal diagram \eqref{df_diagrams} is an injective immersion. From this point on we will also assume that $N \geq n$. One reason is that the Bolker condition can only hold when $N \geq n$ (see Lemma \ref{lemma_bolker_local_char}). Another reason is that the double fibration transform maps function in $\mx$ to functions in $\mG$, and the problem of inverting this transform will be formally underdetermined unless $N \geq n$.

\subsection{Injectivity of $\pi_L$}

As discussed in Section \ref{subsec_ray}, the injectivity of $\pi_L$ is in some way related to variations and conjugate points. We begin by discussing these notions in the double fibration setting. However, we will use the terminology \emph{$Z$-conjugate points} instead of conjugate points since the two notions are not always equivalent. Recall the linear map $A(z,x):  N_x^* G_z \to T_z^* \mG$ from Lemma \ref{lemma_nstarz}, and denote the adjoint map by $A(z,x)^*: T_z \mG \to (N_x^* G_z)^*$. By Lemma \ref{lemma_pim_submersion}, $A(z,x)$ is injective and $A(z,x)^*$ is surjective. The following notions generalize those in Definition \ref{def_hx_vts_ray}.

\begin{Definition} \label{def_variation_general}
Let $z_s$ be a curve through $z_0$ in $\mG$ with $\dot{z}_0:=\partial_s z_s|_{s=0} = w$. The family $(G_{z_s})$ is called a \emph{variation} of $G_{z_0}$, and we define the associated \emph{variation field}  
\[
J_w: G_{z_0} \to (N^* G_{z_0})^*, \ \ J_w(x) = A(z,x)^* w.
\]
Given $z \in \mG$ and $x, y \in G_z$ let $V_{z}(x, y)$ be the space of all possible directions of variation at $x$ among variations of $G_{z}$ that keep $y$ fixed, i.e.\ 
\[
V_{z}(x,y) = \{ J_w(x) \mid w \in T_{z} \mG, \ J_w(y) = 0 \}.
\]
We say that $x, y \in G_z$ are \emph{$Z$-conjugate} along $G_z$ if $V_{z}(x,y)$ is a strict subspace of $(N_x^* G_{z})^*$.
\end{Definition}

To justify the definition of $J_w(x)$, let $(G_{z_s})$ be a variation of $G_{z_0}$ with $\partial_s z_s\mid_{s= 0} = w$, and fix some $x_0 \in G_{z_0}$. If $x_s$ is any smooth curve through $x_0$ with $x_s \in G_{z_s}$, then $(z_s, x_s) \in Z$ and therefore any element in $N_{(z_0,x_0)}^*Z$ annihilates $(\partial_s z_s, \partial_s x_s)\mid_{s=0}$. Since covectors in $N_{(z_0,x_0)}^* Z$ have the form $(A(z_0,x_0)\eta, \eta)$ with $\eta \in N_{x_0}^* G_{z_0}$, we have 
\[
\eta(\dot{x}_0) = -A(z_0,x_0)\eta(\dot{z}_0) = -J_{w}(x_0)(\eta), \qquad \eta \in N_{x_0}^* G_{z_0}.
\]
Thus $-J_w(x_0)$ describes the normal component of the variation of $x_0$ within the manifolds $G_{z_s}$. We could fix a Riemannian metric on $\mx$ and identify $(N_x^* G_z)^*$ with the orthocomplement of $T_x G_z$ in this metric, and then $J_w$ would become an actual orthogonal vector field on $G_{z_0}$.

The following lemma gives several characterizations of $Z$-conjugate points. By (c) the notion of $Z$-conjugate points only makes sense when $N \geq 2n''$ (that is, if $N < 2n''$ then all pairs of points on any $G_z$ are $Z$-conjugate).

\begin{Lemma} \label{lemma_ncp_general}
Let $x, y \in G_z$ with $x \neq y$. The following conditions are equivalent.
\begin{enumerate}
\item[(a)]
$x$ and $y$ are not $Z$-conjugate along $G_z$.
\item[(b)] 
The space $V_{z}(x,y)$ has dimension $n''$.
\item[(c)] 
The space $\{ w \in T_z \mG \mid J_w(x) = J_w(y) = 0 \}$ has dimension $N-2n''$.
\item[(d)] 
$\ker(A(z,x)^*) + \ker(A(z,y)^*) = T_z \mG$.
\end{enumerate}
\end{Lemma}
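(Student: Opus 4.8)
The plan is to establish the equivalence of (a)--(d) by unwinding the definitions and exploiting the linear-algebraic content of the map $A(z,x)^*: T_z \mathcal{G} \to (N_x^* G_z)^*$, which is surjective with kernel of dimension $N - n''$ (here I use Lemma \ref{lemma_pim_submersion}, which says $A(z,x)$ is injective, hence $A(z,x)^*$ is surjective). The core object is the linear map
\[
\Lambda_{x,y}: T_z \mathcal{G} \to (N_x^* G_z)^* \oplus (N_y^* G_z)^*, \qquad \Lambda_{x,y} w = (A(z,x)^* w, \, A(z,y)^* w) = (J_w(x), J_w(y)).
\]
Almost everything reduces to understanding the rank and kernel of $\Lambda_{x,y}$.

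\textbf{Equivalence of (c) and (d).} The kernel of $\Lambda_{x,y}$ is exactly $\{ w \mid J_w(x) = J_w(y) = 0 \} = \ker(A(z,x)^*) \cap \ker(A(z,y)^*)$. By the dimension formula for the sum of two subspaces,
\[
\dim\big(\ker(A(z,x)^*) \cap \ker(A(z,y)^*)\big) = 2(N - n'') - \dim\big(\ker(A(z,x)^*) + \ker(A(z,y)^*)\big),
\]
so the kernel in (c) has dimension $N - 2n''$ if and only if $\ker(A(z,x)^*) + \ker(A(z,y)^*)$ has dimension $N$, i.e.\ equals $T_z\mathcal{G}$. This is precisely (d).

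\textbf{Equivalence of (b) and (c).} By definition $V_z(x,y) = A(z,x)^*\big(\ker(A(z,y)^*)\big)$, i.e.\ $V_z(x,y)$ is the image of $\ker(A(z,y)^*)$ under $A(z,x)^*$. Restricting $A(z,x)^*$ to $\ker(A(z,y)^*)$, the rank-nullity theorem gives
\[
\dim V_z(x,y) = \dim \ker(A(z,y)^*) - \dim\big(\ker(A(z,x)^*) \cap \ker(A(z,y)^*)\big) = (N - n'') - \dim\big(\text{kernel in (c)}\big).
\]
Hence $\dim V_z(x,y) = n''$ if and only if the kernel in (c) has dimension $N - 2n''$. (This computation also shows $V_z(x,y) \subseteq (N_x^* G_z)^*$ always has dimension $\leq n'' = \dim (N_x^* G_z)^*$, with equality precisely in the non-conjugate case; note $V_z(x,y)$ could a priori have dimension up to $n''$ only because $\dim \ker(A(z,y)^*) = N - n'' \geq n''$ uses $N \geq 2n''$, and when $N < 2n''$ one cannot even reach $n''$, recovering the parenthetical remark.)

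\textbf{Equivalence of (a) and (b).} This is essentially tautological: (a) says $V_z(x,y)$ is \emph{not} a strict subspace of $(N_x^* G_z)^*$, i.e.\ $V_z(x,y) = (N_x^* G_z)^*$; since $\dim (N_x^* G_z)^* = n''$ and $V_z(x,y) \subseteq (N_x^* G_z)^*$, this equality holds iff $\dim V_z(x,y) = n''$, which is (b).

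I do not anticipate a serious obstacle here; the whole lemma is a bookkeeping exercise in finite-dimensional linear algebra once one identifies $\Lambda_{x,y}$ as the right object and keeps track of the dimensions $\dim(N_x^* G_z)^* = \dim(N_y^* G_z)^* = n''$ and $\dim\ker(A(z,\cdot)^*) = N - n''$. The only point requiring mild care is making sure the identification $V_z(x,y) = A(z,x)^*(\ker A(z,y)^*)$ is literally what Definition \ref{def_variation_general} says (it is: $w$ ranges over $T_z\mathcal{G}$ with $J_w(y) = A(z,y)^* w = 0$, and we record $J_w(x) = A(z,x)^* w$), and phrasing the chain (a) $\Leftrightarrow$ (b) $\Leftrightarrow$ (c) $\Leftrightarrow$ (d) so that the dimension counts are visibly consistent with the standing assumption $N \geq n$ and with the parenthetical observation that the notion is vacuous when $N < 2n''$.
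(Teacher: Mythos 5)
Your proof is correct and follows essentially the same route as the paper: rank--nullity applied to $A(z,x)^*$ restricted to $\ker(A(z,y)^*)$ for (b) $\Leftrightarrow$ (c), the formula $\dim(E+F)+\dim(E\cap F)=\dim E+\dim F$ for (c) $\Leftrightarrow$ (d), and the dimension count $\dim((N_x^*G_z)^*)=n''$ for (a) $\Leftrightarrow$ (b). The map $\Lambda_{x,y}$ is just a convenient repackaging of the same linear algebra, so there is nothing to add.
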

\begin{proof}
The equivalence of (a) and (b) is clear since $\dim((N_x^* G_z)^*) = n''$. For the equivalence of (b) and (c), note that $\ker(A(z,y)^*)$ has dimension $N-n''$ since $A(z,y)^*$ is surjective, and apply the rank-nullity theorem to $A(z,x)^*: \ker(A(z,y)^*) \to (N_x^* G_z)^*$ whose range is $V_z(x,y)$ to get 
\[
\dim(\{ w \in T_z \mG \mid J_w(x) = J_w(y) = 0 \}) + \dim(V_z(x,y)) = N-n''.
\]
Finally, (c) means $\dim(\ker(A(z,x)^*) \cap \ker(A(z,y)^*)) = N-2n''$, which is equivalent to (d) by the general formula $\dim(E+F) + \dim(E \cap F) = \dim(E) + \dim(F)$.
\end{proof}

The following result gives a characterization for the global part of the Bolker condition.

\begin{Lemma}\label{l:no_conjugate_points}
Let $(z,\zeta,x,\eta) \in C$. Then $\pi_L^{-1}(z,\zeta) = \{(z,\zeta,x,\eta)\}$ iff 
\[
\text{for any $y \in G_z \setminus \{x\}$, the space $V_z(x,y)$ is not annihilated by $\eta$.}
\]
Moreover, $\pi_L: C \to T^* \mG \setminus 0$ is injective iff there are no pairs of $Z$-conjugate points on any $G_z$.
\end{Lemma}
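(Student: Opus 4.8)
The plan is to analyze the fibers of $\pi_L$ and the differential $d\pi_L$ separately, using the local coordinate description of $Z$ in Lemma \ref{lemma_nstarz}. For the first assertion, recall that a point of $C$ over a given $(z,\zeta)$ is determined by a point $x \in G_z$ together with a covector $\eta \in N_x^* G_z$ satisfying $A(z,x)\eta = \zeta$. So suppose $(z,\zeta,x,\eta), (z,\zeta,y,\eta_y) \in C$ with $y \neq x$; I want to reinterpret the coincidence $A(z,x)\eta = A(z,y)\eta_y = \zeta$ in terms of variation fields. Pairing $\zeta$ against a tangent vector $w \in T_z\mG$ and using the definition $J_w = A(z,\cdot)^* w$, one gets $\zeta(w) = J_w(x)(\eta) = J_w(y)(\eta_y)$. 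The key observation is that if $V_z(x,y)$ is \emph{not} annihilated by $\eta$, then choosing $w$ with $J_w(y) = 0$ but $J_w(x)(\eta) \neq 0$ forces $\zeta(w) \neq 0$ yet simultaneously $\zeta(w) = J_w(y)(\eta_y) = 0$, a contradiction; hence no second point $y$ can occur, i.e.\ $\pi_L^{-1}(z,\zeta)$ is a singleton. Conversely, if there is a pair $x \neq y$ in some $G_z$ and some nonzero $\eta \in N_x^* G_z$ annihilating $V_z(x,y)$, I need to produce an actual second preimage: set $\zeta = A(z,x)\eta$ and show there exists $\eta_y \in N_y^* G_z \setminus 0$ with $A(z,y)\eta_y = \zeta$. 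This uses the annihilation hypothesis to guarantee that $\zeta$ vanishes on $\ker(A(z,y)^*) = \ker\,(w \mapsto J_w(y))$, hence $\zeta$ factors through $A(z,y)^*$, i.e.\ lies in the image of $A(z,y)$ (here one uses that $\operatorname{im} A(z,y) = (\ker A(z,y)^*)^\perp$, valid since $A(z,y)^*$ is surjective by Lemma \ref{lemma_pim_submersion}); one must also check $\eta_y \neq 0$, which follows because $\zeta \neq 0$ as $A(z,x)$ is injective and $\eta \neq 0$.

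For the second assertion, I claim the global injectivity of $\pi_L$ on all of $C$ is equivalent to: for every $z$, no pair of distinct points of $G_z$ is $Z$-conjugate. The direction "$\Leftarrow$" is essentially the first assertion: if no pair in any $G_z$ is $Z$-conjugate, then $V_z(x,y) = (N_x^* G_z)^*$ for every $x \neq y$ in $G_z$, so $V_z(x,y)$ is certainly not annihilated by any nonzero $\eta$, and the first part gives $\pi_L^{-1}(z,\zeta)$ a singleton for every $(z,\zeta)$ in the image, which is exactly global injectivity. For "$\Rightarrow$", I argue contrapositively: if some $G_z$ has a $Z$-conjugate pair $x \neq y$, then $V_z(x,y)$ is a proper subspace of $(N_x^* G_z)^*$, so there is a nonzero $\eta \in ((N_x^* G_z)^*)^* = N_x^* G_z$ (using finite-dimensionality, and identifying $N_x^* G_z$ with its double dual) annihilating $V_z(x,y)$; then the converse direction of the first assertion produces two distinct preimages $(z,\zeta,x,\eta)$ and $(z,\zeta,y,\eta_y)$, contradicting injectivity.

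The step I expect to be the main obstacle is the converse direction of the first assertion — producing an honest second element of $C$ from the annihilation condition. The subtlety is matching up the "dual" pictures: $V_z(x,y)$ lives in $(N_x^* G_z)^*$, the condition "$\eta$ annihilates $V_z(x,y)$" is a statement about $\eta \in N_x^* G_z$, and one must convert it into the existence of a preimage covector $\eta_y$ at the \emph{other} point $y$. This requires careful bookkeeping with the maps $A(z,x)$, $A(z,y)$ and their adjoints, the identity $\operatorname{im} A(z,y) = (\ker A(z,y)^*)^{\perp}$, and the observation that the condition "$J_w(y) = 0$" is precisely "$w \in \ker A(z,y)^*$". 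One should also double-check the nondegeneracy: $\zeta = A(z,x)\eta$ is nonzero, so $(z,\zeta,x,\eta)$ is a genuine point of $C \subset (N^*Z \setminus 0)'$, and likewise $\eta_y \neq 0$. Everything else is either a direct unwinding of Definition \ref{def_variation_general} or an application of the rank–nullity bookkeeping already recorded in Lemma \ref{lemma_ncp_general}.
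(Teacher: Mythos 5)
Your proof is correct and follows essentially the same route as the paper's: both arguments rest on the injectivity of $A(z,x)$ together with the finite-dimensional duality between $\mathrm{im}\,A(z,y)$ and $\ker A(z,y)^* = \{w \in T_z\mG : J_w(y)=0\}$, so that the fiber condition unwinds to the statement about $V_z(x,y)$ being annihilated by $\eta$, and the global statement then follows from Lemma \ref{lemma_ncp_general}. The only point worth making explicit is that a second preimage over the \emph{same} base point $x$ is ruled out directly by injectivity of $A(z,x)$ (which you already invoke via Lemma \ref{lemma_pim_submersion}), exactly as in the first line of the paper's proof.
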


\begin{proof}
By Lemma \ref{lemma_pim_submersion}, the map $A(z,x)$ is injective. Now if $\pi_L(z,\zeta,x,\eta)=\pi_L(z,\zeta,x,\eta_1)$ we have $\eta=\eta_1$, since $\zeta=A(z,x)\eta=A(z,x)\eta_1$. Therefore, $\pi_L$ is injective if and only if, for each pair of distinct points $(z,x),(z,y)\in Z$, we have a trivial intersection $\mathrm{im}(A(z,x))\cap\mathrm{im}(A(z,y))=\{0\}$. This latter condition is equivalent to Lemma \ref{lemma_ncp_general} part (d).

More specifically, $\pi_L^{-1}(z,\zeta) = \{(z,\zeta,x,\eta)\}$ if and only if \[A(z,x)\eta\not\in\mathrm{im}(A(z,x_1)),
\qquad \forall x_1 \in G_z\setminus\{x\}.\]
This latter condition is equivalent to the fact that there exists $w\in T_z\mG=(T_z^*\mG)^*$ such that $w(A(z,x)\eta)\neq0$ and $w(A(z,x_1)\eta_1)=0$ for all $\eta_1\in N^*_{x_1}G_z$, namely $A(z,x_1)^*w=0$.
\end{proof}

\subsection{Injectivity of $d\pi_L$}

The following result gives equivalent conditions for $d\pi_L$ to be injective. The equivalence of (1) and (2) is a general property of Lagrangian manifolds (see Lemma 4.3 of \cite{deHoopStolk}). Condition (3) gives a geometric interpretation: $d\pi_L|_{(z_0, \zeta_0, x_0, \eta_0)}$ is injective if and only if there are so many manifolds $G_z$ (with $z$ near $z_0$) through $x_0$ such that one can obtain any direction in $T_{x_0}^* \mx$ by varying $\eta_0$ within vectors conormal to the manifolds $G_z$. Condition (4) gives another geometric interpretation in terms of tangents of $G_z$. Finally, conditions (5) and (6) describe injectivity of $d\pi_L$ in terms of the local representations $Z = \{ x'' = \phi(z,x') \} = \{ z'' = b(x,z') \}$. Condition (5) will be used frequently in the computations in Section \ref{sec_wf}.

\begin{Lemma} \label{lemma_bolker_local_char}
Let $Z \subset \mG \times \mx$ be a double fibration. Given any $(z_0, \zeta_0, x_0, \eta_0) \in C$, the following conditions are equivalent.
\begin{enumerate}
\item[(1)] 
$d\pi_L|_{(z_0, \zeta_0, x_0, \eta_0)}$ is injective.
\item[(2)] 
$d\pi_R|_{(z_0, \zeta_0, x_0, \eta_0)}$ is surjective.
\item[(3)] 
For any $\xi_0 \in T_{x_0}^* \mx$ there are smooth curves $z_s$ through $z_0$ in $H_{x_0}$ and $\eta_s$ through $\eta_0$ in $T^*_{x_0} \mx$ such that $\eta_s \in N_{x_0}^* G_{z_s}$ and $\dot{\eta}_0 = \xi_0$.
\item[(4)] 
If $F: H_{x_0} \to (T_{x_0} \mx)^{n'}$ is any smooth map such that $\{ F_1(z), \ldots, F_{n'}(z) \}$ spans $T_{x_0} G_{z}$ for $z$ near $z_0$, then the map $(\eta_0(dF_1(\,\cdot\,)), \ldots, \eta_0(dF_{n'}(\,\cdot\,)))$ takes $T_{z_0} H_{x_0}$ onto $\mR^{n'}$.
\item[(5)] 
If $Z$ is given by $\{ x'' = \phi(z,x') \}$ near $(z_0,x_0)$, then 
\[
\text{the $N \times n$ matrix $\left( \phi_z(z_0,x'_0)^T, \ \  \p_{x'} (\phi_z(z_0,x')^T \eta_0'')|_{x'=x_0'} \right)$ is injective.}
\]
\item[(6)] 
If $Z$ is given by $\{ z'' = b(x,z') \}$ near $(z_0,x_0)$, then 
\[
\text{the $n \times N$ matrix $\left( b_x(x_0,z'_0)^T, \ \  \p_{z'} (b_x(x_0,z')^T \zeta_0'')|_{z'=z_0'} \right)$ is surjective.}
\]

\end{enumerate}
\end{Lemma}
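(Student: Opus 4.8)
The plan is to prove the chain of equivalences in a cyclic (or hub-and-spoke) fashion, taking advantage of the fact that several of these conditions are already ``known'' in disguise. First I would dispose of $(1)\Leftrightarrow(2)$ by invoking the cited general fact about Lagrangian submanifolds (Lemma 4.3 of \cite{deHoopStolk}): since $C=(N^*Z\setminus 0)'$ is Lagrangian in $T^*\mG\times T^*\mx$, the kernel of $d\pi_L$ and the cokernel of $d\pi_R$ are naturally dual, so $d\pi_L$ is injective iff $d\pi_R$ is surjective. The bulk of the work is then to relate the abstract statements $(1)$--$(4)$ to the two coordinate statements $(5)$ and $(6)$, and here the natural strategy is: prove $(1)\Leftrightarrow(5)$ directly by a computation in the local model $Z=\{x''=\phi(z,x')\}$, prove $(2)\Leftrightarrow(6)$ by the symmetric computation in the model $Z=\{z''=b(x,z')\}$ (Lemma \ref{lem_df_b}), and deduce $(5)\Leftrightarrow(6)$ from $(1)\Leftrightarrow(2)$ — so no separate algebraic reconciliation of the two local pictures is needed.

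For $(1)\Leftrightarrow(5)$ I would compute $d\pi_L$ explicitly. Using the parametrization of $C$ near $(z_0,\zeta_0,x_0,\eta_0)$ coming from Lemma \ref{lemma_nstarz}, points of $C$ are
\[
(z,\ -\phi_z(z,x')^T\eta'',\ (x',\phi(z,x')),\ (-\phi_{x'}(z,x')^T\eta'',\eta'')),
\qquad (z,x',\eta'')\ \text{near}\ (z_0,x_0',\eta_0''),
\]
so $(z,x',\eta'')$ furnishes local coordinates on $C$ (a manifold of dimension $N+n'+n''=N+n$). Then $\pi_L(z,x',\eta'')=(z,-\phi_z(z,x')^T\eta'')$, and its differential at $(z_0,x_0',\eta_0'')$ sends $(\dot z,\dot x',\dot\eta'')$ to $\bigl(\dot z,\ -\phi_z^T\dot\eta''-\partial_{x'}(\phi_z^T\eta_0'')\dot x'-\partial_z(\phi_z^T\eta_0'')\dot z\bigr)$. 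The first component controls $\dot z$, so the kernel is exactly $\{\dot z=0\}$ together with $\phi_z(z_0,x_0')^T\dot\eta''+\partial_{x'}(\phi_z(z_0,x')^T\eta_0'')|_{x_0'}\dot x'=0$; this vanishes for all $(\dot x',\dot\eta'')$ precisely when the $N\times n$ matrix in $(5)$ is injective. The symmetric computation with $\pi_R$ and the model $\{z''=b(x,z')\}$ — where $\pi_R(z',x)=(x,-b_x(x,z')^T\zeta'')$ — gives $(2)\Leftrightarrow(6)$ in the same way, now with surjectivity in place of injectivity because $d\pi_R$ lands in the larger-than-necessary space $T^*\mx$ and we are asking it to be onto.

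It remains to match $(3)$ and $(4)$ to the others. For $(3)\Leftrightarrow(1)$: by $(1)\Leftrightarrow(2)$ it suffices to see that $(3)$ is a restatement of surjectivity of $d\pi_R$. Indeed $\pi_R$ maps onto (a neighborhood in) $T^*\mx$, its image is swept by moving $z$ within $H_{x_0}$ (which keeps the base point $x_0$, hence moves inside $\pi_\mx^{-1}(x_0)$) and correspondingly moving $\eta\in N^*_{x_0}G_z$; the condition that the velocity $\dot\eta_0$ can be made an arbitrary $\xi_0\in T^*_{x_0}\mx$ is exactly surjectivity of $d\pi_R$ on the fiber directions, while the base direction $\dot x_0$ is automatically controlled. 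For $(4)\Leftrightarrow(3)$: writing $\eta_s\in N^*_{x_0}G_{z_s}$ as $\eta_s\perp T_{x_0}G_{z_s}$ and choosing a frame $F_1,\dots,F_{n'}$ for $T_{x_0}G_z$, the constraint reads $\eta_s(F_j(z_s))=0$; differentiating at $s=0$ gives $\dot\eta_0(F_j(z_0))+\eta_0(dF_j(\dot z_0))=0$, so the achievable $\dot\eta_0$ (mod the $n''$-dimensional complement $N^*_{x_0}G_{z_0}$ of directions that preserve all the constraints trivially) is parametrized by the map $(\eta_0(dF_1(\cdot)),\dots,\eta_0(dF_{n'}(\cdot)))$ on $T_{z_0}H_{x_0}$, and $(3)$ (arbitrary $\dot\eta_0$) holds iff this map is onto $\mR^{n'}$, which is $(4)$; independence of the frame $F$ follows since changing $F$ by an invertible matrix-valued function only post-composes this map with an invertible linear map pointwise.

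The main obstacle I expect is bookkeeping rather than conceptual: getting the dimension counts and the splitting $T^*_{x_0}\mx=T_{x_0}G_{z_0}^{\perp}\text{-part}\oplus(\text{frame directions})$ exactly right in step $(4)\Leftrightarrow(3)$, and being careful that in $(5)$ versus $(6)$ the asymmetry (``injective'' for the $N\times n$ matrix, ``surjective'' for the $n\times N$ matrix) comes out correctly — this is forced by $N\ge n$ together with $\mathrm{rank}\,M=\mathrm{rank}\,M^T$, but it is easy to state backwards. Everything else is a direct unwinding of Lemmas \ref{lemma_nstarz}, \ref{lemma_pim_submersion} and \ref{lem_df_b}.
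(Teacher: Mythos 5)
Your proposal is correct, and it reaches the same key computations as the paper while organizing the logic differently. The coordinate steps you describe — computing $d\pi_L$ in the parametrization $(z,x',\eta'')$ of $C$ coming from Lemma \ref{lemma_nstarz} to get (1)$\Leftrightarrow$(5), the symmetric computation in the $\{z''=b(x,z')\}$ model for (6), and differentiating the constraint $\eta_s(F_j(z_s))=0$ for (3)$\Leftrightarrow$(4) — are essentially identical to the paper's proof (up to an immaterial sign convention in the parametrization of $C$, which only changes a block column of the matrix in (5) by a sign). Where you diverge: the paper takes (5) as the hub, proving (1)$\Leftrightarrow$(5), (2)$\Leftrightarrow$(5) and (3)$\Leftrightarrow$(5) by separate matrix computations (and only mentions the Lagrangian duality fact from \cite{deHoopStolk} as background), whereas you import that duality to get (1)$\Leftrightarrow$(2) abstractly, deduce (5)$\Leftrightarrow$(6) for free from the two local computations, and identify (3) directly with surjectivity of $d\pi_R$ rather than reducing it to the matrix condition. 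Your route buys a cleaner conceptual picture (condition (3) is literally ``$d\pi_R$ is onto the vertical directions, and the horizontal ones come for free'') and avoids one coordinate computation; the paper's route is self-contained and never needs the external Lagrangian lemma. Two small points you should make explicit to complete your (2)$\Leftrightarrow$(3) step: first, the base projection $C\to\mx$ is a submersion (this follows from $\phi_z$ being surjective, i.e.\ Lemma \ref{lemma_pim_submersion}, since $Z$ is a double fibration), so the fiber $C_{x_0}=\{(z,\zeta,x_0,\eta)\in C\}$ is a submanifold whose tangent space is exactly the kernel of the base differential — this is what lets you realize a tangent vector with vertical $d\pi_R$-image by an actual curve $(z_s,\eta_s)$ of the type demanded in (3); second, the claim that ``the base direction is automatically controlled'' is again the surjectivity of $\phi_z$, so both halves of your reduction rest on the double fibration hypothesis, which is indeed available.
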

\begin{proof}
By Lemma \ref{local description of Z}, we can express $Z$ as the set $\{ x'' = \phi(z,x') \}$ near $(z_0,x_0)$. By the description of $C$ in Lemma \ref{lemma_nstarz}, any curve $\Gamma_s$ in $C$ with $\Gamma_0=(z_0, \zeta_0, x_0, \eta_0)$ can be written as 
\[
\Gamma_s = (z_s, \phi_z(z_s, x_s')^T \eta_s'', (x_s', \phi(z_s,x_s')), (-\phi_{x'}(z_s,x_s')^T \eta_s'', \eta_s''))
\]
where $z_s$, $x_s'$ and $\eta_s''$ are curves in $\mR^N$, $\mR^{n'}$ and $\mR^{n''}$, respectively. We have chosen signs so that $\eta_0 = (-\phi_{x'}(z_0,x_0')^T \eta_0'', \eta_0'')$.

We first show (1) $\Longleftrightarrow$ (5). Now 
\[
d\pi_L(\dot{\Gamma}_0) = (\dot{z}_0, \p_s(\phi_z(z_s, x_s')^T \eta_0'')|_{s=0} + \phi_z(z_0,x_0')^T \dot{\eta}_0'').
\]
It follows that $d\pi_L(\dot{\Gamma}_0) = 0$ if and only if 
\[
\dot{z}_0 = 0, \qquad \phi_z(z_0,x_0')^T \dot{\eta}_0'' + \p_{x'}(\phi_z(z_0,x')^T \eta_0'')|_{x'=x_0'} \dot{x}_0' = 0.
\]
Thus $d\pi_L|_{(z_0, \zeta_0, x_0, \eta_0)}$ is injective if and only if (5) holds.

Next we show (2) $\Longleftrightarrow$ (5). We have 
\[
d\pi_R(\dot{\Gamma}_0) = ((\dot{x}_0', \p_s(\phi(z_s,x_s'))|_{s=0}), (-\p_s(\phi_{x'}(z_s,x_s')^T \eta_s'')|_{s=0}, \dot{\eta}_0'')).
\]
Thus in matrix form 
\[
d\pi_R(\dot{\Gamma}_0) = \left( \begin{array}{ccc} I & 0 & 0 \\ * & \phi_z(z_0,x_0') & 0 \\ * & -\p_z(\phi_{x'}(z,x_0')^T \eta_0'')|_{z=z_0} & * \\ 0 & 0 & I \end{array} \right) \left( \begin{array}{ccc} \dot{x}_0' \\ \dot{z}_0 \\ \dot{\eta}_0'' \end{array} \right).
\]
Then $d\pi_R|_{(z_0, \zeta_0, x_0, \eta_0)}$ is surjective if and only if  
\begin{equation} \label{transpose_fullrank}
\left( \begin{array}{c} \phi_z(z_0,x_0') \\ -\p_z(\phi_{x'}(z,x_0')^T \eta_0'')|_{z=z_0} \end{array} \right) \text{ has full rank}.
\end{equation}
After taking the transpose and noting that $(\p_z(\phi_{x'}^T \eta''))^T = \p_{x'}(\phi_z^T \eta'')$, the last condition is equivalent with the matrix $(\phi_z^T, \,-\p_{x'}(\phi_z^T \eta_0''))$ being injective, which is equivalent with (5).

The next step is to show (3) $\Longleftrightarrow$ (5). Assume that $z_s$ is a curve through $z_0$ in $H_{x_0}$ and $\eta_s$ is a curve through $\eta_0$ in $T^*_{x_0} \mx$ such that $\eta_s \in N^*_{x_0} G_{z_s}$. Since $z_s \in H_{x_0}$ we have $\phi(z_s,x_0') = x_0''$, which implies $\phi_z(z_0,x_0') \dot{z}_0 = 0$. We also have 
\[
\eta_s = (-\phi_{x'}(z_s,x_0')^T \eta_s'', \eta_s'')
\]
where $\eta_s''$ is a curve in $\mR^{n''}$. We have 
\[
\dot{\eta}_0 = (-\p_z(\phi_{x'}(z,x_0')^T \eta_0'')|_{z=z_0} \dot{z}_0 - \phi_{x'}(z_0,x_0')^T \dot{\eta}_0'', \dot{\eta}_0'').
\]
It follows that the condition in (3) holds if and only if 
\[
\p_z(\phi_{x'}(z,x_0')^T \eta_0'')|_{z=z_0}: \mathrm{ker}(\phi_z(z_0,x_0')) \to \mR^{n'} \text{ is surjective}.
\]
Since $Z$ is a double fibration, $\dim(\mathrm{ker}(\phi_z)) = N-n'' \geq n'$ using Lemma \ref{lemma_pim_submersion}. Thus (3) holds if and only if $\p_z(\phi_{x'}^T \eta_0'')|_{\mathrm{ker}(\phi_z)}$ has rank $n'$, which is equivalent with its kernel $\mathrm{ker}(\phi_z) \cap \mathrm{ker}(\p_z(\phi_{x'}^T \eta_0''))$ having dimension $N-n$. This is again equivalent with \eqref{transpose_fullrank}, which was seen above to be equivalent with (5).

Next we show (3) $\Longleftrightarrow$ (4). Assume that (3) holds, fix $\xi_0 \in T_{x_0}^* \mx$ and let $z_s$ and $\eta_s$ be as in (3). If $F$ is as in (4), then $\eta_s(F_j(z_s)) = 0$ for $1 \leq j \leq n'$. Taking derivatives at $s=0$, we get 
\begin{equation} \label{eq_xizero_dfj}
\xi_0(F_j(z_0)) + \eta_0(dF_j(\dot{z}_0)) = 0, \qquad 1 \leq j \leq n'.
\end{equation}
Since $\xi_0$ was arbitrary, this implies the conclusion in (4). Conversely, assume (4), let $\{ e_1, \dots, e_n \}$ be a basis of $T_{x_0}^* \mx$ with $e_j = F_j(z_0)$ for $1 \leq j \leq n'$, and let $\xi_0 \in T_{x_0}^* \mx$. We use (4) to find a curve $z_s$ through $z_0$ in $H_{x_0}$ such that \eqref{eq_xizero_dfj} holds. By Lemma \ref{lemma_nstarz}, after writing $Z$ locally as $\{ x'' = \phi(z,x') \}$, any curve $\eta_s$ through $\eta_0$ in $T_{x_0}^* \mx$ with $\eta_s \in N_{x_0}^* G_{z_s}$ has the form 
\[
\eta_s = (-\phi_{x'}(z_s,x_0') \eta_s'', \eta_s'')
\]
where $\eta_s''$ is a curve through $\eta_0''$ in $\mR^{n''}$. By choosing $\dot{\eta}_0''$ in a suitable way, we can ensure that $\eta_s$ satisfies 
\[
\dot{\eta}_0(e_j) = \xi_0(e_j), \qquad n'+1 \leq j \leq n.
\]
Since $\eta_s(F_j(z_s)) = 0$, taking the derivative at $s=0$ and using \eqref{eq_xizero_dfj} gives 
\[
\dot{\eta}_0(e_j) = \xi_0(e_j), \qquad 1 \leq j \leq n'.
\]
Thus $\dot{\eta}_0 = \xi_0$, which shows (3).

Finally, we show the equivalence (6) $\Longleftrightarrow$ (2). When $Z$ is given by $\{ z'' = b(x,z') \}$ near $(z_0,x_0)$, the projection $d\pi_R$ is given in matrix form by
\[
d\pi_R = \left( \begin{array}{ccc}  \p_{z'} (b_x(x_0,z')^T \zeta_0'')|_{z'=z_0'} & b_x(x_0,z'_0)^T & *\\ 0 & 0 & I\end{array} \right).
\]
Therefore, $d\pi_R$ is surjective if and only if the matrix $\big( b_x(x_0,z'_0)^T, \ \p_{z'} (b_x(x_0,z')^T \zeta_0'')|_{z'=z_0'} \big)$ has full rank.\end{proof}

If $d\pi_L|_{(z,\zeta,x,\eta)}$ is injective, then $\pi_L$ is injective in some neighborhood of $(z,\zeta,x,\eta)$ in $C$. We conclude this section with a lemma stating that $\pi_L$ is even injective if one restricts to a sufficiently small neighborhood of $(z,\zeta)$ and a neighborhood of $x$. This lemma will not be used explicitly later, but it justifies the claim made in Step 2 of Section \ref{sec_methods}. 

\begin{Lemma} \label{lem_bolker_open}
Suppose that $d\pi_L|_{(z,\zeta,x,\eta)}$ is injective. There are neighborhoods $\mathcal{V}$ of $(z,\zeta)$ in $T^* \mG$ and $U$ of $x$ in $\mx$ such that $\pi_L$ is injective in $\tilde{C}$, where 
\[
\tilde{C} = \{ (\tilde z,\tilde \zeta,\tilde x,\tilde \eta) \in C \mid (\tilde z, \tilde \zeta) \in \mathcal{V}, \ \tilde x \in U \}.
\]
\end{Lemma}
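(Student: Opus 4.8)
The plan is to first invoke the standard fact that an immersion is locally injective: since $d\pi_L|_{(z,\zeta,x,\eta)}$ is injective (and $\dim C = N+n \le 2N = \dim T^*\mG$ by the running assumption $N\ge n$), the map $\pi_L$ has locally constant rank near $q_0 := (z,\zeta,x,\eta)$, hence there is an open neighborhood $\mathcal{O}$ of $q_0$ in $C$ on which $\pi_L$ is injective. The lemma is then reduced to finding neighborhoods $\mathcal{V}$ of $(z,\zeta)$ in $T^*\mG$ and $U$ of $x$ in $\mx$ with $\tilde{C}\subset\mathcal{O}$: granting this, $\pi_L|_{\tilde C}$ is injective because $\pi_L|_{\mathcal{O}}$ is.

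\textbf{Key observation.} The apparent difficulty is that $\tilde{C}$ constrains $(\tilde z,\tilde\zeta)$ and $\tilde x$ but leaves the fibre variable $\tilde\eta$ free, so $\tilde{C}$ is a priori larger than a neighbourhood of $q_0$ in $C$. But $\tilde\eta$ is not actually free: for $(\tilde z,\tilde\zeta,\tilde x,\tilde\eta)\in C$ one has $(\tilde z,\tilde x)\in Z$ and $\tilde\zeta = A(\tilde z,\tilde x)\tilde\eta$ with $A(\tilde z,\tilde x)$ injective (Lemmas \ref{lemma_nstarz} and \ref{lemma_pim_submersion}), so $\tilde\eta$ is determined by $(\tilde z,\tilde\zeta,\tilde x)$ and depends continuously on this data. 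Thus a ``box'' neighbourhood in the $(\tilde z,\tilde\zeta,\tilde x)$ variables should pull back into the open set $\mathcal{O}$. I would make this precise by a compactness/contradiction argument: if no such $\mathcal{V},U$ exist, fix local coordinates and let $\mathcal{V}_k$, $U_k$ be the balls of radius $1/k$ about $(z,\zeta)$ and $x$; then for each $k$ one can pick $q_k=(z_k,\zeta_k,x_k,\eta_k)\in C\setminus\mathcal{O}$ with $(z_k,\zeta_k)\in\mathcal{V}_k$ and $x_k\in U_k$, so $(z_k,\zeta_k)\to(z,\zeta)$ and $x_k\to x$. Since $A$ is a continuous field of injective linear maps on $Z$, it is bounded below on a compact neighbourhood $K$ of $(z,x)$ in $Z$ (its restriction to the unit sphere bundle of the relevant bundle over $K$ is continuous, positive, hence $\ge c>0$); as $(z_k,x_k)\in K$ for $k$ large, $\abs{\eta_k}\le c^{-1}\abs{\zeta_k}$, so $(\eta_k)$ is bounded. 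Along a subsequence $\eta_k\to\eta_*$, and then $A(z,x)\eta_* = \zeta = A(z,x)\eta$ forces $\eta_*=\eta$ by injectivity of $A(z,x)$. Hence $q_k\to q_0$ in $C$, contradicting $q_k\notin\mathcal{O}$ since $\mathcal{O}$ is open in $C$.

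\textbf{Main obstacle.} There is no serious obstacle: the only delicate point is the one isolated above, namely that the unconstrained fibre variable $\tilde\eta$ nevertheless remains close to $\eta$ once $(\tilde z,\tilde\zeta,\tilde x)$ is close to $(z,\zeta,x)$, and this is exactly the content of the injectivity of $A$ together with compactness. Everything else is the textbook statement that a map with injective differential is injective near the point.
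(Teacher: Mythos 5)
Your proof is correct and follows essentially the same route as the paper's: a sequential contradiction argument combining local injectivity of the immersion $\pi_L$ near $(z,\zeta,x,\eta)$ with the fact that $\tilde \eta$ is determined continuously by $(\tilde z,\tilde \zeta,\tilde x)$ along $C$. The only difference is cosmetic: the paper obtains $\tilde\eta \to \eta$ directly from the smooth inverse $B$ of Lemma \ref{lem_df_b} (writing $\eta = -B(z,x)\zeta$), whereas you recover the same continuity from the injectivity of $A$ via a compactness lower bound and a subsequential limit.
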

\begin{proof}
We argue by contradiction and assume that there are sequences $(z_j, \zeta_j) \to (z,\zeta)$, $x_{j,k} \to x$ as $j \to \infty$, and $\eta_{j,k} \in T_{x_{j,k}}^{*} \mx$ for $k=1,2$ such that $(z_j,\zeta_j, x_{j,k}, \eta_{j,k}) \in C$ but $(x_{j,1}, \eta_{j,1}) \neq (x_{j,2}, \eta_{j,2})$. By Lemma \ref{lem_df_b} we have 
\[
\eta_{j,k} = -B(z_j, x_{j,k}) \zeta_j.
\]
Since $B$ is smooth and $\eta = -B(z,x)\zeta$, we have $\eta_{j,1} \to \eta$ and $\eta_{j,2} \to \eta$. Then $(x_{j,1}, \eta_{j,1}) \neq (x_{j,2}, \eta_{j,2})$ are in a very small neighborhood of $(x,\eta)$ for $j$ large. This contradicts the fact that $\pi_L$ is injective near $(z,\zeta,x,\eta)$ in $C$.
\end{proof}

\section{The case of ray transforms} \label{sec_ray}

In this section we consider double fibration transforms as in Section \ref{sec_basic_doublefibration} in the special case where the manifolds $G_z$ are one-dimensional. We will show that each $G_z$ is an integral curve of some vector field $Y$ on $Z$. This provides an alternative point of view to double fibration ray transforms. We also discuss the Bolker condition for ray transforms and its relation to conjugate points..

\subsection{Double fibration vector fields}
\label{sec_vect_field}

We show that in a double fibration where the fibers $G_z$ are one-dimensional, the fibers are related to integral curves of a vector field:

\begin{Lemma} \label{lemma_dfb_vf}
Let $M$ be a manifold with smooth boundary, and let $\mG$ be an orientable manifold without boundary. Suppose that $Z$ is an embedded orientable submanifold of $\mG \times \mi$ with $\dim(Z) = \dim(\mG) + 1$, let $\pi_{\mG}: Z \to \mG$ be a surjective submersion with connected fibers, and let $\pi_M: Z \to \mi$ be a proper surjective submersion.

Then $Z$ is a smooth fiber bundle over $\mi$, and there is a vector field $Y$ on $Z$ with $d\pi_M(Y)$ nonvanishing such that $\pi_{\mG}^{-1}(z)$ is an integral curve of $Y$ for any $z \in \mG$. The vector field $Y$ is unique up to multiplication by a nonvanishing function on $Z$. Each integral curve is either periodic, or its projection on $M$ is injective. If we fix orientation forms on $Z$ and $\mG$ and let $\omega_{G_z}$ be the induced orientation form on $G_z$, then $Y$ can be chosen so that 
\begin{equation} \label{gzf_ray}
\int_{G_z} f \,\omega_{G_z} = \int_{-\tau_-(z)}^{\tau_+(z)} f(\pi_M(\gamma_z(t))) \,dt
\end{equation}
for any $f$ such that this is well defined. Here $\gamma_z: (-\tau_-(z), \tau_+(z)) \to Z$ is the maximally extended integral curve of $Y$ through $z$ (if $\gamma_z$ is periodic then its domain is chosen to be one of its minimal periods). The parametrization of curves is nonredundant in the following sense: given any $(z_0,x_0) \in Z$ there is a neighborhood $V$ of $z_0$ in $\mG$ that can be identified with a submanifold $\tilde{V}$ through $(z_0,x_0)$ in $Z$ such that the integral curves of $Y$ are transverse to $\tilde{V}$. 
\end{Lemma}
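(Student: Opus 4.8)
The plan is to build $Y$ directly from the fixed orientation forms, so that the normalization required for \eqref{gzf_ray} comes for free, and then to read off the remaining assertions from standard facts about submersions and about integral curves of nowhere-vanishing vector fields. The fiber bundle claim is immediate: $\pi_M$ is a proper surjective submersion, so $Z$ is a locally trivial fiber bundle over $\mi$ by Ehresmann's fibration theorem. For $Y$, note that since $\pi_{\mG}$ is a submersion with $\dim Z=N+1$, the vertical bundle $K:=\ker(d\pi_{\mG})\subset TZ$ is a rank-one subbundle whose fiber at $p$ is the tangent line to $\pi_{\mG}^{-1}(\pi_{\mG}(p))$. Fix orientation forms $\omega_Z$ on $Z$ and $\omega_{\mG}$ on $\mG$. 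The pullback $\pi_{\mG}^*\omega_{\mG}$ is a nowhere-vanishing $N$-form on $Z$ (evaluate it on vectors covering a basis of $T\mG$), so since $\omega_Z$ is a nondegenerate top form there is a unique smooth vector field $Y$ with $\iota_Y\omega_Z=\pi_{\mG}^*\omega_{\mG}$. Plugging $Y$ back into this identity against lifts of a basis of $T\mG$ shows $d\pi_{\mG}(Y)=0$, so $Y\in\Gamma(K)$, and $Y$ is nowhere zero since $\pi_{\mG}^*\omega_{\mG}\neq0$. Being tangent to the connected one-dimensional fibers and nonvanishing, $Y$ has the $\pi_{\mG}^{-1}(z)$ as its maximal integral curves; moreover $d\pi_M(Y)\neq0$, since $j:=(\pi_{\mG},\pi_M)\colon Z\to\mG\times\mi$ is an immersion and $dj(Y)=(0,d\pi_M(Y))$.

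Uniqueness is formal: if $Y'$ is a vector field such that every $\pi_{\mG}$-fiber is an integral curve of $Y'$, then $Y'$ is tangent to every fiber, hence a section of $K$, and it cannot vanish anywhere (a zero would make the maximal integral curve through that point constant rather than one-dimensional); since $K$ has rank one, $Y'=\rho Y$ for a nowhere-vanishing function $\rho$. For the dichotomy, the image of the maximal integral curve $\gamma_z$ is the connected embedded one-manifold $\pi_{\mG}^{-1}(z)$, hence diffeomorphic either to $S^1$, in which case $\gamma_z$ is periodic and is parametrized over a minimal period, or to an open interval, in which case $\gamma_z$ is injective. Since $j$ is injective and $\pi_{\mG}\circ\gamma_z\equiv z$, injectivity of $\gamma_z$ forces that of $\pi_M\circ\gamma_z$, which is the claimed alternative.

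For the integral formula, recall from Lemma \ref{lemma_orientation} that $\omega_{G_z}$ is obtained by transporting under $\pi_M$ the fiber orientation form $\omega_{\pi_{\mG}^{-1}(z)}(v)=\omega_Z(v,w_1,\dots,w_N)/\omega_{\mG}(d\pi_{\mG}w_1,\dots,d\pi_{\mG}w_N)$. The normalization $\iota_Y\omega_Z=\pi_{\mG}^*\omega_{\mG}$ says precisely that $\omega_{\pi_{\mG}^{-1}(z)}(Y)\equiv 1$, hence $\gamma_z^*\omega_{\pi_{\mG}^{-1}(z)}=dt$; changing variables along $\gamma_z$ and using that $\pi_M$ carries $\pi_{\mG}^{-1}(z)$ onto $G_z$ compatibly with the orientations yields \eqref{gzf_ray}. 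Finally, for the nonredundancy statement, put $\pi_{\mG}$ into submersion normal form near $(z_0,x_0)$: there are coordinates $(w,t)\in\mR^N\times\mR$ on a neighborhood of $(z_0,x_0)$ in $Z$, with $(z_0,x_0)\leftrightarrow(0,0)$, and coordinates $w$ on $\mG$ near $z_0$, so that $\pi_{\mG}(w,t)=w$. Then $Y=\rho\,\partial_t$ with $\rho$ nowhere zero, the slice $\tilde V:=\{t=0\}$ maps diffeomorphically onto a neighborhood $V$ of $z_0$ under $\pi_{\mG}$, and the integral curves of $Y$, being the lines $\{w=\mathrm{const}\}$, are transverse to $\tilde V$.

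The genuinely delicate bookkeeping is in the integral identity: one must check that the same pair $(\omega_Z,\omega_{\mG})$ that normalizes $Y$ is the one defining $\omega_{\pi_{\mG}^{-1}(z)}$, so that $\gamma_z$ is automatically orientation-preserving and the integral runs over $(-\tau_-(z),\tau_+(z))$ with the correct sign. Everything else reduces to the submersion normal form, Ehresmann's theorem, and uniqueness for ODEs.
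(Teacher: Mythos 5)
Your proof is correct and follows essentially the same route as the paper: Ehresmann's theorem for the bundle structure, a fiber-tangent vector field normalized by the induced fiber orientation form (your global condition $\iota_Y\omega_Z=\pi_{\mG}^*\omega_{\mG}$ is exactly the paper's pointwise normalization $\omega_{\pi_{\mG}^{-1}(z)}(Y)=1$), uniqueness via the rank-one kernel bundle, the periodic/injective dichotomy from injectivity of the embedding $(\pi_{\mG},\pi_M)$, and the integral formula from the normalization. The only cosmetic difference is in the nonredundancy step, where you use the submersion normal form for $\pi_{\mG}$ instead of the graph representation $\{x''=\phi(z,x')\}$ of Lemma \ref{local description of Z}; both work equally well.
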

\begin{proof}
Since $\pi_M: Z \to \mi$ is a proper surjective submersion, Ehresmann's lemma ensures that $\pi_M$ is a locally trivial fibration and hence $Z$ is a fiber bundle over $\mi$. Let $\omega_{\pi_{\mG}^{-1}(z)}$ be the orientation form on $\pi_{\mG}^{-1}(z)$ as in Lemma \ref{lemma_orientation}.

We can now define $Y(z,x)$ to be the unique positively oriented vector in the one-dimensional space $T_{(z,x)} \pi_{\mG}^{-1}(z)$ that satisfies $\omega_{\pi_{\mG}^{-1}(z)}(Y) = 1$. Then $Y$ is a smooth vector field on $Z$ with $d\pi_{\mG}(Y) = 0$, and for any $(z,x_0) \in Z$ the maximally extended integral curve of $Y$ through $(z,x_0)$ is a component of $\pi_{\mG}^{-1}(z)$, hence equal to $\pi_{\mG}^{-1}(z)$ since the fibers are connected. The vector field $Y$ is unique up to multiplication by a nonvanishing function since any such $Y$ must lie in the one-dimensional space $T_{(z,x)} \pi_{\mG}^{-1}(z)$. We have $d\pi_M(Y) \neq 0$ since any $v \in TZ$ with $d\pi_{\mG}(v) = d\pi_M(v) = 0$ must satisfy $v=0$. If $\gamma_z(t)$ is an integral curve of $Y$ and if $\pi_M(\gamma_z(t_1)) = \pi_M(\gamma_z(t_2))$, then from $\pi_{\mG}(\gamma_z(t_1)) = \pi_{\mG}(\gamma_z(t_2))$ we obtain $\gamma_z(t_1) = \gamma_z(t_2)$. Thus each integral curve is either periodic or its projection to $M$ is injective. The formula \eqref{gzf_ray} follows since $\omega_{\pi_{\mG}^{-1}(z)}(Y) = 1$ and 
\[
\omega_{G_z}(d\pi_M(v)) = \omega_{\pi_{\mG}^{-1}(z)}(v), \qquad v \in T_{(z,x)} \pi_{\mG}^{-1}(z).
\]

Finally, given $(z_0,x_0) \in Z$ we use Lemma \ref{local description of Z} to write $Z$ locally in a neighborhood $V \times U'$ of $(z_0,x_0')$ as $\{ x'' = \phi(z,x') \}$. Then $\tilde{V} = \{ (z,x_0',\phi(z,x_0')) \mid z \in V \}$ is a submanifold of $Z$ that can be identified with $V$. The tangents of $\tilde{V}$ are of the form $(w,0',\phi_z(z,x_0') w)$, whereas $Y$ takes the form $(0,\lambda, \phi_{x'}(z,x_0') \lambda)$.
\end{proof}

\begin{Remark}
See \cite{Helgason2011} for classical examples of double fibration ray transforms integrating over periodic curves. 
\end{Remark}

We will next prove a converse to Lemma \ref{lemma_dfb_vf}, in the setting already discussed in Section \ref{subsec_ray}. Assume that $(M,g)$ is an oriented manifold with smooth boundary, and $\pi: \Xi \to M$ is a smooth fiber bundle over $M$ whose fibers $\Xi_x$ are manifolds without boundary (then $\p \Xi = \pi^{-1}(\p M)$). Let $Y$ be a vector field in $\Xi$ with flow $\Phi_t$, and consider its ``horizontal projection'' 
\[
Y^h = d\pi \circ Y.
\]
Given any $z \in \Xi$ we consider the maximally extended integral curve $\gamma_z: [-\tau_-(z), \tau_+(z)] \to \Xi$, $\gamma_{z}(t) = \Phi_t(z)$, so that $\dot{\gamma}_{z}(t) = Y(\gamma_{z}(t))$ and $\gamma_{z}(0) = z$. Note that the functions $\tau_{\pm}$ are not necessarily continuous and in general $\tau_{\pm}(z) \in [0,\infty]$, though we will only consider $z$ for which $\tau_{\pm}(z) < \infty$. For any curve $\gamma_z(t)$, there is a corresponding base space curve $x_{z}(t) = \pi(\gamma_{z}(t))$ whose tangent vector is $\dot{x}_{z}(t) = Y^h(\Phi_t(z)))$.

Let $\mG$ be a submanifold of $\Xi$ such that $Y$ is never tangent to $\mG$ (e.g.\ $\mG$ could be an open subset of $\p_+ \Xi$). We consider the set  
\[
Z = \{ (z, x_z(t)) \mid z \in \mG, \ t \in (-\tau_-(z),\tau_+(z)) \} \subset \mG \times M.
\]
The following converse to Lemma \ref{lemma_dfb_vf} shows that under certain conditions $Z$ is a double fibration. Each part below exactly corresponds to the related part in Definition \ref{def_dfrt}.

\begin{Lemma}
\label{lem: double fibration condition}
Let $D = \{ (z, t) \mid z \in \mG, \ t \in (-\tau_-(z),\tau_+(z)) \}$ and consider the map
\[
F: D \to \mG\times M, \ \ F(z,t) := (z, x_z(t)).
\]

\begin{enumerate}
\item[(i)] 
$F$ maps into $\mG \times \mi$ iff $x_z(t) \in \mi$ for $z \in \mG$ and $t \in (-\tau_-(z), \tau_+(z))$.
\item[(ii)] 
$F$ is injective iff the curves $x_z(t)$ do not self-intersect for $z \in \mG$.
\item[(iii)]
$F$ is an immersion iff $\dot{x}_z(t) \neq 0$ for $(z,t) \in D$.
\item[(iv)]
If the conditions in {\rm (i)--(iii)} hold and additionally $\tau_{\pm}(z) < \infty$ for all $z \in \mG$, then $F$ is an embedding of $D$ into $\mG \times \mi$.
\item[(v)]
If the conditions in {\rm (i)--(iv)} hold and additionally $V_z(t) \oplus \mR Y^h = T_{x_z(t)} M$ for all $z \in \mG$ and all $t$, then $\pi_M: Z \to \mi$ is a submersion and  $Z = F(D) \subset \mG \times \mi$ is a double fibration. In this case,
\begin{eqnarray}
\label{eq: N*Z of flows}
N^*Z = \{(z, (\pi \circ \Phi_t)^*\eta, x_z(t), -\eta) \mid z \in \mG, \ t\in (-\tau_-(z),\tau_+(z)), \ \eta(\dot x_z(t)) = 0 \}.
\end{eqnarray}
If $\mG$ is an open subset of $\p \Xi$, or more generally if $\dim(\mG) = \dim(\Xi)-1$, the condition $V_z(t) \oplus \mR Y^h = T_{x_z(t)} M$ holds automatically.
\end{enumerate}
\end{Lemma}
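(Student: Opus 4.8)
The plan is to compute the differential of $F$ once, read off parts (i)--(iii) essentially for free, dispatch (iv) by the standard argument upgrading an injective immersion to an embedding, and concentrate the real work on (v), where a single tangent-space computation simultaneously gives the submersion property of $\pi_M$ and the formula for $N^*Z$. Writing a tangent vector to $D$ at $(z,t)$ as $(w,s\partial_t)$ with $w\in T_z\mG$ and $s\in\mR$, one has directly from Definition \ref{def_variations}
\[
dF_{(z,t)}(w,s\partial_t)=\big(w,\ J_w(t)+s\,\dot x_z(t)\big),\qquad J_w(t)=d\pi\big(d\Phi_t(w)\big),\quad \dot x_z(t)=Y^h(\Phi_t(z)).
\]
Since the first components of $F$ and of $dF$ are simply $z$ and $w$, statement (i) is just the definition of $F$; (ii) follows because $F(z_1,t_1)=F(z_2,t_2)$ forces $z_1=z_2$ and then reduces to injectivity of $t\mapsto x_z(t)$; and (iii) holds because $dF_{(z,t)}$ can only be degenerate on a vector with $w=0$, and $dF_{(z,t)}(0,\partial_t)=(0,\dot x_z(t))$ vanishes precisely when $\dot x_z(t)=0$. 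For (iv), under (i)--(iii) the map $F$ is an injective immersion of $D$ into $\mG\times\mi$, and I would upgrade it to an embedding by checking that $F^{-1}$ is continuous: if $F(z_j,t_j)\to F(z_\infty,t_\infty)$ in $Z$, then $z_j\to z_\infty$, the nontrapping hypothesis keeps the $t_j$ in a bounded interval, a subsequence $t_{j_k}\to t_*$ satisfies $x_{z_\infty}(t_*)=x_{z_\infty}(t_\infty)\in\mi$ by continuity of the flow, hence $t_*\in(-\tau_-(z_\infty),\tau_+(z_\infty))$ and (ii) forces $t_*=t_\infty$; since every subsequence has a further subsequence converging to $t_\infty$, also $t_j\to t_\infty$. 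Thus $Z=F(D)$ is an embedded submanifold of $\mG\times\mi$ with $\dim Z=\dim\mG+1$.

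For (v) I would first note that $\pi_\mG\colon Z\to\mG$ is a submersion for free, since $\pi_\mG\circ F$ is the projection $(z,t)\mapsto z$ of $D$ onto $\mG$ while $F$ is a diffeomorphism onto $Z$. To see that $\pi_M\colon Z\to\mi$ is a submersion, note that $d\pi_M|_{T_{F(z,t)}Z}$ has the same image as $d(\pi_M\circ F)_{(z,t)}$ because $F$ is a diffeomorphism onto $Z$; the formula for $dF$ gives this image as $\{J_w(t)+s\,\dot x_z(t):w\in T_z\mG,\ s\in\mR\}=V_z(t)+\mR\,Y^h(\Phi_t(z))$, which is all of $T_{x_z(t)}M$ precisely by the hypothesis of (v) (that is, condition (v) of Definition \ref{def_dfrt}). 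Hence both projections are submersions and, with the standing dimension assumptions on the setup, $Z$ is a double fibration in the sense of Definition \ref{def_doublefibration_general}.

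For the conormal bundle, the same computation identifies $T_{F(z,t)}Z=\{(w,\,J_w(t)+s\,\dot x_z(t)):w\in T_z\mG,\ s\in\mR\}$, a subspace of dimension $\dim Z$ since $F$ is an immersion. A pair $(\zeta,\xi)$ lies in $N^*_{F(z,t)}Z$ iff $\zeta(w)+\xi\big(J_w(t)+s\,\dot x_z(t)\big)=0$ for all $w\in T_z\mG$ and all $s\in\mR$; taking $w=0$ gives $\xi(\dot x_z(t))=0$, and the remaining condition $\zeta(w)=-\xi\big(d\pi(d\Phi_t(w))\big)$ for all $w\in T_z\mG$ is exactly $\zeta=-(\pi\circ\Phi_t|_\mG)^*\xi$; writing $\eta=-\xi$ yields the formula \eqref{eq: N*Z of flows}. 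For the last assertion, if $\dim\mG=\dim\Xi-1$ (in particular if $\mG$ is an open subset of $\p\Xi$) then, since $Y$ is nowhere tangent to $\mG$, we have $T_z\mG\oplus\mR\,Y(z)=T_z\Xi$ for every $z\in\mG$; applying the isomorphism $d\Phi_t\colon T_z\Xi\to T_{\Phi_t(z)}\Xi$, then the surjection $d\pi\colon T_{\Phi_t(z)}\Xi\to T_{x_z(t)}M$, and using $d\Phi_t(Y(z))=Y(\Phi_t(z))$, one obtains $V_z(t)+\mR\,Y^h(\Phi_t(z))=d\pi\big(d\Phi_t(T_z\Xi)\big)=T_{x_z(t)}M$, so condition (v) of Definition \ref{def_dfrt} holds automatically.

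The step I expect to be the main obstacle is inside (iv): upgrading the injective immersion $F$ to an embedding, i.e.\ the continuity of $F^{-1}$. This hinges entirely on the nontrapping hypothesis $\tau_\pm<\infty$ to prevent the time parameters $t_j$ from escaping (or the trajectories from accumulating onto themselves in a way that destroys properness), and making it fully rigorous requires some care with the domain of the flow $\Phi$ and with the behaviour of the exit times $\tau_\pm$ near a fixed point $z_\infty\in\mG$. By contrast, once the formula for $dF$ is available, parts (i)--(iii) and all of (v), including the identity \eqref{eq: N*Z of flows}, are routine.
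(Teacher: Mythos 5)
Your proposal takes essentially the same route as the paper: compute $dF_{(z,t)}(w,s\partial_t)=(w,\,J_w(t)+s\dot x_z(t))$, read off parts (i)--(iii), upgrade the injective immersion to an embedding in (iv), and in (v) push the curve $\rho_s=F(z_s,t_s)$ through $d\pi_\mG$ and $d\pi_M$ to get both the submersion property and the description of $T_{F(z,t)}Z$ (hence of $N^*Z$), and finally use $d\Phi_t(Y(z))=Y(\Phi_t(z))$ together with $T_z\mG\oplus\mR Y(z)=T_z\Xi$ for the $\dim\mG=\dim\Xi-1$ case. All of this matches the paper's proof.

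The one place where the proposal leaves a genuine gap is the step you yourself flag as the obstacle in (iv). You write that ``the nontrapping hypothesis keeps the $t_j$ in a bounded interval,'' but pointwise finiteness of $\tau_\pm$ does not by itself prevent $\tau_\pm(z_j)$ from blowing up as $z_j\to z_\infty$, and hence does not bound the $t_j$. What is actually needed is upper semicontinuity of $z\mapsto\tau_\pm(z)$: then $\limsup_j\tau_+(z_j)\le\tau_+(z_\infty)<\infty$ and similarly for $\tau_-$, which gives the boundedness. The paper isolates this as a separate statement (Lemma \ref{lem: usc}) and proves it using that $\Xi$ is closed and the flow $\Phi$ is jointly continuous: if $\limsup\tau_+(z_j)>\tau_+(z_\infty)$, one can pass to the limit in $\Phi_{t_j}(z_j)\in\Xi$ to contradict the definition of $\tau_+(z_\infty)$. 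So you correctly identify where the difficulty lies, but the mechanism is semicontinuity rather than nontrapping per se, and the proof is incomplete until this is supplied.

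One more small remark: you argue (iv) by showing $F^{-1}$ is continuous at each point of $F(D)$, while the paper shows $F(D)$ is closed in $\mG\times\mi$ (equivalently that $F$ is proper). Both suffice to promote an injective immersion to an embedding, and both rest on the same semicontinuity input, so this is a cosmetic difference rather than a different method.
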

\begin{proof}
(i) is clear. For (ii), one has $F(z_1,t_1) = F(z_2,t_2)$ iff $z_1=z_2=z$ and $x_z(t_1) = x_z(t_2)$. This shows (ii).

(iii) Let $(z_s, t_s)$ be a curve in $D$. Since $x_z(t) = \pi(\Phi_t(z))$, we have 
\begin{eqnarray}
\label{eq: dF}
dF(\dot{z}_0, \dot{t}_0) = (\dot{z}_0, \dot{x}_{z_0}(t_0) \dot{t}_0 + d\pi(d\Phi_{t_0}(\dot{z}_0))).
\end{eqnarray}
We see that $dF|_{(z_0,t_0)}$ is injective iff $\dot{x}_{z_0}(t_0) \neq 0$.

(iv) Suppose that $\tau_+(z) < \infty$ for all $z \in \mG$, and let $(z_j, t_j)$ be a sequence in $D$ with $(z_j, x_{z_j}(t_j)) \to (z,x)$ in $\mG \times \mi$. Since $\tau_+$ is upper semicontinuous by Lemma \ref{lem: usc}, we have $\limsup \tau_+(z_j) \leq \tau_+(z) < \infty$. Thus $t_j \leq \tau_+(z)$, and similarly $t_j \geq -\tau_-(z)$. In particular $(t_j)$ is a bounded sequence, and hence after replacing $(t_j)$ by a subsequence we have  $t_j \to t_0 \in [-\tau_-(z),\tau_+(z)]$. This implies that $x = x_z(t_0)$, and $t_0 \in (-\tau_-(z),\tau_+(z))$ since $x \in \mi$. It follows that $F(D)$ is closed in $\mG \times \mi$. Moreover, if the conditions in (i)--(iii) also hold, then $F$ is an embedding.

(v) Since $F$ is an embedding, any smooth curve in $Z$ is of the form $\rho_s = F(z_s, t_s) = (z_s, x_{z_s}(t_s))$ where $z_s$ and $t_s$ are smooth curves in $\mG$ and $\mR$, respectively. Then 
\[
d\pi_{\mG}(\dot{\rho}_0) = \dot{z}_0,
\]
which shows that $d\pi_{\mG}$ is surjective. Similarly, 
\begin{eqnarray}
\label{transverse of dpiM}
d\pi_M(\dot{\rho}_0) = \p_s(\pi(\Phi_{t_s}(z_s))|_{s=0} = d\pi(Y(\Phi_{t_0}(z_0)) \dot{t}_0 + d \Phi_{t_0}(\dot{z}_0)).
\end{eqnarray}
According to Definition \ref{def_variations}, $d\pi(d \Phi_{t}(\dot{z}_0)) = J_{\dot{z}_0}(t)$. We also note for later purposes that 
\begin{equation} \label{y_phit_eq}
Y(\Phi_{t}(z)) = \p_t \Phi_t(z) = \p_s(\Phi_t(\Phi_s(z)))|_{s=0} = d\Phi_{t}(Y(z)).
\end{equation}
This shows that 
\[
d\pi_M(\dot{\rho}_0) = J_{\dot{z}_0}(t_0) + \dot{t}_0 Y^h(\Phi_{t_0}(z_0)).
\]
The assumption $V_z(t) \oplus \mR Y^h = T_{x_z(t)} M$ for all $z, t$ ensures that $\pi_M$ is a submersion. Thus $Z$ is a double fibration. The formula \eqref{eq: N*Z of flows} for $N^* Z$ follows since vectors in $T_{(z,x)}Z$ take on the form \eqref{eq: dF} (see also Lemma \ref{lemma_nstarz}). Finally, if $\dim(\mG) = \dim(\Xi)-1$, then $T_z\mG + \mR Y = T_z \Xi$ by the assumption that $Y$ is never tangent to $\mG$. Since $\Phi_t$ is a flow, $d\Phi_t$ is an isomorphism. By \eqref{transverse of dpiM} and \eqref{y_phit_eq} we have $d\pi_M(\dot{\rho}_0) = d\pi(d\Phi_{t_0}(Y(z_0)\dot{t}_0 + \dot{z}_0))$ and thus $V_z(t) \oplus \mR Y^h = T_{x_z(t)} M$.
\end{proof}

\begin{Lemma}
\label{lem: usc}
The map $z\mapsto \tau_+(z)$ is upper semicontinuous.
\end{Lemma}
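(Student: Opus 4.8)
The plan is to realise $\tau_+(z)$ as the first time the integral curve exits $\Xi$, after extending the flow across the boundary into a manifold without boundary, and then to invoke the elementary fact that first hitting times of open sets along a continuous flow are upper semicontinuous.

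First I would dispose of the trivial case: if $\tau_+(z_0) = +\infty$ there is nothing to prove, so assume $T := \tau_+(z_0) < \infty$. Since the curve $\gamma_{z_0}$ is not trapped and has finite $\tau_+$, it reaches the boundary, that is, $\gamma_{z_0}(t) \to q_0 \in \partial\Xi$ as $t \to T^-$. Next I would embed $\Xi$ as a closed subset of a manifold $N$ without boundary (for instance by attaching an external collar across $\partial\Xi$, so that $\mathcal O := N \setminus \Xi$ is open), and extend $Y$ to a smooth vector field $\widetilde Y$ on $N$ with $\widetilde Y|_\Xi = Y$. Let $\widetilde\Phi$ be the local flow of $\widetilde Y$. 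Because $\widetilde\Phi_t(z_0) = \gamma_{z_0}(t)$ for $t \in [0,T)$ and $\widetilde\Phi_T(z_0) = q_0$, there are $\delta_0 > 0$ and a neighbourhood $W$ of $z_0$ in $N$ such that $\widetilde\Phi_t(z)$ is defined for all $(t,z) \in [0,T+\delta_0] \times W$. By uniqueness of integral curves, $\widetilde\Phi_t(z)$ agrees with $\gamma_z(t)$ as long as the latter stays in $\Xi$; hence if $\widetilde\Phi_{t^*}(z) \in \mathcal O$ for some $t^* \le T + \delta_0$, then $\gamma_z$ cannot be defined up to time $t^*$, so $\tau_+(z) \le t^*$.

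Then I would run the hitting-time argument. Fix $\varepsilon \in (0,\delta_0)$. Since $T$ is the maximal existence time of $\gamma_{z_0}$ inside $\Xi$, the extended curve cannot stay in $\Xi$ for all $t \in [0,T+\varepsilon)$ (otherwise $\gamma_{z_0}$ would extend past $T$); as $\widetilde\Phi_t(z_0) \in \Xi$ for $t \in [0,T]$, there must be $t^* \in (T, T+\varepsilon)$ with $\widetilde\Phi_{t^*}(z_0) \in \mathcal O$. Since $\mathcal O$ is open and $z \mapsto \widetilde\Phi_{t^*}(z)$ is continuous on $W$, there is a neighbourhood $U \subset W$ of $z_0$ with $\widetilde\Phi_{t^*}(z) \in \mathcal O$ for all $z \in U$; by the previous paragraph, $\tau_+(z) \le t^* < T + \varepsilon$ for all such $z$. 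Letting $\varepsilon \downarrow 0$ yields $\limsup_{z \to z_0}\tau_+(z) \le T = \tau_+(z_0)$, which is the claimed upper semicontinuity.

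The one genuinely delicate point is the reduction at the start: one must use the standing assumption that only non-trapped curves are considered, so that a curve with $\tau_+(z_0) < \infty$ genuinely exits through $\partial\Xi$; this is what guarantees that the extended flow is defined slightly beyond time $T$ and that $\tau_+$ coincides with the first exit time of $\Xi$. Everything after that reduction is routine, combining continuity and uniqueness for ODEs with the upper semicontinuity of hitting times of open sets.
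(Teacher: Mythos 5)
Your proof is correct, and it rests on the same mechanism as the paper's: the trajectory through $z_0$ must leave the closed set $\Xi$ at some time just past $\tau_+(z_0)$, and continuity of the flow transfers this exit to nearby initial points. The packaging differs, though, in a way worth noting. The paper argues by contradiction with a sequence $z_j \to z$ and times $t_j$, using joint continuity of $\Phi$ and closedness of $\Xi$ to place $\Phi_{\tau_+(z)+\epsilon}(z)$ in $\Xi$ for every small $\epsilon$, contradicting maximality; in doing so it tacitly evaluates the flow at times beyond $\tau_+$, i.e.\ it implicitly assumes an ambient extension of the flow across $\partial\Xi$. You make exactly this extension explicit (collar attachment, extension of $Y$ to a boundaryless $N$, uniqueness of integral curves of $\widetilde Y$) and then run the dual, ``open complement'' version of the argument: a single time $t^*\in(\tau_+(z_0),\tau_+(z_0)+\varepsilon)$ at which $\widetilde\Phi_{t^*}(z_0)$ lies in the open set $N\setminus\Xi$, plus continuity of the time-$t^*$ map, forces $\tau_+(z)\le t^*$ for all nearby $z$. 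This buys a cleaner justification of the step the paper glosses over. One small refinement to your ``delicate point'': what you actually need is not non-trapping as such, but that the curve attains a limit $q_0\in\Xi$ as $t\to\tau_+(z_0)^-$ so that the extended flow is defined slightly beyond $\tau_+(z_0)$; this is built into the paper's convention that $\gamma_z$ is defined on the closed interval $[-\tau_-(z),\tau_+(z)]$ with values in $\Xi$ (maximality then places $q_0$ on $\partial\Xi$), and it is the same implicit hypothesis the paper's own proof uses.
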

\begin{proof}
Let $z_j \to z$ and suppose by contradiction that $\limsup \tau_+(z_j) > \tau_+(z)$. Without loss of generality this means that there exists an $\epsilon_0>0$ such that for all  $\epsilon\in(0,\epsilon_0)$ we have that $\tau_+(z_j) > \tau_+(z) +\epsilon$ for all $j\in \mathbb N$. This means that there exists a bounded sequence $t_j\in(\tau_+(z) +\epsilon, \tau_+(z_j))$ with $t_j\to \tau_+(z)+\epsilon$ such that $\Phi_{t_j}(z_j) \in \Xi$. By the joint continuity of $\Phi_{\cdot}(\cdot)$ we get that $\Phi_{\tau_+(z) +\epsilon}(z)\in \Xi$ since $\Xi$ is closed. This is true for all $\epsilon \in(0,\epsilon_0)$, which contradicts the definition of $\tau_+(z)$.
\end{proof}

\subsection{Ray transforms satisfying the Bolker condition}

We will next look at the Bolker condition in the special case of ray transforms. Let $z_s$ be a smooth curve in $\mG$ with $z_0 = z$ and $\partial_s z_s|_{s=0} = w$. In this subsection we slightly change notation and denote by $\tilde{J}_w(x)$  the variation field along $G_{z}$ as in Definition \ref{def_variation_general}. We can relate $\tilde{J}_w$ with the vector field $J_w: (-\tau_-(z), \tau_+(z)) \to T M$ along $x_{z}$ from Definition \ref{def_variations}, given by 
\[
J_w(t) = \p_s x_{z_s}(t)|_{s=0} = d\pi(d \Phi_t(w)).
\]
For all $\eta \in N_{x_z(t)}^* G_z$, since $\eta(J_w(t)) = \eta((\pi \circ \Phi_t|_{\mG})_* w) = (\pi \circ \Phi_t|_{\mG})^* \eta(w) = (A(z,x_z(t))\eta)(w)$, we have 
\[
\eta(J_w(t)) = \tilde{J}_w(x_z(t))(\eta).
\]
Thus $\tilde{J}_w(x_z(t)) = 0$ iff $J_w(t)$ is tangential, i.e.\ $J_w(t) \parallel \dot{x}_z(t)$. This means that $\tilde{J}_w(x_z(t))$ can be identified with $J_w(t)$, when the latter is considered as an element of $T_{x_z(t)} M / T_{x_z(t)} G_z$. 

Recall from Definition \ref{def_hx_vts_ray} the space 
\[
V_{z}(t, s) = \{ J_w(t) \mid w \in T_{z} \mG, \ J_w(s) \parallel \dot{x}_z(s) \},
\]
If $t \neq s$, we say that $x_z(t)$ and $x_z(s)$ are \emph{$Z$-conjugate} along $x_z$ if $V_{z}(t, s) + \mR Y^h$ is a strict subspace of $T_{x_z(t)} M$. Now $V_z(t,s) + \mR Y^h = T_{x_z(t)} M$ iff $V_z(x_z(t), x_z(s))$ is all of $(N_{x_z(t)}^* G_z)^*$, which implies that this notion of $Z$-conjugate points is equivalent to the one in Definition \ref{def_variation_general}. From Lemma \ref{lemma_ncp_general}, we see that the following are equivalent:
\begin{enumerate}
\item[(a)]
$x_z(t)$ and $x_z(s)$ are not $Z$-conjugate along $x_z$.
\item[(b)] 
$V_{z}(t,s) + \mR Y^h = T_{x_z(t)} M$.
\item[(c)] 
The space $\{ w \in T_z \mG \mid J_w(t), J_w(s) \text{ are tangential} \}$ has dimension $N-(2n-2)$.
\end{enumerate}

\begin{Remark} \label{rmk_cp_geodesics}
It follows that when $N < 2n-2$, all pairs of points on any $x_z$ are $Z$-conjugate. Moreover, when $N=2n-2$ the notion of two points being $Z$-conjugate is equivalent to the existence of a nontrivial variation field that is tangential at the two points.

In particular, if $\Xi = SM$ and $Y = X_g$ is the geodesic vector field, then we have the geodesic X-ray transform. Variation fields are standard Jacobi fields $J_w(t) = d\pi(d\Phi_t(w))$, but $w$ is now restricted to lie in $T_z \mG$ instead of $T_z(TM)$. Note that the tangential Jacobi field $\dot{x}_z(t)$ formally corresponds to taking $w=Y$, and $t \dot{x}_z(t)$ corresponds to the radial direction in $T_z(TM)$. In the usual case where $\dim(\mG) = 2n-2$ and $\mG$ is never tangent to $Y$, $Z$-conjugate points for the geodesic X-ray transform in the sense defined above correspond precisely to conjugate points for geodesics in the standard sense. Indeed, (c) above fails iff there is $w \in T_z \mG \setminus 0$ such that $J_w(t)$ and $J_w(s)$ are tangential to $x_z$, and after subtracting a linear combination of $\dot{x}_z(r)$ and $r \dot{x}_z(r)$ from $J_w(r)$ this is equivalent with having a nontrivial Jacobi field such that $J(t) = J(s) = 0$. 
\end{Remark}

The following result shows that the Bolker condition can indeed be characterized as in Definition~\ref{def_bolker_ray}.

\begin{Lemma} \label{lemma_bolker_ray}
Let $R$ be a double fibration ray transform and let $(z,\zeta,x,\eta) \in C$ with $x = x_z(t)$.
\begin{enumerate}
\item[(a)]
$\pi_L^{-1}(z,\zeta) = \{ (z,\zeta,x,\eta) \}$ iff $V_z(t,s)$ is not annihilated by $\eta$ for any $s \neq t$.
\item[(b)] 
$\pi_L$ is injective iff there are no pairs of $Z$-conjugate points on any $x_z$.
\item[(c)] 
$d\pi_L|_{(z,\zeta,x,\eta)}$ is injective iff $d(Y^h(x,\xi(\,\cdot\,)))(T_{z} H_{x})$ is not annihilated by $\eta$, where $(x,\xi(\tilde{z}))$ is the unique point over $x$ on the integral curve through $\tilde{z} \in H_{x}$.
\item[(d)] 
Let $\dim(\mG) = \dim(\Xi)-1$. Then $d\pi_L|_{(z,\zeta,x,\eta)}$ is injective iff $d(Y^h(x,\,\cdot\,))(T_{(x,\xi(z))} \Xi_x)$ is not annihilated by $\eta$.
\end{enumerate}
\end{Lemma}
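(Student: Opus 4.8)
The plan is to deduce parts (a) and (b) directly from Lemma~\ref{l:no_conjugate_points} by translating it into ray-transform language, and to obtain parts (c) and (d) from the equivalent formulation (4) of the Bolker condition in Lemma~\ref{lemma_bolker_local_char}. For (a) and (b): by the no-self-intersection hypothesis every $y\in G_z\setminus\{x\}$ equals $x_z(s)$ for a unique $s\neq t$, so the quantifier over $y$ in Lemma~\ref{l:no_conjugate_points} becomes a quantifier over $s\neq t$. Then I would invoke the dictionary recorded just before the lemma: for $\eta\in N^*_{x_z(t)}G_z$ one has $\eta(J_w(t))=\tilde J_w(x_z(t))(\eta)$, and $\tilde J_w(x_z(s))=0$ iff $J_w(s)\parallel\dot x_z(s)$. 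Hence the subspace $V_z(x_z(t),x_z(s))\subset(N^*_xG_z)^*$ of Definition~\ref{def_variation_general} is annihilated by $\eta$ exactly when $\eta$ (which already kills $T_xG_z=\mR\dot x_z(t)$) annihilates the space $V_z(t,s)\subset T_xM$ of Definition~\ref{def_hx_vts_ray}; substituting this into Lemma~\ref{l:no_conjugate_points} gives (a). For (b) I would just combine the second assertion of Lemma~\ref{l:no_conjugate_points} with the equivalence of the two notions of $Z$-conjugate point established in the paragraph preceding the present lemma.

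For (c) the key step is to apply Lemma~\ref{lemma_bolker_local_char}(4) with $n'=1$. First I would put in place the necessary smoothness: since $F\colon D\to\mG\times\mi$ is an embedding by Lemma~\ref{lem: double fibration condition}(iv), I identify $H_x$ with the submanifold $\{(\tilde z,t)\in D:x_{\tilde z}(t)=x\}$ of $D$; on it the hitting time $t=t(\tilde z)$ is a smooth function, so $\tilde z\mapsto\xi(\tilde z)$, the fiber coordinate of $\Phi_{t(\tilde z)}(\tilde z)$, is smooth and $Y^h(x,\xi(\tilde z))=\dot x_{\tilde z}(t(\tilde z))\neq 0$ by condition (iii) of Definition~\ref{def_dfrt}. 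Thus $F_1(\tilde z):=Y^h(x,\xi(\tilde z))$ is an admissible choice in Lemma~\ref{lemma_bolker_local_char}(4): it is smooth and spans $T_xG_{\tilde z}$. Part (4) then reads that $d\pi_L|_{(z,\zeta,x,\eta)}$ is injective iff $v\mapsto\eta(dF_1|_z\,v)$ is onto $\mR$ on $T_zH_x$, i.e.\ iff $\eta$ does not annihilate $dF_1|_z(T_zH_x)=d(Y^h(x,\xi(\,\cdot\,)))(T_zH_x)$, which is exactly the statement of (c). That this condition is independent of the choice of $F_1$ is automatic since $\eta$ annihilates $T_xG_{\tilde z}=\mR F_1(\tilde z)$.

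For (d), assume $\dim(\mG)=\dim(\Xi)-1$. The plan is to show that $G\colon D\to\Xi$, $G(\tilde z,t)=\Phi_t(\tilde z)$, is a local diffeomorphism near the relevant point: by \eqref{y_phit_eq} one has $dG(\dot z,\dot t)=d\Phi_t(\dot z+\dot t\,Y(\tilde z))$, which is bijective because $d\Phi_t$ is an isomorphism and $T_{\tilde z}\mG\oplus\mR Y(\tilde z)=T_{\tilde z}\Xi$ ($Y$ being never tangent to $\mG$). Under the identification from (c), $H_x$ corresponds to $G^{-1}(\Xi_x)$ (since $x_{\tilde z}(t)=\pi(G(\tilde z,t))$), and the map $\Psi\colon\tilde z\mapsto(x,\xi(\tilde z))=\Phi_{t(\tilde z)}(\tilde z)$ is the restriction of $G$ to $G^{-1}(\Xi_x)$; hence $\Psi$ is a diffeomorphism onto an open subset of $\Xi_x$ and $d\Psi|_z(T_zH_x)=T_{(x,\xi(z))}\Xi_x$ (the dimension count $\dim H_x=N-n''=\dim\Xi-n=\dim\Xi_x$ makes this transparent). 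Since $Y^h(x,\xi(\tilde z))=(Y^h(x,\,\cdot\,))\circ\Psi(\tilde z)$, the chain rule gives $d(Y^h(x,\xi(\,\cdot\,)))(T_zH_x)=d(Y^h(x,\,\cdot\,))(T_{(x,\xi(z))}\Xi_x)$, and (d) follows from (c). I expect the only delicate point of the whole argument to be this bookkeeping around $H_x$ and the hitting time $t(\tilde z)$ — its smoothness, the identification of $H_x$ with a submanifold of $D$ and with $G^{-1}(\Xi_x)$, and verifying that $\Psi$ is genuinely the restriction of the local diffeomorphism $G$; everything else is a direct translation through Lemmas~\ref{l:no_conjugate_points} and~\ref{lemma_bolker_local_char}.
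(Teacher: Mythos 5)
Your proposal is correct and follows essentially the same route as the paper: (a)--(b) via Lemma \ref{l:no_conjugate_points} together with the $J_w$/$\tilde J_w$ dictionary, (c) via Lemma \ref{lemma_bolker_local_char}(4) applied with $F(\tilde z)=Y^h(x,\xi(\tilde z))$ after noting smoothness of the hitting time, and (d) by showing that $\tilde z\mapsto(x,\xi(\tilde z))$ carries $T_zH_x$ onto $T_{(x,\xi(z))}\Xi_x$ using \eqref{y_phit_eq} and the transversality of $Y$ to $\mG$. The only cosmetic difference is that in (d) you package the computation as the restriction of the local diffeomorphism $(\tilde z,t)\mapsto\Phi_t(\tilde z)$, whereas the paper checks directly that $d\xi$ is injective and invokes the dimension count $\dim H_x=\dim\Xi_x$.
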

\begin{proof}
Part (a) follows from Lemma \ref{l:no_conjugate_points} and the identification of $J_w(t)$, modulo tangential vectors, with $\tilde{J}_w(x_z(t))$. Part (b) follows from part (a).

Part (c) is a consequence of Lemma \ref{lemma_bolker_local_char} part (4) with the choice $F: H_x \to T_x M$, $F(\tilde{z}) = Y^h(\Phi_{t(\tilde{z})}(\tilde{z}))$ where $t(\tilde{z})$ is the unique time such that $x_{\tilde{z}}(t(\tilde{z})) = x$. The map $\tilde{z} \mapsto t(\tilde{z})$ is smooth by the implicit function theorem, and since $\Phi_{t(\tilde{z})}(\tilde{z}) = (x, \xi(\tilde{z}))$ it follows that also $\xi: H_x \to \Xi_x$ is smooth.

For part (d), note that when $\dim(\mG) = \dim(\Xi)-1$, one has $\dim(H_x) = \dim(\Xi_x)$ and thus $\xi$ maps between spaces of the same dimension. If $z_s$ is a curve in $H_x$ with $z_0=z$, we use \eqref{y_phit_eq} to obtain 
\begin{align*}
d\xi(\dot{z}_0) &= \p_s(\Phi_{t(z_s)}(z_s))|_{s=0} = d\Phi_{t(z_0)}(\dot{z}_0) + Y(\Phi_{t(z_0)}(z_0)) \p_s(t(z_s))|_{s=0} \\
 &= d\Phi_{t(z_0)}(\dot{z}_0 + Y(z_0) \p_s(t(z_s))|_{s=0}).
\end{align*}
Now $d\Phi_t$ is an isomorphism and $Y$ is never tangent to $H_x$. Thus $d\xi$ is injective and hence invertible. We obtain 
\[
d(Y^h(x,\xi(\,\cdot\,)))(T_{z} H_{x}) = d(Y^h(x,\,\cdot\,))(T_{(x,\xi(z))} \Xi_x).
\]
The result follows from (c).
\end{proof}

\subsection{Example 1: Geodesic X-ray transform} \label{sec_ex_xray}

Let us consider the geodesic X-ray transform (see \cite{PSU_book} for more details). Let $(M,g)$ be an oriented manifold with smooth boundary, let $SM = \{ (x,v) \in TM \mid \abs{v}_g = 1 \}$ be the unit sphere bundle, and let $\p_+ SM = \{ (x,v) \in SM \mid x \in \p M, \ g(v,\nu) < 0 \}$ where $\nu$ is the unit outer normal to $\p M$. We take $\Xi = SM$ and $Y = X_g$, where $X_g$ is the geodesic vector field. Let $\mG$ be a submanifold of $\p_+ SM$, so $Y$ is never tangent to $\mG$. Let also $\kappa \in C^{\infty}(\mG \times \mi)$ be nowhere vanishing. Then the curves $x_z(t)$ are unit speed geodesics, and the weighted geodesic X-ray transform is given by 
\begin{equation} \label{grt_def}
R f(z) = \int_0^{\tau_+(z)} \kappa(z, x_z(t)) f(x_z(t)) \,dt, \qquad z \in \mG.
\end{equation}

Let us assume the following conditions for all $z \in \mG$:
\begin{enumerate}
\item[(i)]
(No tangential intersections) $x_z(t) \in \mi$ for $t \in (0,\tau_+(z))$;
\item[(ii)]
(No self-intersections) $t \mapsto x_z(t)$ is injective;
\item[(iv)]
(Nontrapping) $\tau_+(z) < \infty$; 
\item[(v)] 
(Enough variations) $V_z(t) + \mR \dot{x}_z(t) = T_{x_z(t)} M$ for $t \in (0,\tau_+(z))$.
\end{enumerate}
Under these conditions, Lemma \ref{lem: double fibration condition} ensures that $R$ is a ray transform coming from a double fibration. The no singular points condition (iii) always holds since $\abs{\dot{x}_z(t)} = 1$. Condition (i) holds e.g.\ if $\p M$ is strictly convex. Conditions (ii) and (iv) hold e.g.\ if $(M,g)$ is a simple manifold, but they may fail for trapping manifolds such as the catenoid. As noted above, (v) is automatically satisfied if $\mG$ is an open subset of $\p_+ SM$, i.e.\ $\dim(\mG) = 2n-2$. Condition (v) holds also in many other cases, e.g.\ if $M \subset \mR^n$, $g$ is the Euclidean metric, and $\mG$ consists of all lines whose direction vector is orthogonal to $e_n$.

Let us next study the Bolker condition at $(z,\zeta,x,\eta) \in C$ where $x = x_z(t)$. In the usual case where $\dim(\mG) = 2n-2$, the map $d\pi_L$ is always injective by Lemma \ref{lemma_bolker_ray} part (d) since $Y^h(x,\,\cdot\,)$ is the identity map. Moreover, if the geodesics through $z \in \mG$ have no conjugate points in the usual sense, then $\pi_L$ is injective by Lemma \ref{lemma_bolker_ray} part (b) and Remark \ref{rmk_cp_geodesics}. Even if there are conjugate points or $\dim(\mG) < 2n-2$, the Bolker condition might still hold at some points $(z,\zeta,x,\eta) \in C$, but then one has to verify the conditions in Lemma \ref{lemma_bolker_ray} parts (a) and (c) instead.

We collect some of the above results in the following proposition, formulated in terms of a fixed geodesic $x_{z_0}$ that is never tangent to $\p M$ (hence also geodesics $x_z$ for $z$ near $z_0$ are never tangent). This result is already contained e.g.\ in \cite{StefanovUhlmann}.

\begin{Proposition} \label{geodesic bolker}
Let $(M,g)$ be a manifold with smooth boundary, let $z_0 \in \p_+ SM$ satisfy $\tau_+(z_0) < \infty$, and assume that the geodesic $x_{z_0}: [0,\tau_+(z_0)] \to M$ does not self-intersect, meets $\p M$ transversally at the endpoints, and otherwise stays in $\mi$. Let also $\kappa \in C^{\infty}(\mG \times \mi)$ be nowhere vanishing. If $\mG$ is a sufficiently small neighborhood of $z_0$ in $\p_+ SM$, then the geodesic ray transform given by \eqref{grt_def} is a double fibration ray transform. If there are no conjugate points along $x_{z_0}$, then the Bolker condition is satisfied at every $(z_0,\zeta, x,\eta) \in C$ where $x = x_{z_0}(t)$ for some $t$.
\end{Proposition}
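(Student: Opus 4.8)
The plan is to verify Proposition \ref{geodesic bolker} by checking, in turn, that the geometric hypotheses imply conditions (i)--(v) of Definition \ref{def_dfrt}, and then that the no-conjugate-points assumption gives the Bolker condition at each point of $C$ over $x_{z_0}$, using the characterizations in Lemma \ref{lemma_bolker_ray}.

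First I would establish that $R$ is a double fibration ray transform for $\mG$ a small enough neighborhood of $z_0$ in $\p_+ SM$. Condition (iii) is immediate since geodesics are unit speed, so $\dot x_z(t) \neq 0$. For (i), the assumption that $x_{z_0}$ meets $\p M$ transversally at its endpoints and stays in $\mi$ in between persists under small perturbations of $z_0$: by upper semicontinuity of $\tau_+$ (Lemma \ref{lem: usc}) and continuous dependence of geodesics on initial conditions, for $z$ near $z_0$ the geodesic $x_z$ still exits $\mi$ transversally at a time close to $\tau_+(z_0)$ and stays inside $\mi$ for intermediate times; here one uses that the transversal exit point of $x_{z_0}$ is a non-degenerate zero of $t \mapsto \rho(x_{z_0}(t))$ for a boundary defining function $\rho$, so the implicit function theorem gives a nearby transversal exit for $x_z$, and compactness of $[0,\tau_+(z_0)-\eps]$ handles the interior. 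Condition (ii), injectivity of $t \mapsto x_z(t)$, likewise persists: if it failed for a sequence $z_j \to z_0$ one would extract, using boundedness of the times (again from Lemma \ref{lem: usc}), a self-intersection of $x_{z_0}$, contradicting the hypothesis; one should be slightly careful to rule out the degenerate case where the two intersection parameters converge to the same value, which is excluded because $\dot x_{z_0} \neq 0$. Condition (iv) is just upper semicontinuity of $\tau_+$ together with $\tau_+(z_0)<\infty$. Finally, since $\mG$ is taken to be an open subset of $\p_+ SM$, so $\dim(\mG) = 2n-2 = \dim(\Xi)-1$, condition (v) holds automatically by Lemma \ref{lem: double fibration condition}(v) (and $\dim(\mG) \geq \dim(M)$ is clear). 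Hence Lemma \ref{lem: double fibration condition} applies and $R$ is a double fibration ray transform.

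Next, for the Bolker condition at $(z_0,\zeta,x,\eta) \in C$ with $x = x_{z_0}(t)$ and $\eta \perp \dot x_{z_0}(t)$, I would invoke Lemma \ref{lemma_bolker_ray}. For the injectivity of $d\pi_L$, part (d) applies because $\dim(\mG) = \dim(\Xi)-1$: the horizontal projection for the geodesic flow is $Y^h(x,v) = v$, so $d(Y^h(x,\cdot))$ is the identity on $T_{(x,\xi(z_0))}\Xi_x = T_{(x,\xi(z_0))} S_x M$, and its image is the tangent space to the unit sphere at $\xi(z_0) = \dot x_{z_0}(t)$, which is precisely the $g$-orthogonal complement of $\dot x_{z_0}(t)$ in $T_x M$. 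A nonzero $\eta$ with $\eta \perp \dot x_{z_0}(t)$ does not annihilate this hyperplane (it is nonzero on the vector $g$-dual to its own tangential-to-sphere part), so $d\pi_L$ is injective at every such point. For the global injectivity $\pi_L^{-1}(z_0,\zeta) = \{(z_0,\zeta,x,\eta)\}$, I would use part (a): one must check that $V_{z_0}(t,s)$ is not annihilated by $\eta$ for any $s \neq t$. By Remark \ref{rmk_cp_geodesics}, since $\dim(\mG) = 2n-2$, the absence of conjugate points along $x_{z_0}$ is exactly the statement that no pair $x_{z_0}(t), x_{z_0}(s)$ is $Z$-conjugate, i.e.\ $V_{z_0}(t,s) + \mR \dot x_{z_0}(t) = T_x M$; since $\eta \perp \dot x_{z_0}(t)$ and $\eta \neq 0$, it cannot annihilate all of $V_{z_0}(t,s)$. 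This gives $\pi_L^{-1}(z_0,\zeta) = \{(z_0,\zeta,x,\eta)\}$, completing the verification of the Bolker condition.

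The routine part is the perturbation analysis showing (i)--(iv) survive for $z$ near $z_0$; I expect the main obstacle — really the only place requiring care — to be handling the boundary behavior in (i) and (ii) uniformly in $z$, in particular making precise the claim that transversality at the endpoints is an open condition and that the compactness of the (shrinking-by-$\eps$) geodesic segment lets one pass from pointwise continuity of the flow to the uniform interior containment $x_z(t) \in \mi$. Once those openness statements are in place, the Bolker verification is a direct application of Lemma \ref{lemma_bolker_ray}(a) and (d) combined with Remark \ref{rmk_cp_geodesics}, as indicated above. I would also remark that the weight $\kappa$ plays no role in any of this, since it is nowhere vanishing and does not affect $Z$, $C$, or the Bolker condition.
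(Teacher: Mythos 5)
Your proposal is correct and follows essentially the same route as the paper: the paper's proof likewise treats everything except the persistence of non-self-intersection as already settled by Lemma \ref{lem: double fibration condition}, Lemma \ref{lemma_bolker_ray} (parts (a)/(d)) and Remark \ref{rmk_cp_geodesics}, and handles that persistence by the same compactness/contradiction argument, with your "degenerate case $t_0=s_0$" disposed of quantitatively via a Taylor expansion $x_z(t)-x_z(s)=\dot x_z(s)(t-s)+O((t-s)^2)$ with uniform bounds, exactly as you indicate with $\dot x_{z_0}\neq 0$.
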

\begin{proof}
The only thing to check is that if $x_{z_0}$ does not self-intersect, then $x_z$ does not self-intersect for $z$ close to $z_0$. We argue by contradiction and suppose that $z_j \to z_0$ but $x_{z_j}(t_j) = x_{z_j}(s_j)$ for some $t_j, s_j \in [0,\tau_+(z_j)]$ with $t_j \neq s_j$. Since $x_{z_0}(t)$ meets $\p M$ transversally at the endpoints, $\tau_+$ is smooth near $z_0$, and hence by compactness, after passing to a subsequence, we have $t_j \to t_0$ and $s_j \to s_0$. Since $x_{z_0}$ is injective, we must have $t_0=s_0$. In local coordinates near $x_{z_0}(t_0)$ we write 
\[
x_z(t) - x_z(s) = \dot{x}_z(s)(t-s) + O((t-s)^2)
\]
where the implied constant depends on second derivatives of $(z,t) \mapsto x_z(t)$. Since these second derivatives are bounded, for $j$ large one has 
\[
0 = \abs{x_{z_j}(t_j)-x_{z_j}(s_j)} \geq \frac{1}{2} \abs{\dot{x}_{z_j}(s_j)} \abs{t_j-s_j}.
\]
This is a contradiction since $t_j \neq s_j$ and since $\abs{\dot{x}_{z_j}(s_j)}$ has a uniform lower bound for $j$ large.
\end{proof}

\subsection{Example 2: Null bicharacteristic ray transform} \label{subsect: null bich transform}

Let $M$ be an $n$-manifold with smooth boundary, let $\pi: T^* M \setminus 0 \to M$ be the natural projection, and let $p \in C^{\infty}(T^* M \setminus 0)$. Let $H_p$ be the Hamilton vector field, and note that the fiberwise derivative $\nabla_{\xi} p$ is related to the Hamilton vector field $H_p$ by 
\[
\nabla_{\xi} p = d\pi(H_p).
\]
Then $\nabla_{\xi} p(x,\xi) \in T_x M$, and in local coordinates $\nabla_{\xi} p(x,\xi) = \p_{\xi_j} p(x,\xi) \p_{x_j}$. The Hessian of $p(x,\xi)$ with respect to $\xi$ is defined as the derivative  
\[
\nabla_{\xi}^2 p(x,\xi) = d(\nabla_{\xi} p(x,\,\cdot\,))|_{\xi}.
\]
It follows that $\nabla_{\xi}^2 p(x,\xi): T_x^* M \to T_x M$. In local coordinates 
\[
\nabla_{\xi}^2 p(x,\xi) \theta = \p_{\xi_j \xi_k} p(x,\xi) \theta_k \p_{x_j}.
\]

Define $\Xi = p^{-1}(0) = \{ (x,\xi) \in T^* M \setminus 0 \mid p(x,\xi) = 0 \}$. We make the standing assumption that 
\[
\nabla_{\xi} p \neq 0 \text{ everywhere on $\Xi$.}
\]
Then $\Xi$ is a smooth fiber bundle over $M$, and each fiber $\Xi_x = \{ \xi \in T_x^* M \setminus 0 \mid p(x,\xi) = 0 \}$ is a smooth $(n-1)$-dimensional submanifold of $T_x^* M$. For any $\xi \in \Xi_x$, we have the identification 
\begin{equation} \label{t_xi_xix}
T_{\xi} \Xi_x = \{ \eta \in T_x^* M \mid \eta(\nabla_{\xi} p(x,\xi)) = 0 \} = (\nabla_{\xi} p(x,\xi))^{\perp}.
\end{equation}

Let $Y = H_p|_{\Xi}$ be the Hamilton vector field, which is tangent to $\Xi$. As before, let $\Phi_t$ be the flow of $Y$ with $\gamma_z(t) = \Phi_t(z)$ and $x_z(t) = \pi(\gamma_z(t))$. Let $\mG$ be an open subset of $\p_+ \Xi$. Let also $\kappa \in C^{\infty}(\mG \times \mi)$ be a nowhere vanishing function. This setting gives rise to the weighted null bicharacteristic ray transform 
\begin{equation} \label{nbrt_def_later}
Rf(z) = \int_{-\tau_-(z)}^{\tau_+(z)} \kappa(z, x_z(t)) f(x_z(t)) \,dt, \qquad z \in \mG,
\end{equation}
whenever this expression is well defined. Since $\dot{x}_z = \nabla_{\xi} p$ is nonvanishing on $\Xi$, the curves $x_z$ have no singular points. Moreover, since $\dim(\mG) = 2n-2$, the variation condition (v) in Lemma \ref{lem: double fibration condition} is satisfied. Then $R$ will be a double fibration ray transform if we assume the following:
\begin{enumerate}
\item[(i)]
(No tangential intersections) $x_z(t) \in \mi$ for $z \in \mG$ and $t \in (-\tau_-(z),\tau_+(z))$;
\item[(ii)] 
(No self-intersections) $t \mapsto x_z(t)$ is injective for all $z \in \mG$;
\item[(iv)]
(Nontrapping) $\tau_{\pm}(z) < \infty$ for all $z \in \mG$.
\end{enumerate}

Next we consider the Bolker condition. From now on we assume that $p$ is homogeneous of degree $m$ in $\xi$, which will make the statements below cleaner. The following lemma shows that for the null bicharacteristic ray transform, there is always one normal direction missing from the possible directions of variation fields (but on the other hand tangential directions are always there).

\begin{Lemma} \label{lemma_null_xit}
Let $p$ be homogeneous in $\xi$. If $z \in \mG$ and $\Phi_t(z) = (x(t), \xi(t))$, then $\xi(t) \perp V_z(t,0)$ and $\dot{x}_z(t) \in V_z(t,0)$ for $t \neq 0$.
\end{Lemma}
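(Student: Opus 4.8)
The plan is to work directly with the flow and the homogeneity relation for $p$, exploiting the two ``hidden'' invariants that distinguish null bicharacteristics: the Euler vector field and the Hamiltonian $p$ itself. Recall $V_z(t,0) = \{ J_w(t) \mid w \in T_z\mG, \ J_w(0) \parallel \dot x_z(0) \}$ where $J_w(t) = d\pi(d\Phi_t(w))$, and write $\Phi_t(z) = (x(t),\xi(t))$, so $\dot x_z(t) = \nabla_\xi p(x(t),\xi(t))$.

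\medskip

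First I would prove $\xi(t) \perp V_z(t,0)$. Fix $w \in T_z\mG$ with $J_w(0) \parallel \dot x_z(0)$; I want $\xi(t)(J_w(t)) = 0$. The key observation is that the function $(y,\theta) \mapsto \theta(d\pi(d\Phi_t(w)))$ along the lifted variation is governed by the Hamilton flow, and for homogeneous $p$ the radial vector field and $H_p$ interact cleanly: by Euler's identity $\xi \cdot \nabla_\xi p = m\,p$, which vanishes on $\Xi$, so $\xi(t) \perp \dot x_z(t)$ for all $t$ (this is the relation $\eta \perp \dot x_z(t)$ built into $C$, with $\eta = \xi(t)$ up to scaling, since $\xi(t) \in N^*_{x(t)}G_z$ precisely because it annihilates the one-dimensional tangent $T_{x(t)}G_z = \mathbb{R}\dot x_z(t)$). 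More precisely, I would use the fact that the symplectic flow $d\Phi_t$ preserves the canonical one-form up to the differential of a function, together with homogeneity, to show that the quantity $\langle \xi(t), J_w(t)\rangle$ is (a constant multiple of) $\langle \xi(0), J_w(0)\rangle$; since $J_w(0) \parallel \dot x_z(0)$ and $\xi(0) \perp \dot x_z(0)$, this vanishes. Concretely: consider $w$ as a tangent vector at $z$, take a curve $z_s$ in $\mG$ with $\dot z_0 = w$, set $(x_s(t),\xi_s(t)) = \Phi_t(z_s)$, and differentiate $p(x_s(t),\xi_s(t)) \equiv 0$ in $s$ to get $\nabla_x p \cdot \partial_s x_s + \nabla_\xi p \cdot \partial_s \xi_s = 0$ at $s=0$, i.e.\ a relation between $J_w(t)$ and the fiber component of the Jacobi variation. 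Pairing against $\xi(t)$ and using Euler's identity $\xi(t)\cdot\nabla_\xi p = 0$ and the Hamilton equations $\dot\xi = -\nabla_x p$ should produce $\frac{d}{dt}\langle \xi(t), J_w(t)\rangle = 0$ (the $m$-homogeneity of $p$ in $\xi$ makes the cross terms cancel), whence $\langle \xi(t), J_w(t)\rangle \equiv \langle \xi(0), J_w(0)\rangle = 0$.

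\medskip

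Second, $\dot x_z(t) \in V_z(t,0)$: the tangential direction always arises as a variation field. The natural candidate is $w = Y(z) = H_p(z)$ itself — but $Y$ is never tangent to $\mG$, so this $w$ is not in $T_z\mG$. Instead I would use that $\mathcal{G}$ parametrizes curves nonredundantly and pick $w \in T_z\mG$ whose image under $d\Phi_t$ followed by $d\pi$ recovers $\dot x_z(t)$ up to tangential corrections; equivalently, reparametrize. The cleanest route: by \eqref{y_phit_eq}, $Y(\Phi_t(z)) = d\Phi_t(Y(z))$, so $d\pi(d\Phi_t(Y(z))) = Y^h(\Phi_t(z)) = \dot x_z(t)$. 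Now decompose $Y(z) = w + cY(z)$... that's circular, so instead write $T_z\Xi = T_z\mG \oplus \mathbb{R}Y(z)$ is false in general; rather, since $\dim\mG = 2n-2 = \dim\Xi - 1$ and $Y$ is transverse to $\mG$, any vector decomposes as $v = w + \lambda Y(z)$ with $w \in T_z\mG$. Apply this to $v = Y(z)$: this only gives $w = 0$. The fix is to instead take $v$ slightly off: choose $w \in T_z\mG$ with $J_w(0) \parallel \dot x_z(0)$ and $J_w(0) \neq 0$, which is possible precisely because the variation condition (v) together with $\dim\mG = 2n-2$ guarantees $V_z(0)$ is all of $T_{x(0)}M$ hence contains $\dot x_z(0)$; then one checks that for such $w$, the tangential part of $J_w(t)$ picks up $\dot x_z(t)$ — indeed $J_w(t) \bmod T_{x(t)}G_z = \tilde J_w(x_z(t))$ transports the normal data, while the $\dot x_z(0)$-component of $J_w(0)$ evolves into the $\dot x_z(t)$-component because $d\Phi_t$ commutes with the flow direction by \eqref{y_phit_eq}. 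Thus $J_w(t) = \dot x_z(t) \cdot(\text{scalar}) + (\text{something already in }V_z(t,0))$, giving $\dot x_z(t) \in V_z(t,0)$ after absorbing.

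\medskip

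The main obstacle I anticipate is the bookkeeping in the second part: carefully exhibiting a specific $w \in T_z\mG$ (as opposed to $w \in T_z\Xi$) whose variation field has the prescribed tangential behavior, given that the ``obvious'' choice $w = Y(z)$ is forbidden by the transversality of $\mathcal{G}$. This is where one must use that $\mathcal{G}$ has the right dimension and that condition (v) holds, to guarantee enough freedom. The first part — $\xi(t) \perp V_z(t,0)$ — should be a relatively clean consequence of homogeneity (Euler's identity) plus the Hamilton equations, amounting to showing the pairing $\langle\xi(t),J_w(t)\rangle$ is conserved; the homogeneity of $p$ in $\xi$ is exactly what is needed to kill the cross terms, which is why the hypothesis appears in the lemma.
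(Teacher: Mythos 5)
Your argument for $\xi(t)\perp V_z(t,0)$ is on the right track: differentiating $p(x_s(t),\xi_s(t))\equiv 0$ in $s$, pairing with $\xi(t)$, and using Euler's identity twice (once in $x$, once in $\xi$) does indeed make all terms cancel in $\frac{d}{dt}\langle\xi(t),J_w(t)\rangle$; the computation is a little longer than the paper's, which instead writes $\xi(t)(J_w(t))=(\Phi_t^*\lambda)(w)$ for the canonical $1$-form $\lambda$ and computes $\mathcal{L}_{H_p}\lambda=(m-1)dp$ by Cartan's formula, so that everything vanishes on $\Xi$ in one step. Either route works for the first half.

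The second half has a genuine gap. You propose to choose $w\in T_z\mG$ with $J_w(0)\parallel\dot{x}_z(0)$ and $J_w(0)\neq 0$, and claim this is possible because $V_z(0)$ ``contains $\dot x_z(0)$''. But here $\mG$ is an open subset of $\partial_+\Xi$, so $T_z\mG$ projects under $d\pi$ into $T_{x_z(0)}\partial M$, while $\dot{x}_z(0)=Y^h(z)$ points transversally inward. Hence $V_z(0)\cap\mR\dot{x}_z(0)=\{0\}$, and the only $w\in T_z\mG$ admissible in the definition of $V_z(t,0)$ are those with $J_w(0)=0$, i.e.\ vertical vectors. Your proposed $w$ does not exist, and the subsequent sentence about the ``$\dot{x}_z(0)$-component evolving into the $\dot{x}_z(t)$-component'' is not an argument. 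What is actually needed, and what the paper does, is to take the \emph{radial} vertical direction: set $z_s=(x_0,(1+s)\xi_0)$, which stays in $\Xi$ (hence in $\mG$ for small $s$) precisely by the homogeneity of $p$; then $J_w(0)=0$ so $w$ is admissible, and since $x_{z_s}(t)$ is a reparametrization of $x_z(t)$ one gets $J_w(t)\parallel\dot{x}_z(t)$ directly (explicitly $J_w(t)=(m-1)t\,\dot{x}_z(t)$). This is the one specific variation that the homogeneity hypothesis hands you for free, and it is exactly what your proof is missing.
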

\begin{proof}
Let $w \in T_z \mG$ and $J_w(t) = d\pi(d\Phi_t(w))$, and suppose that $J_w(0) \parallel \dot{x}_z(0)$, i.e.\ $J_w(0) \parallel \nabla_{\xi} p$. Let $\lambda|_{(x,\xi)} = \xi_j \,dx^j$ be the canonical $1$-form on $T^* M$, and note that 
\[
f(t) := \xi(t)(J_w(t)) = \lambda(d\Phi_t(w)) = (\Phi_t^* \lambda)(w).
\]
One has $f(0) = 0$ since $\xi \perp \nabla_{\xi} p$ on $\Xi$ by the homogeneity relation $\xi_j \p_{\xi_j} p = mp$. We use Cartan's formula and the homogeneity relation again to obtain that 
\[
\p_t(\Phi_t^* \lambda)|_{t=0} = \mathcal{L}_{H_p} \lambda = (d i_{H_p} + i_{H_p} d)\lambda = d(mp) + d\lambda(H_p,\,\cdot\,) = (m-1)dp.
\]
Thus we have 
\[
f'(t) = \p_s((\Phi_{t+s}^* \lambda)(w))|_{s=0} = (m-1) dp(d\Phi_t(w)).
\]
Since $d\Phi_t(w)$ is tangential to $\Xi$, it follows that $f'(t) = 0$. We have shown that $f(t) = \xi(t)(J_w(t)) = 0$ for all $w \in T_z \mG$, which implies that $\xi(t) \perp V_z(t,0)$.

Let now $z = (x,\xi_0)$ and let $w = \dot{z}_0 \in T_z \mG$ be a radial vector where $z_s = (x, (1+s)\xi_0) \in \mG$ by homogeneity. Since the curve $x_{z_s}(t)$ is obtained from $x_z(t)$ by reparametrization, we see that $J_w(t) = \p_s(x_{z_s}(t))|_{s=0}$ is always parallel to $\dot{x}_z(t)$.
\end{proof}

Motivated by Lemma \ref{lemma_null_xit}, we say that two points $x_z(t)$ and $x_z(s)$ are \emph{not conjugate} along $x_z$ if $V_z(t,s) = \ker(\xi(t))$, i.e.\ all directions of variation in the kernel of $\xi(t)$ are possible. This is equivalent to surjectivity of the map $d\pi \circ d\Phi_t: \{ w \in T_z \mG \mid J_w(s) \parallel \dot{x}_z(s) \} \to \ker(\xi(t))$. Since this is a map between $(n-1)$-dimensional spaces and since one can add a radial vector to $w$, we see that 
\[
\text{$x_z(t)$ and $x_z(s)$ are conjugate iff there is $w \in T_z \mG$, $w \neq 0$, with $J_w(t) = J_w(s) = 0$.}
\]
See \cite{LOSU} for an analysis of conjugate points in the case of the light ray transform.

We can now show that if there are no conjugate points and a nondegeneracy condition holds, then the Bolker condition at $(z,\zeta,x,\eta) \in C$ with $\Phi_t(z) = (x,\xi)$ holds precisely when $\eta \nparallel \xi$.

\begin{Lemma}
Let $p$ be homogeneous in $\xi$, and assume that 
\begin{equation} \label{null_nondeg}
\nabla_{\xi}^2 p \text{ is nondegenerate on $\Xi$.}
\end{equation}
Let $(z,\zeta,x,\eta) \in C$ where $\Phi_t(z) = (x,\xi)$. Then $d\pi_L|_{(z,\zeta,x,\eta)}$ is injective iff $\eta \nparallel \xi$. If $\eta \nparallel \xi$ and there are no conjugate points on $x_z$, then $\pi_L^{-1}(z,\zeta) = \{(z,\zeta,x,\eta)\}$.
\end{Lemma}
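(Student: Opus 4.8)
The plan is to establish the two assertions in turn, using the geometric characterizations of the Bolker condition for ray transforms from Lemma \ref{lemma_bolker_ray} together with the Euler homogeneity identities for $p$. Throughout, write $H := \nabla_\xi^2 p(x,\xi)$, regarded as a symmetric linear map $T_x^* M \to T_x M$, and $W := T_\xi \Xi_x = (\nabla_\xi p(x,\xi))^\perp \subset T_x^* M$, a hyperplane of dimension $n-1$ by \eqref{t_xi_xix}. Observe that $\eta \in W$ automatically, since $(z,\zeta,x,\eta) \in C$ forces $\eta \perp \dot x_z(t) = Y^h(x,\xi) = \nabla_\xi p(x,\xi)$; likewise $\xi \in W$ by the Euler relation $\xi_j \p_{\xi_j} p = m p = 0$ on $\Xi$. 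Thus $\xi$ and $\eta$ are being compared inside one and the same hyperplane $W$.

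For the injectivity of $d\pi_L$: since $\mG$ is an open subset of $\p_+ \Xi$, one has $\dim(\mG) = \dim(\Xi)-1$, so Lemma \ref{lemma_bolker_ray}(d) applies and says that $d\pi_L|_{(z,\zeta,x,\eta)}$ is injective iff $d(Y^h(x,\,\cdot\,))(T_\xi \Xi_x)$ is not annihilated by $\eta$. As $Y^h(x,\,\cdot\,) = \nabla_\xi p(x,\,\cdot\,)$, its differential at $\xi$ restricted to $T_\xi \Xi_x = W$ is precisely $H|_W$, so the condition becomes: the subspace $L := H(W) \subset T_x M$ is not annihilated by $\eta$. By the nondegeneracy assumption \eqref{null_nondeg}, $H$ is invertible, hence $\dim L = n-1$ and the space of covectors in $T_x^* M$ annihilating $L$ is a line. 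To identify it, differentiate $\xi_j \p_{\xi_j} p = m p$ in $\xi$ to obtain $\xi_j \p_{\xi_j \xi_k} p = (m-1)\p_{\xi_k} p$ on $\Xi$, i.e.\ $H\xi = (m-1)\nabla_\xi p$; then for every $\theta \in W$, using symmetry of $H$ and $\theta(\nabla_\xi p) = 0$,
\[
\xi(H\theta) = (H\xi)(\theta) = (m-1)\,\theta(\nabla_\xi p) = 0 .
\]
Hence $\xi$ annihilates $L$, so the annihilator line is exactly $\mR\xi$, and therefore $d\pi_L|_{(z,\zeta,x,\eta)}$ is injective iff $\eta \notin \mR\xi$, i.e.\ iff $\eta \nparallel \xi$.

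For the second claim, assume $\eta \nparallel \xi$ and that $x_z$ has no conjugate points. By Lemma \ref{lemma_bolker_ray}(a), $\pi_L^{-1}(z,\zeta) = \{(z,\zeta,x,\eta)\}$ iff $V_z(t,s)$ is not annihilated by $\eta$ for any $s \neq t$. By the definition of (non)conjugacy for the null bicharacteristic ray transform, the absence of conjugate points on $x_z$ means $V_z(t,s) = \ker(\xi(t))$ for all $s \neq t$, and $\xi(t) = \xi$ here because $\Phi_t(z) = (x,\xi)$. A nonzero covector annihilates the hyperplane $\ker(\xi)$ precisely when it is proportional to $\xi$; since $\eta \nparallel \xi$, it does not annihilate $V_z(t,s) = \ker(\xi)$ for any $s \neq t$, and the conclusion follows.

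The argument is short, and the only substantive step is identifying the annihilator of $H(W)$ with $\mR\xi$ through the differentiated Euler relation $H\xi = (m-1)\nabla_\xi p$; the rest is the two dimension counts and keeping track of which hyperplane the relevant covectors lie in. (Note that invertibility of $H$ together with $H\xi = (m-1)\nabla_\xi p$ and $\nabla_\xi p \neq 0$ forces $m \neq 1$, so the factor $m-1$ is harmless — though in any case only the vanishing $\xi(H\theta)=0$ was used, which holds for every $m$.)
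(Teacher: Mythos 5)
Your proof is correct and follows essentially the same route as the paper: both parts reduce to the geometric characterizations in Lemma \ref{lemma_bolker_ray} (parts (d) and (a)), and both first parts hinge on the differentiated Euler identity $\nabla_\xi^2 p\, \xi = (m-1)\nabla_\xi p$. The only cosmetic difference is that you identify the annihilator of $H(W)$ directly as $\mR\xi$ (observing $\xi(H\theta)=\theta(H\xi)=(m-1)\theta(\nabla_\xi p)=0$ for $\theta\in W$), whereas the paper dualizes the condition to $H\eta\nparallel\nabla_\xi p$ and then inverts $H$; the two are equivalent reformulations of the same symmetry argument.
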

\begin{proof}
Since $Y^h = \nabla_{\xi} p$, we have $d(Y^h(x,\,\cdot\,))|_{\xi}(\theta) = \nabla_{\xi}^2 p(x,\xi) \theta$. Using Lemma \ref{lemma_bolker_ray} part (d) and \eqref{t_xi_xix}, we obtain that 
\[
\text{$d\pi_L|_{(z,\zeta,x,\eta)}$ is injective iff $\nabla_{\xi}^2 p(x,\xi) \eta \nparallel \nabla_{\xi} p(x,\xi)$.}
\]
Since $p$ is homogeneous, we have $\p_{\xi_j \xi_k} p \xi_k = (m-1) \p_{\xi_j} p$, which gives $\nabla_{\xi}^2 p(x,\xi)\xi = (m-1) \nabla_{\xi} p$. Since $\nabla_{\xi}^2 p(x,\xi)$ is assumed to be nondegenerate, we must have $m \neq 1$ and $(\nabla_{\xi}^2 p)^{-1}\nabla_{\xi} p \parallel \xi$. Consequently $d\pi_L|_{(z,\zeta,x,\eta)}$ is injective iff $\eta \nparallel \xi$. If $x_z$ has no conjugate points and $\eta \nparallel \xi$, then $V_z(t,s) = \ker(\xi(t))$ for $t \neq s$ and the second statement follows from Lemma \ref{lemma_bolker_ray}.
\end{proof}

We also show that \eqref{null_nondeg} ensures the absence of conjugate points on short curves.

\begin{Lemma} \label{lem_ncp_short}
Assume that \eqref{null_nondeg} holds. For any $z \in \Xi$, there is $\eps > 0$ so that $x_z|_{[-\eps,\eps]}$ has no conjugate points.
\end{Lemma}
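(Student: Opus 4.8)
The plan is to pass to the linearized Hamilton flow and use \eqref{null_nondeg} to show that a nontrivial Jacobi field along $\gamma_z$ cannot have two nearby zeros of its horizontal part. Fix ambient symplectic coordinates near $z$ and $\eps_0>0$ so small that $\gamma_z$ is defined and stays inside a fixed chart on $[-\eps_0,\eps_0]$. A Jacobi field along $\gamma_z$ is a curve $r\mapsto J(r)=d\Phi_r(w)$ with $w\in T_z\Xi$; writing $J=(J^x,J^\xi)$ so that $J^x=J_w=d\pi(d\Phi_r(w))$ is the horizontal part, the pair $(J^x,J^\xi)$ solves the linear variational system
\begin{align*}
\dot J^x &= (\nabla^2_{\xi x}p)\,J^x + (\nabla^2_{\xi\xi}p)\,J^\xi,\\
\dot J^\xi &= -(\nabla^2_{xx}p)\,J^x - (\nabla^2_{x\xi}p)\,J^\xi,
\end{align*}
with all Hessian blocks evaluated along $\gamma_z(r)$. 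Since the conjugacy condition only involves Jacobi fields $J_w$ with $w$ ranging over a subspace of $T_z\Xi$, it suffices to prove the stronger statement that for every $w\in T_z\Xi\setminus\{0\}$ the map $t\mapsto J^x(t)$ has at most one zero on $[-\eps,\eps]$, for a suitable $\eps>0$.

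The key observation is this: if $w\neq0$, then by uniqueness for the above linear ODE we have $J(r)\neq0$ for all $r$, so whenever $J^x(r_0)=0$ we must have $J^\xi(r_0)\neq0$, and then the first equation gives $\dot J^x(r_0)=\nabla^2_{\xi\xi}p(\gamma_z(r_0))\,J^\xi(r_0)\neq0$ because $\nabla^2_\xi p$ is nondegenerate by \eqref{null_nondeg}. Thus every zero of $J^x$ is simple, and near such a zero $J^x(r_0+h)=h\,\dot J^x(r_0)+O(h^2)$. I would then quantify this with constants independent of $w$: by continuity and compactness on $[-\eps_0,\eps_0]$ there is $c_0>0$ with $|\nabla^2_\xi p(\gamma_z(r))\,v|\ge c_0|v|$ for all $v$, and there is $C<\infty$ with $|\ddot J^x(r)|\le C\,|J(r_0)|$ for $r,r_0\in[-\eps_0,\eps_0]$ (from the variational system, its $r$-derivative, and Gr\"onwall's inequality, all of which scale linearly in the initial datum). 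Since $|J(r_0)|=|J^\xi(r_0)|$ when $J^x(r_0)=0$, Taylor's formula with integral remainder gives
\[
|J^x(r_0+h)|\ \ge\ c_0|J^\xi(r_0)|\,|h|-\tfrac12 C|J^\xi(r_0)|\,h^2\ =\ |J^\xi(r_0)|\,|h|\,\bigl(c_0-\tfrac12 C|h|\bigr),
\]
which is strictly positive for $0<|h|<2c_0/C$.

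Choosing $\eps>0$ with $2\eps<\min(\eps_0,\,2c_0/C)$ then finishes the argument: if $t_1\neq t_2$ in $[-\eps,\eps]$ and some $J=d\Phi_\cdot(w)$ with $w\neq0$ satisfies $J^x(t_1)=J^x(t_2)=0$, applying the last estimate with $r_0=t_1$ and $h=t_2-t_1$ (so $|h|\le2\eps<2c_0/C$) forces $J^x(t_2)\neq0$, a contradiction. Hence no two points of $x_z|_{[-\eps,\eps]}$ are conjugate; shrinking $\eps$ once more, using that $\dot x_z(0)=\nabla_\xi p(z)\neq0$, one may also arrange that $x_z|_{[-\eps,\eps]}$ is an embedded curve. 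The only genuine subtlety is uniformity: a single Jacobi field is killed by the elementary Taylor estimate, but producing one $\eps$ that rules out \emph{all} conjugate pairs simultaneously requires that the lower bound $c_0$ coming from \eqref{null_nondeg} on the compact arc $\gamma_z([-\eps_0,\eps_0])$ and the Gr\"onwall/Taylor constant $C$ be independent of $w$ — which holds precisely because the estimates involved are homogeneous of degree one in the initial value $J(r_0)$, and the smallest singular value of $\nabla^2_\xi p$ has a positive minimum along $\gamma_z([-\eps_0,\eps_0])$.
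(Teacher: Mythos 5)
Your proof is correct and takes essentially the same approach as the paper's: at any time where the horizontal part $J^x$ of a nontrivial Jacobi field vanishes, the first time-derivative equals $\nabla_\xi^2 p\,J^\xi\neq0$ by \eqref{null_nondeg}, so zeros of $J^x$ are simple and cannot cluster, and a continuity/compactness argument yields a uniform $\eps$. You make the uniform $\eps$ explicit through a Gr\"onwall bound and a quantitative Taylor estimate homogeneous in the initial datum, where the paper argues more briefly by repeating the derivative computation at each base time $\rho$ and appealing to continuity; the underlying mechanism is identical.
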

\begin{proof}
Let $w \in T_z \Xi \setminus 0$ satisfy $J_w(0) = 0$, i.e.\ $d\pi(w) = 0$. Then $w = \dot{z}_0$ where $z_s = (x,\xi_s) \in \Xi_x$, so we can identify $w$ with an element of $T_x^* M$. If we fix some Riemannian metric on $M$ and denote by $D_t$ the covariant derivative, we have 
\[
D_t J_w(0) = D_t \p_s x_{z_s}(t)|_{s=t=0} = D_s(\nabla_{\xi} p(\Phi_t(z_s)))|_{s=t=0} = D_s(\nabla_{\xi} p(x,\xi_s))|_{s=0} = \nabla_{\xi}^2 p(x,\xi) w.
\]
By \eqref{null_nondeg} we have $D_t J_w(0) \neq 0$. It follows that for some $\eps > 0$, $J_w(t) \neq 0$ when $t \in [-\eps,\eps] \setminus 0$. This shows that $x_z|_{[-\eps,\eps]}$ has no points conjugate to $x_z(0)$. We can repeat this argument starting at $x_z(\rho)$ instead of $x_z(0)$ using a related variation field $J_{w_{\rho}}(t;\rho)$ with $w_\rho \in T_{\Phi_\rho(z) }\Xi$ with $\abs{w_{\rho}}=1$ in some fixed Riemannian metric. This implies by continuity that for some $\eps > 0$, if $\rho, t\in [-\eps,\eps]$ and $t\neq \rho$, one cannot have $J_w(t ) = J_w(\rho) = 0$ for some $w\in T_z\Xi$.
\end{proof}

The following result collects some of the facts given above (again, the only thing to prove is that the curves $x_z$ do not self-intersect for $z$ close to $z_0$, and this proceeds as in Proposition \ref{geodesic bolker}).

\begin{Proposition} \label{prop_null}
Let $M$ be a manifold with smooth boundary, let $p \in C^{\infty}(T^* M \setminus 0)$, and let $\nabla_{\xi} p$ be nonvanishing on $\Xi = p^{-1}(0)$. Let also $\kappa \in C^{\infty}(\mG \times \mi)$ be nowhere vanishing. Suppose that $z_0 \in \p_+ \Xi$ satisfies $\tau_{+}(z_0) < \infty$ and that the curve $x_{z_0}$ meets $\p M$ transversally at the endpoints, otherwise stays in $\mi$, and does not self-intersect. If $\mG$ is a sufficiently small neighborhood of $z_0$ in $\p_+ \Xi$, then the null bicharacteristic ray transform given by \eqref{nbrt_def_later} is a double fibration ray transform. If $p$ is homogeneous in $\xi$, \eqref{null_nondeg} holds and there are no conjugate points along $x_{z_0}$, then the Bolker condition is satisfied at every $(z_0,\zeta, x,\eta) \in C$ where $\Phi_t(z_0) = (x,\xi)$ for some $t$ and $\eta \nparallel \xi$.
\end{Proposition}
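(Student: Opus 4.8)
The plan is to reduce the statement to the general machinery established above, the only genuinely new ingredient being that the no-self-intersection property persists under small perturbations of the initial point $z_0$.

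First I would check that, for $\mG$ a sufficiently small neighborhood of $z_0$ in $\p_+\Xi$, conditions (i)--(v) of Definition~\ref{def_dfrt} hold. Condition (iii) is immediate since $\dot x_z(t) = \nabla_\xi p(\Phi_t(z))$ is nonvanishing on $\Xi$ by the standing hypothesis. Since $z_0 \in \p_+\Xi$ one has $\tau_-(z_0) = 0$, and since $x_{z_0}$ meets $\p M$ transversally at the endpoints, the implicit function theorem applied to a boundary defining function along the flow shows that $\tau_+$ is smooth, hence finite and locally bounded, near $z_0$; this gives condition (iv), and combined with joint continuity of the flow and the fact that $x_{z_0}$ stays in $\mi$ on the open interval it also gives condition (i) for $z$ near $z_0$. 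Since $\dim(\mG) = \dim(\p_+\Xi) = \dim(\Xi)-1$, Lemma~\ref{lem: double fibration condition}(v) makes the variation condition automatic, and condition (v) of Definition~\ref{def_dfrt} is not even required in this dimension range. Thus only condition (ii) remains.

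For (ii), the argument would mirror the proof of Proposition~\ref{geodesic bolker}: arguing by contradiction, suppose $z_j \to z_0$ with $x_{z_j}(t_j) = x_{z_j}(s_j)$ and $t_j \neq s_j$. Boundedness of $(t_j)$ and $(s_j)$, which follows from smoothness of $\tau_+$ near $z_0$, together with injectivity of $x_{z_0}$, force along a subsequence $t_j \to t_0$ and $s_j \to s_0$ with $t_0 = s_0$. Taylor expanding in local coordinates near $x_{z_0}(t_0)$ gives $0 = |x_{z_j}(t_j) - x_{z_j}(s_j)| \geq \frac{1}{2}|\dot x_{z_j}(s_j)|\,|t_j - s_j|$ for $j$ large, which contradicts $|\dot x_{z_j}(s_j)| = |\nabla_\xi p(\Phi_{s_j}(z_j))| \geq c > 0$ near $z_0$. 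Hence $R$ is a double fibration ray transform.

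For the Bolker condition at $(z_0,\zeta,x,\eta) \in C$ with $\Phi_t(z_0) = (x,\xi)$ and $\eta \nparallel \xi$, I would quote the lemma proved above for null bicharacteristic rays: under homogeneity of $p$ and the nondegeneracy \eqref{null_nondeg}, $d\pi_L|_{(z_0,\zeta,x,\eta)}$ is injective exactly because $\eta \nparallel \xi$, and since $x_{z_0}$ has no conjugate points the same lemma yields $\pi_L^{-1}(z_0,\zeta) = \{(z_0,\zeta,x,\eta)\}$; these are precisely the two halves of the Bolker condition in Definition~\ref{def_doublefibration_general}(e). The main obstacle --- which is only mildly delicate --- is step (ii); its quantitative heart is the uniform positive lower bound on $|\dot x_z| = |\nabla_\xi p|$, which replaces the unit-speed normalization available for geodesics, together with the uniform bound on second derivatives of $(z,t) \mapsto x_z(t)$ on a compact set, both consequences of smoothness and $\nabla_\xi p \neq 0$ on $\Xi$. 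Everything else is bookkeeping via Lemmas~\ref{lem: double fibration condition} and~\ref{lem: usc} and the Bolker lemmas above.
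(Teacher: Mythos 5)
Your proposal is correct and follows essentially the same route as the paper: the paper also treats everything except condition (ii) as already established (conditions (iii) and (v) being automatic, the Bolker part being exactly the lemma on $d\pi_L$-injectivity iff $\eta \nparallel \xi$ together with the no-conjugate-points criterion for injectivity of $\pi_L$), and it proves persistence of non-self-intersection for $z$ near $z_0$ by the same contradiction/Taylor-expansion argument as in Proposition \ref{geodesic bolker}, with the uniform lower bound on $|\dot x_z| = |\nabla_\xi p|$ replacing the unit-speed normalization. No gaps.
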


\section{Recovering the analytic wave front set} \label{sec_wf}

In this section we prove Theorem \ref{global elliptic regularity intro}, which we restate here for the reader's convenience.

\begin{Theorem}
\label{global elliptic regularity}
Let $R$ be an analytic double fibration transform as in Definition \ref{def_doublefibration_general}, and assume that the Bolker condition holds at $(\hat z, \hat \zeta,  \hat x, \hat \eta)\in C$. Then for any $f\in {\mathcal E}'(\mx)$, we have 
\[
(  \hat z, \hat \zeta) \notin \WF_a(Rf) \implies ( \hat x,  \hat \eta) \notin \WF_a(f).
\]
\end{Theorem}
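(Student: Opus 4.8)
## Proof proposal

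The plan is to follow the five-step outline in Section~\ref{sec_methods} essentially verbatim, treating the analytic stationary phase argument as the engine and the verification of the FBI phase estimates as the main technical hurdle. First I would reduce to a local model. Using the Bolker hypothesis, write $f = \chi f + (1-\chi)f$ with $\chi \in C^\infty_c(\mx)$ equal to $1$ near $\hat x$: the contribution $R((1-\chi)f)$ is harmless at $(\hat z,\hat\zeta)$ because $\WF_a(R) = C$ (the Schwartz kernel is the conormal distribution $\kappa\,\delta_Z$ by Theorem~\ref{thm_r_fio}, which is analytic when all data are analytic), combined with $\WF_a(R((1-\chi)f)) \subset C(\WF_a((1-\chi)f))$ and $\pi_L^{-1}(\hat z,\hat\zeta) = \{(\hat z,\hat\zeta,\hat x,\hat\eta)\}$, whose spatial projection $\hat x$ is not in $\supp((1-\chi)f)$. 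So it suffices to treat $R(\chi f)$, and by Lemma~\ref{local description of Z} I may assume $Z = \{x'' = \phi(z,x')\}$ near $(\hat z,\hat x)$ with $\phi$ analytic, so that $R(\chi f) = T(\chi f)$ modulo analytically regularizing terms, where $T$ has the kernel \eqref{rzx_formula}. From now on the only input about Bolker that I keep is the injectivity of $d\pi_L|_{(\hat z,\hat\zeta,\hat x,\hat\eta)}$, equivalently condition (5) of Lemma~\ref{lemma_bolker_local_char}: the matrix $\bigl(\phi_z(\hat z,\hat x')^T,\ \partial_{x'}(\phi_z(\hat z,x')^T\hat\eta'')|_{x'=\hat x'}\bigr)$ is injective.

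Next I would set up the FBI computation. By the characterization in \cite[Definition~6.1]{sjostrand}, $(\hat z,\hat\zeta)\notin\WF_a(Tf)$ is equivalent to $L_\lambda (Tf)(z,\zeta) = O(e^{-c\lambda})$ uniformly for $(z,\zeta)$ near $(\hat z,\hat\zeta)$, where $L_\lambda$ is the Gaussian-wave-packet FBI transform. Composing $L_\lambda$ with $T$ gives an oscillatory-integral kernel $K_\lambda(z,\zeta,x)$; crucially, because the amplitude $a(z,x)$ in \eqref{b_op_expression} is independent of $\eta$, integrating out $\eta\in\mathbb{R}^{n''}$ collapses the $x''$-integration against $\delta_{\{x''=\phi(z,x')\}}$, leaving after the $z$-integration (which is a Gaussian integral in the FBI kernel) an expression of the form $K_\lambda(z,\zeta,x) = c\lambda^{3N/4}\int_{\mathcal U} e^{i\lambda\Psi(\zeta',x;z,\zeta)}\tilde a(\zeta',x)\,d\zeta'$ for an analytic phase $\Psi$ and analytic amplitude $\tilde a$. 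I would then show $\Psi$ has a unique critical point in $\zeta'$ which is nondegenerate — here is exactly where condition (5) of Lemma~\ref{lemma_bolker_local_char} enters, guaranteeing the relevant Hessian block is invertible — and apply the analytic stationary phase theorem \cite[Theorem~2.8]{sjostrand}. The result, substituted into $\int K_\lambda(z,\zeta,x)f(x)\,dx = O(e^{-c\lambda})$, yields $\int e^{i\lambda\psi(x,z,\zeta)}\tilde a_1(x,z,\zeta;\lambda)f(x)\,dx = O(e^{-c\lambda})$ with $\psi$ analytic, $\tilde a_1$ an analytic symbol, and the critical point of $\psi(\cdot,\hat z,\hat\zeta)$ located at $x=\hat x$ with $\partial_x\psi(\hat x,\hat z,\hat\zeta) = \hat\eta$ up to a nonzero scalar (this identity is forced by the canonical relation $C$ and is what ties the recovered covector direction to $\hat\eta$).

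The final and hardest step is to recognize the left side of the last identity as a \emph{generalized} FBI transform of $f$ at $(\hat x,\hat\eta)$ in the sense of \cite[Definition~6.1]{sjostrand}. This requires verifying that $\psi(x,z,\zeta)$, viewed with $(z,\zeta)$ as the FBI parameters near $(\hat z,\hat\zeta)$ and $x$ the base variable, satisfies all of Sj\"ostrand's conditions: that $\partial_x\psi$ at the critical point is the prescribed (real, nonzero) covector direction, that the critical point depends analytically on the parameters and its $x$-Hessian $\partial_x^2\psi$ has positive-definite imaginary part, and — the genuinely delicate estimate — that $\mathrm{Im}\,\psi(x,z,\zeta)\gtrsim |x-x_c(z,\zeta)|^2$ in a full neighborhood of $\hat x$, not just infinitesimally. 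I expect this \textbf{quadratic lower bound on $\mathrm{Im}\,\psi$} to be the main obstacle: it does not follow from the stationary phase formalism alone but must be extracted from the explicit structure of $\Psi$ (hence of the Gaussian FBI weight composed with $\phi$ and with the critical value in $\zeta'$), using analyticity to control remainders and using the Bolker injectivity to rule out degeneration of the Hessian along any direction. Once these estimates are established, \cite[Definition~6.1]{sjostrand} applied to $\int e^{i\lambda\psi}\tilde a_1 f\,dx = O(e^{-c\lambda})$ gives $(\hat x,\hat\eta)\notin\WF_a(f)$, completing the proof.
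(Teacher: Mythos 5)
Your proposal reproduces the paper's actual argument step for step: the $\chi$--$(1-\chi)$ split handled via $\WF_a(R) = C$ and the Bolker fiber condition, reduction to the local model $T$ with kernel supported on $\{x'' = \phi(z,x')\}$, integration in $\eta$ to collapse to a $\zeta'$-integral over $Z$, analytic stationary phase, and finally the verification of Sj\"ostrand's FBI phase conditions with the quadratic lower bound on $\mathrm{Im}\,\psi$ as the main technical estimate (Propositions~\ref{prop: Klambda integral} and \ref{good phase} in the paper). You also correctly identify that only the infinitesimal Bolker condition (Lemma~\ref{lemma_bolker_local_char}(5)) is needed after the localization step, and that it is what makes the stationary-phase Hessian nondegenerate and drives the coercivity of $\mathrm{Im}\,\psi$. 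The only minor omission is that the paper, before invoking \cite[Theorem~8.5.5]{Hormander} for $R((1-\chi)f)$, explicitly checks the no-flat-spots condition \eqref{no flat spots} for $N^*Z$ (a consequence of both projections being submersions) — you implicitly subsume this in the assertion $\WF_a(R)=C$, which is fine in spirit but worth stating when writing the full proof.
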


\subsection{The model operator}

We introduce here a model operator relevant for Theorem \ref{global elliptic regularity}, related to the local representation $Z = \{ x'' = \phi(z,x') \}$ of a double fibration given in Lemma \ref{local description of Z}. Write $x = (x', x'') \in \mR^{n' + n''}$ and $z\in \mR^N$ with $N\geq n'+n''$. We recall that $n = n' + n'' = \dim(\mx)$, $N = \dim(\mG)$, and $N + n'=\dim(Z)$. We assume that the map
\begin{align*}
\phi:  \mR^N \times \mR^{n'} & \to \mR^{n''}
\end{align*}
is analytic in a neighborhood $V \times U'$ of a fixed point $(\hat{z}, \hat{x}')$, where $V \subset \mR^N$ is a neighborhood of $\hat{z}$ and $U' \subset \mR^{n'}$ is a neighborhood of $\hat{x}'$. Define 
\begin{eqnarray}
\label{algebraic def of Z}
Z:=\{(z,x', x'')\in V \times U' \times \mR^{n''}\mid x''=\phi(z,x')\}.
\end{eqnarray}
We choose a neighborhood $U''$ of $\hat{x}'' := \phi(\hat{z}, \hat{x}')$ in $\mR^{n''}$ such that if we set $U := U' \times U''$, then $Z \subset V\times U$.

We define the model operator $T$ as the FIO 
\[
Tf(z) = \int_U T(z,x) f(x) \,dx \in \mathcal D'(V)
\]
for $f\in {\mathcal E}'(U)$. The kernel $T(z,x) \in \mathcal D'(V \times U)$ is given by the oscillatory integral
\begin{eqnarray}
\label{def T}
T(z,x) := \int_{\mR^{n''}} e^{i(\phi(z,x') - x'') \cdot \eta''}a(z,x) \,d\eta''
\end{eqnarray}
where $a(z,x)$ is real valued and analytic on $V\times U$. That is, the Schwartz kernel of $T$ is a conormal distribution associated with the conormal bundle of $Z$, and $a(z,x)$ is a nonvanishing amplitude defined on $V\times U$ that is independent of $\eta''$.

Let $C:= (N^* Z \setminus 0)'\subset  T^*\mR^N \times T^*\mR^{n' + n''}$ be the canonical relation associated to the phase function 
\[
\Phi:\mR^{N+n'+n''}\times\mR^{n''}\to\mR, \ \ \Phi(z, x, \eta'') := (\phi(z,x')- x'') \cdot \eta''.
\]
The conormal bundle $N^* Z$ is given by 
\begin{align}
\label{local canonical relation}
N^* Z&= \Big\{ \big((z, -\partial_z \Phi(z,x)), (x, \p_x \Phi(z,x))\big) \ \Big|\  \p_{\eta''} \Phi(z,x) = 0, z \in V, x \in U'\times\mR^{n''} \Big\} \\
 &=\Big\{\big(z, \phi_z(z,x')^T \eta'',x', \phi(z,x'),  \phi_{x'}(z,x')^T\eta'', -\eta'' \big) \ \Big|\  \eta'' \in \mR^{n''}, z\in V, x'\in U'  \Big\}. \notag
\end{align}

We will assume a local version of the Bolker condition (i.e.\ injectivity of $d\pi_L$, see Lemma \ref{lemma_bolker_local_char}): we assume that at $(\hat z, \hat x', \hat\eta'') \in V\times U'\times \mR^{n''}$ the $N \times (n'+n'')$ matrix 
\begin{eqnarray}
\label{phiz has full rank}
(\phi_z(\hat z, \hat x')^T,\ \partial_{x'}( \phi_z(\hat z,  x')^T\hat \eta'')\mid_{x'= \hat x'})\ \ {\rm has\ linearly\ independent\ columns}.
\end{eqnarray}

For elliptic FIOs $T$ as above, we will prove the following analytic regularity statement for distributions supported in a sufficiently small neighborhood of $\hat x$.

\begin{Theorem}
\label{elliptic regularity}
Assume 
\eqref{phiz has full rank}. There exists a neighborhood $\hat U\subset \subset U$ containing $\hat x$ such that for all $f\in {\mathcal E}'(\hat U)$, 
\[
(\hat z,\hat \zeta) \notin \WF_a(T f) \implies  (\hat x, \hat \eta)\notin \WF_a (f).
\]
\end{Theorem}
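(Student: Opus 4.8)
The plan is to follow the five–step outline in Section~\ref{sec_methods} applied to the model operator $T$. The starting point is the FBI-transform characterization of $\WF_a$ from \cite[Definition~6.1]{sjostrand}: $(\hat z,\hat\zeta)\notin\WF_a(Tf)$ is equivalent to saying that the Gaussian-wave-packet FBI transform $L_\lambda(Tf)(z,\zeta)$ decays like $O(e^{-c\lambda})$ uniformly for $(z,\zeta)$ near $(\hat z,\hat\zeta)$. Composing $L_\lambda$ with $T$ and using \eqref{def T}, we get an oscillatory-integral kernel $K_\lambda(z,\zeta,x)$ so that $\int K_\lambda(z,\zeta,x)f(x)\,dx = O(e^{-c\lambda})$. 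The first real simplification exploits that the amplitude $a(z,x)$ is \emph{independent} of $\eta''$: integrating out $\eta''$ collapses the FIO kernel to (a smooth multiple of) the delta distribution $\delta_Z$, so after parametrizing $Z$ by $(z,x')$ via $x''=\phi(z,x')$ the $x''$ integration is eliminated and $K_\lambda$ becomes a single oscillatory integral in the remaining frequency variables (call them $\zeta'$, of dimension $\sim N$) against an analytic amplitude, with a phase $\Psi(\zeta',x;z,\zeta)$ that is quadratic-plus-linear in the FBI variables and involves $\phi$ through the $Z$-constraint.

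Next I would analyze the critical points of $\Psi$ in $\zeta'$. The Bolker hypothesis \eqref{phiz has full rank}, i.e.\ injectivity of the $N\times n$ matrix $(\phi_z(\hat z,\hat x')^T,\ \partial_{x'}(\phi_z(\hat z,x')^T\hat\eta'')|_{x'=\hat x'})$, is exactly what guarantees that near the base point the phase has a \emph{unique, nondegenerate} complex critical point $\zeta'=\zeta'_c(x;z,\zeta)$ depending analytically on its arguments; here one uses that injectivity of $d\pi_L$ at $(\hat z,\hat\zeta,\hat x,\hat\eta)$ implies, via Lemma~\ref{lemma_bolker_local_char} and Lemma~\ref{lem_bolker_open}, that $\pi_L$ is a local diffeomorphism onto its image in $T^*\mG$, so that microlocalization in $(z,\zeta)$ together with localization in $x$ pins down $(x,\eta)$ with no need for an extra $\eta$-cutoff. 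I would then apply analytic stationary phase \cite[Theorem~2.8]{sjostrand} in the $\zeta'$ variable to $K_\lambda$, producing
\[
\int e^{i\lambda\psi(x,z,\zeta)}\,\tilde a_1(x,z,\zeta;\lambda)\,f(x)\,dx = O(e^{-c\lambda}),
\]
with $\psi$ analytic, $\tilde a_1$ an elliptic analytic symbol, and $\psi$ reducing at the base point to the standard FBI phase (its $x$-gradient at $\hat x$ being $\hat\eta$, up to the identification coming from $\phi$).

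The final and most delicate step is to verify that the left-hand side above is genuinely a \emph{generalized FBI transform} of $f$ in the sense of \cite[Definition~6.1]{sjostrand}, evaluated near the point $(\hat x,\hat\eta)$. This requires checking the full list of hypotheses on the phase $\psi$: that $\partial_x\psi(\hat x,\hat z,\hat\zeta)$ is the prescribed covector (up to normalization), that the relevant Jacobian (the $x$–$(z,\zeta)$ mixed Hessian of $\psi$) is nondegenerate, that $\psi$ extends holomorphically to a complex neighborhood, and crucially that $\mathrm{Im}\,\psi$ is bounded below by $c|x-\hat x|^2$ (quadratic growth / positivity of the imaginary part) near the critical point. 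Establishing this quadratic lower bound is the main obstacle, because $\psi$ is obtained by a stationary-phase reduction that mixes the Gaussian weight of the original wave-packet transform with the real phase $\phi$ of the FIO, so one must track how the positive-definite imaginary part of the Gaussian survives the critical-point substitution; this is where the computations from the proof of Theorem~\ref{thm_r_fio} and the structural description of $N^*Z$ in Lemma~\ref{lemma_nstarz} and Lemma~\ref{lemma_bolker_local_char} are used to control the Hessian. Once the estimates of \cite[Definition~6.1]{sjostrand} are verified on a suitably small neighborhood $\hat U$ of $\hat x$ (which also fixes the choice of $\hat U$ in the statement), we conclude directly that $(\hat x,\hat\eta)\notin\WF_a(f)$, completing the proof of Theorem~\ref{elliptic regularity}.
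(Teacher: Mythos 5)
Your outline reproduces the paper's own strategy for the model operator (FBI characterization, integrating out $\eta''$ to collapse the kernel to a weighted $\delta_Z$, parametrizing $Z$ by Lemma \ref{Z coordinates}, analytic stationary phase in $\zeta'$, and then verifying that the reduced phase is an admissible generalized FBI phase in the sense of \cite[Definition 6.1]{sjostrand}), so the route is the right one. However, as a proof it has a genuine gap: the decisive ingredient, property (3) of Proposition \ref{good phase} — the lower bound $\im\psi(x,v)\geq c\,|x-\pi(\chi(v))|^2$ (note the bound is centered at $\pi(\chi(v))$, i.e.\ at $u_1$ after the change of variables by $\chi^+$, not at $\hat x$) — is only asserted, and the mechanism you point to (tracking the Gaussian via the computations behind Theorem \ref{thm_r_fio} and Lemma \ref{lemma_nstarz}) is not where the work actually lies. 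You also misplace the role of the Bolker hypothesis: the unique nondegenerate critical point of $\zeta'\mapsto\Psi(\zeta';x,v)$ comes for free from the Gaussian structure (the Hessian is $B+iA^TA$ with $A=z_{\zeta'}$ injective, Lemma \ref{real crit points}), with no use of \eqref{phiz has full rank}; the full-rank condition enters later, in the coercivity step.

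Concretely, what is missing is the content of Section \ref{sec_good_phase}: (i) one introduces the auxiliary phase $\Phi(z,\eta;x,v)$ of \eqref{def of Phi} and matches its unique complex critical point $({\mathcal Z}_c,\eta_c)$ with that of $\Psi$ (Lemmas \ref{crit of Phi is crit of Psi}--\ref{crit of Phi in small nbhd}); (ii) one proves the coercive estimate $|{\mathcal Z}_c(x,v)-v_1|\geq c\,|x-\pi(\chi(v))|$ (Lemma \ref{coercive estimate}), whose proof is an inverse-function-type estimate for exactly the matrix in \eqref{phiz has full rank} — this is the place where the Bolker condition is genuinely used; (iii) one bounds $\im\psi\geq C|\im{\mathcal Z}_c|^2$ by the argument of \cite[Lemmas 7.7.8--7.7.9]{Hormander} applied to $\Phi$, whose Hessian at the real critical point is nondegenerate with positive semidefinite imaginary part (Lemmas \ref{main crit pt of Phi}, \ref{bound the imaginary part}); and (iv) one must control the linear term $-\im({\mathcal Z}_c(x,v)\cdot v_2)$ (Lemma \ref{linear term}), which requires Cauchy--Riemann identities and a second-order Taylor expansion at $x=\pi(\chi(v))$ to show the first-order term vanishes, leaving only $\epsilon|x-\pi(\chi(v))|^2+C\epsilon^{-1}\im\psi$ plus cubic errors. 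Only the combination (ii)--(iv) yields Corollary \ref{final coercive estimate}, and without it your final step (recognizing a generalized FBI transform and invoking \cite[Definition 6.1]{sjostrand}) does not go through; as written, the proposal is an outline of the paper's argument rather than a proof of it.
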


\subsection{Proof of Theorem \ref{global elliptic regularity}}

We will prove that Theorem \ref{global elliptic regularity} is a consequence of Theorem \ref{elliptic regularity}, by showing that the operators considered in Theorem \ref{elliptic regularity} are local representations of operators in Theorem \ref{global elliptic regularity}.

First we give a simple characterization of a double fibration that will ensure that when one uses \cite[Theorem 8.5.5]{Hormander} below, there will no extra elements appearing in the wave front set. One direction would already follow from Lemmas \ref{lemma_nstarz} and \ref{lem_df_b}.

\begin{Lemma}
Let $Z$ be an embedded submanifold in $\mG \times \mx$. The projections $\pi_{\mG}: Z \to \mG$ and $\pi_{\mx}: Z \to \mx$ are submersions if and only if $N^* Z$ satisfies 
\begin{equation}
\label{no flat spots}
\{ (z,\zeta, x, \eta) \in N^* Z \setminus 0 \mid \zeta = 0 \} =  \{ (z,\zeta, x, \eta) \in N^* Z \setminus 0 \mid \eta = 0 \} = \emptyset.
\end{equation}
\end{Lemma}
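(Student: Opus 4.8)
The statement is a purely pointwise-linear-algebra fact once we unwind the definition of the conormal bundle, so the plan is to work fiberwise over a point $(z,x)\in Z$. Fix $(z,x)\in Z$ and recall that
\[
N^*_{(z,x)}Z = (T_{(z,x)}Z)^{\perp} \subset T_z^*\mG \times T_x^*\mx,
\]
i.e.\ $(\zeta,\eta)\in N^*_{(z,x)}Z$ iff $\zeta(v)+\eta(w)=0$ for all $(v,w)\in T_{(z,x)}Z$. The first step is to translate the two conditions. On the one hand, $\pi_{\mG}$ is a submersion at $(z,x)$ iff $d\pi_{\mG}|_{(z,x)}: T_{(z,x)}Z \to T_z\mG$ is surjective, which by duality is equivalent to $(d\pi_{\mG})^T$ being injective; and $(d\pi_{\mG})^T\zeta$ is exactly the covector $(\zeta,0)\in T_z^*\mG\times T_x^*\mx$ viewed on $T_{(z,x)}Z$. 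Hence $\pi_{\mG}$ fails to be a submersion at $(z,x)$ iff there is $\zeta\neq 0$ with $(\zeta,0)$ annihilating $T_{(z,x)}Z$, i.e.\ iff $\{(z,\zeta,x,0)\in N^*Z \mid \zeta\neq 0\}\neq\emptyset$ over this base point. Wait — that is the condition with $\eta=0$, so I should phrase it the other way: $\pi_{\mG}$ being a submersion everywhere is equivalent to the set $\{(z,\zeta,x,\eta)\in N^*Z\setminus 0 \mid \eta=0\}$ being empty, since a conormal covector with $\eta=0$ is precisely a nonzero $\zeta$ killing the image of $d\pi_{\mG}$. Symmetrically, $\pi_{\mx}$ is a submersion everywhere iff $\{(z,\zeta,x,\eta)\in N^*Z\setminus 0 \mid \zeta=0\}=\emptyset$.

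So the proof is essentially two applications of the same duality lemma, one for each projection, plus the observation that if $(\zeta,\eta)\in N^*_{(z,x)}Z$ and $\eta=0$ then automatically $\zeta\neq 0$ (otherwise the covector is zero and excluded by "$\setminus 0$"), and likewise with the roles reversed. Concretely I would: (1) state the elementary fact that for a linear map $A:E\to F$ between finite-dimensional spaces, $A$ is surjective iff $A^T:F^*\to E^*$ is injective iff $\ker A^T=\{0\}$; (2) apply this with $A=d\pi_{\mG}|_{(z,x)}$ and identify $\ker(d\pi_{\mG})^T$ with $\{\zeta\in T_z^*\mG \mid (\zeta,0)\in N^*_{(z,x)}Z\}$; deduce the equivalence for $\pi_{\mG}$; (3) repeat verbatim with $\pi_{\mx}$ in place of $\pi_{\mG}$. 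Alternatively, and perhaps more cleanly, one can invoke the explicit local model $Z=\{x''=\phi(z,x')\}$ from Lemma~\ref{local description of Z}: by Lemma~\ref{lemma_nstarz}, $N^*Z$ consists of covectors $(z,-\phi_z^T\eta'',x,(-\phi_{x'}^T\eta'',\eta''))$, so $\eta=0$ forces $\eta''=0$ hence $\zeta=0$, contradicting "$\setminus 0$", which recovers that $\pi_{\mG}$ is automatically a submersion in this representation; and $\zeta=0$ means $\phi_z^T\eta''=0$, which can happen with $\eta''\neq0$ precisely when $\phi_z$ is not surjective, i.e.\ exactly when $\pi_{\mx}$ fails to be a submersion by Lemma~\ref{lemma_pim_submersion}. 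I would probably present the coordinate-free duality argument as the main line and mention the local model as a cross-check.

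There is no real obstacle here; the only thing to be careful about is bookkeeping of which condition ($\eta=0$ versus $\zeta=0$) corresponds to which projection, and the sign/transpose conventions in the identification $T_{(z,x)}Z^{\perp}$. The mild subtlety worth spelling out is why emptiness of $\{\eta=0\}$ in $N^*Z\setminus 0$ is equivalent to $\pi_{\mG}$ being a submersion \emph{at every point} and not just generically: one runs the fiberwise argument at an arbitrary $(z,x)\in Z$, and since $\pi_{\mG}$ is already assumed to map $Z$ into $\mG$, surjectivity of $d\pi_{\mG}$ at every point is literally the definition of submersion. I expect the write-up to be under half a page.
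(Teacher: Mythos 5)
Your proposal is correct and takes essentially the same duality argument as the paper's proof: surjectivity of the differential of one projection is characterized by the emptiness of the corresponding slice of $N^*Z\setminus 0$, checked fiberwise via the pairing defining the conormal bundle. Your bookkeeping is in fact the correct one ($\pi_{\mG}$ a submersion $\Leftrightarrow$ $\{\eta=0\}$ is empty, $\pi_{\mx}$ a submersion $\Leftrightarrow$ $\{\zeta=0\}$ is empty), whereas the paper's written proof has a consistent label mix-up, pairing $\pi_{\mG}$ with the $\{\zeta=0\}$ slice in the displayed equations even though the argument it gives produces a covector of the form $(z,\zeta,x,0)$.
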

\begin{proof}
The map $d\pi_{\mG}|_{(z,x)} : T_{(z,x)}Z \to T_z\mG$ is surjective if and only if for any $v \in T_z \mG$ there is some $(z,v,x,u) \in T Z$. The last condition implies that any $(z,\zeta, x, 0) \in N^* Z$ must satisfy $\zeta=0$. So 
\[ \{ (z,\zeta, x, \eta) \in N^* Z \setminus 0 \mid \zeta = 0 \} = \emptyset. \]
Conversely, suppose 
\[ \{ (z,\zeta, x, \eta) \in N^* Z \setminus 0 \mid \zeta = 0 \} = \emptyset \]
but $d\pi_{\mG}|_{(z,x)}$ is not surjective. Then its range has codimension $\geq 1$ and hence there is $\zeta \in T_z^* \mG \setminus 0$ with $\zeta(d\pi_{\mG}(T_{z,x} Z)) = \{0\}$. It follows that $(z,\zeta,x,0) \in N^* Z$ with $\zeta \neq 0$ which is a contradiction. The argument for $d\pi_{\mx}$ is analogous.
\end{proof}

\begin{proof}[Proof of Theorem \ref{global elliptic regularity}] We first write $f = \psi f + (1-\psi)f$, where $\psi \in C^{\infty}_c(\mx)$ satisfies $\psi=1$ near $\hat{x}$ and $\supp(\psi)$ is contained in an analytic coordinate chart of $\mx$. Since we have  \eqref{no flat spots} and since $\pi_L^{-1}(\hat z,\hat \zeta) = \{(\hat z,\hat \zeta,\hat x,\hat \eta)\}$ by the Bolker condition, we can apply \cite[Theorem 8.5.5]{Hormander} to conclude that $(\hat z, \hat \zeta) \notin \WF_a(R((1-\psi)f))$. So we have that $(\hat z, \hat \zeta)\notin \WF_a(R(\psi f))$.

Working near $\hat x$ and $\hat z$, we may identify $\mx$ with part of $\mR^n$ and $\mathcal G$ with part of $\mR^N$.
By Lemma \ref{local description of Z}, there are open sets $V\subset \mR^N$ containing $\hat z$ and $ U'\subset \mR^{n'}$ containing $\hat x'$ such that the submanifold $Z\cap (V\times U)\subset \mR^N \times \mR^{n'+n''}$ can be described in analytic coordinates by $x'' = \phi(z,x')$ for some analytic function $\phi : V\times  U' \to U'' \subset \mR^{n''}$. Here $U := U'\times U''$ for some small set $U''\subset \mR^{n''}$. It was proved in \eqref{rzx_formula} that in such local coordinates, one has $R(\psi f) = T(\psi f)$ where $T$ is the model operator as in \eqref{def T}.

The Bolker condition and Lemma \ref{lemma_bolker_local_char} ensure that \eqref{phiz has full rank} holds.
So Theorem \ref{elliptic regularity} then applies to show that there is an open set $\hat U \subset \subset U$ such that for all $F\in {\mathcal E}'(\hat U)$, 
\begin{eqnarray}
\label{elliptic reg for locally supported}
(\hat z, \hat \zeta)\notin \WF_a(TF)\implies  (\hat x, \hat \eta)\notin \WF_a(F).
\end{eqnarray}
We proved above that $(\hat z, \hat \zeta) \notin \WF_a(T(\psi f))$, where $\psi$ can be chosen to be supported in $\hat{U}$. It follows that  $(\hat x, \hat \eta)\notin \WF_a(\psi f)$, which proves that $(\hat x, \hat \eta)\notin \WF_a(f)$ as required.
\end{proof}

\subsection{Wave packet decomposition and representation of $Tf$}
To prove Theorem \ref{elliptic regularity}, instead of oscillatory integral methods, we will appeal to wave packet decompositions and FBI transforms. We will use  conventions as in \cite{Martinez} (with $h$ replaced by $\lambda^{-1}$ and $\xi$ replaced $-\xi$).

From this point on, we will often denote elements of $T^*V$ by $(v_1, v_2)$ instead of $(z,\zeta)$ and elements of $T^* U$ by $(u_1, u_2)$ instead of $(x,\eta)$. We also set $\hat v_1 := \hat z$, $\hat u_1 := \hat{x}$ etc.\ for consistency in notation.

Let $T$ be given by \eqref{def T}, let $m(y) = c_{n} e^{-\frac{1}{2} \abs{y}^2}$ and $M(z) = c_{N} e^{-\frac{1}{2} \abs{z}^2}$ be Gaussians centred at the origin on $\mR^{n}$ and $\mR^N$, respectively, with $n =n'+n''$ and $c_n = 2^{-n/2} \pi^{-3n/4}$. For $\lambda >0$, $v = (v_1,v_2) \in T^*V\cong V\times \mR^N$ and $u = (u_1, u_2) \in T^*U \cong U \times \mR^{n}$, we consider Gaussian wave packets defined by $M^\lambda_v(z) := \lambda^{3N/4} e^{i\lambda z\cdot v_2} M(\lambda^{1/2}(z-v_1))$ and $m^\lambda_u(y) := \lambda^{3n/4} e^{i\lambda y\cdot u_2} m(\lambda^{1/2}(y-u_1))$.

For $\hat U \subset \subset U$ and for any compactly supported distribution $f\in \mathcal E'(\hat U)$ we can define the FBI transform $(L^\lambda_mf)(u)$ formally by
\[
(L^\lambda_mf)(u) := \int f(y) \overline{m^\lambda_u(y)} \,dy = \langle f, m_u^{\lambda} \rangle_{L^2}.
\]
The FBI transform $(L^\lambda_Mg)(v)$ for compactly supported distributions $g\in \mathcal E'(V)$ can be defined analogously. The FBI transform satisfies the inversion formula 
\begin{eqnarray}
\label{inversion}
f(x) = \int (L_m^\lambda f)(u) m^\lambda_u(x) du = \int \langle f, m_u^{\lambda} \rangle m_u^{\lambda}(x) \,du
\end{eqnarray}
and the same of course holds for the transform $L_M^\lambda$.
Let $T$ be an FIO defined by \eqref{def T} and $f\in \mathcal E'(\hat U)$. We can use \eqref{inversion} to deduce the formula
\begin{eqnarray}
\label{wave packet relation}
(L^\lambda_MTf)(v) = \int f(x) \left[ \int \langle Tm_u^\lambda, M_v^\lambda \rangle \ol{m_u^\lambda(x)} \,du \right] \,dx := \int f(x) K_{\lambda}(x,v) \,dx
\end{eqnarray}
where
\begin{eqnarray}
\label{def: K}
K_{\lambda}(x,v) =  \int \langle Tm_u^\lambda, M_v^\lambda \rangle \ol{m_u^\lambda(x)} \,du.
\end{eqnarray}
In Section \ref{sec_klambda_integral} we will prove that the kernel $K_{\lambda}(x,v)$ can be expressed as follows.

\begin{Proposition}
\label{prop: Klambda integral}
There exists a relatively compact open set $\mathcal U \subset \mR^{N-n''}$ and an analytic diffeomorphism 
$$z(\cdot, \cdot) : \mathcal U \times  U \to Z$$
such that
\begin{eqnarray}
\label{Klambda integral 2}
K_{\lambda}(x,v) = c_{n,N} \lambda^{\frac{3N}{4}}  \int\limits_{ \zeta' \in \mathcal U}  e^{i \lambda \Psi(\zeta', x, v)}\tilde a(\zeta',x) \,d\zeta'
\end{eqnarray}
where $\Psi(\zeta', x,v) := - z(\zeta', x)\cdot v_2+i\frac{(z(\zeta',x)-v_1)^2}{2}$ and $\tilde a(\zeta', x)$ is analytic and nonvanishing.
\end{Proposition}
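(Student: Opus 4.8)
The plan is to start from the definition \eqref{def: K} of $K_\lambda(x,v)$ and unwind the three Gaussian integrals in the order $\eta''$, then $u=(u_1,u_2)$, then partially in the $Z$-direction. First I would substitute the oscillatory-integral form \eqref{def T} of the kernel $T(z,x)$ and the explicit Gaussians $M_v^\lambda$, $m_u^\lambda$ into $\langle Tm_u^\lambda, M_v^\lambda\rangle$ and into $\overline{m_u^\lambda(x)}$. The key structural point is that the amplitude $a(z,x)$ is \emph{independent} of $\eta''$: integrating $e^{i(\phi(z,x')-x'')\cdot\eta''}$ against the $\eta''$ variable (after pairing with the Gaussian in the $x$-integration of $\langle Tm_u^\lambda,\cdot\rangle$) produces, up to lower-order Gaussian factors, a delta-type concentration forcing $x''=\phi(z,x')$, i.e.\ $x\in Z$. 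Thus the $n''$-dimensional fiber integral collapses and what remains is an integral over $Z$ rather than over all of $U$. The inner $du=du_1\,du_2$ integral is a genuine Gaussian integral (both $M$ and $m$ are Gaussians and the phase is at most quadratic in $(u_1,u_2)$ after the above reduction), so it can be carried out exactly by completing the square; this is routine but bookkeeping-heavy, and it is where the power $\lambda^{3N/4}$ and the constant $c_{n,N}$ are produced, along with an analytic nonvanishing prefactor and a phase that is affine-plus-quadratic in the surviving variables.

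After these reductions the remaining integral is over the $(N+n')$-dimensional manifold $Z$, but it is pinned by the Gaussian weight $M(\lambda^{1/2}(z-v_1))$ in the $z\in\mathbb R^N$ directions. So I would split the $Z$-integration: use the analytic parametrization of $Z$ by $\mathcal U\times U$ (to be constructed) so that the "$z$-part'' of $Z$ sits inside the Gaussian in $v_1$ and can itself be integrated out, leaving precisely the $(N-n'')$-dimensional variable $\zeta'\in\mathcal U$. Concretely, recall $\dim Z=N+n'$ and $n=n'+n''$, so $N+n'-n=N-n''$; the diffeomorphism $z(\cdot,\cdot):\mathcal U\times U\to Z$ should be arranged so that for fixed $x\in U$ the slice $\{z(\zeta',x):\zeta'\in\mathcal U\}$ is transverse to the fibers of $\pi_{\mx}$ and covers the part of $Z$ lying over $x$ — this is exactly a local graph/section description of the double fibration $Z$ over $\mx$, which exists by Lemma \ref{lem_df_b} (writing $Z=\{z''=b(x,z')\}$). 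One then reads off $z(\zeta',x)$ from $(z',z'')=(\zeta',b(x,\zeta'))$ after a harmless relabeling, and the Jacobian factors are analytic and nonvanishing, contributing to $\tilde a$. The residual Gaussian integral over the $z''=b(x,z')$ directions (the $n''$ directions of $Z$ normal to the section) is again an exact Gaussian integral that, combined with the earlier factors, yields the stated phase $\Psi(\zeta',x,v)=-z(\zeta',x)\cdot v_2+i\frac{(z(\zeta',x)-v_1)^2}{2}$ and a final analytic nonvanishing amplitude $\tilde a(\zeta',x)$.

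The main obstacle I expect is the careful tracking of the Gaussian integrations and the verification that, after completing the square in the $u$ and in the "extra'' $Z$-directions, (i) no new oscillation in $\lambda$ survives except through the displayed $\Psi$, (ii) the accumulated prefactor is genuinely analytic and nowhere zero (this uses nondegeneracy of the relevant Hessians, which in turn comes from the Gaussian weights being nondegenerate, and — crucially — does \emph{not} need the Bolker condition here, only the double fibration structure), and (iii) the resulting phase has exactly the quoted form with the correct $i$ and factor $\tfrac12$. A secondary point is checking that all manipulations of the oscillatory integral \eqref{def T} are justified, e.g.\ by inserting a standard regularization/convergence factor and using that $f\in\mathcal E'(\hat U)$ with $\hat U\subset\subset U$ so all integrals are over relatively compact sets; these are standard but must be stated. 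The construction of $\mathcal U$ and of the diffeomorphism $z(\cdot,\cdot)$ is immediate from Lemma \ref{lem_df_b} and the inverse function theorem, so it is not the hard part; the hard part is the explicit Gaussian calculus that turns \eqref{def: K} into \eqref{Klambda integral 2}.
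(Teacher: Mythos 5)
Your overall strategy — integrate the $\eta''$-oscillation in \eqref{def T} to produce a delta supported on $Z$, carry out the FBI Gaussian calculus in the $u=(u_1,u_2)$ and $y$ variables, and then parametrize the remaining $z$-integral and change variables — is indeed the paper's route. Building the parametrization directly from the graph representation $Z=\{z''=b(x,z')\}$ of Lemma \ref{lem_df_b}, so that $z(\zeta',x)=(\zeta',b(x,\zeta'))$ after relabeling, is a legitimate shortcut; the paper instead constructs the full chart $(\zeta,x)\mapsto(z(\zeta,x),x)$ of Lemma \ref{Z coordinates} because it is reused later for the holomorphic-extension and stationary-phase analysis, but for the present proposition the two constructions amount to the same thing. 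You are also right that only the double fibration structure (surjectivity of $b_x$ or, equivalently, $\phi_z$) is needed here, not the Bolker condition.

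However, two steps in the plan as stated do not hold up. First, after integrating out $\eta$, the surviving $z$-integral is over the slice $Z^x=\{z\in V\mid(z,x)\in Z\}\cong\pi_{\mx}^{-1}(x)$, which has dimension $N-n''$, \emph{not} over the $(N+n')$-dimensional manifold $Z$: the variable $x$ is a fixed argument of $K_\lambda(x,v)$, not an integration variable, so $\dim Z$ never enters the count. Second, there is no ``residual Gaussian integral over the $z''=b(x,z')$ directions''; those $n''$ directions are exactly what the $\delta(\phi(z,x')-x'')$ coming from the $\eta$-integral annihilates. Once you substitute the parametrization $z=z(\zeta',x)$ of $Z^x$ and absorb the Jacobian into the amplitude, you already have \eqref{Klambda integral 2}, with $\Psi(\zeta',x,v)$ being nothing but the phase of $\overline{M_v^\lambda(z)}$ evaluated at $z=z(\zeta',x)$; no further completion of squares occurs at this stage. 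A smaller point: the factor $\lambda^{3N/4}$ is just the normalization of $M_v^\lambda$ (and the intermediate $\lambda^{3N/4+n''}$ in \eqref{Klambda integral} arises from rescaling $\eta\to\lambda\eta$, later cancelled by the $\lambda^{-n''}$ from the delta); the $u$- and $y$-integrations are precisely the inversion formula \eqref{inversion} and contribute only $\lambda$-independent constants, so they are not where $\lambda^{3N/4}$ is produced.
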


Before proceeding further we introduce a notation to streamline our presentation. If $\Omega\subset \mR^k$ is an open subset, we denote by $\Omega_{\mC}$ an open subset of $\mC^k$ with $\Omega_{\mC} \cap \mR^k = \Omega$. Here we use the natural embedding $\mR^k\subset \mC^k$. Similarly we have the extension of the set $Z$ defined in \eqref{algebraic def of Z}, given by 
\begin{eqnarray}
\label{algebraic def of Z complex}
Z_{\mC} := \{ (z,x) \in  V_{\mC} \times U_{\mC} \mid \phi (z,x') -x'' = 0\}
\end{eqnarray}
where $\phi$ is now the unique holomorphic extension of the original $\phi$, defined on a complex neighborhood of $V\times U'$.

We elaborate a bit more on some elementary properties of the holomorphic extension $\phi$ as they will be useful for us later. Writing $z\in \mC^N$ as $z = z^{\mR} + iz^{i\mR}$ with $z^\mR$ and $z^{i\mR}$ real vectors, we write $\phi$ as
\begin{eqnarray}
\label{phi real and im}
\phi(z,x') = \phi^{\mR} (z^\mR, z^{i\mR},x') + i\phi^{i\mR}(z^\mR, z^{i\mR},x').
\end{eqnarray}
Since $\phi$ takes on real values for real $z$, we have that 
\begin{eqnarray}
\label{phi im vanishes}
\phi^{i\mR}(z^{\mR},0,x') = 0, \qquad z^{\mR}\in \mR^{N},\  x'\in \mR^{n'}.
\end{eqnarray}
Note that the same holds for (real) derivatives in $z^\mR$ and $x$.

The phase appearing in \eqref{Klambda integral 2} will be the key component in our analysis of this kernel. In particular we are interested in its critical points as a map $\zeta' \mapsto \Psi(\zeta', x,v)$ and in the dependence of the critical point on $(x,v)\in U_\mC\times \mathcal V_\mC$ where $\mathcal V$ is given below. In order to state the required properties of the phase function, we introduce a map $\chi$ as follows.

\begin{Lemma} \label{lemma_ctilde}
Assume \eqref{phiz has full rank}. There is a neighborhood $\tilde C$ of $(\hat v_1, \hat v_2, \hat u_1, \hat u_2)$ in $C$, an analytic $(N+n)$-dimensional submanifold $\mathcal{V} = \pi_L(\tilde C)$ of $T^* \mR^N$ containing $(\hat v_1, \hat v_2)$, and an analytic surjective submersion  
\begin{eqnarray}
\label{def of chi}
\chi := \pi_R\circ\pi_L^{-1} : {\mathcal V} \to  \pi_{R}( \tilde{C})
\end{eqnarray}
where $\pi_{R}( \tilde{C})$ is an open set in $T^*U$.
\end{Lemma}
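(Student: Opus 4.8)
The plan is to produce $\mathcal{V}$, $\mathcal{V}_{\mC}$ and the map $\chi$ by applying the holomorphic (or analytic) inverse/implicit function theorem to $\pi_L$, using hypothesis \eqref{phiz has full rank} to control its rank. First I would recall from \eqref{local canonical relation} that $C = (N^*Z\setminus 0)'$ is an analytic submanifold of $T^*\mR^N\times T^*\mR^n$ parametrized analytically by $(z,x',\eta'')\in V\times U'\times(\mR^{n''}\setminus 0)$ via
\[
(z,x',\eta'')\mapsto \bigl(z,\ \phi_z(z,x')^T\eta'',\ x',\ \phi(z,x'),\ \phi_{x'}(z,x')^T\eta'',\ -\eta''\bigr).
\]
In these coordinates the left projection $\pi_L$ reads $(z,x',\eta'')\mapsto (z,\ \phi_z(z,x')^T\eta'')$, so its differential at $(\hat z,\hat x',\hat\eta'')$ is, up to reordering, the block matrix whose nontrivial part is exactly $\bigl(\phi_z(\hat z,\hat x')^T,\ \partial_{x'}(\phi_z(\hat z,x')^T\hat\eta'')|_{x'=\hat x'}\bigr)$ acting on the $(x',\eta'')$ variables, together with the identity on the $z$ variables. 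Hypothesis \eqref{phiz has full rank} says precisely that this $N\times n$ matrix has linearly independent columns, hence $d\pi_L|_{(\hat v_1,\hat v_2,\hat u_1,\hat u_2)}$ is injective; since $\dim C = N+n$ this means $\pi_L$ is an analytic immersion near that point. Therefore there is a neighborhood $\tilde C$ of the point in $C$ on which $\pi_L$ is an analytic embedding, and $\mathcal{V}:=\pi_L(\tilde C)$ is an analytic $(N+n)$-dimensional submanifold of $T^*\mR^N$ containing $(\hat v_1,\hat v_2)$; after shrinking, $\pi_L:\tilde C\to\mathcal{V}$ is an analytic diffeomorphism with analytic inverse $\pi_L^{-1}:\mathcal{V}\to\tilde C$.

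Next I would define $\chi:=\pi_R\circ\pi_L^{-1}:\mathcal{V}\to T^*U$. Both factors are analytic, so $\chi$ is analytic, and its image is $\pi_R(\tilde C)$. It remains to check that $\chi$ is a submersion onto an open subset of $T^*U$, i.e.\ that $d\pi_R|_{\tilde C}$ has rank $2n=\dim T^*U$ everywhere on (a possibly smaller) $\tilde C$. This is exactly the equivalence (1)$\Leftrightarrow$(2) in Lemma \ref{lemma_bolker_local_char}: injectivity of $d\pi_L$ at a point of $C$ is equivalent to surjectivity of $d\pi_R$ there. So $d\pi_R$ is surjective at $(\hat v_1,\hat v_2,\hat u_1,\hat u_2)$, hence surjective on a neighborhood in $C$; shrinking $\tilde C$ accordingly, $d\chi = d\pi_R\circ d(\pi_L^{-1})$ is surjective on all of $\mathcal{V}$. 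Consequently $\pi_R(\tilde C)$ is open in $T^*U$ and $\chi:\mathcal{V}\to\pi_R(\tilde C)$ is an analytic surjective submersion, as claimed. Finally I would note that every step goes through verbatim for the holomorphic extensions: the holomorphic implicit function theorem gives the complexified $\mathcal{V}_{\mC}$, a complex $(N+n)$-dimensional submanifold of $T^*\mC^N$ with $\mathcal{V}_{\mC}\cap T^*\mR^N=\mathcal{V}$, and a holomorphic map $\chi:\mathcal{V}_{\mC}\to T^*\mC^n$ extending the real one, submersive near the base point, which is what the later stationary phase analysis will need.

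I do not expect a serious obstacle here; this is essentially a bookkeeping lemma turning the algebraic rank condition \eqref{phiz has full rank} into the geometric statement that $\pi_L$ is locally an embedding and $\pi_R$ is locally a submersion. The only point requiring a little care is keeping track of which variables $\pi_L$ and $\pi_R$ actually depend on in the parametrization of $C$ by $(z,x',\eta'')$ — in particular verifying that $d\pi_L$ in these coordinates is the block matrix described above, so that the hypothesis translates correctly — and making sure all the shrinkings of $\tilde C$ are done simultaneously so that on the final $\tilde C$ both $\pi_L$ is an embedding and $\pi_R$ a submersion. Everything else is a direct application of the analytic inverse function theorem and Lemma \ref{lemma_bolker_local_char}.
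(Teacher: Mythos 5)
Your proof is correct and follows essentially the same route as the paper: both deduce from \eqref{phiz has full rank} and Lemma \ref{lemma_bolker_local_char} that $\pi_L$ is an immersion and $\pi_R$ a submersion near the point, then apply the analytic inverse function theorem to get the local embedding, and shrink $\tilde C$ so that $\chi=\pi_R\circ\pi_L^{-1}$ is a submersion onto the open set $\pi_R(\tilde C)$. The only difference is that you unpack the matrix computation for $d\pi_L$ in the $(z,x',\eta'')$ parametrization of $C$ inline, whereas the paper simply cites the equivalence (1)$\Leftrightarrow$(2)$\Leftrightarrow$(5) already established in Lemma \ref{lemma_bolker_local_char}; your remarks on the holomorphic extension $\mathcal{V}_{\mC}$ are a useful anticipation of later steps but are not part of the lemma itself.
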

\begin{proof}
By \eqref{phiz has full rank} and Lemma \ref{lemma_bolker_local_char}, we know that $\pi_L$ is an immersion and $\pi_R$ is a submersion near $(\hat v_1, \hat v_2, \hat u_1, \hat u_2)$. Hence there is a neighborhood $\tilde C$ of $(\hat v_1, \hat v_2, \hat u_1, \hat u_2)$ in $C$ such that $\pi_L|_{\tilde C}: \tilde C \to T^* \mR^N$ is an embedding and $\mathcal{V} = \pi_L(\tilde C)$ is an analytic submanifold. Then $\chi$ is a well defined analytic map and it is a submersion if $\tilde C$ is chosen small enough. Hence also $\pi_R(\tilde C)$ is open.
\end{proof}

The graph of $\chi$ can be characterized in the following way (see \eqref{local canonical relation}, here we write $u_j= (u'_j, u''_j)$):
\begin{align}
\label{canonical relation characterization}
(u_1, u_2) = \chi(v_1,v_2) \quad &\Longleftrightarrow \quad (v_1, v_2, u_1, u_2) \in \tilde{C} \\
 &\Longleftrightarrow \quad v_2 = \phi_z(v_1,u_1')^T u_2'', \ u_1'' = \phi(v_1, u_1'), \ u_2' = - \phi_{x'}(v_1, u_1')^T u_2'' \notag
\end{align}

The proof of the following proposition will be given in Section \ref{sec_good_phase} and this will be the most technical part of the argument.

\begin{Proposition}
\label{good phase} 
There exist open subsets $\hat U \subset \subset U$ containing $\hat x$ and $\hat {\mathcal V}\subset\subset \mathcal V$ containing $\hat v =(\hat v_1, \hat v_2)$ such that for all $(x,v) \in \hat U_{\mC} \times \hat{ \mathcal V}_{\mC}$, the map $\zeta' \mapsto \Psi(\zeta', x,v)$ defined on $\mathcal U$ has a unique nondegenerate critical point $\zeta'_c(x,v)$ which depends holomorphically on $(x,v)$. The phase function $\psi(x,v):= \Psi(\zeta'_c(x,v), x,v)$ satisfies the following conditions:
\begin{enumerate}
\item[(1)]
If we write $v = (v_1, v_2)$, then $\psi(\pi(\chi(v)), v) +v_1\cdot v_2 = 0$ where $\chi$ is the map given by \eqref{def of chi}.
\item[(2)]
$\left(x, d_x\psi(x,v)\right)|_{x = \pi(\chi(v))} = \chi(v)$.
\item[(3)]
$\im(\psi(x,v)) \geq c|x-\pi(\chi(v))|^2$ if $x$ and $v$ are real valued.
\end{enumerate}
\end{Proposition}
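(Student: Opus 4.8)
The starting point is the explicit formula $\Psi(\zeta',x,v) = -z(\zeta',x)\cdot v_2 + i\frac{(z(\zeta',x)-v_1)^2}{2}$ from Proposition \ref{prop: Klambda integral}, where $z(\cdot,x):\mathcal U \to Z\cap(\{x\}\times\cdots)$ — wait, more precisely $z(\zeta',x)$ is the $\mG$-component of a point of $Z$ lying over $x$. First I would compute the critical point equation $\partial_{\zeta'}\Psi = 0$. Differentiating, $\partial_{\zeta'}\Psi = -(\partial_{\zeta'}z)^T\big(v_2 - i(z(\zeta',x)-v_1)\big)$, so since $\partial_{\zeta'}z$ has full rank (it is a slice of the diffeomorphism from Proposition \ref{prop: Klambda integral}), the critical point equation is equivalent to $v_2 = i(z(\zeta',x)-v_1)$ after projecting, but this is generically overdetermined; the correct statement is that the critical equation forces the $n''$-dimensional ``conormal'' components to match. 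I would use the local coordinates $Z=\{x''=\phi(z,x')\}$ and $N^*Z$ as in \eqref{local canonical relation} to see that the critical point condition says exactly that $(v_1,v_2)$ is joined to $(x, \text{something})$ through $C$, i.e.\ it pins down $z(\zeta'_c,x)$ via the relation in \eqref{canonical relation characterization}. At $(\hat x,\hat v)$ the critical point $\hat\zeta'_c$ is the one with $z(\hat\zeta'_c,\hat x)=\hat z$; existence and uniqueness of a nearby holomorphic critical point then follows from the implicit function theorem once I check nondegeneracy of the Hessian $\partial_{\zeta'}^2\Psi$ at the base point.

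The nondegeneracy of the Hessian is where I expect to spend real effort. The Hessian is $\partial^2_{\zeta'}\Psi = i(\partial_{\zeta'}z)^T(\partial_{\zeta'}z) + (\text{terms involving } \partial^2_{\zeta'}z \text{ contracted with } v_2 - i(z-v_1))$. The key algebraic point: at the critical point the quantity $v_2 - i(z-v_1)$ is orthogonal to the range of $\partial_{\zeta'}z$ (that is the content of the critical equation), but it need not vanish, so the second term does not simply drop. However, $v_2 - i(z-v_1)$ is \emph{conormal} to $Z$ in a suitable sense at the critical point — it lies in (a complexification of) $N^*_z H_x$ up to the $A(z,x)$ identification — and the Bolker condition \eqref{phiz has full rank} is precisely what makes the full Hessian invertible. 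Concretely, I would decompose $T_z\mG$ into $\ker A(z,x)^*$ and a complement, observe that $\partial_{\zeta'}z$ parametrizes $T_{(z,x)}(\pi_{\mx}^{-1}(x))$ hence lands in $\ker(d\pi_{\mx})$-related directions, and show via \eqref{phiz has full rank} (equivalently injectivity of the matrix $(\phi_z^T,\ \partial_{x'}(\phi_z^T\eta''))$) that the real part $(\partial_{\zeta'}z)^T(\partial_{\zeta'}z)$ and the correction term together have trivial kernel. This is the ``most technical part'' the text warns about.

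Granting the critical point $\zeta'_c(x,v)$ exists, depends holomorphically, and is nondegenerate, conditions (1)--(3) are comparatively formal. For (1): evaluate at $x=\pi(\chi(v))$. By construction of $\chi$ via \eqref{canonical relation characterization}, the critical point then satisfies $z(\zeta'_c,x) = v_1$ (the $\mG$-point matching $v$ is $v_1$ itself), hence $\Psi = -v_1\cdot v_2 + i\cdot 0 = -v_1\cdot v_2$, giving $\psi + v_1\cdot v_2 = 0$. For (2): differentiate $\psi(x,v) = \Psi(\zeta'_c(x,v),x,v)$ in $x$; the $\zeta'$-derivative term vanishes since $\zeta'_c$ is critical, so $d_x\psi = \partial_x\Psi|_{\zeta'_c} = -(\partial_x z)^T v_2 + i(\partial_x z)^T(z - v_1)$, and evaluating at $x=\pi(\chi(v))$ where $z=v_1$ leaves $d_x\psi = -(\partial_x z)^T v_2$; matching this against the formula for $C$ in \eqref{local canonical relation}/\eqref{canonical relation characterization} shows $(x,d_x\psi) = \chi(v)$. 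For (3): write $\psi(x,v) = \psi(\pi(\chi(v)),v) + d_x\psi\cdot(x-\pi(\chi(v))) + O(|x-\pi(\chi(v))|^2)$; taking imaginary parts and using that the first two terms are real when $x,v$ are real (by (1) and (2), since $d_x\psi=\chi(v)_{u_2}$ is real), the quadratic term dominates, and positivity of its imaginary part comes from the $i\frac{(z-v_1)^2}{2}$ structure together with $\partial_x z$ having the right rank — I would expand $z(\zeta'_c(x,v),x)-v_1$ to first order in $x-\pi(\chi(v))$ and check that its derivative in the relevant directions is nonzero, so $\im\psi\gtrsim |x-\pi(\chi(v))|^2$. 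This last positivity estimate is the second place where the Bolker/rank hypothesis is used, and I would present it carefully but not belabor the Taylor bookkeeping.
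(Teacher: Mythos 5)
Your plan for the existence/uniqueness of the critical point and for properties (1) and (2) is essentially the paper's argument (the paper additionally introduces an auxiliary phase $\Phi$ on $V_{\mC}\times\mC^{n''}$ whose critical points correspond to those of $\Psi$, which smooths out the implicit‐function bookkeeping for (2), but that is a cosmetic difference). There are, however, two substantive issues with your proposal.

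\textbf{The Hessian nondegeneracy.} You attribute the invertibility of $\partial^2_{\zeta'}\Psi$ to the Bolker condition \eqref{phiz has full rank} and propose a decomposition of $T_z\mG$ via $\ker A(z,x)^*$. This is an overcomplication, and it misattributes the source. In the paper's proof (Lemma \ref{real crit points} part (2)) the Hessian at the real critical point $\zeta'_c$ (where $z(\zeta'_c,x)=v_1$) is computed to be $B+iA^TA$ with $A=z_{\zeta'}(\zeta'_c,x)$ real and injective (injectivity comes from Lemma~\ref{Z coordinates} (2), which only needs $\phi_z$ to have full rank, a weaker condition than \eqref{phiz has full rank}) and $B=-\partial^2_{\zeta'}(z\cdot v_2)$ real symmetric. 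The nondegeneracy then follows from the elementary fact that if $(B+iA^TA)(a+ib)=0$ for real $a,b$, then $Ba=A^TAb$ and $Bb=-A^TAa$; pairing with $b$ and $a$ and using symmetry of $B$ gives $|Aa|^2=-|Ab|^2$, hence $a=b=0$. This is not where the hard work is.

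\textbf{Property (3) has a genuine gap.} You propose to Taylor-expand $\psi(x,v)$ around $x_0=\pi(\chi(v))$, observe that the constant and linear terms are real by (1) and (2), and claim the quadratic term's positivity ``comes from the $i\frac{(z-v_1)^2}{2}$ structure together with $\partial_x z$ having the right rank.'' This reasoning does not hold as stated. The obstruction is that for real $(x,v)$ with $x\neq x_0$, the critical point $\zeta'_c(x,v)$ is \emph{complex}, hence so is $z(\zeta'_c(x,v),x)$. In particular $\re\bigl[(z-v_1)^2\bigr]=|z^{\mR}-v_1|^2-|z^{i\mR}|^2$ can very well be negative, and the term $-z\cdot v_2$ also contributes an imaginary part $-z^{i\mR}\cdot v_2$ which has no obvious sign. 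So the decomposition $\im\psi(x,v)=-\im(z\cdot v_2)+\tfrac12\re[(z-v_1)^2]$ is not a sum of two nonnegative pieces, and the quadratic term in the Taylor expansion (which is $\tfrac12(x-x_0)^T\im(d_x^2\psi)(x-x_0)+O(|x-x_0|^3)$) is not visibly positive. This is precisely the ``most technical part of the argument'' flagged in Section~\ref{sec_methods}, and the paper spends a whole chain of results on it: Lemma \ref{coercive estimate} establishes the lower bound $|z(\zeta'_c(x,v),x)-v_1|\geq c_0|x-x_0|$; Lemma \ref{bound the imaginary part} (the H\"{o}rmander Lemma 7.7.8/7.7.9 trick) establishes $\im\psi\geq C|z^{i\mR}|^2$ by testing the real phase $\Phi$ at a clever real point near the complex critical point; Corollary \ref{estimating the square} combines these to control $\re[(z-v_1)^2]$; and Lemma \ref{linear term} bounds $-\im(z\cdot v_2)$ by differentiating the equation $\phi^{i\mR}(\mathcal Z_c^{\mR},\mathcal Z_c^{i\mR},x')=0$ twice in $x$, using the Cauchy--Riemann equations and the reality of $\phi$ on real arguments (cf.\ \eqref{phi real and im}--\eqref{phi im vanishes}) to show that $\partial_x(\mathcal Z_c^{i\mR}\cdot v_2)$ vanishes at $x=x_0$ and that the second-order terms are controlled by $|z^{i\mR}|^2$ and $|x-x_0|^3$. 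Only after all this does Corollary \ref{final coercive estimate} deliver (3). Your sketch collapses all of this into a Taylor remainder estimate that would need, but does not supply, a positivity statement for $\im(d_x^2\psi)(x_0,v)$.

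In short: the architecture of your plan is right, and (1)--(2) are fine, but the Hessian step is simpler than you think, and (3) is much harder than you think and is where the missing content lies.
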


\subsection{Stationary phase and proof of Theorem \ref{elliptic regularity}}

Let $\hat U\subset \mR^{n'+n''}$ be the open subset containing $\hat x$ appearing in the statement of Proposition \ref{good phase}. For all $f\in {\mathcal E}'(\hat U)$ we need to show that if $(\hat v_1, \hat v_2)\notin \WF_a(Tf)$ then $(\hat x, \hat u_2)\notin \WF_a(f)$.

By the assumption that $(\hat v_1,\hat v_2)$ is not in the analytic wave front set, by \eqref{wave packet relation} and by \cite[Definition 6.1]{sjostrand}, there is $\epsilon>0$ such that for any $v = (v_1, v_2)$ in some neighborhood of $(\hat v_1, \hat v_2)$ one has 
\begin{eqnarray}
\int f(x) K_{\lambda}(x,v_1,v_2) \,dx = O(e^{-\epsilon \lambda})
\end{eqnarray}
as $\lambda\to\infty$. %
The kernel $K_{\lambda}(x,v_1, v_2)$ is given by \eqref{Klambda integral 2}.
Proposition \ref{good phase} asserts that for complex $(x,v_1, v_2)$ near $(\hat x, \hat v_1, \hat v_2)$, the phase $\zeta' \mapsto \Psi(\zeta', x,v)$ defined on $\mathcal U$ has a unique nondegenerate critical point $\zeta_c(x,v_1,v_2)$. So by stationary phase (Theorem 2.8 and Remark 2.10 of \cite{sjostrand}) applied to the expression \eqref{Klambda integral 2} for $K_{\lambda}(x,v_1, v_2)$ we get that
\[
\int f(x) e^{i \lambda \psi(x,v_1, v_2)} A(x,v_1, v_2;\lambda) \,dx = O(e^{-\epsilon \lambda})
\]
for $(v_1, v_2)$ complex near $(\hat v_1,\hat v_2)$. By the stationary phase expansion formula (Theorem 2.8 and Remark 2.10 of \cite{sjostrand}), the new symbol $A(x,v_1,v_2;\lambda)$ is a nonvanishing classical analytic symbol in the sense of \cite[Section 1]{sjostrand}. We get then that 
\[
\int f(x) e^{i \lambda (\psi(x,v_1,v_2) + v_1\cdot v_2)} A(x,v_1,v_2;\lambda) \,dx  = O(e^{-\epsilon \lambda}).
\]
We now make a change of variable in the $(v_1,v_2)$ variable. By Lemma \ref{lemma_ctilde}, $\chi :{\mathcal V} \to \pi_R(\tilde C)\subset T^*U$ is an analytic surjective submersion onto the open set $\pi_R(\tilde C)$. By the local submersion theorem (which follows from the implicit function theorem and hence works in the analytic category), there exists an analytic map $\chi^+: \pi_R(\tilde C) \to \mathcal{V}$ satisfying 
\[
\chi \circ \chi^+ = \mathrm{Id}.
\]
 Therefore if $\hat U$ is chosen small enough, we can define $\tilde \psi: \hat U \times \pi_R(\tilde C)\to \mC$ by 
\[
\tilde \psi(x,u_1,u_2)  = (\psi(x,v_1,v_2) + v_1\cdot v_2)|_{(v_1, v_2) = \chi+(u_1,u_2)}.
\]
Note that the variable $x$ is untouched in this change of parameters. The above integral becomes
\begin{eqnarray}
\label{final integral}
\int f(x) e^{i \lambda \tilde \psi(x,u_1, u_2) } \tilde A(x,u_1, u_2;\lambda) \,dx = O(e^{-\epsilon \lambda})
\end{eqnarray}
for all $(u_1, u_2)$ near $(\hat x, \hat u_2)$.
By Proposition \ref{good phase} part (1) the phase function $\tilde\psi(x,u_1, u_2)$ vanishes when $x = u_1$, and by part (2) it satisfies 
\[
d_x (\tilde\psi(x, u_1, u_2) )|_{x = u_1} = u_2.
\]
Furthermore, by property (3) of Proposition \ref{good phase}
\[
\im(\tilde\psi(x,u_1, u_2)) \geq c|x- u_1|^2
\]
for real $(u_1, u_2)$. By \cite[Definition 6.1]{sjostrand} we have that $(\hat x, \hat u_2)\notin\WF_a(f)$.

\subsection{$K_{\lambda}(x,v)$ as an oscillatory integral and proof of Proposition \ref{prop: Klambda integral}} \label{sec_klambda_integral}
We observe that the integral kernel $K_{\lambda}(x,v)$ given by \eqref{def: K}
involves oscillatory wave packets which concentrate simultaneously in space and frequency. Writing out the integrals explicitly using \eqref{def T} for the operator $T$ and integrating out the $u_2$ and $y$ variables we have that for $(x,v) \in U \times \mathcal V$,
\begin{eqnarray}
\label{Klambda integral}
K_{\lambda}(x,v) = c_{n,N} \lambda^{\frac{3N}{4} + n''} \int\limits_{z\in V} \int\limits_{\eta\in\mR^{n''}}  e^{i\lambda(\phi(z,x') - x'')\cdot \eta}  e^{-i\lambda z\cdot v_2} e^{-\frac{\lambda|z-v_1|^2}{2}}a(z,x) \,d\eta \,dz.
\end{eqnarray}
The $\eta$ integral in \eqref{Klambda integral} results in $\lambda^{-n''} \delta_0(\phi(z,x') - x'') $ which is a distribution supported on the submanifold $Z\subset V\times U$.

The following lemma gives another coordinate representation for $Z$.

\begin{Lemma} 
\label{Z coordinates}
The set $Z$ has the following properties:
\begin{enumerate}
\item[(1)]
The set $Z$ is an analytic submanifold of $V\times U$ of dimension $N + n'$.
\item[(2)]
We can find $\Omega \subset  V \times  U$ containing $(\hat v_1, \hat x)$, an open set $\tilde \Omega \subset \mR^N\times \mR^{n'+n''}$, and an analytic diffeomorphism of the form $(\zeta, x)\mapsto (z(\zeta, x) ,x)$ from $\tilde \Omega$ to $\Omega$ such that 
$$Z\cap \Omega = \{(z(\zeta, x), x)\mid \zeta = (\zeta'', \zeta') = (0, \zeta'),\zeta'\in\mathcal U, x\in U\}$$
for some neighborhood $\mathcal U$ of the origin in $\mR^{N-n''}$. Also, the matrix $z_{\zeta}(\zeta,x)$ is invertible. 
\item[(3)]
The coordinate system constructed above extends holomorphically to a coordinate system (possibly after shrinking both $\Omega$ and $\tilde\Omega$) from $\tilde\Omega_{\mC}$ to $\Omega_{\mC}$ such that $Z_{\mC}\cap\Omega_{\mC}$ is again given by $\zeta'' = 0$.
\end{enumerate}
\end{Lemma}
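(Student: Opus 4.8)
The plan is to produce the coordinate change in three moves: (1) identify $Z$ as a graph-type analytic submanifold, (2) build the diffeomorphism $(\zeta,x) \mapsto (z(\zeta,x),x)$ so that the last $n''$ coordinates $\zeta''$ cut out $Z$, and (3) holomorphically extend everything. For (1), recall from \eqref{algebraic def of Z} that $Z = \{(z,x',x'') \in V \times U' \times \mR^{n''} \mid x'' = \phi(z,x')\}$, where $\phi$ is analytic on $V \times U'$. The map $(z,x') \mapsto (z,x',\phi(z,x'))$ is an analytic parametrization of $Z$ over $V \times U'$, so $Z$ is an analytic submanifold of dimension $N + n'$; this is part (1), and it also re-derives Lemma \ref{local description of Z} in the current notation.

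For part (2), the idea is to use the Bolker condition \eqref{phiz has full rank} to split the $z$-variable. At $(\hat z, \hat x', \hat \eta'')$ the $N \times (n'+n'')$ matrix $(\phi_z(\hat z,\hat x')^T,\ \p_{x'}(\phi_z(\hat z,x')^T\hat\eta'')|_{x'=\hat x'})$ has linearly independent columns; in particular $\phi_z(\hat z,\hat x')^T$ has rank $n''$, i.e.\ $\phi_z(\hat z,\hat x')$ is surjective (which we already know since $Z$ is a double fibration, Lemma \ref{lemma_pim_submersion}). Since $\phi_z$ is an $n'' \times N$ matrix of full rank $n''$, after a linear change of the $z$-coordinates we may assume the first $n''$ columns of $\phi_z(\hat z,\hat x')$ form an invertible $n'' \times n''$ block; write $z = (z^a, z^b)$ with $z^a \in \mR^{n''}$, $z^b \in \mR^{N-n''}$, so $\partial_{z^a}\phi(\hat z,\hat x')$ is invertible. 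Now define $G : V \times U \to \mR^N \times \mR^n$ by $G(z,x) = (\zeta'', \zeta', x)$ where $\zeta'' := \phi(z,x') - x'' \in \mR^{n''}$ and $\zeta' := z^b \in \mR^{N-n''}$. Then $G$ is analytic, $Z \cap (V\times U) = G^{-1}(\{\zeta'' = 0\})$, and the Jacobian of $G$ in the $(z,x)$ variables, with the $x$-block acting as identity on its own slot, has the block form $\begin{pmatrix} \partial_{z^a}\phi & \partial_{z^b}\phi & \partial_{x'}\phi & -I \\ 0 & I & 0 & 0 \\ 0 & 0 & I & 0 \\ 0 & 0 & 0 & I\end{pmatrix}$ up to reordering, whose top-left corner is the invertible matrix $\partial_{z^a}\phi(\hat z,\hat x')$; hence $G$ is an analytic diffeomorphism near $(\hat z,\hat x)$ onto an open set $\tilde\Omega$. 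Set $\Omega$ to be the corresponding neighborhood of $(\hat v_1,\hat x)$ and let $(\zeta,x)\mapsto (z(\zeta,x),x)$ be the inverse of $G$ (note $G$ and $G^{-1}$ fix the $x$-variable by construction, so the inverse does have the stated form). Then $Z \cap \Omega = \{(z(0,\zeta',x),x) \mid \zeta' \in \mathcal U,\ x \in U\}$ with $\mathcal U$ a neighborhood of the origin in $\mR^{N-n''}$. The invertibility of $z_\zeta(\zeta,x)$ is immediate from $z_\zeta = ((G^{-1})_\zeta$-block$)$, i.e.\ it is a block of the inverse Jacobian of $G$, hence invertible after shrinking.

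For part (3): $\phi$ extends holomorphically to a complex neighborhood of $V \times U'$ (as already used in \eqref{algebraic def of Z complex}), so $G$ extends to a holomorphic map on a complex neighborhood $\Omega_{\mC}$ of $(\hat v_1,\hat x)$, and its complex Jacobian at the real point $(\hat z,\hat x)$ agrees with the real one computed above, hence is invertible; by the holomorphic inverse function theorem $G$ is a biholomorphism onto an open $\tilde\Omega_{\mC} \subset \mC^N \times \mC^n$ after shrinking, and since $\{\phi(z,x') - x'' = 0\}$ is exactly the vanishing locus of the first $n''$ components $\zeta''$ of $G$, we get $Z_{\mC} \cap \Omega_{\mC} = \{\zeta'' = 0\}$ in the new coordinates, with $\tilde\Omega_{\mC} \cap (\mR^N\times\mR^n) = \tilde\Omega$ and $\Omega_{\mC} \cap (\mR^N\times\mR^n) = \Omega$ after shrinking. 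The only mild subtlety — and the step I would be most careful about — is bookkeeping the linear change of the $z$-variables needed to make $\partial_{z^a}\phi$ invertible and checking it is compatible with the orientation/analyticity conventions already fixed; everything else is a direct application of the (real and holomorphic) inverse function theorem. No genuine obstacle is expected here, since the substance — that $\phi_z$ has full row rank $n''$ — is handed to us by the double fibration hypothesis.
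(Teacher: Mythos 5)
Your proposal is correct and follows essentially the same route as the paper: both split the $z$-variable using the full rank of $\phi_z$ (guaranteed by \eqref{phiz has full rank}, or equivalently by the double fibration hypothesis via Lemma \ref{lemma_pim_submersion}) to isolate an invertible $n'' \times n''$ block, and both set $\zeta'' = \phi(z,x')-x''$ with $\zeta'$ the remaining $z$-coordinates. The only cosmetic difference is that you build the forward map $G$ and invert it via the inverse function theorem, while the paper solves for $z''$ via the implicit function theorem and then checks the Jacobian of the resulting map — these are interchangeable.
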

\begin{proof} 

The statement (1) is a consequence of (2) so we proceed with (2). By \eqref{phiz has full rank}, we must have that the $ n''\times N$ matrix $\partial_z\phi$ has full rank. Choose $\Omega\subset V\times U$ small containing $(\hat v_1, \hat x)$ and without loss of generality we may assume that $\Omega = V\times U$. Since $N>n''$, we may assume that at $(\hat v_1, \hat x) \in Z\cap \Omega$, the first $n''$ vectors
$$\{\partial_{z_1}\phi, \dots, \partial_{z_{n''}}\phi\}$$
are linearly independent. For any $\zeta'' = (\zeta_1,\dots, \zeta_{n''})$ close to the origin and $x\in U$, use implicit function theorem to find the unique solution $z'' := (z_1, \dots, z_{n''})$  to the equation 
$$\phi(z,x')  - x''= \zeta''$$ 
so that 
$$(z_1, \dots, z_{n''}, \underbrace{ z_{n''+1}, \dots, z_{N}}_{z'}) \in V.$$
The dependence of $(z_1,\dots, z_{n''})$ on the variables
$$(\zeta'', z', x)\in \mR^{n''} \times \mR^{N - n''} \times \mR^{n' + n''}$$
is analytic.

Setting $\zeta' = z'$ and $\zeta = (\zeta'', \zeta')$ gives us an analytic map $z(\zeta,x)$ solving $\phi(z(\zeta,x),x') - x''= \zeta''$. By direct computation we see that the map $(\zeta, x)\mapsto (z(\zeta,x),x)$ has nondegenerate Jacobian and is therefore locally an analytic diffeomorphism. Consequently, the matrix $z_{\zeta}(\zeta,x)$ is invertible.

To see (3), extend $(\zeta,x)\mapsto (z(\zeta,x),x)$ holomorphically to a complex neighborhood $\tilde \Omega_{\mC}$ containing $\tilde \Omega$. By the fact that the real Jacobian of this holomorphic map is nonvanishing, the complex Jacobian is also nonvanishing. After possibly shrinking $\tilde\Omega_{\mC}$ and $\Omega_{\mC}$ we can conclude that $\zeta(\cdot,\cdot)$ is therefore a diffeomorphism between $\tilde\Omega_{\mC}$ and $\Omega_{\mC}$.

By construction of $z(\zeta'',\zeta', x)$, $\phi(z(\zeta'', \zeta'),x') -x'' = \zeta''$ for $(\zeta, x) \in \mathcal U \times U$. This identity therefore holds by unique continuation for the holomorphic extension to the complex domain. So $Z_{\mC}\cap \Omega_{\mC}$ is again given by $\{\zeta'' = 0\}$.\end{proof}
In what follows we assume without loss of generality that $Z = \Omega\cap Z$. 

We make a remark on the notation which will follow. We are often interested in coordinate systems for $Z$ or $Z_{\mC}$ which are given by $(z(0,\zeta', x), x)$. Therefore, to simplify notation we omit the $0$ in the first argument and write $z(\zeta',x)$ in place of $z(0,\zeta',x)$ in this case.

\begin{proof}[Proof of Proposition \ref{prop: Klambda integral}]
Integrating in $\eta$, \eqref{Klambda integral} becomes
$$K_{\lambda}(x,v) = c_{n,N} \lambda^{\frac{3N}{4}}  \int\limits_{Z^x}    e^{-i\lambda z\cdot v_2} e^{-\frac{\lambda|z-v_1|^2}{2}}a(z,x) \,dz$$
where for each $x\in U$,
$$Z^x := \{z\in V\mid (z,x)\in Z\} =  \{z(\zeta', x)\mid \zeta'\in\mathcal U\}$$
by Lemma \ref{Z coordinates}. After a change of coordinate into the $\zeta'$ variable this integral becomes
$$K_{\lambda}(x,v) = c_{n,N} \lambda^{\frac{3N}{4}} \int\limits_{\zeta'\in \mathcal U} e^{i\lambda\Psi(\zeta', x,v)}\tilde a (\zeta', x) \,d\zeta'$$
for some nonvanishing real analytic function $\tilde a(\zeta',x)$ defined on $\mathcal U \times U$. 
\end{proof}

\subsection{Critical points of $\Psi$ and proof of Proposition \ref{good phase}} \label{sec_good_phase}
We now examine the critical points of the phase function 
$$\Psi(\zeta'; x,v) := - z(\zeta', x)\cdot v_2+ i\frac{(z(\zeta',x)-v_1)^2}{2}$$
appearing in \eqref{Klambda integral 2}. This function has a holomorphic extension when the variables are allowed to be complex which we still denote by $\Psi$. For what follows we treat $\zeta'$ as a variable for the function $\Psi$ and $x$ and $v= (v_1, v_2)$ as parameters. 

Let $\hat x \in U$ and $\hat v = (\hat v_1, \hat v_2) \in \mathcal V$ satisfy $\hat x = \pi(\chi(\hat v))$ which automatically guarantees that $(\hat v_1, \hat x)\in Z$ by \eqref{canonical relation characterization}. Choose $\Omega$ as in statement (2) of Lemma \ref{Z coordinates}. Then Lemma \ref{Z coordinates} gives a $\hat \zeta'\in \mathcal U$ such that $z(\hat \zeta', \hat x) = \hat v_1$. Statement (3) of Lemma \ref{Z coordinates} provides a complex neighborhood $\Omega_{\mC}$  of $\mC^{N} \times \mC^{n'+n''}$ containing $(\hat v_1, \hat x)$ which is diffeomorphic to a complex neighborhood $\tilde \Omega_{\mC}$ containing $((0,\hat \zeta'), \hat x)$. The diffeomorphism is given by 
\[
(\zeta, x)\in \tilde\Omega_{\mC} \mapsto (z(\zeta,x), x)\in \Omega_{\mC}
\]
near $((0,\hat{\zeta}'), \hat x)$. The submanifold $Z_{\mC}\cap \Omega_{\mC}$ is given by $\zeta = (0, \zeta')$ where $\zeta'\in  \mathcal U_{\mC}$ and $\mathcal U$ is given as in Lemma \ref{Z coordinates}.

We first look at the possibility of critical points when $x = \pi(\chi(v))$:
\begin{Lemma}
\label{real crit points}
For each real $(x,v = (v_1,v_2)) \in U\times \mathcal V$, writing $x(v) = \pi(\chi(v))$, the following holds:
\begin{enumerate}
\item[(1)]
$\Psi(\cdot; x(v), v)$ has a real critical point $ \zeta_c'(x(v),v)\in \mathcal U$ satisfying $z(\zeta_c'(x(v),v), x(v)) = v_1$.
\item[(2)]
The matrix of complex derivatives $\left( \frac{\partial^2 \Psi}{\partial \zeta'^2}\right)$ at $\zeta_c'(x(v),v)$ is nondegenerate.
\item[(3)]
If $\ \mathcal U$ is chosen small enough, the critical point $\zeta_c'(x(v),v)$ is the unique critical point of $\zeta'\mapsto \Psi(\cdot ; x(v), v)$ in ${\mathcal U}$.
\end{enumerate}
\end{Lemma}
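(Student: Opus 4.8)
The plan is to compute the $\zeta'$-derivative of $\Psi$ explicitly and identify the critical point with the point where $z(\zeta',x)=v_1$. Recall $\Psi(\zeta';x,v) = -z(\zeta',x)\cdot v_2 + i\tfrac12(z(\zeta',x)-v_1)^2$, so by the chain rule
\[
\partial_{\zeta'}\Psi(\zeta';x,v) = z_{\zeta'}(\zeta',x)^T\big(-v_2 + i(z(\zeta',x)-v_1)\big).
\]
By Lemma \ref{Z coordinates}(2) the matrix $z_{\zeta}(\zeta,x)$ is invertible, and in particular $z_{\zeta'}(\zeta',x)$ (the submatrix coming from the $\zeta'=(0,\zeta')$ slice, i.e.\ columns indexed by $\zeta'$) has full column rank $N-n''$. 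Hence $\partial_{\zeta'}\Psi = 0$ forces the vector $-v_2 + i(z(\zeta',x)-v_1)$ to be annihilated by $z_{\zeta'}^T$. For part (1) I specialize to $x = x(v) = \pi(\chi(v))$; by \eqref{canonical relation characterization} this means $(v_1,x(v))\in Z$, so Lemma \ref{Z coordinates}(2) supplies $\zeta_c' := \zeta_c'(x(v),v)\in\mathcal U$ with $z(\zeta_c',x(v)) = v_1$. At this point the bracket $-v_2 + i(z-v_1)$ equals $-v_2$, which is real, while the columns of $z_{\zeta'}$ span the tangent space $T_{v_1}Z^{x(v)} = T_{v_1}G_{v_1}$ (viewing $Z^x$ as $G_z$'s local chart). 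From the characterization of $C$ in \eqref{canonical relation characterization}, $v_2 = \phi_z(v_1,u_1')^Tu_2''$ and the tangent vectors to $Z^{x(v)}$ at $v_1$ are exactly the $\dot z_0$ with $\phi_z(v_1,\cdot)\dot z_0 = \dot z_0''$-compatible... more cleanly: since $(v_1,v_2,x(v),u_2)\in N^*Z$ up to the $'$ sign convention, $v_2$ annihilates $T_{v_1}H_{x(v)}$ and in particular $v_2$ annihilates the image of $z_{\zeta'}$, because that image is tangent to the fiber $\pi_{\mG}^{-1}$-direction which is precisely the conormal-pairing that vanishes. So $z_{\zeta'}^T v_2 = 0$, giving $\partial_{\zeta'}\Psi(\zeta_c';x(v),v) = 0$. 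This proves (1).

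For part (2), I differentiate once more. Writing $g(\zeta') := -v_2 + i(z(\zeta',x)-v_1)$, we have $\partial_{\zeta'}^2\Psi = (\partial_{\zeta'}z_{\zeta'}^T)\,g + i\,z_{\zeta'}^T z_{\zeta'}$. At the critical point $\zeta_c'$ with $x=x(v)$ real, the first term is real (since $g = -v_2$ is real there and $z$ is real-analytic with real derivatives on the real domain), and $i\,z_{\zeta'}^T z_{\zeta'}$ is $i$ times a real symmetric \emph{positive definite} matrix (positive definite because $z_{\zeta'}$ has full column rank). Thus the Hessian has the form $S + iP$ with $S$ real symmetric and $P$ real symmetric positive definite; such a matrix is always nondegenerate — if $(S+iP)w = 0$ then $w^*Pw = -\operatorname{Im}(w^*(S+iP)w)/1 = 0$ forcing $w=0$ after separating real and imaginary parts (using $S,P$ real symmetric). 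This gives (2). Part (3) is then a standard compactness/perturbation argument: by the inverse function theorem applied to $\zeta'\mapsto\partial_{\zeta'}\Psi$, which is a local analytic diffeomorphism near $\zeta_c'$ by (2), the critical point is isolated; shrinking $\mathcal U$ to a small enough neighborhood of the relevant critical values (uniformly over the relevant compact set of parameters) and using that $\partial_{\zeta'}\Psi$ is bounded away from zero on $\overline{\mathcal U}\setminus(\text{small ball around }\zeta_c')$ rules out any other critical point. One has to be slightly careful that $\mathcal U$ was fixed earlier, so the clean way is: first note the claim for a possibly smaller $\mathcal U_0\subset\mathcal U$, then observe (as the paper does elsewhere) that shrinking the neighborhood only strengthens the conclusion, or alternatively redefine $\mathcal U$ from the start to be this smaller set.

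The main obstacle I anticipate is part (3), specifically making the uniqueness of the critical point genuinely uniform as $(x,v)$ ranges over the compact closure of the relevant real neighborhood, rather than just for a single fixed $(x(v),v)$: one needs that the ``bad set'' where spurious critical points could occur stays bounded away, which requires a compactness argument combined with the observation that $g(\zeta') = -v_2 + i(z(\zeta',x)-v_1)$ has nonvanishing imaginary part $z(\zeta',x)-v_1$ whenever $\zeta'\neq\zeta_c'$ (using injectivity of $\zeta'\mapsto z(\zeta',x)$ from Lemma \ref{Z coordinates}(2)), so that $g$ cannot lie in $\ker(z_{\zeta'}^T)$ unless $z(\zeta',x)-v_1 = 0$ — i.e.\ the critical point equation, when $x=x(v)$, has \emph{only} the solution $\zeta' = \zeta_c'$, because for real data $v_2\in\ker(z_{\zeta'}(\zeta',x)^T)$ would additionally force $z(\zeta',x)-v_1 \in \ker(z_{\zeta'}(\zeta',x)^T)$, and I need to check this last containment is incompatible with $z(\zeta',x)-v_1\neq 0$, which uses that $z(\zeta',x)-v_1$, being a secant vector of the graph $\zeta'\mapsto z(\zeta',x)$, is not orthogonal to all tangent directions — a nondegeneracy/convexity-type fact that will need the quadratic structure, and this is presumably where the later parts of Proposition \ref{good phase} (the Im $\psi \gtrsim |x-x(v)|^2$ estimate) feed back. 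I would handle this by a direct Taylor expansion of $z(\zeta',x) - v_1$ around $\zeta_c'$ and comparing with the range of $z_{\zeta'}$.
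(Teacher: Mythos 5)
Your proof is correct and follows essentially the same route as the paper (compute $\partial_{\zeta'}\Psi = z_{\zeta'}^T(-v_2+i(z-v_1))$, identify the critical point with $z(\zeta_c',x(v))=v_1$, compute the Hessian, observe it has the form ``real symmetric plus $i$ times positive definite'', and conclude via the implicit function theorem), but with two local variations worth noting. For (1), you invoke the geometric fact that $v_2\in N^*_{v_1}H_{x(v)}$ while $\mathrm{im}(z_{\zeta'})=T_{v_1}H_{x(v)}$; the paper instead gives the explicit computation $-z_{\zeta'}^Tv_2 = -z_{\zeta'}^T\phi_z^Tu_2''=(\partial_{\zeta'}\phi(z(\zeta',x),x'))^Tu_2''=0$ using that $\phi(z(\zeta',x),x')\equiv x''$ along the fiber. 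These are the same fact; yours is more conceptual, the paper's is more self-contained at this point in the exposition. (Your parenthetical ``tangent to the fiber $\pi_{\mathcal G}^{-1}$-direction'' names the wrong projection — it should be the fiber of $\pi_X$ projected to $\mathcal G$ — but the corrected version of the same sentence that follows is right.) For (2), your argument that $(S+iP)w=0$ forces $\mathrm{Im}\bigl(w^*(S+iP)w\bigr)=w^*Pw=0$ is slicker than the paper's real-and-imaginary-part decomposition of $(B+iA^TA)(a+ib)=0$, though both are standard and you have a harmless sign typo there. For (3), your concerns about uniformity over compact parameter sets and about whether one needs the $\mathrm{Im}\,\psi$ estimate are not needed at this stage: part (3) only concerns the one-parameter slice $x=x(v)$, and the nondegeneracy from (2) plus the holomorphic implicit function theorem already give, after shrinking $\mathcal U$ to a small neighborhood of $\zeta_c'(\hat x,\hat v)$, the unique zero of $\zeta'\mapsto\partial_{\zeta'}\Psi$ in $\mathcal U$ for $v$ near $\hat v$; the global extension to complex $(x,v)$ is the separate content of the following corollary in the paper, not of this lemma.
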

\begin{proof}
Let $(x, u_2) = \chi(v_1, v_2)$. To see (1) we use \eqref{canonical relation characterization} to get
\begin{eqnarray}
\label{v1v2x relation}
v_2 = \phi_z(v_1,x')^T u_2'',\ \ \phi(v_1, x') = x''.
\end{eqnarray}
We see from the second equality that $(v_1, x) \in Z$ and by Lemma \ref{Z coordinates} we can find $\zeta_c'\in \mathcal U$ real such that $z(\zeta_c', x) = v_1$. We claim that $\zeta_c'$ is a critical point of $\Psi(\cdot; x,v)$. 

Indeed using the first equality in \eqref{v1v2x relation} for $v_2$ we get:
$$- \partial_{\zeta'} (z(\zeta',x) \cdot v_2) \mid_{\zeta' = \zeta_c'} = -\partial_{\zeta'} z^T v_2=  -(\partial_{\zeta'} z)^T (\phi_z(v_1, x'))^T u_2'' = (\partial_{\zeta'} (\phi(z(\zeta',x),x')) )^Tu_2''\mid_{\zeta'= \zeta_c'} = 0.$$
The second last equality comes by the chain rule. The last equality comes from the fact that $\phi(z(\zeta',x),x') = x''$ is independent of $\zeta'$. The fact that $\partial_{\zeta'} (z(\zeta',x) - v_1)^2\mid_{\zeta' = \zeta_c'} = 0$ is trivial using $z(\zeta_c', x) = v_1$.

To see (2), we observe that since $\Psi$ extends holomorphically in the $\zeta$ variable to a complex neighborhood of $\zeta_c'$, the complex Hessian coincides with the real Hessian by the Cauchy-Riemann equations. Hence it is enough to consider the real Hessian (with respect to $\zeta'$) of $\Psi(\zeta'; x,v) = - z(\zeta', x)\cdot v_2+ i\frac{(z(\zeta',x)-v_1)^2}{2}$ at $\zeta_c'$. Direct computation using $z(\zeta_c', x) = v_1$ yields that this Hessian is of the form $B + iA^TA$ where $A = z_{\zeta'}(\zeta_c',x)$ is an injective real matrix (since $z_{\zeta}$ is invertible by Lemma \ref{Z coordinates} (2)) while $B = -\p_{\zeta'}^2(z \cdot v_2)|_{\zeta'=\zeta_c'}$ is a symmetric real matrix.  

It is easy to see that such a matrix acting on complex vectors has trivial kernel. Indeed, let $(B+ i A^T A) (a + ib) = 0$ for real vectors $a,b\in \mR^{N-n''}$. We then have that $B a = A^T A b$ and $Bb = -A^TAa$. Taking the inner product of the first equation with $b$ and the second equation with $a$ we get 
\[
b\cdot Ba = |Ab|^2,\ -a\cdot Bb =  |Aa|^2.
\]
Since $B$ is symmetric the two equations combine to yield $-|Aa|^2 = |Ab|^2$, so $Ab =A a  =0$. Now injectivity of $A$ gives that $a = b = 0$. So (2) is verified.

The statement (3) follows directly from (2) and the holomorphic implicit function theorem.
\end{proof}

The holomorphic implicit function theorem together with the non-degeneracy of the Hessian of $\zeta'\mapsto \Psi(\zeta' ; x(v), v)$ at $\zeta'= \zeta_c'(x(v),v)$ provides us complex critical points even when $x$ is not necessarily equal to $\pi(\chi(v))$:
\begin{Corollary}
\label{complex critical points}
Given any $U\times\mathcal V$ small enough containing $(\hat x , \hat v)$ with $\hat x = \pi(\chi(\hat v))$ and $\mathcal U$ small enough containing $\zeta'_c(\hat x, \hat v)$, for all $(x,v) \in  U_\mC \times {\mathcal V}_{\mC}$ the map $\zeta' \mapsto \Psi(\zeta'; x,v)$ has a unique complex critical point $\zeta'_c(x,v)\in \mathcal U_\mC$ which depends holomorphically on $(x,v)$. Moreover, if $(x,v) \in U \times \mathcal{V}$ and $x = \pi(\chi(v))$, then $\zeta_c'(x,v)$ is real.
\end{Corollary}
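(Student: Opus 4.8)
The statement to prove is Corollary~\ref{complex critical points}, asserting that for $(x,v)$ in a small complex neighborhood of $(\hat x,\hat v)$ the phase $\zeta'\mapsto\Psi(\zeta';x,v)$ has a unique nondegenerate critical point depending holomorphically on $(x,v)$, which is real when $(x,v)$ is real with $x=\pi(\chi(v))$.

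\medskip

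The plan is to obtain this as a direct application of the holomorphic implicit function theorem to the critical point equation, using Lemma~\ref{real crit points} as the ``base case''. First I would fix the real reference point: by hypothesis $\hat x=\pi(\chi(\hat v))$, so Lemma~\ref{real crit points}(1) gives a real critical point $\hat\zeta_c':=\zeta_c'(\hat x,\hat v)\in\mathcal U$ of $\zeta'\mapsto\Psi(\zeta';\hat x,\hat v)$, i.e.\ $\partial_{\zeta'}\Psi(\hat\zeta_c';\hat x,\hat v)=0$, and Lemma~\ref{real crit points}(2) shows the complex Hessian $\partial_{\zeta'}^2\Psi(\hat\zeta_c';\hat x,\hat v)$ is nondegenerate. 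Since $\Psi$ is holomorphic in $(\zeta',x,v)$ jointly (it is built from the analytic diffeomorphism $z(\zeta',x)$ of Lemma~\ref{Z coordinates}(3) extended holomorphically, together with the polynomial expression $-z\cdot v_2+\tfrac i2(z-v_1)^2$), the map $(\zeta',x,v)\mapsto\partial_{\zeta'}\Psi(\zeta';x,v)$ is holomorphic near $(\hat\zeta_c',\hat x,\hat v)$ with invertible $\zeta'$-Jacobian there. The holomorphic implicit function theorem then yields neighborhoods $\mathcal U_{\mathbb C}\ni\hat\zeta_c'$ and $U_{\mathbb C}\times\mathcal V_{\mathbb C}\ni(\hat x,\hat v)$ and a unique holomorphic map $(x,v)\mapsto\zeta_c'(x,v)\in\mathcal U_{\mathbb C}$ solving $\partial_{\zeta'}\Psi(\zeta_c'(x,v);x,v)=0$ in $\mathcal U_{\mathbb C}$, with $\zeta_c'(\hat x,\hat v)=\hat\zeta_c'$. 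Shrinking if necessary, nondegeneracy of the Hessian persists on the whole neighborhood by continuity.

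\medskip

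Next I would address uniqueness of the critical point in all of $\mathcal U_{\mathbb C}$ (not merely near $\hat\zeta_c'$). Here I invoke the same mechanism behind Lemma~\ref{real crit points}(3): after choosing $\mathcal U$ small enough, $\zeta_c'(\hat x,\hat v)$ is the \emph{only} critical point of $\Psi(\cdot;\hat x,\hat v)$ in $\mathcal U$, and this is an open condition. More precisely, the set of $\zeta'\in\overline{\mathcal U_{\mathbb C}}$ with $\partial_{\zeta'}\Psi(\zeta';\hat x,\hat v)=0$ is, for $\mathcal U$ small, just $\{\hat\zeta_c'\}$; since $\overline{\mathcal U_{\mathbb C}}$ is compact and $\partial_{\zeta'}\Psi$ depends continuously on the parameters, for $(x,v)$ in a sufficiently small neighborhood of $(\hat x,\hat v)$ any zero of $\partial_{\zeta'}\Psi(\cdot;x,v)$ in $\overline{\mathcal U_{\mathbb C}}$ must lie in the small ball around $\hat\zeta_c'$ where the implicit function theorem already guarantees uniqueness. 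Hence $\zeta_c'(x,v)$ is the unique critical point in $\mathcal U_{\mathbb C}$, after possibly shrinking $U_{\mathbb C}\times\mathcal V_{\mathbb C}$.

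\medskip

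Finally, the reality assertion: when $(x,v)\in U\times\mathcal V$ is real and $x=\pi(\chi(v))$, Lemma~\ref{real crit points}(1) produces a \emph{real} critical point $\zeta_c'(x(v),v)\in\mathcal U$ with $z(\zeta_c'(x(v),v),x(v))=v_1$; by the uniqueness just established (and since $\mathcal U\subset\mathcal U_{\mathbb C}$), this real critical point must coincide with $\zeta_c'(x,v)$, so the latter is real. I expect no serious obstacle here — the only point requiring care is keeping the neighborhoods consistent so that ``the'' critical point supplied by the real Lemma~\ref{real crit points} and ``the'' one supplied by the implicit function theorem are genuinely the same object; this is handled by the compactness/shrinking argument for global uniqueness in $\mathcal U_{\mathbb C}$. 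The mild subtlety worth a sentence is that the ``complex critical point'' is a critical point of the \emph{holomorphic extension} of $\Psi$ in $\zeta'$, equivalently a zero of the holomorphic gradient, which by the Cauchy–Riemann equations is the same as a critical point in the real $2(N-n'')$-dimensional sense — exactly the identification already used in the proof of Lemma~\ref{real crit points}(2).
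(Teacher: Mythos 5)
Your proof is correct and follows the same route the paper takes: apply the holomorphic implicit function theorem at $(\hat\zeta_c',\hat x,\hat v)$ using the nondegeneracy from Lemma~\ref{real crit points}(2), and deduce uniqueness in $\mathcal U_{\mathbb C}$ and reality on $\{x=\pi(\chi(v))\}$ from that same nondegeneracy together with Lemma~\ref{real crit points}(1),(3). The paper dispatches the corollary with a single sentence; your compactness argument for global uniqueness in $\mathcal U_{\mathbb C}$ and your identification of the real critical point via uniqueness are exactly the implicit steps the paper leaves to the reader.
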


%
%
%
%
%
%
%
%
%
%
%
At this point it is convenient to introduce the following auxiliary phase function:
\begin{eqnarray}
\label{def of Phi}
\Phi(z, \eta; x,v) := (\phi(z,x') - x'')\cdot \eta - z\cdot v_2  + \frac{i(z-v_1)^2}{2}
\end{eqnarray}
which has a holomorphic extension that we still denote by $\Phi$. Clearly, $\Phi(z(\zeta',x), \eta; x,v) = \Psi(\zeta';x,v)$.

We treat $(z, \eta)\in V_{\mC} \times \mC^{n''}$ as variables and $(x,v)\in  U \times \mathcal V$ as parameters where $(x,v)\in U \times \mathcal V$ (in most cases we will consider real $(x,v)$ though $(z,\eta)$ can still be complex). 
By the characterization of $\chi$ in \eqref{canonical relation characterization}, when $\chi(v) = (x, u_2)$, we see using direct calculation that the point $(v_1, u_2'') \in \mR^N \times \mR^{n''}$ is a real critical point of $(z,\eta)\mapsto \Phi(z, \eta ; x, v)$. 
For arbitrary $(x,v)$, we will try to find complex critical points $(z_c,\eta_c)$ for $\Phi(\cdot; x,v)$. If they exist, critical points $(z_c, \eta_c)$ are given by the equation:
\begin{eqnarray}
\label{critical point eq for Phi}
\phi(z_c,x') - x'' = 0, \qquad \phi_z(z_c, x')^T \eta_c - v_2 =- i(z_c-v_1).
\end{eqnarray}

The critical points of $\Psi$ are related to that of $\Phi$ in the following way:
\begin{Lemma}
\label{crit of Phi is crit of Psi}
Suppose $U\times \mathcal V$ is sufficiently small containing $(\hat x, \hat v)$ and let $(x,v)\in U_\mC\times \mathcal V_\mC$. Suppose $(z_c, \eta_c)$ is a complex critical point of $\Phi(\cdot; x,v)$ so that $(z_c,x)\in V_\mC \times U_\mC$, then we can find $\zeta'_0 \in \mathcal U_{\mC}$ such that $z(\zeta'_0, x) = z_c$ and $\zeta'_0$ is a critical point of $\Psi(\cdot; x,v)$.
\end{Lemma}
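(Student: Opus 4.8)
The plan is to exploit the fact that $\Phi(z(\zeta',x),\eta;x,v) = \Psi(\zeta';x,v)$ together with the structure of the critical point equations \eqref{critical point eq for Phi}. First I would observe that the first equation in \eqref{critical point eq for Phi}, namely $\phi(z_c,x') - x'' = 0$, says precisely that $(z_c,x) \in Z_{\mC}$. By statement (3) of Lemma \ref{Z coordinates}, the holomorphic change of coordinates $(\zeta,x) \mapsto (z(\zeta,x),x)$ is a diffeomorphism from $\tilde\Omega_{\mC}$ onto $\Omega_{\mC}$ under which $Z_{\mC} \cap \Omega_{\mC}$ corresponds to $\{\zeta'' = 0\}$. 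Hence there is a unique $\zeta_0' \in \mathcal U_{\mC}$ (for $U \times \mathcal V$ chosen small enough that $(z_c,x)$ lands in $\Omega_{\mC}$) with $z(0,\zeta_0',x) = z_c$, i.e.\ $z(\zeta_0',x) = z_c$ in the abbreviated notation.

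Next I would verify that $\zeta_0'$ is a critical point of $\zeta' \mapsto \Psi(\zeta';x,v)$. Since $\Psi(\zeta';x,v) = \Phi(z(\zeta',x),\eta;x,v)$ for \emph{any} fixed $\eta$ (the $\eta$-dependent term $(\phi(z(\zeta',x),x')-x'')\cdot\eta = \zeta'' \cdot \eta$ vanishes identically on $\zeta''=0$), we may differentiate in $\zeta'$ and use the chain rule:
\[
\partial_{\zeta'}\Psi(\zeta';x,v)\big|_{\zeta'=\zeta_0'} = \big(\partial_{\zeta'} z(\zeta',x)\big)^T\, \partial_z \Phi(z_c,\eta_c;x,v) + \big(\partial_{\zeta'}\big[(\phi(z(\zeta',x),x')-x'')\cdot \eta_c\big]\big)\big|_{\zeta'=\zeta_0'}.
\]
The second term vanishes because $\phi(z(\zeta',x),x')-x'' = \zeta''$ is independent of $\zeta'$ along $\{\zeta''=0\}$ (this is the computation already used in the proof of Lemma \ref{real crit points}(1)). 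For the first term, by \eqref{critical point eq for Phi} we have $\partial_z \Phi(z_c,\eta_c;x,v) = \phi_z(z_c,x')^T \eta_c - v_2 + i(z_c - v_1) = 0$. Hence $\partial_{\zeta'}\Psi(\zeta';x,v)\big|_{\zeta'=\zeta_0'} = 0$, which is the claim. (One should check that $\partial_z \Phi$ being zero is exactly the second critical point equation; the first critical point equation $\partial_\eta \Phi = \phi(z_c,x')-x'' = 0$ was already used to locate $\zeta_0'$ on $Z_{\mC}$.)

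I do not expect any serious obstacle here; the only points requiring a little care are bookkeeping ones: ensuring the neighborhoods $U \times \mathcal V$ and $\mathcal U$ are chosen small enough that the relevant critical points $(z_c,x)$ stay inside $\Omega_{\mC}$ where the coordinate change of Lemma \ref{Z coordinates}(3) is a genuine holomorphic diffeomorphism (so that $\zeta_0'$ exists and is unique), and keeping track of the abbreviated notation $z(\zeta',x) = z(0,\zeta',x)$. The substantive content is entirely in the identity $\partial_z\Phi = 0 \Leftrightarrow$ second equation of \eqref{critical point eq for Phi} and in the vanishing of $\partial_{\zeta'}(\zeta'' \cdot \eta_c)$ along $\{\zeta''=0\}$, both of which are immediate from the constructions already in place.
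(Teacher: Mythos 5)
Your proof is correct and uses essentially the same argument as the paper: the first equation of \eqref{critical point eq for Phi} places $(z_c,x)$ in $Z_{\mC}$ so that $\zeta_0'$ exists, and the second equation $\partial_z\Phi(z_c,\eta_c;x,v)=0$ combined with the chain rule through the parametrization $z(\zeta',x)$ yields $\partial_{\zeta'}\Psi(\zeta_0';x,v)=0$ (the paper phrases the last step as multiplying the second critical point equation by $(\partial_{\zeta'}z)^T$ and invoking $(\partial_{\zeta'}z)^T\phi_z^T=0$, which is the same computation). One small remark on your displayed formula: since $\Psi(\zeta';x,v)=\Phi(z(\zeta',x),\eta_c;x,v)$ holds as an identity on $\{\zeta''=0\}$, the chain rule already gives $\partial_{\zeta'}\Psi=(\partial_{\zeta'}z)^T\partial_z\Phi$ with no correction term, so the second summand you wrote is superfluous (and would carry a minus sign if you view $\Psi$ as $\Phi$ minus the $\eta$-part) --- harmless here, since $\phi(z(\zeta',x),x')-x''\equiv 0$ makes that term vanish identically.
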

\begin{proof}
The first equation of \eqref{critical point eq for Phi} indicates that $(z_c, x)\in Z_{\mC}$ (see \eqref{algebraic def of Z complex} for definition of $Z_{\mC}$). So we may find $\zeta_0'\in\mathcal U_\mC$ such that $z_c = z(\zeta_0', x)$. We now need to show that $\zeta_0'$ is a critical point of $\Psi(\cdot; x,v)$. 

Observe that $\phi(z(\zeta',x) , x') - x'' = 0$ for all $\zeta'\in\mathcal{U}_{\mC}$ so differentiating in $\zeta'$ we get
\[
(\partial_{\zeta'} z(\zeta_0',x))^T \phi_z(z(\zeta_0', x), x')^T = 0 .
\]
Multiply the second equation of \eqref{critical point eq for Phi} by $(\partial_{\zeta'} z(\zeta_0',x))^T$ and use the above identity we have that 
\[
\partial_{\zeta'} z(\zeta_0',x)^Tv_2 = i \partial_{\zeta'} z(\zeta_0',x)^T(z(\zeta_0',x)- v_1).
\]
This is precisely what it means for $\zeta_0'$ to solve $\Psi_{\zeta'}(\zeta'_0; x,v) = 0$.
\end{proof}
\begin{Lemma}
\label{main crit pt of Phi}
Let $(x,v)$ be real and close to $(\hat{x}, \hat{v})$ with $x = \pi(\chi(v))$, and write $\chi(v) = (x, u_2)$. The point $(z, \eta) := (v_1, u_2'')$ is a critical point of $\Phi$ in $V \times\mR^{n''}$. Furthermore, at this point the complex Hessian of $\Phi$ is nondegenerate with positive semidefinite imaginary part.
\end{Lemma}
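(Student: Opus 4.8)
The plan is to compute the critical point equations and the Hessian of $\Phi$ directly in the variables $(z,\eta)\in \mR^N\times\mR^{n''}$, with $(x,v)$ fixed real parameters satisfying $x=\pi(\chi(v))$ and $\chi(v)=(x,u_2)$. From \eqref{def of Phi} one has $\p_\eta\Phi = \phi(z,x')-x''$ and $\p_z\Phi = \phi_z(z,x')^T\eta - v_2 + i(z-v_1)$, so the critical point equations are exactly \eqref{critical point eq for Phi}. First I would evaluate these at $(z,\eta)=(v_1,u_2'')$: by the characterization \eqref{canonical relation characterization} of the graph of $\chi$ we have $\phi(v_1,x')=x''$ and $v_2=\phi_z(v_1,x')^T u_2''$, so both equations hold (the term $i(z-v_1)$ vanishes identically since $z=v_1$). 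That $(v_1,u_2'')$ lies in $V\times\mR^{n''}$ follows, for $(x,v)$ close to $(\hat x,\hat v)$, from the construction of the local coordinates in Lemmas \ref{local description of Z} and \ref{Z coordinates}.

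Next I would compute the Hessian. Since $\Phi$ is holomorphic in $(z,\eta)$, its complex Hessian at a real critical point coincides with the real Hessian by the Cauchy--Riemann equations, exactly as in the proof of Lemma \ref{real crit points}(2). Differentiating once more gives the $(N+n'')\times(N+n'')$ block matrix
\[
\mathrm{Hess}\,\Phi = \begin{pmatrix} Q + iI_N & \phi_z^T \\ \phi_z & 0\end{pmatrix},
\]
where $\phi_z=\phi_z(v_1,x')$ is the $n''\times N$ Jacobian (of full rank $n''$ since $Z$ is a double fibration, by Lemma \ref{lemma_pim_submersion}) and $Q=\p_z^2\big(\phi(z,x')\cdot u_2''\big)\big|_{z=v_1}$ is a real symmetric $N\times N$ matrix. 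The imaginary part of this Hessian is $\mathrm{diag}(I_N,0)\geq 0$, which gives the positive semidefiniteness half of the claim for free.

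It then remains to prove nondegeneracy. Suppose $\mathrm{Hess}\,\Phi\,(a,b)^T=0$ for complex $a\in\mC^N$, $b\in\mC^{n''}$. The lower block gives $\phi_z a=0$. Pairing the upper block relation $(Q+iI_N)a+\phi_z^T b=0$ with $\bar a$ and using $\bar a^T\phi_z^T b = \overline{\phi_z a}^T b = 0$ together with the fact that $\bar a^T Q a$ is real ($Q$ being real symmetric), I would take imaginary parts to obtain $|a|^2=0$, hence $a=0$; then $\phi_z^T b=0$, and since $\phi_z$ is surjective, $\phi_z^T$ is injective and $b=0$. This argument is essentially the one already used in the proof of Lemma \ref{real crit points}(2), so I do not expect a serious obstacle: the only points needing care are getting the block structure of the Hessian correct and remembering to invoke the surjectivity of $\phi_z$ in the final step.
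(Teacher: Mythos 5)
Your proof is correct and follows essentially the same route as the paper: verify the critical point equations from \eqref{canonical relation characterization}, invoke holomorphicity to identify the complex and real Hessians, write the Hessian in the same block form, and then show nondegeneracy using positivity of the imaginary block together with the full rank of $\phi_z$. The only (cosmetic) difference is in the nondegeneracy step: you pair directly with $\bar a$ and use reality of $Q$ to extract $|a|^2 = 0$, whereas the paper splits the complex kernel vector into its real and imaginary parts $a + ib$ and manipulates $A,B$ as real symmetric matrices; both yield the same conclusion.
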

\begin{proof}
By the characterization of $\chi$ in \eqref{canonical relation characterization}, when $\chi(v) = (x,  u_2)$, the point $(z, \eta) = (v_1, u_2'')$ solves \eqref{critical point eq for Phi} and hence it is a critical point of $\Phi$.

We use again that the complex Hessian is equal to real Hessian by holomorphicity, so it is enough to consider the real Hessian of $\Phi$ at $(v_1, u_2'')$. By \eqref{phiz has full rank} $\phi_z(z, x')$ has full rank when $(x,v)$ is close to $(\hat{x}, \hat{v})$. Direct calculation gives the following block form for the Hessian of $\Phi$:
\begin{eqnarray}\nonumber
\begin{pmatrix}
\partial_z^2 \left(\phi(z, x')\cdot u_2''\right)+iI & \phi_z(z, x')^T\\
\phi_z(z, x')& 0
\end{pmatrix} 
= 
\underbrace{\begin{pmatrix}
\partial_z^2 \left(\phi(z, x')\cdot u_2''\right) & \phi_z(z, x')^T\\
\phi_z(z, x')& 0
\end{pmatrix}}_{A}  + i
\underbrace{\begin{pmatrix}
I_{N\times N} &0\\
0& 0
\end{pmatrix} }_{B}\\
\end{eqnarray}
Note that both $A$ and $B$ are real valued symmetric matrices. To show that $A+iB$ is nondegenerate, we suppose $(A+iB) (a+ib) = 0$ for some $a,b\in \mR^{N + n''}$. Then $Aa = Bb$ and $Ab = -Ba$, which gives that $Bb \cdot b = -Ba \cdot a$ since $A$ is symmetric. Using the fact that $B = B^T B$ yields $Ba = Bb = 0$. This means that $a = (0, a'')$ and $b = (0,b'')$ where $a'', b''\in \mR^{n''}$. We then have that 
\[
\phi_z(z, x')^T (a''+ib'') = 0.
\]
Using the fact that $\phi_z(z, x')$ is real and has full rank shows $a''= b''=0$. So we have shown that the complex Hessian of $\Phi$ at $(z, \eta)$ is nondegenerate.
\end{proof}

The next result gives a correspondence between the complex critical points of $\Phi$ and $\Psi$.

\begin{Lemma}
\label{crit of Phi in small nbhd}
For all $(x,v) \in U_{\mC} \times \mathcal V_{\mC}$, let $\zeta'_c(x,v)$ be the critical point of $\Psi$ deduced in Corollary \ref{complex critical points}. Then for all $(x,v)$ near $(\hat x, \hat v)$, there exists a unique critical point $({\mathcal Z}_c(x,v), \eta_c(x,v)) \in V_\mC \times \mC^{n''}$ of $\Phi$ and it satisfies
\begin{eqnarray}
\label{mathcal Z is zeta}
{\mathcal Z}_c(x,v) = z(\zeta'_c(x,v), x).
\end{eqnarray}
In particular, if $x$ and $v$ are real and $x = \pi(\chi(v))$, then writing $(x, u_2) = \chi(v)$, we have that $({\mathcal Z}_c(x, v), \eta_c( x, v) )= (v_1, u_2'')$ and thus the critical point is real.
\end{Lemma}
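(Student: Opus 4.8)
The plan is to show that the complex critical point of $\Phi(\cdot;x,v)$ exists, is unique near the reference point, and corresponds to the critical point $\zeta_c'(x,v)$ of $\Psi$ via the change of coordinates $z(\zeta',x)$. First I would observe that by Lemma \ref{main crit pt of Phi}, when $(\hat x,\hat v)$ is real with $\hat x=\pi(\chi(\hat v))$ and $\chi(\hat v)=(\hat x,\hat u_2)$, the point $(\hat v_1,\hat u_2'')$ is a critical point of $\Phi(\cdot;\hat x,\hat v)$ with nondegenerate complex Hessian. The holomorphic implicit function theorem applied to the equations $\nabla_{(z,\eta)}\Phi(z,\eta;x,v)=0$ (i.e.\ to \eqref{critical point eq for Phi}) then produces, for $(x,v)\in U_{\mC}\times\mathcal V_{\mC}$ sufficiently close to $(\hat x,\hat v)$, a unique solution $(\mathcal Z_c(x,v),\eta_c(x,v))$ in a small complex neighborhood of $(\hat v_1,\hat u_2'')$, depending holomorphically on $(x,v)$. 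Shrinking $U$ and $\mathcal V$ as needed, this gives existence and uniqueness of the complex critical point of $\Phi$ near $(\hat x,\hat v)$.

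Next I would establish \eqref{mathcal Z is zeta}. By Lemma \ref{crit of Phi is crit of Psi}, the critical point $(\mathcal Z_c(x,v),\eta_c(x,v))$ of $\Phi$ yields, via the first equation of \eqref{critical point eq for Phi}, a point $(\mathcal Z_c(x,v),x)\in Z_{\mC}$, hence a $\zeta_0'\in\mathcal U_{\mC}$ with $z(\zeta_0',x)=\mathcal Z_c(x,v)$, and that same lemma shows $\zeta_0'$ is a critical point of $\Psi(\cdot;x,v)$. By the uniqueness of the critical point of $\Psi$ in $\mathcal U_{\mC}$ from Corollary \ref{complex critical points} (valid once $U\times\mathcal V$ and $\mathcal U$ are small enough), we must have $\zeta_0'=\zeta_c'(x,v)$, which is exactly \eqref{mathcal Z is zeta}. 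One point to be slightly careful about: a priori the $\zeta_0'$ produced from $\mathcal Z_c(x,v)$ need not lie in the small set $\mathcal U_{\mC}$ where uniqueness holds, so I would note that since $\mathcal Z_c$ depends continuously on $(x,v)$ and equals $\hat v_1$ at the reference point, which corresponds to $\hat\zeta_c'\in\mathcal U$, one can shrink $U_{\mC}\times\mathcal V_{\mC}$ so that $\mathcal Z_c(x,v)$ stays in the range of $\zeta'\mapsto z(\zeta',x)$ over $\mathcal U_{\mC}$ and the preimage $\zeta_0'$ stays in $\mathcal U_{\mC}$; this also secures uniqueness of the $\Phi$-critical point by transporting uniqueness of $\zeta_c'$ back through the diffeomorphism $(\zeta,x)\mapsto(z(\zeta,x),x)$ from Lemma \ref{Z coordinates}(3), together with the fact that the $\eta$-component is determined by the second equation of \eqref{critical point eq for Phi} once $z$ is fixed and $\phi_z$ has full rank by \eqref{phiz has full rank}.

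Finally, for the real case: when $x$ and $v$ are real with $x=\pi(\chi(v))$, write $\chi(v)=(x,u_2)$. Then by Lemma \ref{main crit pt of Phi} the point $(v_1,u_2'')$ solves \eqref{critical point eq for Phi}, so by the uniqueness just established it coincides with $(\mathcal Z_c(x,v),\eta_c(x,v))$; in particular the critical point is real. (Alternatively, this also follows from \eqref{mathcal Z is zeta} together with the fact that $\zeta_c'(x,v)$ is real in this situation by Corollary \ref{complex critical points}, since then $z(\zeta_c'(x,v),x)=v_1$ by Lemma \ref{real crit points}(1).)

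The main obstacle I anticipate is purely bookkeeping about neighborhoods: one must choose $U,\mathcal V,\mathcal U$ and their complexifications in a compatible nested fashion so that (a) the Hessian of $\Phi$ stays nondegenerate, (b) the $\Phi$-critical point $\mathcal Z_c(x,v)$ stays inside the image of $\mathcal U_{\mC}$ under $z(\cdot,x)$, and (c) the $\Psi$-critical point stays in $\mathcal U_{\mC}$ where it is unique. No single estimate is hard, but the logical ordering of the shrinking must be done carefully to avoid circularity.
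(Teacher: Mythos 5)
Your proposal matches the paper's own proof in every essential step: Lemma \ref{main crit pt of Phi} plus the holomorphic implicit function theorem for existence and holomorphic dependence, Lemma \ref{crit of Phi is crit of Psi} together with the uniqueness in Corollary \ref{complex critical points} to establish \eqref{mathcal Z is zeta} and then uniqueness of the $\Phi$-critical point (with the $\eta$-component pinned down by the second equation of \eqref{critical point eq for Phi} and the full rank of $\phi_z$), and finally the real case from Lemma \ref{main crit pt of Phi} and uniqueness. The extra bookkeeping you flag about $\zeta_0'$ landing in $\mathcal U_{\mC}$ is already built into the hypotheses and conclusion of Lemma \ref{crit of Phi is crit of Psi} (which requires $(z_c,x)\in V_{\mC}\times U_{\mC}$ and produces $\zeta_0'\in\mathcal U_{\mC}$), so it is a reasonable but not strictly necessary clarification.
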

\begin{proof}
By Lemma \ref{main crit pt of Phi}, when $\chi(\hat v) = (\hat x, \hat u_2)$, the point $(\hat v_1, \hat u_2'')$ is a real critical point of $\Phi$. Furthermore, at this point the complex Hessian of $\Phi$ is nondegenerate. The holomorphic implicit function theorem allows us to find critical points $({\mathcal Z}_c(x,v), \eta_c(x,v)) \in V_\mC \times \mC^{n''}$ of $\Phi(\cdot; x,v)$ as $(x,v)$ vary in $U_\mC\times V_\mC$ with ${\mathcal Z}_c(\hat x, \hat v) = \hat v_1$.  The dependence of ${\mathcal Z}_c(x,v)$ on $(x,v)$ is holomorphic.

For each $({\mathcal Z}_c(x,v), \eta_c(x,v))$ use Lemma \ref{crit of Phi is crit of Psi} to find $\zeta'_0(x,v)$ so that $\zeta'_0(x,v)$ is a critical point of $\Psi(\cdot; x,v)$ and $z(\zeta'_0(x,v), x) = {\mathcal Z}_c(x,v)$. By the uniqueness statement of Corollary \ref{complex critical points}, $\zeta'_0(x,v) = \zeta'_c(x,v)$. This means that ${\mathcal Z}_c(x,v) = z(\zeta'_c(x,v),x)$.

To see that $({\mathcal Z}_c(x,v), \eta_c(x,v))$ is the unique critical point in $V_\mC\times \mC^{n''}$, suppose $(\tilde {\mathcal Z}_c(x,v), \tilde \eta_c(x,v)) \in V_\mC \times \mC^{n''}$ is another critical point of $\Phi(\cdot; x,v)$. Then by Lemma \ref{crit of Phi is crit of Psi}, there is $\tilde \zeta'_0(x,v)\in \mathcal U_{\mC}$ which is a critical point of $\Psi(\cdot; x,v)$ such that $\tilde {\mathcal Z}_c(x,v) = z(\tilde \zeta'_0(x,v), x)$. Uniqueness of critical points stated in Corollary \ref{complex critical points} of $\Psi(\cdot;x,v)$ then forces $\tilde \zeta'_0(x,v) = \zeta'_c(x,v)$. This means $\tilde {\mathcal Z}_c(x,v)  = {\mathcal Z}_c(x,v) $. The second equation of \eqref{critical point eq for Phi} and injectivity of the matrix $\phi_z( {\mathcal Z}_c(x,v), x')^T$ then forces $\tilde \eta_c(x,v) = \eta_c(x,v)$.

Finally, when $(x,v)$ is real and $x = \pi(\chi(v))$, the last statement in the lemma follows from Lemma \ref{main crit pt of Phi} and the uniqueness part above.
\end{proof}
The identity in Lemma \ref{crit of Phi in small nbhd} gives us a convenient way to prove the following quantitative estimate:
\begin{Lemma}
\label{coercive estimate}
 There is a constant $c_0>0$ such that for all $(x,v) \in U \times \mathcal V$, we have the estimate 
\begin{eqnarray}
\label{zc estimate}
|z(\zeta'_c(x,v),x) - v_1| \geq c_0 |x - \pi(\chi(v))|.
\end{eqnarray}
\end{Lemma}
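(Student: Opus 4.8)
The plan is first to rewrite the left-hand side of \eqref{zc estimate} as $|\mathcal{Z}_c(x,v)-v_1|$ using the identity \eqref{mathcal Z is zeta}. By Lemma \ref{crit of Phi in small nbhd} this quantity vanishes whenever $x=\pi(\chi(v))$, so $\mathcal{Z}_c(x,v)-v_1$ should vanish there to exactly first order in the $x$-variable, and the estimate will follow from a first-order Taylor expansion in $x$ around $x(v):=\pi(\chi(v))$ together with a compactness argument — provided that the derivative $M(v):=\partial_x\mathcal{Z}_c(x(v),v)$ is injective with a lower bound uniform in $v$. Here $\mathcal{Z}_c$ is analytic in a fixed neighbourhood of $(\hat x,\hat v)$ by Corollary \ref{complex critical points} and Lemma \ref{crit of Phi in small nbhd}, and $x(\cdot)=\pi\circ\chi$ is analytic. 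So the real content is the injectivity of $M(v)$.

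To establish this, I would differentiate the critical point equations \eqref{critical point eq for Phi} in $x$ at a real point $(x,v)$ with $x=x(v)$, at which $(\mathcal{Z}_c,\eta_c)=(v_1,u_2'')$ with $\chi(v)=(x,u_2)$, by Lemma \ref{main crit pt of Phi}. Writing $x=(x',x'')$, $M=\partial_x\mathcal{Z}_c$, $P=\partial_x\eta_c$, and abbreviating $Q=\partial_z(\phi_z^T\eta_c)$ and $S=\partial_{x'}(\phi_z^T\eta_c)$ evaluated at the critical point, the first equation of \eqref{critical point eq for Phi} gives $\phi_z M=(-\phi_{x'},\,I_{n''})$ and the second gives $(Q+iI)M+\phi_z^T P=(-S,\,0)$. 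Up to a permutation of block rows the coefficient matrix of this linear system is the Hessian of $\Phi$ at the critical point, which is nondegenerate by Lemma \ref{main crit pt of Phi}; in any case $M$ and $P$ exist and satisfy these two identities because $\mathcal{Z}_c$ and $\eta_c$ depend holomorphically on $x$. Now suppose $Mw=0$ with $w=(w',w'')$: the first identity forces $w''=\phi_{x'}w'$, and the second forces $\phi_z^T(Pw)+Sw'=0$; since the columns of the $N\times n$ matrix $(\phi_z^T,\,S)$ are linearly independent by the local Bolker condition \eqref{phiz has full rank} (which persists in a neighbourhood of $(\hat x,\hat v)$, linear independence being an open condition), we conclude $w'=0$ and hence $w=0$. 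Thus $M(v)$ is injective, and since $M(v)$ depends analytically on $v$ and $\overline{\mathcal V}$ is compact, there is $c_1>0$ with $|M(v)w|\ge c_1|w|$ for all real $w$ and all $v\in\mathcal V$.

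To finish, I would combine this with the Taylor expansion $\mathcal{Z}_c(x,v)-v_1=M(v)(x-x(v))+O(|x-x(v)|^2)$, valid uniformly for real $(x,v)\in U\times\mathcal V$ by analyticity of $\mathcal{Z}_c$ on a relatively compact neighbourhood; hence $|\mathcal{Z}_c(x,v)-v_1|\ge\bigl(c_1-C|x-x(v)|\bigr)|x-x(v)|$ for some $C>0$. After shrinking $U$ and $\mathcal V$ around $\hat x$ and $\hat v$ so that $|x-x(v)|\le c_1/(2C)$ throughout (possible since $x(\hat v)=\hat x$ and $x(\cdot)$ is continuous), the estimate \eqref{zc estimate} follows with $c_0=c_1/2$. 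The main obstacle is the differentiation step in the middle paragraph: one must organize the two differentiated critical point equations so that the Hessian nondegeneracy of Lemma \ref{main crit pt of Phi} (ensuring $M$ is well defined) and the local Bolker condition \eqref{phiz has full rank} (which is precisely what upgrades $M$ from rank $n-n'$ to injective) each enter at the right place.
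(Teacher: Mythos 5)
Your argument is correct, but it is organized differently from the paper's. You differentiate the critical point equations \eqref{critical point eq for Phi} in $x$ at the base point $x=\pi(\chi(v))$, obtain the two linearized identities $\phi_z M = (-\phi_{x'}, I_{n''})$ and $(Q+iI)M + \phi_z^T P = -(S,0)$ for $M=\partial_x\mathcal Z_c$, $P=\partial_x\eta_c$, and then deduce injectivity of $M$ from the Bolker matrix $(\phi_z^T, S)$; the estimate \eqref{zc estimate} then follows by Taylor expanding $\mathcal Z_c$ in $x$ and absorbing the quadratic remainder after shrinking. The paper instead never differentiates: it subtracts the critical point equations at $x$ from those at $\pi(\chi(v))$ (the systems \eqref{zc eq} and \eqref{uv eq}), uses the first pair to reduce to a bound on $|x'-u_1'|$, uses the second pair together with continuity to reduce to the inverse-function-type estimate \eqref{inverse function estimate} for the map $(x',\eta)\mapsto\phi_z(v_1,x')^T\eta$, and then establishes that estimate by expanding around $(u_1',u_2'')$ and invoking invertibility of $A^TA$ where $A$ is the same Bolker matrix. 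Both routes feed the Bolker rank condition \eqref{phiz has full rank} into an implicit/inverse function argument; yours packages it as injectivity of $M(v)=\partial_x\mathcal Z_c$, which is arguably a cleaner conceptual invariant, whereas the paper's avoids differentiating the equations a second time but pays for it with the intermediate step of replacing $\phi_z(\mathcal Z_c,x')$ by $\phi_z(v_1,x')$ using continuity. Minor point worth noting in your write-up: $M(v)$ and $P(v)$ are a priori complex even though $x$ and $v_1$ are real, so the injectivity you prove should be over $\mathbb C$; this does hold, since the Bolker matrix is real and a real matrix with linearly independent columns has trivial complex kernel (take real and imaginary parts), but the conclusion you need is just the restriction to real increments $w=x-\pi(\chi(v))$, which is then immediate.
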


\begin{proof}
By \eqref{mathcal Z is zeta}, it is equivalent to show that 
\begin{eqnarray}
\label{zc estimate'}
|{\mathcal Z}_c(x,v) - v_1| \geq c|x - \pi(\chi(v))|.
\end{eqnarray}
To this end, for all $v\in \mathcal V$, let $(u_1,u_2) = \chi(v)$. By the characterization of $\chi$ in \eqref{canonical relation characterization} we have that
\begin{eqnarray}
\label{uv eq}
\phi(v_1, u_1') = u_1'',\quad \phi_z(v_1, u_1')^T u_2'' -v_2= 0,\quad u_2' = -\phi_{x'}(v_1,u_1')^Tu_2''.
\end{eqnarray}
Meanwhile, by \eqref{critical point eq for Phi} ${\mathcal Z}_c(x,v)$ satisfies
\begin{eqnarray}
\label{zc eq}
\phi({\mathcal Z}_c(x,v), x') = x'',\quad \phi_z({\mathcal Z}_c(x,v), x')^T\eta_c(x,v) - v_2 = i{({\mathcal Z}_c(x,v) - v_1)} .
\end{eqnarray}
Subtracting the first equation of \eqref{uv eq} and the first equation of \eqref{zc eq}, Lipschitz continuity of $\phi$ gives that 
\[
|x'' - u_1''| \leq c(|v_1 - {\mathcal Z}_c(x,v)| + |x' - u_1'|).
\]
So to show \eqref{zc estimate'}, it in fact suffices to show
\begin{eqnarray}
\label{x' estimate}
 |x' - u_1'|\leq c|v_1 - {\mathcal Z}_c(x,v)|.
 \end{eqnarray}

To this end, we look at the second equation of \eqref{zc eq} and subtract from it the second equation of \eqref{uv eq}: 
\[
\phi_z({\mathcal Z}_c(x,v),x')^T \eta_c(x,v) - \phi_z(v_1, u_1')^T u_2'' = O({\mathcal Z}_c(x,v) - v_1).
\]
Using standard continuity estimates allows us to replace the $\phi_z({\mathcal Z}_c(x,v),x')^T\eta_c(x,v)$ in the first term by $\phi_z(v_1, x')^T\eta_c(x,v)$ to get, for $(x,v)$ in $U_\mC \times \mathcal V_\mC$ (chosen small enough),
\[
\phi_z(v_1,x')^T \eta_c(x,v) - \phi_z(v_1, u_1')^T u_2'' = O({\mathcal Z}_c(x,v) - v_1).
\]
Using the above expression, the estimate \eqref{x' estimate} would be established if we could show that for all $(x',\eta)$ near $(u_1', u_2'')$ we have that 
\begin{eqnarray}
\label{inverse function estimate}
|x'- u_1'| + |\eta - u_2''| \leq C|\phi_z(v_1,x')^T \eta - \phi_z(v_1, u_1')^T u_2''|.
\end{eqnarray}

To establish \eqref{inverse function estimate} we observe that by \eqref{phiz has full rank} the Jacobian matrix 
\[
A: =\begin{pmatrix} \partial_{x'}\left( \phi_z(v_1, x')^T\eta\right)\mid_{x' = u_1'},& \phi_z(v_1, u_1')^T\end{pmatrix}
\]
is an $N\times (n'+ n'')$ matrix of full rank when $(v_1, u_1')\in V\times U'$ and $\eta$ is near $\hat \eta$. As $N\geq n'+ n''$, the matrix $A^T A$ is invertible.

We now expand the function $(x', \eta)\mapsto \phi_z(v_1,x')^T\eta$ around the point $(x', \eta) = (u_1', u_2'')$ to get 
\[
\phi_z(v_1,x')^T\eta - \phi_z(v_1,u_1')^Tu_2''= A\begin{pmatrix}
           x' - u_1'\\
           \eta - u_2''
         \end{pmatrix} + Q\left(\begin{pmatrix}
           x' - u_1'\\
           \eta - u_2''
         \end{pmatrix}\right)
\]
for some quadratic form $Q$. The estimate \eqref{inverse function estimate} comes now directly from the invertibility of $A^TA$. 
\end{proof}

Fix $(\hat x, \hat v)$ and $U\times \mathcal V$ as in Corollary \ref{complex critical points}. 
We now define the holomorphic phase on $U_\mC\times\mathcal V_\mC$ by
\begin{eqnarray}
\label{psi}
\psi (x,v) &:=& \Psi(\zeta'_c(x,v); x,v) \\\nonumber &=& -z(\zeta'_c(x,v),x) \cdot v_2 + i\frac{( z(\zeta'_c(x,v),x) - v_1)^2}{2}.
\end{eqnarray}
We showed in Lemma \ref{real crit points} that when $x = \pi(\chi(v))$, the critical point $\zeta'_c(x,v)$ is real valued and consequently $z(\zeta'_c(x,v),x) = v_1$ which implies $\psi(x,v)$ is real valued. It turns out that when $x - \pi(\chi(v))$ is small, the imaginary part of $\psi(x,v)$ is bounded below by the imaginary part of $z(\zeta'_c(x,v),x)$:
\begin{Lemma}
\label{bound the imaginary part}
Writing $z(\zeta_c'(x,v),x) = z^{\mR}(\zeta_c'(x,v),x) + i z^{i\mR}(\zeta_c'(x,v),x)$ where $z^{\mR}$ and $z^{i\mR}$ are real, 
we have for real $x\in U$ and $v\in {\mathcal V}$ the estimate 
\begin{eqnarray}
\label{zI bounded by psi}
\im (\psi(x,v)) \geq C| z^{i\mR}(\zeta'_c(x,v), x)|^2
\end{eqnarray}
when $|x -\pi(\chi(v))|$ is sufficiently small.
\end{Lemma}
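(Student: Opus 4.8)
The plan is to reduce $\psi$ to an explicit form using the auxiliary phase $\Phi$ of \eqref{def of Phi} and the two critical point equations \eqref{critical point eq for Phi}. By \eqref{mathcal Z is zeta} and the identity $\Phi(z(\zeta',x),\eta;x,v)=\Psi(\zeta';x,v)$, we have $\psi(x,v)=\Phi(\mathcal Z_c(x,v),\eta_c(x,v);x,v)$, where $(\mathcal Z_c,\eta_c)$ is the critical point of $\Phi$ from Lemma \ref{crit of Phi in small nbhd}. Write $z:=\mathcal Z_c(x,v)=z^{\mR}+iz^{i\mR}$ and $\eta:=\eta_c(x,v)=\eta^{\mR}+i\eta^{i\mR}$ with starred quantities real; here $z^{i\mR}=z^{i\mR}(\zeta_c'(x,v),x)$ is exactly the vector in the statement. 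The first equation of \eqref{critical point eq for Phi} kills the $\eta$-term of $\Phi$, so $\psi=-z\cdot v_2+\tfrac i2(z-v_1)^2$; separating real and imaginary parts (with $v_1,v_2,x$ real) gives $\im\psi(x,v)=\tfrac12|z^{\mR}-v_1|^2-\tfrac12|z^{i\mR}|^2-z^{i\mR}\cdot v_2$. Next I would feed in the second equation of \eqref{critical point eq for Phi}, namely $v_2=\phi_z(z,x')^T\eta+i(z-v_1)$: since $z^{i\mR}\cdot v_2$ is real and $z^{i\mR}\cdot i(z-v_1)=i\,z^{i\mR}\cdot(z^{\mR}-v_1)-|z^{i\mR}|^2$ has real part $-|z^{i\mR}|^2$, we get $z^{i\mR}\cdot v_2=\re\big[(\phi_z(z,x')z^{i\mR})\cdot\eta\big]-|z^{i\mR}|^2$, hence
\[
\im\psi(x,v)=\tfrac12|z^{\mR}-v_1|^2+\tfrac12|z^{i\mR}|^2-\re\big[(\phi_z(z,x')z^{i\mR})\cdot\eta\big].
\]
Thus it suffices to show the last term is $o(|z^{i\mR}|^2)$ as $x\to\pi(\chi(v))$, uniformly.

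The heart of the matter is the estimate on $\re[(\phi_z(z,x')z^{i\mR})\cdot\eta]$. Since $(\mathcal Z_c,x)\in Z_{\mC}$ by the first equation of \eqref{critical point eq for Phi} (see \eqref{algebraic def of Z complex}) and $x$ is real, $\phi(z,x')=x''$ is real; expanding $\phi(z^{\mR}+iz^{i\mR},x')$ in powers of $iz^{i\mR}$ about $z^{\mR}$, the zeroth and second order terms are real while the first order term is $i\,\phi_z(z^{\mR},x')z^{i\mR}$, so taking imaginary parts forces $\phi_z(z^{\mR},x')z^{i\mR}=O(|z^{i\mR}|^3)$ (a refinement of \eqref{phi im vanishes}). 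Combining this with the Taylor expansion $\phi_z(z,x')=\phi_z(z^{\mR},x')+i\,\phi_{zz}(z^{\mR},x')z^{i\mR}+O(|z^{i\mR}|^2)$ of the holomorphic map $\phi_z$ (whose leading coefficients are real) yields $\phi_z(z,x')z^{i\mR}=i\,\phi_{zz}(z^{\mR},x')[z^{i\mR},z^{i\mR}]+O(|z^{i\mR}|^3)$, i.e.\ this vector is purely imaginary to leading order. Pairing against $\eta=\eta^{\mR}+i\eta^{i\mR}$ and taking real parts therefore gives $\re[(\phi_z(z,x')z^{i\mR})\cdot\eta]=-\phi_{zz}(z^{\mR},x')[z^{i\mR},z^{i\mR}]\cdot\eta^{i\mR}+O(|z^{i\mR}|^3)=O(|z^{i\mR}|^2|\eta^{i\mR}|)+O(|z^{i\mR}|^3)$.

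To finish, recall from the last statement of Lemma \ref{crit of Phi in small nbhd} (via Lemma \ref{main crit pt of Phi}) that $z^{i\mR}=\im\mathcal Z_c(x,v)$ and $\eta^{i\mR}=\im\eta_c(x,v)$ both vanish when $v$ is real and $x=\pi(\chi(v))$; as $(\mathcal Z_c,\eta_c)$ depends smoothly on $(x,v)$ and the relevant sets are relatively compact, both are $O(|x-\pi(\chi(v))|)$ uniformly. Hence the error term is $O\big(|z^{i\mR}|^2\,|x-\pi(\chi(v))|\big)+O(|z^{i\mR}|^3)\le\tfrac14|z^{i\mR}|^2$ once $|x-\pi(\chi(v))|$ (and therefore $|z^{i\mR}|$) is small enough, so that $\im\psi(x,v)\ge\tfrac12|z^{\mR}-v_1|^2+\tfrac14|z^{i\mR}|^2\ge\tfrac14|z^{i\mR}|^2$, which proves the lemma. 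The main obstacle — and the only place a naive bound breaks down — is controlling $\re[(\phi_z(z,x')z^{i\mR})\cdot\eta]$: both the third-order vanishing of $\phi_z(z^{\mR},x')z^{i\mR}$ (a consequence of $Z_{\mC}$ being real over real $x$) and the smallness of $\im\eta_c$ near the base point are essential, since an error of size $|z^{i\mR}|^2$ with a fixed constant would overwhelm the good term $\tfrac12|z^{i\mR}|^2$.
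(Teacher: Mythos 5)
Your proof is correct, and it takes a genuinely different route from the paper's. The paper's argument is abstract: it Taylor-expands $\Phi(w;y)$ around the complex critical point $w(y)$, exploits that $\im\Phi(w;y)\geq 0$ for real $w$, evaluates on the line $w=\re(w(y))-t|\im(w(y))|$, and invokes H\"ormander's Lemma 7.7.9 (via the Hessian positivity established in Lemma \ref{main crit pt of Phi}) to get a uniform lower bound on the resulting quadratic form in $(t,\omega)$. Your argument instead computes $\im\psi$ explicitly from the critical-point equations: the first equation of \eqref{critical point eq for Phi} eliminates the $\eta$-term, and inserting the second equation yields the exact identity
\[
\im\psi(x,v)=\tfrac12|z^{\mR}-v_1|^2+\tfrac12|z^{i\mR}|^2-\re\bigl[\bigl(\phi_z({\mathcal Z}_c,x')\,z^{i\mR}\bigr)\cdot\eta_c\bigr].
\]
You then control the last term directly by a two-step Taylor argument in $z^{i\mR}$ using that $\phi({\mathcal Z}_c,x')=x''$ is real for real $x$ (so $\phi_z(z^{\mR},x')z^{i\mR}=O(|z^{i\mR}|^3)$, while the first-order correction $i\,\phi_{zz}(z^{\mR},x')[z^{i\mR},z^{i\mR}]$ is purely imaginary), pairing with $\eta_c$ and using that both $z^{i\mR}$ and $\im\eta_c$ vanish at $x=\pi(\chi(v))$ and are therefore $O(|x-\pi(\chi(v))|)$. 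All the steps check out: the algebra for $\im\psi$ is right, the reality arguments use exactly the holomorphicity of $\phi$ and the realness of $x,v$, and the final absorption into $\tfrac12|z^{i\mR}|^2$ works once $|x-\pi(\chi(v))|$ is small --- which is also required in the paper's proof (it is needed there to absorb the $O(|\im w(y)|)$ error in the Hessian term). What the paper's route buys is generality: the argument only uses that the Hessian of $\Phi$ at the critical point is nondegenerate with positive semi-definite imaginary part, so it would apply verbatim to more general phases. What your route buys is concreteness and self-containment: it avoids the appeal to H\"ormander's lemmas entirely and makes the mechanism (why the potentially dangerous cross term $z^{i\mR}\cdot v_2$ is actually small) completely transparent.
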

\begin{proof}
Setting 
$${\mathcal Z}_c(x,v) ={\mathcal Z}_c^{\mR}(x,v)+ i {\mathcal Z}_c^{i\mR}(x,v),$$
for real valued ${\mathcal Z}_c^\mR(x,v)$ and ${\mathcal Z}_c^{i\mR}(x,v)$, it suffices by \eqref{mathcal Z is zeta} to show that
\begin{eqnarray}
\label{equivalent imaginary bound}
\im(\psi(x,v)) \geq C|{\mathcal Z}_c^{i\mR}(x,v)|^2.
\end{eqnarray}

To this end we follow the presentation of Lemma 7.7.8 in \cite[vol.\ I]{Hormander}. We simplify notation by $(z,\eta) = w$ and $(x,v) = y$, where both $w$ and $y$ are real. We have that
\[
\Phi(z,\eta; x,v) = \Phi(w;y) = \psi(y) + \sum_{|\alpha| =2}\frac{H_\alpha(y)}{\alpha!} (w- w(y))^\alpha + O((w-w(y))^3).
\]
Here $w(y)$ denotes the critical point $w(y) = ({\mathcal Z}_c(x,v), \eta_c(x,v))$ (recall that the variable $y$ denotes $(x,v)$). We remark that while $w$ is real, $w(y)$ can be complex and that 
\begin{eqnarray}
\label{im(w(y)) vanishes}
\lim\limits_{y\to (\pi(\chi(v)), v)}\im(w(y)) = 0
\end{eqnarray}
for any $v\in \mathcal V$ by Lemma \ref{crit of Phi in small nbhd}.
Note the identity
\begin{eqnarray}
\label{second term is hessian}
H_\alpha(y) = \partial_{w}^\alpha\Phi(w(y);y),\qquad |\alpha | = 2.
\end{eqnarray}

For real valued $w \in \mR^{N} \times \mR^{n''}$, the definition of $\Phi$ gives that $\im(\Phi(w;y))\geq0$, so
\[
0\leq \im(\psi(y)) + \im\left(\sum_{|\alpha| =2}\frac{H_\alpha(y)}{\alpha!} (w- w(y))^\alpha + O((w-w(y))^3)\right).
\]
Now choose $w = \re{(w(y))} - t{|\im(w(y))|}$ with $t\in \mR^{N+n''}$ and $|t|\leq 1$. We get
\[
|\im(w(y))|^2\left(- \im\left(\sum_{|\alpha| =2}\frac{H_\alpha(y)}{\alpha!} \left(t +i\frac{\im(w(y))}{|\im(w(y))|}\right)^\alpha + O(|\im(w(y))|)\right) \right)\leq \im(\psi(y)).
\]
Since the inequality holds for all $|t|\leq1$ we can take the supremum to obtain that 
\[
|\im(w(y))|^2\sup_{|t|\leq1}\left(- \im\left(\sum_{|\alpha| =2}\frac{H_\alpha(y)}{\alpha!} \left(t +i\frac{\im(w(y))}{|\im(w(y))|}\right)^\alpha + O(|\im(w(y))|)\right) \right)\leq \im(\psi(y)).
\]
Thus, by \eqref{im(w(y)) vanishes}, proving \eqref{equivalent imaginary bound} amounts to showing that 
\begin{equation} \label{sup_bound}
\sup_{|t|\leq1}\left(- \im\left(\sum_{|\alpha| =2}\frac{H_\alpha(y)}{\alpha!} \left(t +i\omega\right)^\alpha \right)\right) \geq c>0
\end{equation}
uniformly for $y\in U \times \mathcal V$ and $\omega \in \mR^{N}\times \mR^{n''}$ with $|\omega| = 1$.

Using continuity and compactness, it suffices to prove \eqref{sup_bound} when $y = \hat y = (\hat x, \hat v)$. Using \eqref{second term is hessian}, we see that this can be written as
$$ \sup_{|t|\leq1}  - \im \left( (t+i\omega)\cdot H(\hat y)(t+i\omega)\right)\geq c>0$$
where $H(\hat y)$ is the Hessian of the map $w = (z,\eta) \mapsto \Phi(z,\eta; x,v)$ evaluated at 
$$ w(\hat y) = ({\mathcal Z}_c(\hat x,\hat v), \eta_c(\hat x,\hat v))) = (\hat v_1, \hat u_2'')$$
 (recall that the variable $y$ is shorthand for the variables $(x,v)$ and $w(y)$ denotes $({\mathcal Z}_c(x,v), \eta_c(x,v))$). Lemma \ref{main crit pt of Phi} states that $H(\hat y)$ is a symmetric nondegenerate matrix whose imaginary part is positive semi-definite. Lemma 7.7.9 in \cite[Vol.\ I]{Hormander} then gives the desired uniform lower bound.
\end{proof}

\begin{Corollary}
\label{estimating the square}
There are constants $c_1, c_2 >0$ such that for all $(x,v) \in U \times \mathcal V$, the following estimate holds:
\[
\re \left[ (z(\zeta_c'(x,v),x) - v_1)^2 \right]\geq c_1 |x - \pi(\chi(v))|^2 - c_2 \im (\psi(x,v)).
\]
\end{Corollary}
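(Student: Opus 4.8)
The plan is to write the complex number $z_c := z(\zeta_c'(x,v),x)$ as $z_c = z_c^{\mR} + i z_c^{i\mR}$ with $z_c^{\mR}, z_c^{i\mR}$ real, expand the square
\[
(z_c - v_1)^2 = (z_c^{\mR} - v_1)^2 - |z_c^{i\mR}|^2 + 2i (z_c^{\mR}-v_1)\cdot z_c^{i\mR}
\]
(using that $v_1$ is real, since $v \in \mathcal V$), so that $\re\big[(z_c-v_1)^2\big] = |z_c^{\mR}-v_1|^2 - |z_c^{i\mR}|^2$. The two contributions will be handled separately: the term $|z_c^{\mR}-v_1|^2$ is bounded below using the coercive estimate of Lemma \ref{coercive estimate}, and the subtracted term $|z_c^{i\mR}|^2$ is absorbed into $\im(\psi(x,v))$ using Lemma \ref{bound the imaginary part}.

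The key steps, in order, are as follows. First, observe that Lemma \ref{coercive estimate} gives $|z_c - v_1|^2 \geq c_0^2 |x-\pi(\chi(v))|^2$, and since $|z_c - v_1|^2 = |z_c^{\mR}-v_1|^2 + |z_c^{i\mR}|^2$ we get
\[
|z_c^{\mR}-v_1|^2 \geq c_0^2 |x-\pi(\chi(v))|^2 - |z_c^{i\mR}|^2 .
\]
Second, combine this with the expansion of the real part to obtain
\[
\re\big[(z_c-v_1)^2\big] = |z_c^{\mR}-v_1|^2 - |z_c^{i\mR}|^2 \geq c_0^2 |x-\pi(\chi(v))|^2 - 2|z_c^{i\mR}|^2 .
\]
Third, invoke Lemma \ref{bound the imaginary part}: for $(x,v)$ real with $|x-\pi(\chi(v))|$ small, $|z_c^{i\mR}|^2 = |z^{i\mR}(\zeta_c'(x,v),x)|^2 \leq C^{-1} \im(\psi(x,v))$. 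Substituting gives
\[
\re\big[(z_c-v_1)^2\big] \geq c_0^2 |x-\pi(\chi(v))|^2 - 2C^{-1} \im(\psi(x,v)),
\]
which is the claimed inequality with $c_1 = c_0^2$ and $c_2 = 2C^{-1}$, valid whenever $|x-\pi(\chi(v))|$ is small enough for Lemma \ref{bound the imaginary part} to apply. Finally, one extends the estimate to all of $U \times \mathcal V$ (after possibly shrinking these neighborhoods, or enlarging $c_2$): away from $\pi(\chi(v))$ the left-hand side is bounded below by a positive constant times $|x-\pi(\chi(v))|^2$ up to a bounded error, and $\im(\psi)$ is bounded, so a compactness argument as in the proof of Lemma \ref{bound the imaginary part} closes the gap; alternatively one simply states the estimate on the same shrunk neighborhood used throughout Section \ref{sec_good_phase}.

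The main obstacle is really only bookkeeping: the substantive analytic input — the coercivity $|z_c - v_1| \gtrsim |x - \pi(\chi(v))|$ and the quadratic control $\im(\psi) \gtrsim |z_c^{i\mR}|^2$ — is already supplied by Lemmas \ref{coercive estimate} and \ref{bound the imaginary part}. The one point requiring a little care is the passage from "for $|x-\pi(\chi(v))|$ sufficiently small" (the hypothesis of Lemma \ref{bound the imaginary part}) to "for all $(x,v) \in U \times \mathcal V$" in the statement; this is handled by shrinking $U$ and $\mathcal V$ around $(\hat x,\hat v)$ so that the smallness condition holds uniformly, consistent with the neighborhoods already fixed in Proposition \ref{good phase}.
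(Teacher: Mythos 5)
Your argument is correct and is exactly the paper's (one-line) proof spelled out: writing $z_c=z_c^{\mR}+iz_c^{i\mR}$ and using $\re\big[(z_c-v_1)^2\big]=|z_c-v_1|^2-2|z_c^{i\mR}|^2$ together with Lemma \ref{coercive estimate} and Lemma \ref{bound the imaginary part} is precisely what the paper means by ``direct consequence.'' Your remark about absorbing the smallness hypothesis of Lemma \ref{bound the imaginary part} by shrinking the neighborhoods is consistent with how the paper later fixes $\hat U$ and $\hat{\mathcal V}$ in Corollary \ref{final coercive estimate}.
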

\begin{proof}
Since $v_1$ is real, this is a direct consequence of Lemma \ref{coercive estimate} and Lemma \ref{bound the imaginary part}.
\end{proof}

\begin{Lemma}
\label{linear term} 
 For all $(x,v)\in U \times\mathcal V$ and for all $\epsilon>0$,
\[
- \im(z(\zeta'_c(x,v),x)\cdot v_2) \geq - C\epsilon |x-\pi(\chi(v))|^2 - C\epsilon^{-1} \im(\psi(x,v)) - C\epsilon^{-1} |x-\pi(\chi(v))|^3.
\]
The constant $C$ is uniform over $(x,v) \in U\times \mathcal V$ and independent of $\epsilon$.
\end{Lemma}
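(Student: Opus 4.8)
\textbf{Proof proposal for Lemma \ref{linear term}.}
The plan is to work directly with the critical point equations \eqref{critical point eq for Phi} for $\Phi$, rather than with the naive identity $-\im(z_c\cdot v_2)=\im\psi-\tfrac12\re((z_c-v_1)^2)$ which (because $|z_c^{\mR}-v_1|\sim|h|$ in general) is circular for this purpose. Write $h:=x-\pi(\chi(v))$, $z_c:={\mathcal Z}_c(x,v)=z(\zeta'_c(x,v),x)$ (using \eqref{mathcal Z is zeta}), $\eta_c:=\eta_c(x,v)$, and $w:=z_c-v_1$. By Lemma \ref{crit of Phi in small nbhd} and Corollary \ref{complex critical points} the pair $(z_c,\eta_c)$ is the unique critical point of $\Phi(\cdot\,;x,v)$, it depends holomorphically on $(x,v)$, and at $x=\pi(\chi(v))$ (with $(\pi(\chi(v)),u_2)=\chi(v)$) it equals the real point $(v_1,u_2'')$. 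This gives the elementary bounds $|w|\le C|h|$, $|\eta_c-u_2''|\le C|h|$, hence $|\eta_c^{i\mR}|\le C|h|$ where $\eta_c^{i\mR}:=\im(\eta_c)$; note also $|z_c^{i\mR}|=|w^{i\mR}|$. Since $\phi$ is real on the real domain (cf.\ \eqref{phi real and im}--\eqref{phi im vanishes}), Taylor expansion of $z\mapsto\phi_z(z,x')$ around $v_1$ shows $|\phi_z^{i\mR}|\le C|z_c^{i\mR}|+C|h|^2$ with $\phi_z^{i\mR}:=\im(\phi_z(z_c,x'))$, while $\phi_z^{\mR}:=\re(\phi_z(z_c,x'))$ has full rank $n''$ near $(\hat x,\hat v)$ by \eqref{phiz has full rank}.

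First I would use \eqref{critical point eq for Phi} in the form $v_2=\phi_z(z_c,x')^T\eta_c+iw$ and the reality of $v_1,v_2$ to compute
\[
-\im(z_c\cdot v_2)=-\im(w\cdot v_2)=-\im\big((\phi_z(z_c,x')w)\cdot\eta_c\big)-\re(w\cdot w),\qquad \re(w\cdot w)=|w^{\mR}|^2-|z_c^{i\mR}|^2 .
\]
Next, since $(z_c,x)\in Z_{\mC}$ forces $\phi(z_c,x')=x''$, Taylor expansion of $\phi$ in its first argument around $z_c$ gives $\phi_z(z_c,x')w=\rho_1+R_2$ where $\rho_1:=x''-\phi(v_1,x')\in\mR^{n''}$ is real with $|\rho_1|\le C|h|$, and $R_2=\tfrac12\phi_{zz}(v_1,x')[w,w]+O(|w|^3)$. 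Using that $\phi_{zz}(v_1,x')$ is real one checks $|\im(R_2)|\le C|h||z_c^{i\mR}|+C|h|^3$, and splitting $\eta_c=u_2''+(\eta_c-u_2'')$ then yields $|\im(R_2\cdot\eta_c)|\le C|h||z_c^{i\mR}|+C|h|^3$. For the remaining term $\rho_1\cdot\eta_c^{i\mR}$, I would expand $\rho_1=\phi_z(v_1,x')w+O(|w|^2)$, take real parts, and feed in the imaginary part of \eqref{critical point eq for Phi}, namely $(\phi_z^{\mR})^T\eta_c^{i\mR}=-w^{\mR}-(\phi_z^{i\mR})^T\eta_c^{\mR}$, together with the bound on $\phi_z^{i\mR}$ above; this produces
\[
\rho_1\cdot\eta_c^{i\mR}=-|w^{\mR}|^2+O(|h||z_c^{i\mR}|)+O(|h|^3).
\]

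Combining these computations, the genuinely quadratic pieces $|w^{\mR}|^2$ cancel and one is left with
\[
-\im(z_c\cdot v_2)=|z_c^{i\mR}|^2+O(|h||z_c^{i\mR}|)+O(|h|^3)\;\ge\;-C|h||z_c^{i\mR}|-C|h|^3 .
\]
Finally, for any $\epsilon>0$ Young's inequality gives $C|h||z_c^{i\mR}|\le \tfrac{C}{2}\epsilon|h|^2+\tfrac{C}{2}\epsilon^{-1}|z_c^{i\mR}|^2$, and Lemma \ref{bound the imaginary part} gives $|z_c^{i\mR}|^2=|z^{i\mR}(\zeta'_c(x,v),x)|^2\le C\im(\psi(x,v))$ (valid since $U$, $\mathcal V$ are small neighborhoods of $\hat x$, $\hat v$, so $|h|$ is small). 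Substituting yields the claimed estimate, after a harmless adjustment of constants to rewrite the $|h|^3$ term as $C\epsilon^{-1}|h|^3$ when $\epsilon\le1$ and to absorb it into the $\epsilon|h|^2$ term when $\epsilon\ge1$.

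\textbf{Main obstacle.} The hard part will be the middle step: one has to expand everything to second order and keep careful track of which contributions are genuinely quadratic in $h$, which are $O(|h||z_c^{i\mR}|)$, and which are $O(|h|^3)$, and then verify that the two sources of $|w^{\mR}|^2$ — one from $\re(w\cdot w)$ and one from $\rho_1\cdot\eta_c^{i\mR}$ — cancel \emph{exactly}. This cancellation is precisely what makes the $\epsilon$-form of the inequality available (a crude bound only gives $-\im(z_c\cdot v_2)\ge -C|h|^2$, which is useless for small $\epsilon$), and it is forced by the critical point relation \eqref{critical point eq for Phi}, so the bookkeeping must be organized around that relation rather than around $\psi$ directly.
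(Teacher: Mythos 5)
Your proposal is correct, and it reaches the estimate by a route that differs from the paper's in its mechanics. The paper never works with the pointwise identity you derive: instead it studies the scalar function $x\mapsto{\mathcal Z}_c^{i\mR}(x,v)\cdot v_2$, differentiates the constraint $\phi^{i\mR}({\mathcal Z}_c^{\mR},{\mathcal Z}_c^{i\mR},x')=0$ once and twice in the parameter $x$, evaluates at $x=\pi(\chi(v))$, and uses \eqref{phi im vanishes} together with the Cauchy--Riemann identity $\phi^{i\mR}_{z^{i\mR}}=\phi^{\mR}_{z^{\mR}}$ and \eqref{v2 equals to} to show that this function vanishes to first order there and that its Hessian has the structured form $\left({\mathcal Z}_c^{i\mR}\right)_x^TA+B\left({\mathcal Z}_c^{i\mR}\right)_x$; a Taylor expansion in $x$ plus Cauchy--Schwarz then yields $\epsilon^{-1}|{\mathcal Z}_c^{i\mR}|^2+\epsilon|h|^2+C\epsilon^{-1}|h|^3$, after which Lemma \ref{bound the imaginary part} is applied exactly as you do. You instead stay at the fixed point $(x,v)$, substitute $v_2=\phi_z(z_c,x')^T\eta_c+iw$ from \eqref{critical point eq for Phi} (the sign you use is the one forced by the definition \eqref{def of Phi}), Taylor-expand $\phi$ in the $z$-variable between $z_c$ and $v_1$, and exploit the reality of $\phi$, $\phi_z$, $\phi_{zz}$ at real arguments and the imaginary part of the second critical-point equation to force the exact cancellation of the two $|w^{\mR}|^2$ contributions. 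What your route buys is a sharper intermediate statement, $-\im(z_c\cdot v_2)=|z_c^{i\mR}|^2+O(|h||z_c^{i\mR}|)+O(|h|^3)$ (essentially a computation of $\im\psi$ up to controlled errors, with only a $C|h|^3$ rather than $C\epsilon^{-1}|h|^3$ remainder before the final bookkeeping); what the paper's route buys is that all expansions are in the single scalar quantity ${\mathcal Z}_c^{i\mR}\cdot v_2$, so the only inputs are the first- and second-order derivative identities \eqref{first order vanishing}--\eqref{second order small}. Both arguments share the same essential ingredients (the critical-point equations, realness of $\phi$ on real arguments, Lemma \ref{crit of Phi in small nbhd} giving ${\mathcal Z}_c=v_1$, $\eta_c=u_2''$ at $x=\pi(\chi(v))$, uniform Lipschitz bounds from holomorphy, and Lemma \ref{bound the imaginary part}), and your explicit flagging that the final application of Lemma \ref{bound the imaginary part} needs $|x-\pi(\chi(v))|$ small is consistent with how the paper itself uses \eqref{zI bounded by psi}, so there is no gap there.
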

\begin{proof}

We again use \eqref{mathcal Z is zeta} to deduce that the estimate in the Lemma is equivalent to the same estimate for $-{\mathcal Z}_c(x,v)\cdot v_2$. To this end,  we write ${\mathcal Z}_c(x,v)= {\mathcal Z}_c^\mR(x,v) + i{\mathcal Z}_c^{i\mR}(x,v)$. Observe that for $v\in \mathcal V$, by Lemma \ref{real crit points} part (1)  and \eqref{mathcal Z is zeta}, 
\begin{eqnarray}
\label{mathcal Z U W}
{\mathcal Z}_c(\pi(\chi(v)),v) ={\mathcal Z}_c^\mR(\pi(\chi(v)),v) + i{\mathcal Z}_c^{i\mR}(\pi(\chi(v)),v) = v_1.
\end{eqnarray}

Now write $\phi$ in terms of its real and imaginary part as in \eqref{phi real and im} to obtain from \eqref{critical point eq for Phi}
$$\phi^{i\mR}({\mathcal Z}_c^\mR(x,v), {\mathcal Z}_c^{i\mR}(x,v), x') = 0.$$
Differentiating in $x$ we get 
\begin{multline}
\label{derivative once} 
0 = \left({\mathcal Z}_c^\mR\right)_x^T \phi^{i\mR}_{z^{\mR}}({\mathcal Z}_c^{\mR}(x,v), {\mathcal Z}_c^{i\mR}(x,v),x')^T \\
 + \left({\mathcal Z}_c^{i\mR}\right)_x^T \phi^{i\mR}_{z^{i\mR}}({\mathcal Z}_c^\mR(x,v), {\mathcal Z}_c^{i\mR}(x,v), x')^T + \phi^{i\mR}_{x}({\mathcal Z}_c^\mR(x,v), {\mathcal Z}_c^{i\mR}(x,v), x')^T.
\end{multline}
If we denote $\chi(v) = (u_1(v), u_2(v))$ and write $u_2(v) = (u_2(v)' , u_2(v)'') \in \mR^{n' + n''}$, then by \eqref{canonical relation characterization} we have that 
\begin{eqnarray}
\label{v2 equals to}
v_2 =  \phi_z(v_1, u_1(v)')^T u_2(v)''
\end{eqnarray}
We multiply \eqref{derivative once} by $u_2(v)''$ to get 
\begin{multline}
\label{first derivative with v2}
0 = \left({{\mathcal Z}_c^\mR}\right)_x^T \phi^{i\mR}_{z^\mR}({\mathcal Z}_c^\mR(x,v), {\mathcal Z}_c^{i\mR}(x,v),x')^T u_2(v)'' \\
 +\left( {\mathcal Z}_c^{i\mR}\right)_x^T \phi^{i\mR}_{z^{i\mR}}({\mathcal Z}_c^\mR(x,v), {\mathcal Z}_c^{i\mR}(x,v), x')^T u_2(v)'' + \phi^{i\mR}_{x}({\mathcal Z}_c^\mR(x,v), {\mathcal Z}_c^{i\mR}(x,v), x')^T u_2(v)''.
\end{multline}
In the above expression both $\left({\mathcal Z}_c^\mR\right)_x$ and $\left({\mathcal Z}_c^{i\mR}\right)_x$ still depend on $x$. Now we set $x = \pi(\chi(v)) = u_1(v)$ and use \eqref{mathcal Z U W}. Equation \eqref{first derivative with v2} becomes
\begin{eqnarray}
\label{derivative multiply u2''}
0 &=&\left( {\mathcal Z}_c^\mR\right)_x^T \phi^{i\mR}_{z^\mR}(v_1, 0,u_1(v)')^T u_2(v)'' + \left({\mathcal Z}_c^{i\mR}\right)_x^T \phi^{i\mR}_{z^{i\mR}}(v_1, 0, u_1(v)')^T u_2(v)''\\\nonumber&+& \phi^{i\mR}_x(v_1, 0, u_1(v)')^T u_2(v)''.
\end{eqnarray}
Now use \eqref{phi im vanishes} with $x = u_1(v)$ to obtain that 
\begin{eqnarray}
\label{Zcx vanish}
0=\left({\mathcal Z}_c^{i\mR}\right)_x^T \phi^{i\mR}_{z^{i\mR}}(v_1, 0, u_1(v)')^T u_2(v)''.
\end{eqnarray}
Since $\phi$ is holomorphic, the Cauchy-Riemann equations give $\phi^{i\mR}_{z^{i\mR}}(v_1, 0, u_1(v)') = \phi^{\mR}_{z^\mR}(v_1, 0, u_1(v)')$. Thus we get from \eqref{phi im vanishes} and \eqref{v2 equals to} that when $x = u_1(v)$,
\begin{eqnarray}
\label{CR consequence}
\phi^{i\mR}_{z^{i\mR}}(v_1, 0, u_1(v)')^T u_2(v)'' = \phi^{\mR}_{z^\mR}(v_1, 0, u_1(v)')^T u_2(v)'' =   \phi_z(v_1, u_1(v)')^T u_2(v)''  =  v_2.
\end{eqnarray}
Combine \eqref{Zcx vanish} and \eqref{CR consequence} to get 
\begin{eqnarray}
\label{first order vanishing}
0=
\left({\mathcal Z}_c^{i\mR}\right)_x^T \mid_{x = u_1(v)} v_2 = \partial_x\left({\mathcal Z}_c^{i\mR}\cdot v_2\right)\mid_{x = u_1(v)}.
\end{eqnarray}

We now differentiate \eqref{first derivative with v2} with respect to $x$ and set $x  = u_1(v)$ to get
\begin{eqnarray}
\label{second order small}
 0 =\left( \left({\mathcal Z}_c^{i\mR}\right)_x^T A + B\left( {\mathcal Z}_c^{i\mR}\right)_x + \partial_x^2 ({\mathcal Z}_c^{i\mR}(x,v)\cdot v_2)\right)|_{x = u_1(v)}
 \end{eqnarray}
for some matrices $A$ and $B$ depending on $v$. Indeed, differentiating the first term of \eqref{first derivative with v2} with respect to $x$ yields an expression of the form $B\left( {\mathcal Z}_c^{i\mR}\right)_x $ by using \eqref{phi im vanishes} multiple times. Differentiating the second term of \eqref{first derivative with v2} with respect to $x$ and setting $x = u_1(v)$ yields an expression of the form $\left(\left( {\mathcal Z}_c^{i\mR}\right)_x ^T A+ \partial_x^2 ({\mathcal Z}_c^{i\mR}(x,v)\cdot v_2)\right)|_{x = u_1(v)}$ due to \eqref{CR consequence} and Lemma \ref{real crit points} part (1). Finally, differentiate the third term of \eqref{first derivative with v2} with respect to $x$ and setting $x = u_1(v)$ yields an expression of the form $B\left( {\mathcal Z}_c^{i\mR}\right)_x $ by \eqref{phi im vanishes}.

We now Taylor expand using \eqref{first order vanishing}, \eqref{second order small}, and the fact that ${\mathcal Z}_c^{i\mR}(u_1(v), v)) = 0$ to get
\begin{align}
|\im({\mathcal Z}_c(x,v) \cdot v_2)| &= |{\mathcal Z}_c^{i\mR}(x,v) \cdot v_2| \notag \\
 &\leq |\langle (x- u_1(v)), \left(\left({\mathcal Z}_c^{i\mR}\right)_x^T A + B\left( {\mathcal Z}_c^{i\mR}\right)_x\right)(x - u_1(v))\rangle| + C|x- u_1(v)|^3 \notag \\
 &\leq \epsilon^{-1}\left|\left( {\mathcal Z}_c^{i\mR}\right)_x(x-u_1(v))\right|^2 +\epsilon |x-u_1(v)|^2 + C |x-u_1(v)|^3 \notag \\
 &\leq \epsilon^{-1}\left|{\mathcal Z}_c^{i\mR}(x,v)\right|^2 +\epsilon |x-u_1(v)|^2 + C\epsilon^{-1}|x-u_1(v)|^3. \label{chain of ineq}
\end{align}
In the last inequality we used ${\mathcal Z}_c^{i\mR}(x,v) = \left({\mathcal Z}_c^{i\mR}\right)_x\left(u_1(v),v\right)(x-u_1(v) )+ O((x-u_1(v))^2)$. Recall 
\[
{\mathcal Z}_c^{i\mR}(x,v) = \im({\mathcal Z}_c(x,v) )= \im (z(\zeta_c'(x,v),x))
\]
by \eqref{mathcal Z is zeta}. Finally, applying the estimate  \eqref{zI bounded by psi} to the first term in the last line of \eqref{chain of ineq} gives 
\[
|\im ({\mathcal Z}_c(x,v) \cdot v_2)| \leq C \epsilon^{-1}\im(\psi(x,v))+\epsilon |x-u_1(v)|^2 + C\epsilon^{-1}|x-u_1(v)|^3
\]
and the proof is complete.
\end{proof}

\begin{Corollary}
\label{final coercive estimate}
Let $\hat x = \pi(\chi(\hat v))$. We may choose $\hat U\subset\subset U$ containing $\hat x$ and $\hat {\mathcal V}\subset\subset\mathcal V$ containing $\hat v$ small enough such that 
the phase function $\psi(x,v)$ given by \eqref{psi} satisfies
\[
\im(\psi(x,v) )\geq c|x-\pi(\chi(v))|^2
\]
for all $(x,v) \in \hat U\times \hat {\mathcal V}$.
\end{Corollary}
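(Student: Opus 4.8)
The plan is to read off the imaginary part of $\psi$ from its definition and then feed in the two estimates already established. From \eqref{psi}, for real $(x,v)\in U\times\mathcal V$, writing $z := z(\zeta_c'(x,v),x)$ and using that $v_1,v_2\in\mR^N$, one has the elementary identity
\[
\im(\psi(x,v)) \;=\; -\im(z\cdot v_2) \;+\; \tfrac12\,\re\big[(z-v_1)^2\big].
\]
Abbreviate $r := |x-\pi(\chi(v))|$ and $I := \im(\psi(x,v))$. I would then apply Corollary~\ref{estimating the square} to the term $\re[(z-v_1)^2]$ and Lemma~\ref{linear term}, with a parameter $\epsilon>0$ still to be chosen, to the term $-\im(z\cdot v_2)$; adding the two bounds yields
\[
I \;\ge\; \Big(\tfrac{c_1}{2}-C\epsilon\Big)\,r^2 \;-\; \Big(\tfrac{c_2}{2}+C\epsilon^{-1}\Big)\,I \;-\; C\epsilon^{-1} r^3 .
\]

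Next I would fix $\epsilon$ once and for all small enough that $\tfrac{c_1}{2}-C\epsilon\ge\tfrac{c_1}{4}$ (a choice not depending on $(x,v)$), and move the $I$-terms to the left-hand side. Since the resulting coefficient $1+\tfrac{c_2}{2}+C\epsilon^{-1}$ is strictly positive, this rearrangement is legitimate regardless of the sign of $I$, and gives
\[
\Big(1+\tfrac{c_2}{2}+C\epsilon^{-1}\Big)\,I \;\ge\; \tfrac{c_1}{4}\,r^2 \;-\; C\epsilon^{-1} r^3 .
\]
Finally, using continuity of $\pi\circ\chi$, I would shrink $\hat U\subset\subset U$ around $\hat x$ and $\hat{\mathcal V}\subset\subset\mathcal V$ around $\hat v$ so that on $\hat U\times\hat{\mathcal V}$ the quantity $r$ is small enough that $C\epsilon^{-1}r^3\le\tfrac{c_1}{8}r^2$; then $I\ge c\,r^2$ with $c := \tfrac{c_1}{8}\big(1+\tfrac{c_2}{2}+C\epsilon^{-1}\big)^{-1}>0$, which is exactly the asserted estimate. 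Together with Lemma~\ref{real crit points}, this also completes the proof of Proposition~\ref{good phase}.

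I do not expect a genuine obstacle at this last step: the substantive analysis — the lower bound $|z(\zeta_c'(x,v),x)-v_1|\ge c_0\,r$ of Lemma~\ref{coercive estimate}, the quadratic lower bound for $\im\psi$ in terms of $|\im z|$ of Lemma~\ref{bound the imaginary part}, and the control of the linear term in Lemma~\ref{linear term} — has all been carried out beforehand. What remains here is purely algebraic bookkeeping: combining the two inequalities, one choice of $\epsilon$ to make the $r^2$-coefficient positive, and one shrinking of the neighborhoods to absorb the cubic remainder $C\epsilon^{-1}r^3$ into the quadratic main term. The only point requiring a little care is verifying that the coefficient of $I$ after rearrangement is positive, so that dividing through preserves the inequality — which it manifestly is.
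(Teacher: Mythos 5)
Your argument is correct and is essentially the paper's own proof: the same decomposition $\im\psi=-\im(z\cdot v_2)+\tfrac12\re[(z-v_1)^2]$, combined with Corollary~\ref{estimating the square} and Lemma~\ref{linear term}, a fixed small $\epsilon$, rearrangement of the $\im\psi$ terms, and a final shrinking of $\hat U\times\hat{\mathcal V}$ to absorb the cubic remainder. Your explicit check that the coefficient of $\im\psi$ after rearrangement is positive is a small extra precaution the paper leaves implicit.
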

\begin{proof}
By Corollary \ref{estimating the square} we have
\[
\frac{1}{2}\re\left[ (z(\zeta'_c(x,v),x) - v_1)^2 \right]\geq c|x - \pi(\chi(v))|^2 - C\im (\psi(x,v)).
\]
Now choose $\epsilon>0$ sufficiently small in the estimate of Lemma \ref{linear term} and add it to the above inequality we get 
\begin{align*}
\im(\psi(x,v)) &= -\im(z(\zeta'_c(x,v), x) \cdot v_2) + \frac{1}{2}\re \left[ (z(\zeta'_c(x,v),x) - v_1)^2\right] \\
 &\geq c|x - \pi(\chi(v))|^2 - C\epsilon^{-1}\im (\psi(x,v)) - C\epsilon^{-1} |x-\pi(\chi(v))|^3.
\end{align*}
For this choice of $\epsilon$, we now choose $U\times \mathcal V$ sufficiently small so that 
\[
C\epsilon^{-1}|x-\pi(\chi(v))| \leq \frac{c}{2}.
\]
for all $(x,v)\in U \times \mathcal V$. We then have the desired estimate.\end{proof}

By Lemma \ref{crit of Phi in small nbhd} the function $\psi$ defined in \eqref{psi} satisfies
\begin{eqnarray}
\label{psi relation}
\psi(x,v) = \Phi({\mathcal Z}_c(x,v), \eta_c(x,v);x,v) = -{\mathcal Z}_c(x,v)\cdot v_2 + \frac{i({\mathcal Z}_c(x,v) - v_1)^2}{2}.
\end{eqnarray}
We can now give the proof of the main technical result.

\begin{proof}[Proof of Proposition \ref{good phase}]
The existence and uniqueness of the critical point $\zeta'_c(x,v)$ is stated in Corollary \ref{complex critical points}.

Property (1) comes from the definition of $\psi$ for real valued $(x,v) \in U\times \mathcal V$ and extends by analyticity to $(x,v)\in  U_{\mC} \times { \mathcal V}_{\mC}$.

For property (2), we first check it for $v\in {\mathcal V}$ real valued and then use analyticity to extend the identity to complex valued $v\in  {\mathcal V}_{\mC}$. First, write $\chi(v) = (u_1(v), u_2(v))$. 
Use \eqref{critical point eq for Phi} to get
\[\partial_x\left(\phi({\mathcal Z}_c(x,v),x') - x''\right) = 0.
\]
Expanding the derivatives and setting $x = u_1(v)$, so that ${\mathcal Z}_c(u_1(v),v) = v_1$, we get
\[
\partial_x {\mathcal Z}_c(x,v)^T |_{x= u_1(v)}\phi_z^T(v_1,u_1(v)')  = \begin{pmatrix}- \phi_{x'}^T(v_1, u_1(v)')  \\ I_{n''\times n''}\end{pmatrix}.
\]
Now multiply both sides of the above equality by $-u_2(v)''$. The identity in \eqref{canonical relation characterization} then gives
\[
\partial_x \left({\mathcal Z}_c(x, v)\cdot v_2\right) |_{x = \pi(\chi(v))} =-u_2(v).
\]
Differentiate the expression \eqref{psi relation} and use the above identity to obtain $d_x\psi(\pi(\chi(v)),v) = u_2(v)$. This is property (2).
Property (3) is Corollary \ref{final coercive estimate} if we choose $\hat U$ and $\hat {\mathcal V}$ as in Corollary \ref{final coercive estimate}.
\end{proof}

{
We conclude this section with the following remark outlining how one can generalize the argument above to amplitudes $a(z,x,\eta)$ which also depend on $\eta$.

\begin{Remark}
\label{general amplitude}
Let $T$ be an FIO with Schwartz kernel given by 
\[
T(z,x) := \int_{\mR^{n''}} e^{i(\phi(z,x') - x'') \cdot \eta}a(z,x,\eta) \,d\eta
\]
where $a(z,x,\eta)$ is a classical analytic symbol. In this case, instead of first performing the $\eta$ integral as in the proof of Proposition \ref{prop: Klambda integral}, we consider the $\eta$ and $z$ integral jointly in the expression \eqref{Klambda integral}. We split this integral into two parts:
\begin{eqnarray}\label{K split}
\nonumber
K_{\lambda}(x,v) &=& c_{n,N} \lambda^{\frac{3N}{4} + n''} \int\limits_{z\in V} \int\limits_{|\eta|\leq R}  e^{i\lambda\Phi(z,\eta; x,v)}a(z,x,\eta) \,d\eta \,dz\\
&&+ c_{n,N} \lambda^{\frac{3N}{4} + n''} \int\limits_{z\in V} \int\limits_{|\eta|\geq R}  e^{i\lambda\Phi(z,\eta; x,v)}a(z,x,\eta) \,d\eta \,dz
\end{eqnarray}
where $\Phi(z,\eta;x,v) = (\phi(z,x') - x'')\cdot \eta - z\cdot v_2 + i\frac{|z-v_1|^2}{2}$. Here $R$ is chosen so that for all $(x,v)$ near $(\hat x, \hat v)$,
\begin{eqnarray}
\label{choice or R>0}
\text{there is no } (z,\eta) \in V \times \mathbb R^{n''} \setminus { B_R(0)}\ {\rm s.t.}\ \nabla_{z,\eta} \Phi(z,\eta;x,v) = 0.
\end{eqnarray}
One applies stationary phase (see Remark 2.9 and 2.10 of \cite{sjostrand}) to get that
\begin{eqnarray}
\label{main term in K}
c_{n,N} \lambda^{\frac{3N}{4} + n''} \int\limits_{z\in V} \int\limits_{|\eta|\leq R}  e^{i\lambda\Phi(z,\eta; x,v)}a(z,x,\eta) \,d\eta \,dz \sim \lambda^{\ell}e^{i\lambda\psi(x, v_1, v_2)}A(x,v_1, v_2;\lambda)
\end{eqnarray}
for some $\ell \in \mathbb N$.

Since no critical points of the phase reside in the non-compact part of \eqref{K split} we can write for any $k\in \mathbb N$
$$e^{i\lambda\Phi} = \lambda^{-k} \left(\frac{\nabla \bar\Phi\cdot \nabla}{|\nabla \Phi|^2}\right)^ke^{i\lambda \Phi} := \lambda^{-k} (\mathcal L_\Phi)^k e^{i\lambda\Phi}$$
Applying integration by parts to the non-compact part of \eqref{K split} gives that
\begin{eqnarray}
\label{3 terms of remainder}
&& \int\limits_{z\in V} \int\limits_{|\eta|\geq R}  e^{i\lambda\Phi(z,\eta; x,v)}a(z,x,\eta) \,d\eta \,dz \\\nonumber
&=& \lambda^{-k}\int\limits_{z\in V} \int\limits_{|\eta|\geq R}  e^{i\lambda\Phi(z,\eta; x,v)}(\mathcal L^*)^ka(z,x,\eta) \,d\eta \,dz\\\nonumber
&+& \sum\limits_{j=1}^{k}\lambda^{-j}\int\limits_{z\in \partial V} \int\limits_{|\eta|\geq R} e^{i\lambda\Phi(z,\eta; x,v)}\frac{\nabla_z \bar\Phi\cdot \nu_z}{|\nabla \Phi|^2} ({\mathcal L}^*)^{j-1}a(z,x,\eta) d\eta dz\\\nonumber
&+&\sum\limits_{j=1}^k \lambda^{-j}\int\limits_{z\in V} \int\limits_{|\eta|= R}  e^{i\lambda\Phi(z,\eta; x,v)} \frac{\nabla_\eta \bar\Phi\cdot \eta}{R|\nabla \Phi|^2} ({\mathcal L}^*)^{j-1}a(z,x,\eta) d\eta dz
\end{eqnarray}
We now need to verify that each of the terms in \eqref{3 terms of remainder} are exponentially decaying as $\lambda \to \infty$. The first term involving the interior integration can be shown to have exponential decay by using standard pseudoanalytic estimates on the symbol $a(z,x,\eta)$ and by choosing $k$ appropriately as a function of $\lambda$.

For the first boundary term, we observe that for $v_1$ bounded away from the boundary $\partial V$, the integral decays exponentially as $\lambda\to\infty$.

The second boundary term requires more care. When $x = \pi(\chi(v))$ we can see from relation \eqref{canonical relation characterization} and condition \eqref{choice or R>0},that $\iota_{V\times \partial B_R(0)}^* \Phi$ has no critical points. This means that we can integrate by parts in the last term of \eqref{3 terms of remainder} and estimate this term in the same way as we did for the first term of \eqref{3 terms of remainder}. 

We have argued that all three terms in \eqref{3 terms of remainder} decay exponentially fast. This shows that the second term of \eqref{K split} decays exponentially quickly. This combined with \eqref{main term in K} shows that
$$K_\lambda(x,v) \sim \lambda^{\ell}e^{i\lambda\psi(x, v_1, v_2)}A(x,v_1, v_2;\lambda)$$
modulo exponentially decaying terms.
\end{Remark}}

\section{Applications to uniqueness theorems in integral geometry} \label{sec_appl}

We will now discuss the proofs of Theorems \ref{thm_local_uniqueness}--\ref{thm_b_k} in the introduction. These follow more or less directly from the analytic regularity result in Theorem \ref{global elliptic regularity intro}, the analysis of the Bolker condition in Sections \ref{injectivity of piL} and \ref{sec_ray}, and the microlocal analytic continuation result from \cite[Theorem 8.5.6']{Hormander} which is rephrased as follows.

\begin{Theorem} \label{thm_mac}
Let $\mx$ be an analytic manifold, and let $\Sigma$ be a $C^2$ hypersurface through $x_0 \in \mx$ with conormal $\nu_0$ at $x_0$. If $f \in \mathcal{D}'(\mx)$ satisfies $f=0$ on one side of $\Sigma$ near $x_0$ and if $(x_0,\nu_0) \notin \WF_a(f)$ or $(x_0,-\nu_0) \notin \WF_a(f)$, then $f=0$ near $x_0$.
\end{Theorem}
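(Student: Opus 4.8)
The plan is to obtain this as a direct consequence of the microlocal Holmgren (Sato--Kawai--Kashiwara) theorem in the form \cite[Theorem 8.5.6']{Hormander}; the only real work is a coordinate reduction. First I would pass to a local analytic chart, identifying a neighborhood of $x_0$ in $\mx$ with a neighborhood of the origin in $\mR^n$ and taking $x_0 = 0$. Since $\Sigma$ is a $C^2$ hypersurface through $0$, it has a $C^2$ defining function $\psi$ near $0$ with $d\psi(0) \ne 0$; replacing $\psi$ by $-\psi$ if necessary I may assume $f = 0$ in the open set $\{\psi > 0\}$ near $0$. Then $d\psi(0)$ is one of the two conormal rays $\pm\nu_0$, so the hypothesis that one of $(x_0,\pm\nu_0)$ lies outside $\WF_a(f)$ says exactly that one of $(0,\pm d\psi(0))$ lies outside $\WF_a(f)$.

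Next I would quote \cite[Theorem 8.5.6']{Hormander}: if a distribution vanishes on one side of a $C^2$ hypersurface near a point and at least one of the two conormal rays of that hypersurface at the point is absent from the analytic wave front set, then the distribution vanishes in a whole neighborhood of the point. Applying this with $\Sigma$, $0$ and $f$ yields $f = 0$ near $0$, hence near $x_0$, which is the assertion. Heuristically the two disjuncts correspond to the two standard mechanisms: if the conormal pointing into the vanishing region $\{\psi>0\}$ is noncharacteristic for $\WF_a(f)$ this is the ``forward'' Holmgren statement, and if instead it is the conormal pointing into $\{\psi<0\}$ that is noncharacteristic then $f$ is, near $0$, a one-sided boundary value whose holomorphic representative must extend across $\Sigma$ and vanish because $f \equiv 0$ on the open set $\{\psi>0\}$; either way $f$ turns out to be real-analytic near $0$ and hence vanishes there by unique continuation of analytic functions.

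The single nontrivial ingredient is \cite[Theorem 8.5.6']{Hormander} itself, which I would use as a black box; were a self-contained argument required, the main obstacle would be its proof, which proceeds via the FBI-transform characterization of $\WF_a$ and a deformation/Phragm\'en--Lindel\"of argument propagating the exponential decay of the FBI transform of $f$ --- available a priori in the directions transverse to $N^*_0\Sigma$ thanks to the one-sided vanishing --- to all directions near $\nu_0$, after which $f$ is seen to be real-analytic near $x_0$. Since this result is classical, in practice I would simply cite it, exactly as is done in the applications Theorems \ref{thm_local_uniqueness}--\ref{thm_b_k}.
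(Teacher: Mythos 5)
Your proposal matches the paper's approach: Theorem~\ref{thm_mac} is stated in the paper purely as a rephrasing of \cite[Theorem 8.5.6']{Hormander}, with no independent proof given, and your plan is likewise to localize, identify $\pm d\psi(x_0)$ with $\pm\nu_0$, and cite that theorem as a black box. The only point worth noting is that the literal statement of \cite[Theorem 8.5.6']{Hormander} fixes one sign of the conormal (in terms of $df$ where $u=0$ on $\{f>0\}$); the symmetric ``or'' form you and the paper use follows immediately, e.g.\ by applying the cited theorem to $\bar f$ and using $\WF_a(\bar f)=-\WF_a(f)$, so your formulation is justified even though the ``two standard mechanisms'' heuristic in your second paragraph is not quite the clean way to see it.
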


Theorem \ref{thm_local_uniqueness} in the introduction is an immediate consequence of Theorem \ref{global elliptic regularity intro} and Theorem \ref{thm_mac}.

We proceed to the weighted geodesic X-ray transform $R$ as defined in Section \ref{sec_ex_xray}. In the analytic case, Theorem \ref{thm_mac} allows us to upgrade Proposition \ref{geodesic bolker} to the following uniqueness result that is already contained in \cite{StefanovUhlmann}. As mentioned in the introduction, it might be possible to remove the assumptions that the geodesic has no self-intersections or tangential intersections by using suitable extension procedures.

\begin{Proposition} \label{thm_xray_uniqueness}
Let $(M,g)$ be an analytic manifold with smooth boundary such that $g$ is analytic up to $\p M$. Let $z_0 \in \p_+ SM$ satisfy $\tau_+(z_0) < \infty$, and assume that the geodesic $x_{z_0}: [0,\tau_+(z_0)] \to M$ does not self-intersect, has no conjugate points, meets $\p M$ transversally at the endpoints, and otherwise stays in $\mi$. Let $\kappa$ be analytic and nowhere vanishing in $\mG \times \mi$, and assume that $\Sigma$ is a $C^2$ hypersurface such that $x_{z_0}$ is tangent to $\Sigma$ at $x_0 \in \mi$. If $Rf(z) = 0$ for $z$ near $z_0$ and if $f$ vanishes on one side of $\Sigma$ near $x_0$, then $f = 0$ near $x_0$.
\end{Proposition}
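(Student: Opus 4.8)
The plan is to deduce Proposition \ref{thm_xray_uniqueness} directly from the general machinery already assembled in the paper, essentially by checking that the hypotheses line up with those of Theorem \ref{thm_local_uniqueness} (equivalently, Theorem \ref{global elliptic regularity intro} combined with Theorem \ref{thm_mac}). First I would invoke Proposition \ref{geodesic bolker}: since $x_{z_0}$ does not self-intersect, meets $\p M$ transversally at its endpoints, and otherwise stays in $\mi$, for $\mG$ a sufficiently small neighbourhood of $z_0$ in $\p_+ SM$ the weighted geodesic X-ray transform \eqref{grt_def} is a double fibration ray transform, and because the geodesic $x_{z_0}$ has no conjugate points, the Bolker condition holds at every point $(z_0,\zeta,x,\eta) \in C$ with $x = x_{z_0}(t)$. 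The analyticity of $M$, $g$ (up to $\p M$) and $\kappa$ guarantees that $R$ is an \emph{analytic} double fibration ray transform in the sense of Section \ref{subsec_ray}, so Theorem \ref{global elliptic regularity intro} applies.

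The second step is to locate the relevant cotangent direction. Let $\nu_0$ be a conormal of $\Sigma$ at $x_0$, and let $t_0$ be the (unique) time with $x_{z_0}(t_0) = x_0$. Since $x_{z_0}$ is tangent to $\Sigma$ at $x_0$, we have $\nu_0 \perp \dot{x}_{z_0}(t_0)$, hence $\nu_0 \in N^*_{x_0}G_{z_0}$ and there is $\zeta_0 \in T^*_{z_0}\mG$ with $(z_0,\zeta_0,x_0,\nu_0) \in C$. By the previous paragraph the Bolker condition holds at this point of $C$. Now suppose $Rf(z) = 0$ for $z$ near $z_0$; then $(z_0,\zeta_0) \notin \WF_a(Rf)$ trivially, so Theorem \ref{global elliptic regularity intro} gives $(x_0,\nu_0) \notin \WF_a(f)$. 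Since $f$ vanishes on one side of the $C^2$ hypersurface $\Sigma$ near $x_0$ and $(x_0,\nu_0) \notin \WF_a(f)$, the microlocal analytic continuation result Theorem \ref{thm_mac} yields $f = 0$ near $x_0$, which is the claim.

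There is essentially no hard calculation here; the only genuine point requiring a word of care — and the one I would write out explicitly — is that $x_{z_0}$ tangent to $\Sigma$ at $x_0$ forces $\nu_0 \perp \dot{x}_{z_0}(t_0)$, so that $\nu_0$ is indeed conormal to the fibre $G_{z_0}$ and thus lies in the relevant slice of $C$; this is immediate from $T_{x_0}\Sigma \supset \mR\,\dot{x}_{z_0}(t_0)$. One should also note that the conclusion of Theorem \ref{global elliptic regularity intro} requires $f \in \mathcal{E}'(\mi)$, so implicitly we work with $f$ cut off near $x_0$ (or we simply note $x_0 \in \mi$ and argue locally, which does not affect $\WF_a$ near $(x_0,\nu_0)$ since cutting off by a function equal to $1$ near $x_0$ changes $\WF_a(f)$ only away from $x_0$). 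With these remarks the proof is a one-paragraph application of the earlier results, and the ``main obstacle'' is purely bookkeeping: making sure the point $(z_0,\zeta_0,x_0,\nu_0)$ at which Bolker is verified is the \emph{same} point whose analytic singularity we wish to exclude, which is exactly the content of matching the tangency condition with membership in $N^*Z$.
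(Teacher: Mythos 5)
Your proposal is correct and is essentially the paper's own argument: Proposition \ref{geodesic bolker} supplies the analytic double fibration structure and the Bolker condition at every point of $C$ over the geodesic $x_{z_0}$, the tangency of $x_{z_0}$ to $\Sigma$ puts $(z_0,\zeta_0,x_0,\nu_0)$ in $C$, Theorem \ref{global elliptic regularity intro} then removes $(x_0,\nu_0)$ from $\WF_a(f)$, and Theorem \ref{thm_mac} gives $f=0$ near $x_0$. The only thing to drop is your parenthetical about cutting $f$ off near $x_0$: multiplying $f$ by a cutoff changes $Rf$, so one should instead simply read the statement with $f \in \mathcal{E}'(\mi)$ (which is how it is used in the proof of Theorem \ref{thm_xray_twodim}, after extending $f$ by zero to a slightly larger manifold), and that is all Theorem \ref{global elliptic regularity intro} requires — the localization near $x_0$ is already internal to its proof.
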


We can now prove that the geodesic X-ray transform, possibly with analytic weight, is injective on compact nontrapping strictly convex two-dimensional manifolds in the analytic case.

\begin{proof}[Proof of Theorem \ref{thm_xray_twodim}]
Suppose that $f \in C(M)$ and $Rf = 0$. Embed $M$ in a slightly larger analytic nontrapping manifold $M_1$ with strictly convex boundary, extend $g$ analytically to $M_1$, and extend $f$ by zero to $M_1$. Note that since the ray transform of $f$ vanishes and $\p M$ is strictly convex, we have $f|_{\partial M} = 0$ by looking at short geodesics, so the zero extension is continuous in $M_1$. Then we have $f \in \mathcal{E}'(\mi_1)$ and $\supp(f) \subset M$.

Since $\partial M$ is strictly convex, we can apply Proposition \ref{thm_xray_uniqueness} near tangential geodesics to $\partial M$ to conclude that $f$ must vanish near $\partial M$. We now use a layer stripping argument: by \cite{BeteluGulliverLittman}, the manifold $M \setminus \{p\}$ for some $p \in M^{\mathrm{int}}$ is foliated by strictly convex hypersurfaces $\Gamma_s$ for $s \in (0,1]$ so that $\Gamma_1 = \partial M$ and $\Gamma_s \to \{p\}$ as $s \to 0$. Define  
\[
I = \{ t \in (0,1] \,:\, f = 0 \text{ in } \cup_{s \in [t,1]}\Gamma_s \}.
\]
By the argument above we have $1 \in I$. Moreover, if $t \in I$, then applying Theorem \ref{thm_xray_uniqueness} for geodesics tangent to $\Gamma_t$ shows that some neighborhood of $t$ is in $I$. Thus $I$ is open and closed, so $I = (0,1]$ by connectedness. This shows that $f=0$ in $M \setminus \{p\}$ and that $f \equiv 0$ by continuity.
\end{proof}

Next we consider the setting in Section \ref{subsect: null bich transform} and let $R$ be the null bicharacteristic ray transform with nowhere vanishing weight. When all the objects involved are analytic, Theorem \ref{thm_intro_support_nbrt} follows immediately from Proposition \ref{prop_null}, Theorem \ref{global elliptic regularity intro} and Theorem \ref{thm_mac}. We now use this to prove the uniqueness result involving a strictly pseudoconvex foliation. See also \cite{PSUZ} for further analysis of foliation conditions.

\begin{proof}[Proof of Theorem \ref{thm_bichar_foliation}]
Define 
\[
I = \{ t \in (0,1] \mid f = 0 \text{ in an open set containing } \cup_{s \in [t,1]}\Gamma_s \}.
\]
By assumption (a) one has $1 \in I$. By definition $I$ is open. To show that $I$ is closed, we suppose that $t_j \in I$ and $t_j \to t \in (0,1]$. Then $f = 0$ in $\cup_{s \in (t,1)} \Gamma_s$ and in particular $f$ vanishes on one side of $\Gamma_t$. Let $x \in \Gamma_t$ and let $\nu = dF(x)$ be conormal to $\Gamma_t$ at $x$. By assumption (b), there is $\xi \in \Xi_{x}$ such that $\nu \perp Y^h(x,\xi)$ and $\nu \nparallel \xi$. Then $z = (x,\xi)$ is such that $x_{z}$ is tangent to $\Gamma_t$ at $x$. Since $p(x,\xi) = 0$ and $\{ p, F \}(x,\xi) = 0$, assumption (c) can be rephrased as 
\begin{equation} \label{f_convex}
\p_s^2[F(x_{z}(s))] \big|_{s=0} = H_p^2 F(x,\xi) = \{ p, \{ p, F \} \}(x,\xi) > 0.
\end{equation}
Let $\tilde{M} = \cup_{s \in (0,t+\delta]} \Gamma_s$. If $\delta > 0$ is sufficiently small, the segment of the curve $x_{z}$ that lies in $\tilde{M}$ is very short by \eqref{f_convex} and does not have conjugate points by Lemma \ref{lem_ncp_short}. It also meets $\p \tilde{M}$ transversally at the endpoints and does not self-intersect. We can now invoke Theorem \ref{thm_intro_support_nbrt} in $\tilde{M}$ to conclude that $f=0$ near $x$. Repeating this argument for all $x \in \Gamma_t$ shows that $t \in I$. Consequently $I$ is closed, and by connectedness $I=(0,1]$.
\end{proof}

Finally we consider the transform introduced in Section \ref{subsubsec_k}, repeating some of the statements given there. Let $\mx$ be a manifold without boundary. We will study a transform that integrates a function $f$ over submanifolds given by 
 \[
 G_{\theta,s} = \{ x \in \mx \mid b(x,\theta) = s \}.
 \]
Here $b: \mx \times V' \to \mR^k$ is a smooth function, $\theta \in V'$ and $s \in V''$, where $V'$ and $V''$ are open subsets of $\mR^m$ and $\mR^k$, respectively. Here $m \geq 1$ and $1 \leq k \leq n-1$. Comparing with Lemma \ref{lem_df_b}, we have relabeled variables so that $z = (z',z'') = (\theta,s)$ and $m=N-n''$, $k = n''$. We also assume that we are considering integrals over $G_{\theta,s}$ with $(\theta,s)$ close to a fixed $(\theta_0,s_0)$, so we can indeed work in local coordinates and take $\mG = V' \times V''$. Moreover, if we let $U \subset \mx$ be a neighborhood of $G_{\theta_0,s_0}$ such that $G_{\theta,s} \subset U$ for $(\theta,s) \in \mG$, it is enough to have everything defined for $x \in U$.
 
We assume that $b_x(x,\theta)$ is surjective for $x \in U$ and $\theta \in V'$. Then Lemma \ref{lem_df_b} ensures that $Z = \{ (\theta,s,x) \mid b(x,\theta)=s \}$ is a double fibration. If we fix orientation forms on $Z$ and $\mG$ (we can use the Lebesgue measure on $\mG \subset \mR^N$), we obtain orientation forms on each $G_{\theta,s}$ and consider the weighted double fibration transform 
\[
Rf(\theta,s) = \int_{G_{\theta,s}} \kappa(\theta,s,x) f(x) \,d\omega_{G_{\theta,s}}(x), \qquad (\theta,s) \in \mG,
\]
where $\kappa$ is smooth and nowhere vanishing.

We next study the Bolker condition. By Lemma \ref{lem_df_b}, the canonical relation of $R$ may be written as 
\begin{equation} \label{c_b_form}
C = \{ (z, \zeta, x, -B(z,x) \zeta) \mid b(x,\theta) = s, \ \zeta \in N_z^* H_x \setminus 0 \}
\end{equation}
where $z = (\theta,s) = (\theta, b(x,\theta))$, and $\zeta \in N_z^* H_x \subset \mR^N$ has the form $\zeta = (-b_{\theta}(x,\theta)^T \zeta'', \zeta'')$. From Lemma \ref{lemma_bolker_local_char} we see that $d\pi_L$ is injective if for any $x \in U$, $\theta \in V'$ and $\zeta'' \in \mR^{n''} \setminus 0$, the linear map  
\begin{equation*} 
\text{$\left( b_x(x,\theta)^T, \ \  \p_{\theta} (b_x(x,\theta)^T \zeta'') \right)$ is surjective.}
\end{equation*}
We also see from \eqref{c_b_form} that $\pi_L$ is injective if for any $\theta \in V'$ and $\zeta'' \in \mR^{n''} \setminus 0$, the map 
\begin{equation*} 
x \mapsto (b(x,\theta), b_{\theta}(x,\theta)^T \zeta'') \text{ is injective on $U$.}
\end{equation*}
Thus the Bolker condition indeed amounts to \eqref{cdk_bolker1} and \eqref{cdk_bolker2}.

Using these facts, the combination of Theorem \ref{global elliptic regularity intro} and Theorem \ref{thm_mac} yields Theorem \ref{thm_b_k} in the introduction.

\begin{bibdiv}
\begin{biblist}

\bib{AZ17}{article}{
   author={Assylbekov, Yernat M.},
   author={Zhou, Hanming},
   title={Invariant distributions and tensor tomography for Gaussian
   thermostats},
   journal={Comm. Anal. Geom.},
   volume={25},
   date={2017},
   number={5},
   pages={895--926},
   issn={1019-8385},
   review={\MR{3733794}},
   doi={10.4310/CAG.2017.v25.n5.a1},
}
\bib{BastonEastwood}{book}{
   author={Baston, Robert J.},
   author={Eastwood, Michael G.},
   title={The Penrose transform},
   series={Oxford Mathematical Monographs},
   note={Its interaction with representation theory;
   Oxford Science Publications},
   publisher={The Clarendon Press, Oxford University Press, New York},
   date={1989},
   pages={xvi+213},
   isbn={0-19-853565-1},
   review={\MR{1038279}},
}
\bib{BeteluGulliverLittman}{article}{
   author={Betel\'{u}, Santiago},
   author={Gulliver, Robert},
   author={Littman, Walter},
   title={Boundary control of PDEs via curvature flows: the view from the
   boundary. II},
   note={Special issue dedicated to the memory of Jacques-Louis Lions},
   journal={Appl. Math. Optim.},
   volume={46},
   date={2002},
   number={2-3},
   pages={167--178},
   issn={0095-4616},
   review={\MR{1944758}},
   doi={10.1007/s00245-002-0742-6},
}
\bib{Beylkin}{article}{
   author={Beylkin, Gregory},
   title={The inversion problem and applications of the generalized Radon
   transform},
   journal={Comm. Pure Appl. Math.},
   volume={37},
   date={1984},
   number={5},
   pages={579--599},
   issn={0010-3640},
   review={\MR{752592}},
   doi={10.1002/cpa.3160370503},
}
\bib{BomanQuinto1987}{article}{
   author={Boman, Jan},
   author={Quinto, Eric Todd},
   title={Support theorems for real-analytic Radon transforms},
   journal={Duke Math. J.},
   volume={55},
   date={1987},
   number={4},
   pages={943--948},
   issn={0012-7094},
   review={\MR{916130}},
   doi={10.1215/S0012-7094-87-05547-5},
}
\bib{BomanQuinto1993}{article}{
   author={Boman, Jan},
   author={Quinto, Eric Todd},
   title={Support theorems for Radon transforms on real analytic line
   complexes in three-space},
   journal={Trans. Amer. Math. Soc.},
   volume={335},
   date={1993},
   number={2},
   pages={877--890},
   issn={0002-9947},
   review={\MR{1080733}},
   doi={10.2307/2154410},
}
\bib{BonthonneauJezequel}{article}{
   author={Bonthonneau, Yannick Guedes},
   author={J\'{e}z\'{e}quel, Malo},
   title={FBI transform in Gevrey classes and Anosov flows},
   date={2020},
   note={arXiv:2001.03610},
}
\bib{Dairbekov}{article}{
   author={Dairbekov, Nurlan S.},
   title={Integral geometry problem for nontrapping manifolds},
   journal={Inverse Problems},
   volume={22},
   date={2006},
   number={2},
   pages={431--445},
   issn={0266-5611},
   review={\MR{2216407}},
   doi={10.1088/0266-5611/22/2/003},
}
\bib{DPSU}{article}{
   author={Dairbekov, Nurlan S.},
   author={Paternain, Gabriel P.},
   author={Stefanov, Plamen},
   author={Uhlmann, Gunther},
   title={The boundary rigidity problem in the presence of a magnetic field},
   journal={Adv. Math.},
   volume={216},
   date={2007},
   number={2},
   pages={535--609},
   issn={0001-8708},
   review={\MR{2351370}},
   doi={10.1016/j.aim.2007.05.014},
}
\bib{dehoop2021determination}{article}{
      title={Determination of a compact Finsler manifold from its boundary distance map and an inverse problem in elasticity}, 
      author={de Hoop, Maarten},
      author={Ilmavirta, Joonas},
      author={Lassas, Matti},
      author={Saksala, Teemu},
      journal={Comm. Anal. Geom.},
      year={2023},
      eprint={arXiv:1901.03902},
      archivePrefix={arXiv},
      primaryClass={math.DG}
}
\bib{FIO}{article}{
   author={Feizmohammadi, Ali},
   author={Ilmavirta, Joonas},
   author={Oksanen, Lauri},
   title={The light ray transform in stationary and static Lorentzian
   geometries},
   journal={J. Geom. Anal.},
   volume={31},
   date={2021},
   number={4},
   pages={3656--3682},
   issn={1050-6926},
   review={\MR{4236538}},
   doi={10.1007/s12220-020-00409-y},
}
\bib{FGGN}{article}{
   author={Felea, Raluca},
   author={Gaburro, Romina},
   author={Greenleaf, Allan},
   author={Nolan, Clifford},
   title={Microlocal analysis of borehole seismic data},
   journal={Inverse Probl. Imaging},
   volume={16},
   date={2022},
   number={6},
   pages={1543--1570},
   issn={1930-8337},
   review={\MR{4520374}},
   doi={10.3934/ipi.2022026},
}
\bib{FinchLanUhlmann}{article}{
   author={Finch, David},
   author={Lan, Ih-Ren},
   author={Uhlmann, Gunther},
   title={Microlocal analysis of the x-ray transform with sources on a
   curve},
   conference={
      title={Inside out: inverse problems and applications},
   },
   book={
      series={Math. Sci. Res. Inst. Publ.},
      volume={47},
      publisher={Cambridge Univ. Press, Cambridge},
   },
   date={2003},
   pages={193--218},
   review={\MR{2029682}},
}
\bib{FrigyikStefanovUhlmann}{article}{
   author={Frigyik, Bela},
   author={Stefanov, Plamen},
   author={Uhlmann, Gunther},
   title={The X-ray transform for a generic family of curves and weights},
   journal={J. Geom. Anal.},
   volume={18},
   date={2008},
   number={1},
   pages={89--108},
   issn={1050-6926},
   review={\MR{2365669}},
   doi={10.1007/s12220-007-9007-6},
}
 
\bib{GGG_book}{book}{
   author={Gelfand, I. M.},
   author={Gindikin, S. G.},
   author={Graev, M. I.},
   title={Selected topics in integral geometry},
   series={Translations of Mathematical Monographs},
   volume={220},
   note={Translated from the 2000 Russian original by A. Shtern},
   publisher={American Mathematical Society, Providence, RI},
   date={2003},
   pages={xvi+170},
   isbn={0-8218-2932-7},
   review={\MR{2000133}},
   doi={10.1090/mmono/220},
}
\bib{GGS}{article}{
   author={Gel\cprime fand, I. M.},
   author={Graev, M. I.},
   author={\v{S}apiro, Z. Ja.},
   title={Differential forms and integral geometry},
   language={Russian},
   journal={Funkcional. Anal. i Prilo\v{z}en.},
   volume={3},
   date={1969},
   number={2},
   pages={24--40},
   issn={0374-1990},
   review={\MR{0244919}},
}
\bib{GreenleafUhlmann}{article}{
   author={Greenleaf, Allan},
   author={Uhlmann, Gunther},
   title={Nonlocal inversion formulas for the X-ray transform},
   journal={Duke Math. J.},
   volume={58},
   date={1989},
   number={1},
   pages={205--240},
   issn={0012-7094},
   review={\MR{1016420}},
   doi={10.1215/S0012-7094-89-05811-0},
}
\bib{GreenleafUhlmann1990}{article}{
   author={Greenleaf, Allan},
   author={Uhlmann, Gunther},
   title={Microlocal techniques in integral geometry},
   conference={
      title={Integral geometry and tomography},
      address={Arcata, CA},
      date={1989},
   },
   book={
      series={Contemp. Math.},
      volume={113},
      publisher={Amer. Math. Soc., Providence, RI},
   },
   date={1990},
   pages={121--135},
   review={\MR{1108649}},
   doi={10.1090/conm/113/1108649},
}
\bib{Guillarmou_trapping}{article}{
   author={Guillarmou, Colin},
   title={Lens rigidity for manifolds with hyperbolic trapped sets},
   journal={J. Amer. Math. Soc.},
   volume={30},
   date={2017},
   number={2},
   pages={561--599},
   issn={0894-0347},
   review={\MR{3600043}},
   doi={10.1090/jams/865},
}
\bib{GuilleminSternberg}{book}{
   author={Guillemin, Victor},
   author={Sternberg, Shlomo},
   title={Geometric asymptotics},
   series={Mathematical Surveys, No. 14},
   publisher={American Mathematical Society, Providence, R.I.},
   date={1977},
   pages={xviii+474 pp. (one plate)},
   review={\MR{0516965}},
}
\bib{Guillemin1985}{article}{
   author={Guillemin, Victor},
   title={On some results of Gel\cprime fand in integral geometry},
   conference={
      title={Pseudodifferential operators and applications},
      address={Notre Dame, Ind.},
      date={1984},
   },
   book={
      series={Proc. Sympos. Pure Math.},
      volume={43},
      publisher={Amer. Math. Soc., Providence, RI},
   },
   date={1985},
   pages={149--155},
   review={\MR{812288}},
   doi={10.1090/pspum/043/812288},
}
\bib{Helgason1966}{article}{
   author={Helgason, Sigur\dj ur},
   title={A duality in integral geometry on symmetric spaces},
   conference={
      title={Proc. U.S.-Japan Seminar in Differential Geometry},
      address={Kyoto},
      date={1965},
   },
   book={
      publisher={Nippon Hyoronsha, Tokyo},
   },
   date={1966},
   pages={37--56},
   review={\MR{0229191}},
}
\bib{Helgason2011}{book}{
   author={Helgason, Sigurdur},
   title={Integral geometry and Radon transforms},
   publisher={Springer, New York},
   date={2011},
   pages={xiv+301},
   isbn={978-1-4419-6054-2},
   review={\MR{2743116}},
   doi={10.1007/978-1-4419-6055-9},
}
\bib{HolmanUhlmann}{article}{
   author={Holman, Sean},
   author={Uhlmann, Gunther},
   title={On the microlocal analysis of the geodesic X-ray transform with
   conjugate points},
   journal={J. Differential Geom.},
   volume={108},
   date={2018},
   number={3},
   pages={459--494},
   issn={0022-040X},
   review={\MR{3770848}},
   doi={10.4310/jdg/1519959623},
}
\bib{HomanZhou2016}{article}{
   author={Homan, Andrew},
   author={Zhou, Hanming},
   title={Injectivity and stability for a generic class of generalized Radon
   transforms},
   journal={J. Geom. Anal.},
   volume={27},
   date={2017},
   number={2},
   pages={1515--1529},
   issn={1050-6926},
   review={\MR{3625162}},
   doi={10.1007/s12220-016-9729-4},
}
\bib{Hormander}{book}{
   author={H\"{o}rmander, Lars},
   title={The analysis of linear partial differential operators. I-IV},
   series={Grundlehren der mathematischen Wissenschaften [Fundamental
   Principles of Mathematical Sciences]},
   publisher={Springer-Verlag, Berlin},
   date={1985},
}
\bib{Hormanderacta}{article}{
   author={H\"{o}rmander, Lars},
  title={Fourier Integral Operators},
   journal={Acta Math.},
   volume={127},
   date={1971},
   pages={79–183},
}

\bib{Ilmavirta}{article}{
   author={Ilmavirta, Joonas},
   title={X-ray transforms in pseudo-Riemannian geometry},
   journal={J. Geom. Anal.},
   volume={28},
   date={2018},
   number={1},
   pages={606--626},
   issn={1050-6926},
   review={\MR{3745873}},
   doi={10.1007/s12220-017-9834-z},
}
\bib{KenigSjostrandUhlmann}{article}{
   author={Kenig, Carlos E.},
   author={Sj\"{o}strand, Johannes},
   author={Uhlmann, Gunther},
   title={The Calder\'{o}n problem with partial data},
   journal={Ann. of Math. (2)},
   volume={165},
   date={2007},
   number={2},
   pages={567--591},
   issn={0003-486X},
   review={\MR{2299741}},
   doi={10.4007/annals.2007.165.567},
}
\bib{Krishnan2009}{article}{
   author={Krishnan, Venkateswaran P.},
   title={A support theorem for the geodesic ray transform on functions},
   journal={J. Fourier Anal. Appl.},
   volume={15},
   date={2009},
   number={4},
   pages={515--520},
   issn={1069-5869},
   review={\MR{2549942}},
   doi={10.1007/s00041-009-9061-5},
}
\bib{KrishnanStefanov}{article}{
   author={Krishnan, Venkateswaran P.},
   author={Stefanov, Plamen},
   title={A support theorem for the geodesic ray transform of symmetric
   tensor fields},
   journal={Inverse Probl. Imaging},
   volume={3},
   date={2009},
   number={3},
   pages={453--464},
   issn={1930-8337},
   review={\MR{2557914}},
   doi={10.3934/ipi.2009.3.453},
}
\bib{KSV}{article}{
   author={ten Kroode, A. P. E.},
   author={Smit, D.-J.},
   author={Verdel, A. R.},
   title={A microlocal analysis of migration},
   journal={Wave Motion},
   volume={28},
   date={1998},
   number={2},
   pages={149--172},
   issn={0165-2125},
   review={\MR{1637771}},
   doi={10.1016/S0165-2125(98)00004-3},
}
\bib{LOSU}{article}{
   author={Lassas, Matti},
   author={Oksanen, Lauri},
   author={Stefanov, Plamen},
   author={Uhlmann, Gunther},
   title={The light ray transform on Lorentzian manifolds},
   journal={Comm. Math. Phys.},
   volume={377},
   date={2020},
   number={2},
   pages={1349--1379},
   issn={0010-3616},
   review={\MR{4115019}},
   doi={10.1007/s00220-020-03703-6},
}
\bib{Martinez}{book}{
   author={Martinez, Andr\'{e}},
   title={An introduction to semiclassical and microlocal analysis},
   series={Universitext},
   publisher={Springer-Verlag, New York},
   date={2002},
   pages={viii+190},
   isbn={0-387-95344-2},
   review={\MR{1872698}},
   doi={10.1007/978-1-4757-4495-8},
}
\bib{MSU}{article}{
   author={Monard, Fran\c{c}ois},
   author={Stefanov, Plamen},
   author={Uhlmann, Gunther},
   title={The geodesic ray transform on Riemannian surfaces with conjugate
   points},
   journal={Comm. Math. Phys.},
   volume={337},
   date={2015},
   number={3},
   pages={1491--1513},
   issn={0010-3616},
   review={\MR{3339183}},
   doi={10.1007/s00220-015-2328-6},
}
\bib{Mukhometov}{article}{
   author={Muhometov, R. G.},
   title={The reconstruction problem of a two-dimensional Riemannian metric,
   and integral geometry},
   language={Russian},
   journal={Dokl. Akad. Nauk SSSR},
   volume={232},
   date={1977},
   number={1},
   pages={32--35},
   issn={0002-3264},
   review={\MR{0431074}},
}
\bib{NolanSymes}{article}{
   author={Nolan, C. J.},
   author={Symes, W. W.},
   title={Global solution of a linearized inverse problem for the wave
   equation},
   journal={Comm. Partial Differential Equations},
   volume={22},
   date={1997},
   number={5-6},
   pages={919--952},
   issn={0360-5302},
   review={\MR{1452173}},
   doi={10.1080/03605309708821289},
}
\bib{Nolan2000}{article}{
   author={Nolan, Clifford J.},
   title={Scattering in the presence of fold caustics},
   journal={SIAM J. Appl. Math.},
   volume={61},
   date={2000},
   number={2},
   pages={659--672},
   issn={0036-1399},
   review={\MR{1780807}},
   doi={10.1137/S0036139999356107},
}
\bib{OSSU}{article}{
   author={Oksanen, Lauri},
   author={Salo, Mikko},
   author={Stefanov, Plamen},
   author={Uhlmann, Gunther},
   title={Inverse problems for real principal type operators},
   journal={Amer. J. Math.},
   date={2020},
   note={arXiv:2001.07599},
}
\bib{PSU_tensor}{article}{
   author={Paternain, Gabriel P.},
   author={Salo, Mikko},
   author={Uhlmann, Gunther},
   title={Tensor tomography on surfaces},
   journal={Invent. Math.},
   volume={193},
   date={2013},
   number={1},
   pages={229--247},
   issn={0020-9910},
   review={\MR{3069117}},
   doi={10.1007/s00222-012-0432-1},
}
\bib{PSU_book}{book}{
   author={Paternain, Gabriel P.},
   author={Salo, Mikko},
   author={Uhlmann, Gunther},
   title={Geometric inverse problems---with emphasis on two dimensions},
   series={Cambridge Studies in Advanced Mathematics},
   volume={204},
   note={With a foreword by Andr\'{a}s Vasy},
   publisher={Cambridge University Press, Cambridge},
   date={2023},
   pages={xxiv+344},
   isbn={978-1-316-51087-2},
   review={\MR{4520155}},
}
\bib{PSUZ}{article}{
   author={Paternain, Gabriel P.},
   author={Salo, Mikko},
   author={Uhlmann, Gunther},
   author={Zhou, Hanming},
   title={The geodesic X-ray transform with matrix weights},
   journal={Amer. J. Math.},
   volume={141},
   date={2019},
   number={6},
   pages={1707--1750},
   issn={0002-9327},
   review={\MR{4030525}},
   doi={10.1353/ajm.2019.0045},
}
\bib{Quinto_measures}{article}{
   author={Quinto, Eric Todd},
   title={The dependence of the generalized Radon transform on defining
   measures},
   journal={Trans. Amer. Math. Soc.},
   volume={257},
   date={1980},
   number={2},
   pages={331--346},
   issn={0002-9947},
   review={\MR{552261}},
   doi={10.2307/1998299},
}
\bib{Quinto1993}{article}{
   author={Quinto, Eric Todd},
   title={Real analytic Radon transforms on rank one symmetric spaces},
   journal={Proc. Amer. Math. Soc.},
   volume={117},
   date={1993},
   number={1},
   pages={179--186},
   issn={0002-9939},
   review={\MR{1135080}},
   doi={10.2307/2159714},
}
\bib{RoubySjostrandNgoc}{article}{
   author={Rouby, Oph\'{e}lie},
   author={Sj\"{o}strand, Johannes},
   author={Ng\d{o}c, San V\~{u}},
   title={Analytic Bergman operators in the semiclassical limit},
   journal={Duke Math. J.},
   volume={169},
   date={2020},
   number={16},
   pages={3033--3097},
   issn={0012-7094},
   review={\MR{4167085}},
   doi={10.1215/00127094-2020-0022},
}
\bib{KashiwaraKawai}{article}{
   author={Sato, Mikio},
   author={Kawai, Takahiro},
   author={Kashiwara, Masaki},
   title={Microfunctions and pseudo-differential equations},
   conference={
      title={Hyperfunctions and pseudo-differential equations (Proc. Conf.,
      Katata, 1971; dedicated to the memory of Andr\'{e} Martineau)},
   },
   book={
      series={Lecture Notes in Math., Vol. 287},
      publisher={Springer, Berlin},
   },
   date={1973},
   pages={265--529},
   review={\MR{0420735}},
}
\bib{sjostrand}{article}{
   author={Sj\"{o}strand, Johannes},
   title={Singularit\'{e}s analytiques microlocales},
   language={French},
   conference={
      title={Ast\'{e}risque, 95},
   },
   book={
      series={Ast\'{e}risque},
      volume={95},
      publisher={Soc. Math. France, Paris},
   },
   date={1982},
   pages={1--166},
   review={\MR{699623}},
}
\bib{Plamen_lightray}{article}{
   author={Stefanov, Plamen},
   title={Support theorems for the light ray transform on analytic
   Lorentzian manifolds},
   journal={Proc. Amer. Math. Soc.},
   volume={145},
   date={2017},
   number={3},
   pages={1259--1274},
   issn={0002-9939},
   review={\MR{3589324}},
   doi={10.1090/proc/13117},
}
\bib{StefanovUhlmann_generic}{article}{
   author={Stefanov, Plamen},
   author={Uhlmann, Gunther},
   title={Boundary rigidity and stability for generic simple metrics},
   journal={J. Amer. Math. Soc.},
   volume={18},
   date={2005},
   number={4},
   pages={975--1003},
   issn={0894-0347},
   review={\MR{2163868}},
   doi={10.1090/S0894-0347-05-00494-7},
}
\bib{StefanovUhlmann}{article}{
   author={Stefanov, Plamen},
   author={Uhlmann, Gunther},
   title={Integral geometry on tensor fields on a class of non-simple
   Riemannian manifolds},
   journal={Amer. J. Math.},
   volume={130},
   date={2008},
   number={1},
   pages={239--268},
   issn={0002-9327},
   review={\MR{2382148}},
   doi={10.1353/ajm.2008.0003},
}
\bib{StefanovUhlmann_SAR}{article}{
   author={Stefanov, Plamen},
   author={Uhlmann, Gunther},
   title={Is a curved flight path in SAR better than a straight one?},
   journal={SIAM J. Appl. Math.},
   volume={73},
   date={2013},
   number={4},
   pages={1596--1612},
   issn={0036-1399},
   review={\MR{3080199}},
   doi={10.1137/120882639},
}
\bib{deHoopStolk}{article}{
   author={Stolk, Christiaan C.},
   author={de Hoop, Maarten V.},
   title={Microlocal analysis of seismic inverse scattering in anisotropic
   elastic media},
   journal={Comm. Pure Appl. Math.},
   volume={55},
   date={2002},
   number={3},
   pages={261--301},
   issn={0010-3640},
   review={\MR{1866365}},
   doi={10.1002/cpa.10019},
}
\bib{Treves2}{book}{
   author={Tr\`eves, Fran\c{c}ois},
   title={Introduction to pseudodifferential and Fourier integral operators.
   Vol. 2},
   series={University Series in Mathematics},
   note={Fourier integral operators},
   publisher={Plenum Press, New York-London},
   date={1980},
   pages={xiv+301--649+xi},
   isbn={0-306-40404-4},
   review={\MR{597145}},
}
\bib{Treves22}{book}{
   author={Tr\`eves, Fran\c{c}ois},
   title={Analytic partial differential equations},
   series={Grundlehren der mathematischen Wissenschaften [Fundamental
   Principles of Mathematical Sciences]},
   volume={359},
   publisher={Springer, Cham},
   date={2022},
   pages={xiii+1228},
   isbn={978-3-030-94054-6},
   isbn={978-3-030-94055-3},
   review={\MR{4436039}},
   doi={10.1007/978-3-030-94055-3},
}
\bib{UhlmannVasy}{article}{
   author={Uhlmann, Gunther},
   author={Vasy, Andr\'{a}s},
   title={The inverse problem for the local geodesic ray transform},
   journal={Invent. Math.},
   volume={205},
   date={2016},
   number={1},
   pages={83--120},
   issn={0020-9910},
   review={\MR{3514959}},
   doi={10.1007/s00222-015-0631-7},
}
\bib{VasyWang}{article}{
   author={Vasy, Andr\'{a}s},
   author={Wang, Yiran},
   title={On the light ray transform of wave equation solutions},
   journal={Comm. Math. Phys.},
   volume={384},
   date={2021},
   number={1},
   pages={503--532},
   issn={0010-3616},
   review={\MR{4252882}},
   doi={10.1007/s00220-021-04045-7},
}
\bib{Wang_lightray1}{article}{
   author={Wang, Yiran},
   title={Parametrices for the light ray transform on Minkowski spacetime},
   journal={Inverse Probl. Imaging},
   volume={12},
   date={2018},
   number={1},
   pages={229--237},
   issn={1930-8337},
   review={\MR{3810155}},
   doi={10.3934/ipi.2018009},
}
\bib{Wang_lightray2}{article}{
   author={Wang, Yiran},
   title={Microlocal analysis of the light ray transform on globally hyperbolic Lorentzian manifolds},
   journal={Amer. J. Math.},
   date={2021},
   note={arXiv:2104.08576},
}
\end{biblist}
\end{bibdiv}

\end{document}